\documentclass[10pt, a4paper]{amsart}
\usepackage{amssymb}
\usepackage{mathrsfs, bbm}
\usepackage{a4wide}
\usepackage{tikz}
\usetikzlibrary{arrows.meta,calc}

\newcommand{\R}{\mathbb{R}}
\newcommand{\C}{\mathbb{C}}
\newcommand{\N}{\mathbb{N}}
\newcommand{\bB}{\mathbb{B}}
\newcommand{\D}{\mathbb{D}}

\newcommand{\sD}{\mathscr{D}}
\newcommand{\sL}{\mathscr{L}}
\newcommand{\sH}{\mathscr{H}}
\newcommand{\sK}{\mathscr{K}}
\newcommand{\sM}{\mathscr{M}}

\newcommand{\fs}{\mathfrak{s}}
\newcommand{\co}{\mathfrak{c}_{0}}

\def\Xint#1{\mathchoice
{\XXint\displaystyle\textstyle{#1}}
{\XXint\textstyle\scriptstyle{#1}}
{\XXint\scriptstyle\scriptscriptstyle{#1}}
{\XXint\scriptscriptstyle\scriptscriptstyle{#1}}
\!\int}
\def\XXint#1#2#3{{\setbox0=\hbox{$#1{#2#3}{\int}$}
\vcenter{\hbox{$#2#3$}}\kern-.5\wd0}}
\def\dashint{\Xint-}

\newcommand{\bint}{\ensuremath{\dashint}}
\newcommand{\gbint}{\sideset{^g\!\!}{}\bint}
\newcommand{\hbint}{\sideset{^{h}\!\!\!}{}\bint}

\newcommand{\op}{\operatorname}

\newcommand{\Vol}{\op{Vol}}
\newcommand{\Tr}{\op{Tr}}
\newcommand{\Trw}{\Tr_{\omega}}
\newcommand{\ran}{\op{ran}}

\newcommand{\Com}{\op{Com}}

\newcommand{\Res}{\op{Res}}
\newcommand{\car}{\mathbbm{1}}

\newcommand{\dom}{\op{dom}}

\numberwithin{equation}{section}

\newtheorem{theorem}{Theorem}[section]
\newtheorem{proposition}[theorem]{Proposition}
\newtheorem{corollary}[theorem]{Corollary}

\newtheorem{theoremalph}{Theorem}
\newtheorem{lemmalph}[theoremalph]{Lemma}
\newtheorem{propositionalph}[theoremalph]{Proposition}

\newtheorem{lemma}[theorem]{Lemma}

\theoremstyle{definition}
\newtheorem{definition}[theorem]{Definition}

\theoremstyle{remark}
\newtheorem{example}[theorem]{Example}
\newtheorem{remark}[theorem]{Remark}

\newcommand{\RV}{\textup{RV}}

\newcommand{\BL}{\op{BL}}
\newcommand{\NPT}{\op{NPT}}

\newcommand{\Diag}{\op{Diag}}

\title{Nonclassical Weyl Laws and\\ Connes' Integration for weak Lorentz Ideals, II}
\date{\today}

\author{Rapha\"el Ponge}
 \address{Department of Mathematics and Statistics, University of Ottawa, Canada}
 \curraddr{Department of Mathematics and Statistics, University of Wyoming, Laramie, WY, USA}
 \email{ponge.math@icloud.com}

\author{Yongqiang Tian}
 \address{School of Mathematics and Statistics, Central South University, Changsha, China}
 \email{tianyongqiang@csu.edu.cn}

\keywords{noncommutative geometry; weak Lorentz ideals; spectral analysis; hypermeasurability; Pietsch's correspondence; nonclassical Weyl laws; Riemann Hypothesis}

\subjclass[2020]{58B34; 47B10; 81Q10}

\begin{document}
\begin{abstract}
This is the second in a series of papers on Connes' integration in weak Lorentz ideals. Building on the Dixmier trace theory, Birman--Solomyak perturbation theory, and strong measurability developed in Part 1, we establish a spectral form of Pietsch's correspondence for traces on these ideals, extending to this setting results of Semenov--Sukochev--Usachev--Zanin for the weak trace-class. The correspondence describes the positive normalized traces in terms of Banach limits and yields a complete spectral characterization of strong measurability. We also introduce hypermeasurability (measurability with respect to every normalized trace). Unlike the ambient ideal, it depends on the specific regularly varying function chosen, and we characterize it spectrally by means of eigenvalue sums. We further show that hypermeasurability and spectral measurability are incomparable. Finally, we apply these results to examples arising from nonclassical Weyl laws in the sense of Simon, including the logarithm of the Laplacian on a closed manifold, the double Laplacian, multi-tensor products of Laplacians, and an example arising from Connes' approach to the Riemann Hypothesis.
\end{abstract}
\maketitle


\section{Introduction}

This paper is the second of a series of papers whose main goal is the extension to weak Lorentz ideals of Connes' integration, along with a systematic description of traces on such ideals. 

In the first paper~\cite{PT:Part1} we developed Dixmier traces and Connes' integration in the general setting of weak Lorentz ideals $\sL_g$ (where $g$ is a regularly varying function of index $-1$), extended the Birman--Solomyak perturbation theory to this setting, and studied strong measurability. 

In the current paper, our first aim is to revisit strong measurability and obtain a spectral characterization thereof. This characterization is based on an elegant correspondence, due to Pietsch~\cite{Pi:IEOT17,Pi:AM20}, between traces on an operator ideal and shift-invariant linear functionals on a suitable sequence space. What we actually need here is a spectral version of this correspondence, which we establish explicitly in Section~\ref{sec:Pietsch}. 

Our second aim is to develop the theory of hypermeasurability and to apply these results to a systematic treatment of examples arising from nonclassical Weyl laws. Roughly speaking, hypermeasurability concerns whether an operator in $\sL_g$ admits a common value with respect to all $g$-normalized traces. Although equivalent regularly varying functions generate the same ideal $\sL_g$, the definition of hypermeasurability depends in a highly delicate way on the specific choice of the function $g$. For this reason, we shall undertake a detailed investigation of $g$-hypermeasurability and provide several criteria (see Section~\ref{sec:hypermeasurability} for more information). 

We also relate $g$-hypermeasurability to previously existing notions of measurability. A quick comparison of the definitions immediately yields a clean hierarchy relating Dixmier-measurability, strong measurability and $g$-hypermeasurability. What remains unclear is the relationship between $g$-hypermeasurability and spectral measurability. As we shall see, $g$-hypermeasurability and spectral measurability are in fact \emph{incomparable}. 
It turns out that $g$-hypermeasurability is a rather strong notion: some operators arising from nonclassical Weyl laws are not $g$-hypermeasurable, though they are $g_1$-hypermeasurable for a certain asymptotically equivalent correction $g_1$ of $g$.

We refer to~\cite{GS:JFA14, GU:JMAA20, LU:JMAA25, McD:arXiv26, SS:JFA13, TU:JMAA27} and the references therein for related results.

\subsection{Measurability in weak Lorentz ideals}
Throughout this paper $\sH$ is a separable Hilbert space and $\sK$ is the ideal of compact operators on $\sH$. We also let 
 $g:[0,\infty)\rightarrow(0,\infty)$ be a  function of regular variation of index $-1$ (i.e., an  $\RV_{-1}$-function in the notation of~\cite{PT:Part1}). We further assume that $\int_0^\infty g(t)\,dt=\infty$, and set $G(t)=\int_0^t g(s)\,ds$, $t\geq 0$. Recall that the weak Lorentz ideal associated with $g$ is 
\[
  \sL_g := \bigl\{T \in \sK;\; \mu_j(T) = \op{O}(g(j))\bigr\},
\]
where $\mu_0(T)\geq \mu_1(T)\geq \cdots$ are the singular values of $T$. This is a quasi-Banach ideal. For $g(t)=(t+1)^{-1}$ we recover the weak trace-class $\sL_{1,\infty}$. The closure of finite rank operators in $\sL_g$ is the separable ideal 
\[
(\sL_g)_0:=  \big\{T \in \sK;\; \mu_j(T) = \op{o}(g(j))\big\}.
\]

In~\cite{PT:Part1} we introduced three notions of measurability that extend to weak Lorentz ideals the corresponding notions for the weak trace-class $\sL_{1,\infty}$. 

The first notion, which is the weakest, is \emph{Dixmier-measurability} (or simply \emph{measurability}). An operator $T\in \sL_g$ is Dixmier-measurable if it takes on the same value on \emph{all}  Dixmier traces on $\sL_g$. The common value then is the \emph{noncommutative integral} of $T$ and is denoted $\gbint T$. This extends to $\sL_g$ the well-known notion of measurability of Connes for operators on $\sL_{1,\infty}$~\cite{Co:NCG}. 

The space of Dixmier-measurable operators $\sM(\sL_g)$ is a closed subspace of $\sL_g$ containing the commutator subspace $\Com(\sL_g)$ and all finite rank operators. 
It can be shown that 
\begin{equation*}
 \Big(\text{$T$ is measurable and}\ \gbint T=L\Big) \Longleftrightarrow \lim_{N\to\infty}\frac{1}{G(N)}\sum_{j<N}\lambda_j(T)=L, 
\end{equation*}
where $\lambda(T)=(\lambda_j(T))$ is any eigenvalue sequence for $T$.\footnote{By an eigenvalue sequence we shall mean that each eigenvalue $\lambda_j(T)$ is repeated according to multiplicity and $|\lambda_0(T)|\geq |\lambda_1(T)|\geq |\lambda_2(T)|\geq \cdots$.} This provides a spectral characterization of $\sM(\sL_g)$. 

Dixmier traces are instances of positive $g$-normalized traces. A trace $\varphi$ on $\sL_g$ is $g$-normalized if $\varphi(T_g)=1$ for any positive operator 
$T_g$ whose eigenvalues are the $g(j)$, $j \geq 0$. There are many positive $g$-normalized traces that are not Dixmier traces. Following~\cite{PT:Part1} we shall say that an operator $T\in \sL_g$ is \emph{strongly measurable} (or \emph{positively measurable}) if it takes on the same value on \emph{all} positive $g$-normalized traces. 

Any strongly measurable operator is Dixmier-measurable. The space $\sM_s(\sL_g)$ of strongly measurable operators is given by 
\begin{equation*}
 \sM_s\left(\sL_g\right)= \C T_g \oplus \overline{\Com\left(\sL_g\right)}. 
\end{equation*}
In particular, this is a closed subspace of $\sL_g$. In this paper we will obtain a spectral characterization of $\sM_s(\sL_g)$ (see Theorem~\ref{thm:Intro.strong measurable--ac} below). 

The third notion of measurability arises from the extension to all weak Lorentz ideals of the perturbation theory of Birman--Solomyak of eigenvalues of operators in weak Schatten classes (see~\cite{PT:Part1, Ro:MUS74} and \S~\ref{sec:prelim}). An operator $T=T^*\in \sL_g$ is called \emph{spectrally measurable} if 
\begin{equation*}
 \lim_{j\rightarrow \infty} g(j)^{-1} \lambda_j^+(T) \quad \text{and} \quad  \lim_{j\rightarrow \infty} g(j)^{-1} \lambda_j^-(T)\quad \text{both exist}, 
\end{equation*}
 where $\lambda_0^\pm(T) \geq \lambda_1^\pm(T) \geq \cdots $ are the positive/negative eigenvalues of $T$. If $T\in \sL_g$ is not selfadjoint, we say that $T$ is spectrally measurable if its real and imaginary parts are. 
 
The perturbation theory for eigenvalues of operators in $\sL_g$ ensures that spectrally measurable operators form a closed subset of $\sL_g$ which is invariant under perturbation by operators in $(\sL_g)_0$. Moreover, it can be shown that if $T=T^*\in \sL_g$ is spectrally measurable, then $T$ is strongly measurable, with 
 \begin{equation*}
 \gbint T =  \lim_{j\rightarrow \infty} g(j)^{-1} \lambda_j^+(T) -  \lim_{j\rightarrow \infty} g(j)^{-1} \lambda_j^{-}(T). 
\end{equation*}
This result extends to non-selfadjoint operators (see~\cite{PT:Part1}). It provides a further spectral characterization of the NC integral on $\sL_g$. We refer to~\cite[\S6]{PT:Part1} for various examples of spectrally measurable operators. 

To sum up we have a hierarchy of measurability properties, 
\begin{equation*}
 \text{spectral measurability} \Longrightarrow \text{strong measurability} \Longrightarrow \text{Dixmier measurability}. 
\end{equation*}
Neither of the converse implications holds. In particular, in this paper we will construct an example of a strongly measurable operator which is not spectrally measurable (see \S\ref{subsec:hyper-not-spec}). 

\subsection{Hypermeasurability}
From the point of view of operator theory, it is also natural to consider measurability with respect to the full class of $g$-normalized traces. To this end we further assume that
\begin{equation*}
 t^{-1}=\op{O}\left(g(t)\right) \qquad \text{as}\ t\rightarrow \infty.
\end{equation*}
This ensures that $\sL_{1,\infty}\subseteq \sL_g$ and every trace on $\sL_g$ annihilates $\sL_1$, and hence is singular (see Proposition~\ref{prop:Hyper.trace-singular}).

We shall say that an operator $T \in \sL_g$ is \emph{$g$-hypermeasurable} if it takes on the same value on all $g$-normalized traces. As we shall see, this notion depends on the choice of the function $g$. We denote by $\sM_h^g(\sL_g)$ the space of $g$-hypermeasurable operators. For $g(t)=(t+1)^{-1}$ this corresponds to the notion of universally measurable operators in $\sL_{1,\infty}$ considered by other authors (see, e.g.,~\cite{CRSZ:JST16, LSZ:Book, SZ:Ast23}). In addition, every $g$-hypermeasurable operator is strongly measurable, and hence is Dixmier-measurable.

We have the following spectral characterization of $g$-hypermeasurable operators (see Theorem~\ref{thm:hyper} for the full statement). This extends to weak Lorentz ideals the spectral characterization of universally measurable operators in $\sL_{1,\infty}$ in~\cite{CRSZ:JST16, LSZ:Book}.

\begin{theoremalph}
If $T\in \sL_g$, then
 \begin{equation*}
 \bigg( T\in \sM_h^g(\sL_g) \quad \text{and} \quad \gbint T=c\bigg) \Longleftrightarrow \bigg( \sum_{j < N}\lambda_j(T) = cG(N) + \op{O}\bigl(Ng(N)\bigr)\bigg).
\end{equation*}
\end{theoremalph}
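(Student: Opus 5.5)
We will argue through a spectral form of Pietsch's correspondence (Section~\ref{sec:Pietsch}). Under the standing assumption $t^{-1}=\op{O}(g(t))$, every trace $\varphi$ on $\sL_g$ is singular (Proposition~\ref{prop:Hyper.trace-singular}). It should then be described spectrally as follows: there is a linear functional $\theta$ on the sequence space
\[
\ell_\infty^{g}:=\bigl\{x=(x_N);\ x_N=\op{O}(Ng(N))\bigr\},
\]
vanishing on sequences that are $\op{O}(Ng(N))$ relative to the shift structure (dilation by $2$, in the dyadic picture of Pietsch), such that
\[
\varphi(T)=\theta\Bigl(\Bigl(\textstyle\sum_{j<N}\lambda_j(T)\Bigr)_N\Bigr).
\]
Dually, every such functional gives a trace. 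In the dyadic form, the partial sums of $\lambda(T)$ are $\op{O}(G(N))$, and the relevant quotient is the space of sequences modulo those that are $\op{O}(2^n g(2^n))$ along $N=2^n$. The key facts we need are the following.
\begin{enumerate}
\item The map $T\mapsto \bigl(\sum_{j<N}\lambda_j(T)\bigr)_N$ is additive modulo $\op{O}(Ng(N))$. This is the extension to $\sL_g$ of the Kalton/Dykema--Figiel--Weiss--Wodzicki characterization $\Com(\sL_g)=\{T;\ \sum_{j<N}\lambda_j(T)=\op{O}(Ng(N))\}$, which we take from Part~1 or Section~\ref{sec:Pietsch}.
\item A trace is $g$-normalized iff $\theta\bigl((G(N))_N\bigr)=1$, since $\sum_{j<N}g(j)=G(N)+\op{O}(1)$ and $1=\op{O}(Ng(N))$.
\end{enumerate}

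For the implication ($\Leftarrow$), suppose $\sum_{j<N}\lambda_j(T)=cG(N)+\op{O}(Ng(N))$. Then $\sum_{j<N}\lambda_j(T-cT_g)=\op{O}(Ng(N))$, so $T-cT_g\in\Com(\sL_g)$ by fact (1). Hence $\varphi(T)=c\,\varphi(T_g)=c$ for every $g$-normalized trace $\varphi$. This direction is clean.

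For the implication ($\Rightarrow$), we argue by contraposition. Let $x_N=\sum_{j<N}\lambda_j(T)-cG(N)$ and suppose $x\neq \op{O}(Ng(N))$. We aim to construct a $g$-normalized trace $\varphi$ with $\varphi(T)\neq c$. By the correspondence, $\varphi(T)-c=\theta(x)$, so it suffices to find a linear functional $\theta$ on the quotient space $V=\ell^g/\ell_0^g$ such that:
\begin{itemize}
\item $\theta(\bar G)=1$, where $\ell_0^g$ is the class of $\op{O}(Ng(N))$ sequences;
\item $\theta(\bar x)\neq 0$.
\end{itemize}
Since $\bar x\ne 0$ in $V$, pure linear algebra suffices, because traces need not be positive or continuous:
\begin{itemize}
\item if $\bar x$ and $\bar G$ are linearly independent, we set $\theta(\bar G)=1$ and $\theta(\bar x)=1$ and extend by a Hamel basis;
\item if $\bar x=a\bar G$ with $a\neq0$, then $\theta(\bar x)=a\ne0$ automatically.
\end{itemize}
One point needs checking: $\bar G\neq 0$, i.e.\ $G(N)/(Ng(N))\to\infty$. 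This holds since $g\in\RV_{-1}$ gives $G\in\RV_0$ with $G(N)/(Ng(N))\to\infty$ (Karamata). Note also that $\ell^g$ is exactly the set of partial-sum sequences realizable by operators in $\sL_g$ up to the equivalence, which is needed for the correspondence to be onto.

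The main obstacle is establishing the spectral Pietsch correspondence itself, especially surjectivity: every linear functional on $V$ must arise from a trace. We would do this as follows. Given $\theta$, define $\varphi(T)=\theta\bigl(\overline{(\sum_{j<N}\lambda_j(T))}\bigr)$. Linearity of $\varphi$ follows from fact (1), applied in the form $\sum_{j<N}\lambda_j(S+T)-\sum_{j<N}\lambda_j(S)-\sum_{j<N}\lambda_j(T)=\op{O}(Ng(N))$. This is the $\sL_g$ analogue of the Kalton--Dykema et al.\ estimate, which we expect to be the genuinely hard input and which we would derive from the $\sL_g$ version of the commutator subspace characterization. Unitary invariance of $\varphi$ is immediate.
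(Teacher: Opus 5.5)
Your argument is correct and is essentially the paper's proof. For ($\Leftarrow$) you apply the commutator-space characterization (\ref{eq:com difference}) to $T-cT_g$, using $\sum_{j<N}g(j)=G(N)+\op{O}(Ng(N))$; for ($\Rightarrow$) you argue by contraposition, with Karamata giving $T_g\notin\Com(\sL_g)$ and a Hamel-basis extension producing non-positive $g$-normalized traces that take a value other than $c$ on $T$.

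The only difference is where the linear algebra happens: you build the functional on sequences modulo $\op{O}(Ng(N))$ and pull it back along $T\mapsto(\sum_{j<N}\lambda_j(T))_N$, while the paper builds it directly on $\sL_g/\Com(\sL_g)$, which embeds in your quotient by (\ref{eq:com difference}). Because of this, neither a Pietsch-type correspondence nor realizability of partial-sum sequences is needed, and the realizability claim is in fact false: $((-1)^NG(N))_N$ is not realizable, since realizable sequences have increments $\op{O}(Ng(N))$. Also, your ambient space $\ell^g_\infty$ should be, e.g., the $\op{O}(G(N))$ sequences rather than the $\op{O}(Ng(N))$ ones, which are the subspace you quotient out.
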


As mentioned above, spectral measurability implies strong measurability. The relationship between spectral measurability and hypermeasurability is more involved. On the one hand, we have examples of $g$-hypermeasurable operators that are not spectrally measurable (see Proposition~\ref{prop:hyper non-weyl}). On the other hand, we have a whole class of spectrally measurable operators that are not $g$-hypermeasurable (see below). Therefore, these two notions of measurability are incomparable.

The relations between the various notions of measurability are summarized by the following diagram:

\begin{center}
\begin{tikzpicture}[
  box/.style={draw, rounded corners=2pt, align=center, inner sep=5pt,
              minimum height=10mm, text width=27mm, font=\small},
  imp/.style={-{Implies}, double, double distance=1.6pt, thick},
]
  \node[box] (spec)   at (0,0)      {Spectral\\ Measurability};
  \node[box] (hyper)  at (5.6,0)    {Hyper-\\ Measurability};
  \node[box] (strong) at (2.8,-2.1) {Strong\\ Measurability};
  \node[box] (dix)    at (2.8,-3.9) {Dixmier\\ Measurability};

  \draw[imp] (spec)   -- (strong);
  \draw[imp] (hyper)  -- (strong);
  \draw[imp] (strong) -- (dix);

  \coordinate (mid) at ($(spec.east)!0.5!(hyper.west)$);
  \draw[imp] ([yshift=2.6mm]spec.east)  -- ([yshift=2.6mm]hyper.west);
  \draw[imp] ([yshift=-2.6mm]hyper.west) -- ([yshift=-2.6mm]spec.east);
  \draw[line width=0.9pt] ($(mid)+(-1.4mm,0.8mm)$)  -- ($(mid)+(1.4mm,4.4mm)$);
  \draw[line width=0.9pt] ($(mid)+(-1.4mm,-4.4mm)$) -- ($(mid)+(1.4mm,-0.8mm)$);
  \node[font=\footnotesize] at ($(mid)+(0,8mm)$) {Incomparable};
\end{tikzpicture}
\end{center}

However, we have the following eigenvalue criterion for $g$-hypermeasurability.

\begin{propositionalph}[see Proposition~\ref{prop:hyper cond by asymptotic}] \label{prop:Intro.hyper cond by asymptotic}
 Let $T=T^*\in \sL_g$.  Assume there is a decreasing null function\footnote{By a \emph{null function} we mean a function $\varepsilon:[t_0,\infty)\rightarrow \R$ converging to $0$ as $t\rightarrow \infty$.} $\varepsilon:[t_0,\infty)\rightarrow (0,\infty)$ such that
\begin{equation*}
 \lambda_j^\pm(T)= g(j)\left(c^\pm + \op{O}\left( \varepsilon(j)\right)\right) \qquad \text{and} \qquad \int_{t_0}^t \varepsilon(s)g(s)ds=\op{O}\left(tg(t)\right).
\end{equation*}
Then the operator $T$ is $g$-hypermeasurable.
\end{propositionalph}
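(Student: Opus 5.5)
We reduce the claim to the spectral characterization in Theorem~A and verify the eigenvalue-sum condition directly. Since $T=T^*$, a natural eigenvalue sequence for $T$ is obtained by interleaving the positive eigenvalues $\lambda_j^+(T)$ and the negative eigenvalues $-\lambda_j^-(T)$ in order of decreasing modulus. Theorem~A allows any eigenvalue sequence, so it suffices to prove
\begin{equation*}
 \sum_{j<N}\lambda_j(T) = (c^+-c^-)G(N) + \op{O}\bigl(Ng(N)\bigr).
\end{equation*}
This gives $T\in\sM_h^g(\sL_g)$ with $\gbint T=c^+-c^-$.

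\textbf{Step 1: each sign separately.} Summing the asymptotics $\lambda_j^\pm(T)=c^\pm g(j)+\op{O}(\varepsilon(j)g(j))$ gives
\[
\sum_{j<M}\lambda_j^\pm(T)=c^\pm\sum_{j<M}g(j)+\op{O}\Big(\sum_{j<M}\varepsilon(j)g(j)\Big).
\]
We compare both sums with integrals. For an $\RV_{-1}$-function one has $\sum_{j<M}g(j)=G(M)+\op{O}(1)+\op{O}(Mg(M))$. To justify this, use the uniform convergence theorem: $g(s)/g(j)\to1$ uniformly for $s\in[j,j+1]$, and $\sum_j |g(j)-g(j+1)|$ can be controlled. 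The assumption $t^{-1}=\op{O}(g(t))$ ensures that $\op{O}(1)\subseteq\op{O}(Mg(M))$. Similarly, since $\varepsilon$ is decreasing and $g$ is locally comparable to its sampled values, $\sum_{j<M}\varepsilon(j)g(j)\lesssim \int_{t_0}^M\varepsilon(s)g(s)\,ds+\op{O}(1)=\op{O}(Mg(M))$ by hypothesis. Hence
\[
\sum_{j<M}\lambda_j^\pm(T)=c^\pm G(M)+\op{O}(Mg(M)).
\]

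\textbf{Step 2: interleaving.} Let $N^\pm(N)$ be the number of positive and negative eigenvalues among the first $N$ terms, so $N^++N^-=N$. Then
\[
\sum_{j<N}\lambda_j(T)=\sum_{j<N^+}\lambda_j^+(T)-\sum_{j<N^-}\lambda_j^-(T).
\]
By Step 1 this equals $c^+G(N^+)-c^-G(N^-)+\op{O}(Ng(N))$, using $N^\pm g(N^\pm)\lesssim Ng(N)$ since $tg(t)$ is slowly varying. The main obstacle is converting $G(N^\pm)$ into $G(N)$.

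Following the Birman--Solomyak-type bookkeeping from Part~1, the counting functions satisfy $\#\{j:\lambda_j^\pm>\lambda\}\sim c^\pm\,g^{-1}(\lambda)$. The truncation level $\lambda\approx|\lambda_N|$ is common to both signs. The mismatch between $\sum_{j<N}$ and the sum over eigenvalues of modulus $>\lambda$ therefore involves at most $\op{O}(N)$ terms of size $\op{O}(g(N))$, hence is $\op{O}(Ng(N))$. I would therefore rewrite the sum as $\sum_{|\lambda_j|>\lambda}\lambda_j$ plus $\op{O}(Ng(N))$. With $M^\pm=\#\{\lambda_j^\pm>\lambda\}$, Step 1 gives $c^+G(M^+)-c^-G(M^-)$.

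It then remains to show that $G(M^\pm)=G(K)+\op{O}(Kg(K))$ for a common scale $K\asymp N$. This follows because $G(aK)-G(K)=\int_K^{aK}g=\op{O}(Kg(K))$ for bounded ratios $a$, by regular variation. The degenerate case $c^\pm=0$ is handled by the same bound, with the sum over that sign being $\op{O}(Ng(N))$ directly. Combining the two signs yields the displayed estimate, and Theorem~A concludes the proof.
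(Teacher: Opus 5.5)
Your proof is essentially correct, but it takes a longer route than the paper. The paper never interleaves the two signs. $\sM_h^g(\sL_g)$ is a linear space (directly from the definition, or from $\sM_h^g(\sL_g)=\C T_g\oplus\Com(\sL_g)$), and $T=T^+-T^-$, where $T^\pm\geq 0$ have eigenvalue sequences $\lambda_j(T^\pm)=\lambda_j^\pm(T)$ and each satisfies the hypothesis. So it suffices to treat $T\geq 0$. There the eigenvalue sequence is just $\lambda_j^+(T)$, and your Step~1 is the whole proof: set $a_j=\lambda_j(T)-cg(j)$, compare $\sum_{t_0\le j<N}\varepsilon(j)g(j)$ with $\int_{t_0}^N\varepsilon g$, and use $1=\op{O}(tg(t))$. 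Theorem~\ref{thm:hyper} then concludes.

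What your Step~2 buys is a direct verification of the spectral criterion for $T$ itself, with value $c^+-c^-$. The cost is an extra comparison that you only sketch via counting functions. You need $N^\pm\geq aN$ for some $a>0$ whenever $c^\pm>0$; after that, $G(N)-G(N^\pm)\le\int_{aN}^N g=\op{O}(Ng(N))$. This lower bound follows quickly from the ordering by modulus. Indeed $\lambda^+_{N^+}\le\lambda^-_{N^--1}$, while $\lambda^+_j\geq\frac{c^+}{2}g(j)$ for $j$ large and $\lambda^-_j\le Cg(j)$. This gives $g(N^+)\lesssim g(N-N^+-1)$, which together with the $\RV_{-1}$-property rules out $N^+/N\to 0$ along a subsequence.

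One justification you give is wrong as stated: ``$N^\pm g(N^\pm)\lesssim Ng(N)$ since $tg(t)$ is slowly varying.'' A slowly varying function, even one bounded below, need not be almost increasing; it can oscillate with unbounded ratio $\sup_{t\le N}tg(t)/(Ng(N))$. So this can fail when $N^\pm=\op{o}(N)$.

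The problem is harmless here. The remainder in Step~1 is really $\op{O}(1)+\op{O}\bigl(\sum_{j<M}\varepsilon(j)g(j)\bigr)$. Since the summands are positive, this is nondecreasing in $M$, hence $\op{O}(Ng(N))$ uniformly for $M\le N$. State Step~1 that way rather than with an $\op{O}(Mg(M))$ remainder; this also makes your separate treatment of the degenerate case $c^\pm=0$ automatic.
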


The following eigenvalue result provides an obstruction to $g$-hypermeasurability, and so it enables us to construct examples of spectrally measurable operators that are not $g$-hypermeasurable.

\begin{propositionalph}[see Proposition~\ref{prop:Stronger-meas.Weyk-non-hyper}] \label{prop:Intro.Stronger-meas.Weyk-non-hyper}
 Let $T\in \sL_g$, $T\geq 0$, and suppose there is a decreasing null function $\varepsilon:[t_0,\infty)\rightarrow (0,\infty)$ such that
  \begin{equation*}
 \lambda_j(T)=g(j)\big(c_0 \pm \varepsilon(j) +\op{o}\left( \varepsilon(j)\right)\big) \qquad \text{and} \qquad tg(t) =\op{o}\bigg(\int_{t_0}^t \varepsilon(s)g(s)ds\bigg).
\end{equation*}
Then the operator $T$ is not $g$-hypermeasurable.
\end{propositionalph}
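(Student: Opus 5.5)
We argue by contradiction, using the spectral characterization of $g$-hypermeasurability in Theorem~A: if $T$ were $g$-hypermeasurable with $\gbint T=c$, then
\[
\sum_{j<N}\lambda_j(T)=cG(N)+\op{O}\bigl(Ng(N)\bigr).
\]
We read the hypothesis as fixing one sign: $\lambda_j(T)=g(j)(c_0+\sigma\varepsilon(j)+\op{o}(\varepsilon(j)))$ with $\sigma\in\{\pm1\}$. The plan is to compute $\sum_{j<N}\lambda_j(T)$ directly from this expansion and show that the correction term is \emph{not} $\op{O}(Ng(N))$ once the main term $cG(N)$ is removed.

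\emph{Step 1: summing the expansion.} Split the sum as
\[
\sum_{j<N}\lambda_j(T)=c_0\sum_{j<N}g(j)+\sigma\sum_{t_0\le j<N}\varepsilon(j)g(j)+\sum_{t_0\le j<N}\op{o}\bigl(\varepsilon(j)g(j)\bigr)+\op{O}(1).
\]
First, compare sums with integrals. Since $g$ is $\RV_{-1}$, we may replace $g$ by an asymptotically equivalent monotone version (legitimate because the error $\op{o}(g(j))\varepsilon(j)$ is absorbed, or by the Part~1 results). Then $\sum_{j<N}g(j)=G(N)+\op{O}(g(N))+\op{O}(1)$. Similarly, $\varepsilon g$ is eventually comparable to a decreasing function, so $\sum_{j<N}\varepsilon(j)g(j)=\int_{t_0}^N\varepsilon(s)g(s)\,ds+\op{O}(1)$. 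Set $E(N):=\int_{t_0}^N\varepsilon(s)g(s)\,ds$. By hypothesis $Ng(N)=\op{o}(E(N))$; since $Ng(N)$ is slowly varying and $\int^\infty g=\infty$, we get $E(N)\to\infty$. The $\op{o}$-term then sums to $\op{o}(E(N))$ by a Cesàro-type argument. Altogether,
\[
\sum_{j<N}\lambda_j(T)=c_0G(N)+\sigma E(N)+\op{o}(E(N)).
\]

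\emph{Step 2: contradiction.} Note that $E(N)=\op{o}(G(N))$ because $\varepsilon\to0$. Comparing with Theorem~A and dividing by $G(N)$ forces $c=c_0$. (Alternatively, $T$ is spectrally measurable with $\gbint T=c_0$, hence strongly measurable with that value.) Subtracting the two expansions gives
\[
\sigma E(N)+\op{o}(E(N))=\op{O}\bigl(Ng(N)\bigr)=\op{o}(E(N)),
\]
so $E(N)=\op{o}(E(N))$, which is impossible since $E(N)>0$ for large $N$. Hence $T\notin\sM_h^g(\sL_g)$.

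\emph{Main obstacle.} The delicate point is Step 1, namely replacing the sums by integrals with errors that are $\op{o}(E(N))$, or at least $\op{O}(Ng(N))$. The error in $\sum g(j)$ versus $G(N)$ must be controlled precisely: an error of size $\op{O}(1)$ or $\op{O}(g(N))$ is fine, since $Ng(N)$ is slowly varying and does not tend to $0$ under $t^{-1}=\op{O}(g)$. However, $g$ need not be monotone. To handle this I would first invoke the uniform convergence theorem for regularly varying functions, giving $g(j+\theta)/g(j)\to1$ uniformly for $\theta\in[0,1]$. This yields $\bigl|\sum_{j<N}g(j)-G(N)\bigr|\le\sum_j\op{o}(g(j))+\op{O}(1)$. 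That bound is too crude unless refined, so instead I would use the Potter-type bound $|g(j)-g(s)|\lesssim g(j)/j$ for $s\in[j,j+1]$, which holds for a smooth normalized representative. The resulting error is $\sum g(j)/j=\op{O}(1)$. Choosing such a representative is harmless: changing $g$ within its asymptotic class alters $G$, and this is exactly why hypermeasurability is $g$-dependent. So this choice must be justified carefully, for instance by noting that $g$-normalized traces only see $\sum_{j<N}g(j)$, and that Theorem~A may be read with $G(N)$ replaced by $\sum_{j<N}g(j)$ up to $\op{O}(Ng(N))$.
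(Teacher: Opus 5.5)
Your argument is correct and is essentially the paper's proof. Both use Theorem~\ref{thm:hyper} with $c=c_0$ to reduce to showing that $\sum_{j<N}(\lambda_j(T)-c_0g(j))=\pm E(N)+\op{o}(E(N))$ cannot be $\op{O}(Ng(N))$. The paper gets $c=c_0$ from spectral measurability via Part~1, while you can also get it by dividing by $G(N)$.

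Your ``main obstacle'' is moot. In this paper $\RV$-functions are ultimately monotone by definition, so $\sum_{j<N}g(j)=G(N)+\op{O}(1)$ and $\sum_{t_0\le j<N}\varepsilon(j)g(j)=E(N)+\op{O}(1)$ hold directly. The fact that $E(N)\to\infty$ comes from $t^{-1}=\op{O}(g(t))$, not from slow variation of $tg(t)$ alone. Drop the idea of replacing $g$ by an equivalent representative: that is not harmless here (cf.\ Proposition~\ref{prop:dependence}), and your stated justification for it is wrong.
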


\subsection{Pietsch's correspondence}
Another important approach to constructing traces on operator ideals is Pietsch's correspondence (see~\cite{Pi:IEOT17, Pi:AM20}; see also~\cite{LSZ:Book}). There are various incarnations of this correspondence. In any case we get a one-to-one correspondence between spaces of traces and spaces of shift-invariant linear functionals on suitable sequence spaces. It implies a very strong form of the spectral invariance of traces.

The beginning of Section~\ref{sec:Pietsch} is devoted to recalling the main results of~\cite{Pi:IEOT17, Pi:AM20}. We explain how they enable us to get a one-to-one correspondence between traces on $\sL_g$ and shift-invariant linear functionals on $\ell_\infty$ (see~Theorem~\ref{thm:Pietsch} for the precise statement). A salient feature of Pietsch's construction of traces is the use of suitable dyadic representations of operators in $\sL_g$ (compare~\cite{LSZ:Book}). We also refer to~\cite{LU:JMAA25} for a version of this correspondence in the semi-finite setting.

We seek a spectral reformulation of Pietsch's correspondence. By a well-known result of Dykema--Kalton~\cite{DK:Crelle98}, traces on quasi-Banach ideals are spectral, in the sense that their values depend only on the spectra of the operators at stake. We extend Pietsch's correspondence in terms of eigenvalue sequences. The correspondence is mediated by the \emph{weighted dyadic averaging operator} $\hat{\alpha}_g:\tilde{\ell}_g\rightarrow \ell_\infty$ defined by
\begin{equation*}
  \hat{\alpha}_g(x)_j= \frac{1}{2^jg(2^j)} \sum_{k=2^j-1}^{2^{j+1}-2} x_k, \qquad x=(x_j)_{j\geq 0}\in \tilde{\ell}_g, 
\end{equation*}
where $\tilde{\ell}_g$ consists of sequences $x=(x_j)_{j\geq 0}$ such that $|x_j|=\mu_j(x)=\op{O}(g(j))$ (see Section~\ref{sec:Pietsch}). In particular, a result of Dykema--Kalton~\cite{DK:Crelle98} shows that if $T\in \sL_g$, then every eigenvalue sequence for $T$ is in $\tilde{\ell}_g$. 

\begin{theoremalph}[see Theorem~\ref{thm:Pietsch.Spectral}] \label{thm:Intro.Pietsch}
 We have a one-to-one correspondence between traces on $\sL_g$ and shift-invariant linear functionals on $\ell_\infty$ as follows.
\begin{enumerate}
 \item Every shift-invariant linear functional $\ell$ on $\ell_\infty$ defines a trace on $\sL_g$ by
 \begin{equation*}
\varphi_\ell(T)= \ell\circ\hat{\alpha}_g\left(\lambda(T)\right), \qquad T\in \sL_g.
\end{equation*}

\item Conversely, for every trace $\varphi$ on $\sL_g$, there is a unique shift-invariant linear functional $\ell=\ell_\varphi$ (see~Eq.~(\ref{eq:Pietsch.ell-varphi})) such that
$\varphi=\varphi_\ell$.
\end{enumerate}
\end{theoremalph}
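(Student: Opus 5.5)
The plan is to derive the spectral correspondence from the operator-theoretic Pietsch correspondence for $\sL_g$ (Theorem~\ref{thm:Pietsch}), combined with Dykema--Kalton spectrality of traces on quasi-Banach ideals. Recall that Theorem~\ref{thm:Pietsch} gives a bijection $\ell\mapsto \varphi_\ell$ between shift-invariant functionals on $\ell_\infty$ and traces on $\sL_g$. It is defined through dyadic representations: $T\in\sL_g$ is written as a sum of operators whose pieces have rank at most $2^j$ and norm $\op{O}(g(2^j))$. The trace is $\ell$ applied to the sequence of normalized traces of these pieces, $\bigl(\Tr(T_j)/(2^jg(2^j))\bigr)_j$. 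The task is therefore to show that this dyadic sequence and $\hat{\alpha}_g(\lambda(T))$ differ by an element of the kernel of every shift-invariant functional. The natural candidate for that kernel is $\{y-Sy;\ y\in\ell_\infty\}$, where $S$ is the shift, together with $c_0$; both are annihilated because shift-invariant functionals vanish on sequences converging to $0$.

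First, I check that $\hat{\alpha}_g$ maps $\tilde{\ell}_g$ into $\ell_\infty$ and that $\ell\circ\hat{\alpha}_g\circ\lambda$ is well defined. The bound $\sum_{k=2^j-1}^{2^{j+1}-2}|x_k|\leq C2^jg(2^j)$ follows from $|x_k|=\op{O}(g(k))$ and the regular variation of $g$, since $g(k)\asymp g(2^j)$ on the dyadic block. Independence from the choice of eigenvalue sequence comes from two observations. Reorderings among equal-modulus eigenvalues change the block sums only at the boundaries, so the error is $\op{O}(g(2^j)/ (2^jg(2^j)))\to 0$. The Dykema--Kalton result gives $\lambda(T)\in\tilde{\ell}_g$.

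Next, I construct a spectral dyadic representation. By the Schur/Ringrose decomposition, $T=N+Q$ with $N$ normal having eigenvalue sequence $\lambda(T)$ and $Q$ quasinilpotent in $\sL_g$ (Kalton/Dykema--Kalton). Every trace kills $Q$, because traces on quasi-Banach ideals vanish on quasinilpotents in the ideal. For $N=\sum_k\lambda_k(T)P_k$, I take the dyadic representation $N_j=\sum_{k=2^j-1}^{2^{j+1}-2}\lambda_k(T)P_k$. It has rank $2^j$ and norm $\op{O}(g(2^j))$, and $\Tr(N_j)/(2^jg(2^j))=\hat{\alpha}_g(\lambda(T))_j$ exactly. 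Inserting this representation into the formula of Theorem~\ref{thm:Pietsch}, which Pietsch shows to be independent of the chosen dyadic representation, gives $\varphi_\ell(T)=\varphi_\ell(N)=\ell\circ\hat{\alpha}_g(\lambda(T))$. This proves part (1), and it also proves part (2): for a given trace $\varphi$, take $\ell_\varphi$ from Theorem~\ref{thm:Pietsch}. Concretely, $\ell_\varphi(x)=\varphi(\Diag$-type operator $\sum_j x_j2^jg(2^j)\cdot$(normalized rank-$2^j$ block projection)$)$ up to the normalization conventions of \eqref{eq:Pietsch.ell-varphi}. Uniqueness holds because the operators with prescribed block averages realize every $x\in\ell_\infty$: applying $\hat{\alpha}_g$ to the eigenvalues of such a diagonal operator returns $x$ exactly, modulo index-shift conventions.

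The main obstacle is matching indexing conventions. Pietsch's dyadic blocks may be aligned with ranks $2^j$ starting at index $2^j$ rather than at $2^j-1$, and the normalizations $2^jg(2^j)$ versus $2^{j+1}g(2^{j+1})$ may differ. I would handle this by showing that any two such conventions produce sequences whose difference is $y-Sy$ plus a null sequence, with the factor $g(2^{j+1})/g(2^j)\to1/2$ absorbed by regular variation. Shift-invariance of $\ell$ then identifies the two. The reordering of eigenvalues of equal modulus straddling a block boundary also needs care, but it contributes only $c_0$ terms.
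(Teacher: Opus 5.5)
Your core argument is the paper's proof. The Dykema--Kalton reduction (your Ringrose split $T=N+Q$ is Proposition~\ref{prop:DK} in another guise) brings you to the diagonal operator with eigenvalue sequence $\lambda(T)$. Its decomposition along the blocks $\D_m$ is a $\zeta_g$-representation with $\Tr(N_m)/(2^mg(2^m))=\hat{\alpha}_g(\lambda(T))_m$ \emph{exactly}, and Theorem~\ref{thm:Pietsch} does the rest.

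\textbf{The $\co$ principle is false.} The sequences annihilated by \emph{all} shift-invariant linear functionals on $\ell_\infty$ are exactly those in $\ran(1-S)$, i.e.\ those with bounded partial sums. This contains the finitely supported sequences but not, e.g., $(1/(j+1))_{j\geq 0}$. Positive or continuous shift-invariant functionals do kill $\co$, but general ones need not, and the statement concerns all traces. Consequences:
\begin{itemize}
\item Your reordering argument fails. Its estimate is also wrong: a cluster of equal-modulus eigenvalues can have $\asymp 2^m$ members on each side of a block boundary, which changes $\hat{\alpha}_g(\lambda(T))_m$ by an amount of order $1$.
\item Your plan to absorb $g(2^{j+1})/g(2^j)\to\frac12$ into a null sequence fails for the same reason.
\end{itemize}
Granting Theorem~\ref{thm:Pietsch}, neither step is needed. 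The exact block computation holds for every choice of eigenvalue sequence, and Theorem~\ref{thm:Pietsch} already uses the blocks $\D_m$ and the weights $2^mg(2^m)$.

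\textbf{The same issue affects the black box.} Pietsch's correspondence (Proposition~\ref{prop:corpdns on traces}) is exact for $\frac12$-shift-invariant functionals on $\ell_\infty(\zeta_g)$. The transfer by $Z_g$, however, intertwines the two invariance conditions only up to the null sequence $g(2^{m-1})/(2g(2^m))-1$. For instance, take $g(t)=(t+1)^{-1}\log(t+2)$ and two copies of $T_g$ on complementary subspaces. One computes that $\hat{\alpha}_g(\lambda(T_g\oplus T_g))-2\hat{\alpha}_g(\gamma)\in b+\ran(1-S)$, with $b_m=\bigl(g(2^{m-1})/g(2^m)-2\bigr)\hat{\alpha}_g(\gamma)_{m-1}\sim-(2\log 2)/m$. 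Hence $b\notin\ran(1-S)$. Any shift-invariant $\ell$ with $\ell(b)\neq 0$ then makes $\ell\circ\hat{\alpha}_g\circ\lambda$ non-additive, so it is not a trace. Your argument, like the paper's, therefore establishes part (1) only in two cases: for $\ell$ vanishing on $\co$ (e.g.\ linear combinations of Banach limits), or when $g(2^{m-1})/(2g(2^m))-1$ is summable, as for $g(t)=(t+1)^{-1}$.

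\textbf{Uniqueness.} Your uniqueness argument is wrong as stated. The sequence $\lambda(\Diag(\hat{\delta}_g(x)))$ is the rearrangement of $\hat{\delta}_g(x)$ by decreasing modulus, so applying $\hat{\alpha}_g$ does not return $x$ in general. Try $x=(0,1,0,1,\ldots)$: the zero blocks disappear from the eigenvalue sequence. Uniqueness should instead come from the injectivity in Theorem~\ref{thm:Pietsch}(3). Your computation identifies the spectral formula with Pietsch's $\varphi_\ell$, and $\ell$ is recovered as $\ell(x)=\varphi_\ell(\Diag(\hat{\delta}_g(x)))$. To see this, evaluate Pietsch's formula on the \emph{unordered} $\zeta_g$-representation $\Diag(\hat{\delta}_g(x))=\sum_m g(2^m)x_m D_m$, where $D_m$ is the projection onto $\op{span}\{\xi_j;\ j\in\D_m\}$. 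Its normalized block traces are exactly $x_m$.
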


The correspondence can be specialized to positive and normalized positive traces (see Proposition~\ref{prop:Pietsch.positive-traces} and Corollary~\ref{cor:Pietsch.continuous-traces}). Positive traces correspond to positive multiples of Banach limits and continuous traces correspond to linear combinations of Banach limits. This appears to be new even for the weak trace-class  $\sL_{1,\infty}$.

\begin{theoremalph}[see Theorem~\ref{thm:Pietsch.PNT}]\label{thm:Intro.Pietsch-pn}
Under the correspondence provided by Theorem~\ref{thm:Intro.Pietsch}, the $g$-normalized positive traces on $\sL_g$ are exactly the traces of the form,
 \begin{equation*}
\hat{\varphi}_\theta(T):= \frac{1}{\log 2}\theta\left(\hat{\alpha}_g\left(\lambda(T)\right)\right), \qquad \qquad T\in \sL_g,
\end{equation*}
where $\theta$ ranges over all Banach limits on $\ell_\infty$.
\end{theoremalph}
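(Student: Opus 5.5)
By Theorem~\ref{thm:Intro.Pietsch}, every trace on $\sL_g$ has the form $\varphi_\ell=\ell\circ\hat{\alpha}_g\circ\lambda$ for a unique shift-invariant linear functional $\ell$ on $\ell_\infty$. So the task reduces to two things: identifying which $\ell$ give positive traces, and then imposing the normalization $\varphi_\ell(T_g)=1$.

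\emph{Positivity.} I would first show that $\varphi_\ell$ is positive if and only if $\ell$ is positive on $\ell_\infty$. One direction is immediate: if $T\geq 0$, then $\lambda(T)=\mu(T)\geq 0$, hence $\hat{\alpha}_g(\lambda(T))\geq 0$, and so $\varphi_\ell(T)\geq 0$ when $\ell\geq 0$.

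For the converse, take $y\in\ell_\infty$ with $y\geq 0$. I want a positive operator $T\in\sL_g$ with $\hat{\alpha}_g(\lambda(T))=y$ up to a null sequence.
\begin{enumerate}
\item Put the constant value $y_j\,2^jg(2^j)/2^j=y_j\,g(2^j)$ on the $j$-th dyadic block $[2^j-1,2^{j+1}-2]$. This gives a nonnegative sequence $x=O(g)$.
\item Such an $x$ is not necessarily decreasing. So instead take $T=\Diag(x)$, whose eigenvalue sequence is the decreasing rearrangement $x^*$ of $x$. Then use the fact, implicit in Theorem~\ref{thm:Intro.Pietsch}, that $\varphi_\ell(\Diag(x))=\ell(\hat{\alpha}_g(x))$ for any $x\in\tilde{\ell}_g$. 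This rearrangement invariance should be part of the statement of Theorem~\ref{thm:Pietsch.Spectral} via Dykema--Kalton, since $\Diag(x)$ has spectrum given by $x$ in any order.
\end{enumerate}
Then $\ell(y)=\varphi_\ell(T)\geq 0$. Positive shift-invariant functionals are exactly the nonnegative multiples of Banach limits, since a positive shift-invariant $\ell$ with $\ell(\mathbf 1)=1$ is a Banach limit by definition.

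\emph{Normalization.} Next I would compute $\hat{\alpha}_g(\lambda(T_g))_j$, which equals
\[
\frac{1}{2^jg(2^j)}\sum_{k=2^j-1}^{2^{j+1}-2}g(k).
\]
Since $g$ is $\RV_{-1}$, the uniform convergence theorem gives $g(2^js)/g(2^j)\to s^{-1}$ uniformly for $s\in[1/2,2]$. Writing the sum as a Riemann sum shows that this sequence converges to $\int_1^2 s^{-1}\,ds=\log 2$. A Banach limit agrees with $\lim$ on convergent sequences, so for $\ell=c\,\theta$ with $\theta$ a Banach limit we get $\varphi_\ell(T_g)=c\log 2$. Normalization therefore forces $c=1/\log 2$. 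Conversely, every $\hat{\varphi}_\theta$ is then a positive $g$-normalized trace.

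I expect the main obstacle to be the converse direction of positivity. There one must realize an arbitrary nonnegative bounded sequence, modulo null sequences, as the dyadic averages of a positive element of $\tilde{\ell}_g$. The block construction above, combined with rearrangement invariance of $\varphi_\ell$, should handle this. If the dyadic blocks of $\hat{\alpha}_g$ applied to $x^*$ are needed directly, one would instead have to compare $\hat{\alpha}_g(x)$ and $\hat{\alpha}_g(x^*)$, which is delicate when $y$ oscillates. I would therefore rely on the spectral statement that $\varphi_\ell$ depends only on the eigenvalues counted with multiplicity, in any order.
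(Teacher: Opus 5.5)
Your overall plan is the paper's: positive traces correspond to positive shift-invariant $\ell$, these are the nonnegative multiples of Banach limits, and $\hat{\alpha}_g(\gamma)_j\to\log 2$ fixes the normalization (this is Lemma~\ref{lem:auxiliary result}, and your Riemann-sum argument is fine). The gap is in the converse positivity step, which rests on a false principle.

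Your $x$ is exactly $\hat{\delta}_g(y)$, so $\hat{\alpha}_g(x)=y$ holds exactly. But $x\notin\tilde{\ell}_g$, and the spectral formula only gives $\varphi_\ell(\Diag(x))=\ell(\hat{\alpha}_g(x^*))$. Dykema--Kalton says the trace depends only on the spectrum. It does not say that $\ell\circ\hat{\alpha}_g$ may be evaluated on an arbitrary enumeration of the spectrum.

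Indeed, a functional is shift-invariant iff it vanishes on $\ran(S-1)=\{b\in\ell_\infty;\ \sup_{M}|\sum_{m\leq M}b_m|<\infty\}$. Moreover, $\hat{\alpha}_g$ is not invariant under permutations that move entries between dyadic blocks. Take $g(t)=(t+1)^{-1}$, and for each $i\geq 1$ swap the entries of $\D_{4i+1}$ with the first $2^{4i+1}$ entries of $\D_{8i+3}$. This permutes $\gamma$ into some $x$ for which the partial sums of $\hat{\alpha}_g(x)-\hat{\alpha}_g(\gamma)$ tend to $-\infty$. Hence some shift-invariant $\ell$ satisfies $\ell(\hat{\alpha}_g(x))\neq\ell(\hat{\alpha}_g(\gamma))=\varphi_\ell(\Diag(x))$.

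For the same reason, your stated target ``$\hat{\alpha}_g(\lambda(T))=y$ up to a null sequence'' would not suffice. At that stage $\ell$ is only linear and shift-invariant, and such functionals need not vanish on $\co$; for example, one can have $\ell\big((1/(m+1))_{m\geq 0}\big)=1$. Vanishing on $\co$ is a consequence of the positivity you are trying to prove.

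The repair is the other half of the correspondence, which is exactly what the paper uses. By uniqueness in Theorem~\ref{thm:Pietsch.Spectral} (equivalently Theorem~\ref{thm:Pietsch}(3)), $\ell=\ell_{\varphi_\ell}$. By (\ref{def:ell varphi}) this means $\ell(y)=\varphi_\ell\big(\Diag(\hat{\delta}_g(y))\big)$.

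You can also check this directly. Let $D_m$ be the projection onto $\op{span}\{\xi_j;\ j\in\D_m\}$. Then $\Diag(\hat{\delta}_g(y))=\sum_m g(2^m)y_mD_m$ is a $\zeta_g$-representation, so (\ref{def:varphi ell}) returns $\ell(y)$ with no spectral input. Since $\hat{\delta}_g$ preserves positivity, $\varphi_\ell\geq 0$ gives $\ell(y)\geq 0$ for $y\geq 0$.

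Finally, under the paper's definition a Banach limit is a shift-invariant state vanishing on $\co$. So a positive shift-invariant $\ell$ with $\ell(1)=1$ is not a Banach limit ``by definition''. You need two facts: shift-invariance kills finitely supported sequences, and positivity gives continuity with $\|\ell\|=\ell(1)$. Together these show $\ell$ vanishes on $\co$. The same norm bound shows $\ell(1)>0$ unless $\ell=0$.
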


Theorem~\ref{thm:Intro.Pietsch-pn} enables us to prove Theorem~\ref{thm:Intro.strong measurable--ac}. The arguments that lead to it further enable us to construct an explicit example of a $g$-hypermeasurable operator which is not spectrally measurable (see Proposition~\ref{prop:hyper non-weyl}).

Finally, under the above correspondence Dixmier traces correspond to $C_g$-factorizable Banach limits, where $C_g$ is a weighted version of the Ces\`aro operator introduced in~\cite{LU:JMAA25} (see Proposition~\ref{prop:Pietsch-Dixmier} for the precise statement).

\subsection{Spectral characterization of strong measurability}
Recall that a bounded sequence \emph{almost converges} to $c$ if every Banach limit takes the value $c$ on it. 

The following spectral characterization of strong measurability is proved in Section~\ref{sec:Pietsch} via Pietsch's correspondence for traces on $\sL_g$. It extends to weak Lorentz ideals the result of Semenov--Sukochev--Usachev--Zanin~\cite{SSUZ:AIM15} for $\sL_{1,\infty}$.

\begin{theoremalph}[see Theorem~\ref{thm:strong measurable--ac}]\label{thm:Intro.strong measurable--ac}
 If $T\in \sL_g$, then
 \begin{equation*}
 \bigg(T\in \sM_s(\sL_g) \quad \text{and} \quad \gbint T=c\bigg) \Longleftrightarrow \bigg( \hat{\alpha}_g(\lambda(T))_k \ \text{almost converges to}\ c \log 2\bigg).
\end{equation*}
\end{theoremalph}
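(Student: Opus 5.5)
This is essentially a direct consequence of Theorem~\ref{thm:Intro.Pietsch-pn}. By definition, $T\in\sM_s(\sL_g)$ with $\gbint T=c$ means that $\varphi(T)=c$ for every positive $g$-normalized trace $\varphi$. By Theorem~\ref{thm:Intro.Pietsch-pn}, these traces are exactly the functionals
\[
\hat{\varphi}_\theta(T)=\frac{1}{\log 2}\,\theta\bigl(\hat{\alpha}_g(\lambda(T))\bigr),
\]
where $\theta$ ranges over all Banach limits. The condition therefore reads $\theta(\hat{\alpha}_g(\lambda(T)))=c\log 2$ for every Banach limit $\theta$. This is precisely the statement that $\hat{\alpha}_g(\lambda(T))$ almost converges to $c\log 2$. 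Both implications follow by reading this chain of equivalences in either direction.

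Two points need checking before the argument is complete.

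\emph{The sequence lies in $\ell_\infty$.} For $T\in\sL_g$, every eigenvalue sequence $\lambda(T)$ lies in $\tilde{\ell}_g$ by Dykema--Kalton. The map $\hat{\alpha}_g$ sends $\tilde{\ell}_g$ into $\ell_\infty$. Indeed, $|\lambda_k(T)|\lesssim g(k)$, and $g$ is regularly varying of index $-1$, so $g(k)\asymp g(2^j)$ uniformly for $2^j-1\le k\le 2^{j+1}-2$. Each dyadic block sum is thus $\op{O}(2^jg(2^j))$. Hence "almost converges" makes sense.

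\emph{Independence of the choice of eigenvalue sequence.} This is implicit in Theorem~\ref{thm:Intro.Pietsch}. Different eigenvalue sequences differ only by reordering within groups of equal modulus. The resulting sequences $\hat{\alpha}_g(\lambda(T))$ then differ by a sequence on which every shift-invariant functional, in particular every Banach limit, vanishes. This is what makes $\varphi_\ell$ well defined, so I would simply cite it.

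The only substantive work is the identification in Theorem~\ref{thm:Intro.Pietsch-pn}, namely that positivity together with $g$-normalization corresponds exactly to $\theta/\log 2$ with $\theta$ a Banach limit. I assume that theorem here. Were it not available, the main obstacle would be the normalization constant. Since $T_g$ has eigenvalues $g(k)$, regular variation gives $\hat{\alpha}_g(\lambda(T_g))_j\to\log 2$. Consequently every Banach limit takes the value $\log 2$ on this sequence, which is consistent with the factor $1/\log 2$. Granting the earlier results, the proof is a two-line unwinding of definitions.
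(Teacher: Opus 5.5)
Your proof is correct and is essentially the paper's own: the paper likewise just unwinds the definition of strong measurability through Theorem~\ref{thm:Pietsch.PNT}, and cites Remark~\ref{rmk:Pietsch.independence-g} only to drop the running assumption that $g$ is decreasing. One caution about your side remark on the choice of eigenvalue sequence: you need it only for Banach limits, where it holds because two choices give sequences $\hat{\alpha}_g(\lambda(T))$ whose difference almost converges to $0$, whereas the blanket claim for \emph{every} (possibly discontinuous) shift-invariant functional is not needed here and can in fact fail, e.g.\ for $g(t)=(t+1)^{-1}\log(t+2)$ (the difference can then be a null sequence with unbounded partial sums, which some shift-invariant functionals do not annihilate).
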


As a consequence we see that if two $\sL_g$-operators $S$ and $T$ have the same non-zero spectrum up to multiplicity, then $S$ is strongly measurable if and only if $T$ is (see Corollary~\ref{cor:Strong.spectral-invariance}). 

\subsection{Examples arising from nonclassical Weyl laws}
In Section~\ref{sec:examples} we present several examples of hypermeasurable operators arising from \emph{sharp} nonclassical Weyl laws.

The general strategy is as follows. Suppose that $A$ is a selfadjoint operator with non-negative spectrum such that $0$ is isolated in the spectrum, and the positive part of the spectrum consists of eigenvalues with finite multiplicity. In~\cite{PT:Part1} it was shown that if $A$ satisfies a semiclassical Weyl law of the form
\begin{equation*}
 N(A;\lambda)\sim c \lambda^p (\log \lambda)^q, \qquad c>0, \quad p>0, \quad q\geq -1, 
\end{equation*}
and we put $g(t)=(t+1)^{-1}(\log (t+2))^q$, $t\geq 0$, then $A^{-p}$ is spectrally measurable, with 
\begin{equation*}
 \gbint A^{-p} = cp^{-q}.
\end{equation*}

This result is refined in Section~\ref{sec:examples} in the case of sharper Weyl laws of the form, 
\begin{equation}\label{eq:Intro.A-Weyl-law-remainder}
  N(A;\lambda)= c \lambda^p (\log \lambda)^q \left( 1+ \op{O}\big((\log \lambda)^{-\alpha}\big)\right), \qquad c>0,  \quad q\geq 0, \quad \alpha\geq 1.
\end{equation}

\begin{lemmalph}[see Lemma~\ref{lem:Examples.NAL-remainder1} and Lemma~\ref{lem:distribution asymptotic with remainder}]\label{lem:Intro.Ap-hypermeas}
 Assume that $A$ satisfies a Weyl law~(\ref{eq:Intro.A-Weyl-law-remainder}). The following hold.
 \begin{enumerate}
 \item[(i)] If $q=0$ and $\alpha>1$, then $A^{-p}$ is $(t+1)^{-1}$-hypermeasurable.

 \item[(ii)] If $q>0$ and $\alpha =1$, then $A^{-p}$ is not $g$-hypermeasurable, but it is hypermeasurable with respect to any (continuous) $\RV_{-1}$-function
 $g_1(t)$ such that
 \begin{equation}\label{eq:Intro.ex.g1}
 g_1(t)=\frac{1}{t}(\log t)^q\left(1-q^2\frac{\log\log t}{\log t} +  \op{O}\left((\log t)^{-1}\right)\right).
\end{equation}
\end{enumerate}
\end{lemmalph}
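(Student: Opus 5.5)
The plan is to convert the sharp Weyl law~(\ref{eq:Intro.A-Weyl-law-remainder}) into an eigenvalue asymptotic for $T:=A^{-p}$ with an explicit relative remainder. Then part (i) and the positive half of part (ii) follow from Proposition~\ref{prop:Intro.hyper cond by asymptotic}, and the negative half of (ii) follows from Proposition~\ref{prop:Intro.Stronger-meas.Weyk-non-hyper}. Since $T\geq 0$, we have $\lambda_j^-(T)=0$, so the conditions on the negative eigenvalues hold trivially with $c^-=0$.

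\emph{Step 1 (inversion).} Let $\mu_0\leq \mu_1\leq\cdots$ be the positive eigenvalues of $A$ counted with multiplicity. By the definition of $N(A;\lambda)$ we have $N(A;\mu_j-0)\leq j< N(A;\mu_j)$. The main term $c\lambda^p(\log\lambda)^q$ is continuous and the remainder is relatively small, so
\[
 j=c\,\mu_j^p(\log\mu_j)^q\bigl(1+\op{O}((\log\mu_j)^{-\alpha})\bigr).
\]
Put $x_j=\mu_j^{p}=\lambda_j(T)^{-1}$, so that $\log \mu_j=p^{-1}\log x_j$. Then
\[
 j=cp^{-q}x_j(\log x_j)^q\bigl(1+\op{O}((\log x_j)^{-\alpha})\bigr),
\]
which gives first $\log x_j=\log j-q\log\log j+\op{O}(1)$. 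Substituting back yields
\[
 \lambda_j(T)=\frac{cp^{-q}}{j}(\log x_j)^q\bigl(1+\op{O}((\log j)^{-\alpha})\bigr).
\]
For $q=0$ this reads $\lambda_j(T)=c\,j^{-1}(1+\op{O}((\log j)^{-\alpha}))$. For $q>0$ and $\alpha=1$, expanding $(\log j-q\log\log j+\op{O}(1))^q$ gives
\[
 \lambda_j(T)=cp^{-q}\,\frac{(\log j)^q}{j}\Bigl(1-q^2\frac{\log\log j}{\log j}+\op{O}\bigl((\log j)^{-1}\bigr)\Bigr)=cp^{-q}g_1(j)\bigl(1+\op{O}((\log j)^{-1})\bigr).
\]
Replacing $j$ by $j+1$ or $j+2$ inside $g$ only costs $\op{O}(1/j)$, which is harmless.

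\emph{Step 2 (case (i)).} Take $g(t)=(t+1)^{-1}$ and $\varepsilon(t)=(\log t)^{-\alpha}$, which is decreasing and null. Then $\int_{t_0}^t\varepsilon(s)g(s)\,ds\leq\int_{t_0}^\infty \frac{ds}{s(\log s)^{\alpha}}<\infty$ because $\alpha>1$. This is $\op{O}(1)=\op{O}(tg(t))$, so Proposition~\ref{prop:Intro.hyper cond by asymptotic} applies.

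\emph{Step 3 (case (ii)).} First I show that $T$ is not $g$-hypermeasurable. Relative to $g(t)=t^{-1}(\log t)^q$ (up to the harmless shift), Step 1 gives
\[
 \lambda_j(T)=g(j)\bigl(c_0-\varepsilon(j)+\op{o}(\varepsilon(j))\bigr),\qquad c_0=cp^{-q},\quad \varepsilon(t)=c_0q^2\,\frac{\log\log t}{\log t},
\]
and $\varepsilon$ is eventually decreasing. Moreover
\[
 \int_{t_0}^t\varepsilon(s)g(s)\,ds=c_0q^2\int_{t_0}^t \frac{(\log s)^{q-1}\log\log s}{s}\,ds\sim c_0q\,(\log t)^q\log\log t,
\]
so $tg(t)=(\log t)^q=\op{o}\bigl(\int_{t_0}^t\varepsilon g\bigr)$. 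Proposition~\ref{prop:Intro.Stronger-meas.Weyk-non-hyper} then gives that $T$ is not $g$-hypermeasurable.

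Next I show that $T$ is $g_1$-hypermeasurable. Any continuous $\RV_{-1}$-function $g_1$ as in~(\ref{eq:Intro.ex.g1}) is asymptotically equivalent to $g$. Hence $\sL_{g_1}=\sL_g$, and $t^{-1}=\op{O}(g_1(t))$, so the standing assumptions hold. Step 1 gives $\lambda_j(T)=g_1(j)(c_0+\op{O}(\varepsilon_1(j)))$ with $\varepsilon_1(t)=(\log t)^{-1}$. Then
\[
 \int_{t_0}^t\varepsilon_1g_1\,ds\sim q^{-1}(\log t)^{q}=\op{O}(tg_1(t)),
\]
so Proposition~\ref{prop:Intro.hyper cond by asymptotic} applies with $g_1$ in place of $g$.

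The main obstacle is Step 1. It requires a careful inversion of the counting function with a uniform relative error: handling multiplicities through $N(A;\mu_j\pm0)$, justifying that $\log x_j$ and $\log j$ can be interchanged inside the remainder, and, crucially for (ii), extracting the exact second-order term $-q^2\log\log j/\log j$ with an error that is genuinely $\op{O}(1/\log j)$ rather than merely $\op{o}(\log\log j/\log j)$. I would do this by writing $x_j=\frac{j}{c_0}(\log x_j)^{-q}(1+\op{O}(1/\log j))$, taking logarithms, and iterating once.
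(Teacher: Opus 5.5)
Your proof is correct. Its architecture matches the paper's: eigenvalue asymptotics for $A^{-p}$, followed by the criteria of Section~\ref{sec:hypermeasurability}. The real difference is how you obtain the asymptotics.

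\textbf{Inversion of the Weyl law.} The paper uses the general machinery of Appendix~\ref{app:counting}. It applies Lemma~\ref{lem:app.Ntolambda1}, valid for an arbitrary $\RV_p$-function $h$ under conditions (i)--(iii), with the corrected asymptotic inverse $h_1^\sharp$ of Lemma~\ref{lem:app.h1sharp}. That inverse is built so that $h_1^\sharp\circ h(t)=t(1+\op{O}((\log t)^{-1}))$, and the term $-q^2\log\log j/\log j$ emerges when $h_1^\sharp(j)^{-1}$ is expanded in Lemma~\ref{lem:app.example-tplogq}.

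You avoid asymptotic inverses altogether. You evaluate the Weyl law at $\mu_j$, which is legitimate because the main term is continuous and the $\op{O}$-constant is uniform. You then solve $j=c_0x_j(\log x_j)^q(1+\op{O}((\log x_j)^{-\alpha}))$ with one logarithmic iteration. This replaces the paper's use of $h^\natural$, the auxiliary sequence $a_j=1-\epsilon(\lambda_j^{-1})$ and condition (i), and it makes the origin of the second-order term transparent.

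The paper's route yields reusable lemmas, covering any $\RV_\rho$ remainder and the case $c=0$ in Lemma~\ref{lem:app.Ntolambda2}. Yours is shorter and self-contained for $h(t)=t^p(\log t)^q$.

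One harmless slip: with the paper's convention $N(A;\lambda)=\#\{j;\,\lambda_j(A)<\lambda\}$, the sandwich reads $N(A;\mu_j)\leq j<N(A;\mu_j+0)$, not $N(A;\mu_j-0)\leq j<N(A;\mu_j)$.

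\textbf{The negative half of (ii).} You apply Proposition~\ref{prop:Stronger-meas.Weyk-non-hyper} directly with $\varepsilon(t)=c_0q^2\log\log t/\log t$ and check that $\int_{t_0}^t\varepsilon g\sim c_0q(\log t)^q\log\log t$. This is in fact the precise justification. The paper instead cites Corollary~\ref{cor:Hyper.Weyl-hyper.sLpq}(2), whose hypothesis is a correction $c_1(\log j)^{-\alpha}$ with $c_1\neq 0$ and $0<\alpha<1$. Since $\log\log j/\log j=\op{o}((\log j)^{-\alpha})$ for every such $\alpha$, that corollary does not literally cover this case; its proof method does, and that is what you carry out.

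For part (i) and for $g_1$-hypermeasurability, your computations coincide with Corollaries~\ref{cor:Hyper.Weyl-hyper.sLp}(1) and~\ref{cor:Hyper.Weyl-hyper.sLpq}(1).
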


This general result enables us to deal with the following examples. 

\subsubsection{Riemann Hypothesis (RH)}
Assuming RH holds, let $\sD$ be the Dirac operator whose eigenvalues are the imaginary parts of the zeros of $\zeta(s)$ on the critical line $\Re s=1/2$ (see, e.g.,~\cite{Co:AFA24, CM:PNAS22}). The Riemann--von Mangoldt formula implies the nonclassical Weyl law,
\[
N(|\sD|;\lambda) = \frac{1}{\pi}\,\lambda\log\lambda \left(1 + \op{O}\left((\log\lambda)^{-1}\right)\right).
\]

Set $g(t)=(t+1)^{-1}\log(t+2)$, $t\geq 0$. It is shown in~\cite{PT:Part1} that $|\sD|^{-1}$ is spectrally measurable in $\sL_g$ with $\gbint |\sD|^{-1}=\pi^{-1}$. Using Lemma~\ref{lem:Intro.Ap-hypermeas} it can be further shown that $|\sD|^{-1}$ is not $g$-hypermeasurable, and is instead hypermeasurable with respect to any $\RV_{-1}$-function $g_1(t)$ of the form~(\ref{eq:Intro.ex.g1}) with $q=1$ (see Proposition~\ref{prop:RH}). 

\subsubsection{Logarithm of the Laplacian}
Given a closed Riemannian manifold $(M^n,g)$, set  $T = \log(\Delta_g)\,\Delta_g^{-n/2}$. This is a selfadjoint operator with discrete spectrum and at most finitely many negative eigenvalues. Moreover, the Weyl law for $\Delta_g$ (see, e.g., \cite{Ho:ActaM68, DG:IM75}) implies that
\[
\lambda_j^+(T) = \frac{2}{n}c(n)\Vol_g(M)\frac{\log j}{j}\Bigl(1 + \op{O}\bigl(j^{-1/n}\bigr)\Bigr), \qquad c(n):=(2\pi)^{-n}|\bB^n|.
\]
It follows (see Proposition~\ref{prop:log Laplacian}) that, if we set $h(t)= (t+1)^{-1}\log(t+2)$, $t\geq 0$, then $ \log(\Delta_g)\,\Delta_g^{-n/2}$  is spectrally measurable  and $h$-hypermeasurable in $\sL_h$, with
\[
\hbint \log(\Delta_g)\,\Delta_g^{-n/2}= \frac{2}{n}c(n)\Vol_g(M). 
\]

\subsubsection{Double Laplacian} 
Let $(M^n,g)$ be a closed Riemannian manifold with Laplacian $\Delta_g$. We equip the double manifold $M\times M$ with the product metric $g\otimes g$. On $M\times M$ we consider the double Laplacian, 
\begin{equation*}
 \Delta_g\otimes \Delta_g. 
\end{equation*}
This is a non-hypoelliptic selfadjoint differential operator of order $4$ with non-negative spectrum. It has an infinite-dimensional nullspace. The positive part of its spectrum is given by products of the positive eigenvalues of $\Delta_g$.

The product structure of the spectrum of $\Delta_g\otimes \Delta_g$ implies that its zeta function is the square of the zeta function of $\Delta_{g}$.  Together with the recent extension of Ikehara's Tauberian theorem of Pierce~\emph{et al.}~\cite{PTZ:arXiv25} this enables us to get a sharp nonclassical Weyl law for $\Delta_g\otimes \Delta_g$. Namely, for any $\alpha \in (0, \frac{1}{6})$, we have 
 \begin{equation}\label{eq:Intro.Weyl-Double}
 N\big(\Delta_g\otimes \Delta_g;\lambda\big) = b_1 \lambda^{\frac{n}{2}} \log \lambda + b_0 \lambda^{\frac{n}{2}} + \op{O}\big(\lambda^{\frac{n}{2}-\alpha}\big), 
\end{equation}
where $b_0$ is an explicit coefficient and $b_1=\frac{n}{2} c(n)^2\Vol_g(M)^2$. Therefore, if we set $h(t)=(t+1)^{-1}\log (t+2)$, $t\geq0$, then (see Proposition~\ref{prop:Examples.Double-Laplacian}) the following hold: 
\begin{itemize}
 \item The operator $(\Delta_g\otimes\Delta_g)^{-n/2}$ is spectrally measurable in $\sL_h$, with 
 \[
 \hbint ( \Delta_g\otimes \Delta_g)^{-n/2}=c(n)^2 \Vol_{g}(M)^2. 
 \]

\item It is not $h$-hypermeasurable, but it is hypermeasurable with respect to any  
 $\RV_{-1}$-function $h_1(t)$ of the form~(\ref{eq:Intro.ex.g1}) with $q=1$. 
\end{itemize}

The Weyl law~(\ref{eq:Intro.Weyl-Double}) is a refinement of a Weyl law of  Battisti~\cite{Ba:MZ12} in the special case of the double Laplacian. Battisti's result was the main inspiration behind this example. 

\subsubsection{Multi-tensor products of Laplacians}
Let $(M_i^{n_i},g_i)$, $i=1,\ldots,r$, be closed Riemannian manifolds with  $n=n_1\geq\cdots\geq n_r$ and $r\geq 2$. On $M_1\times \cdots \times M_r$ consider the tensor product of Laplacians,
\begin{equation*}
A:= \Delta_{g_1}\otimes \cdots \otimes \Delta_{g_r}.
\end{equation*}
This is a non-hypoelliptic differential operator of order $2r$. As with the double Laplacian, $A$ is essentially selfadjoint with a non-negative spectrum and an infinite-dimensional nullspace. The positive part of its spectrum is discrete and consists of the products of the eigenvalues of the Laplacians $\Delta_{g_i}$, $i=1,\ldots,r$.

As with the double Laplacian, we have a sharp nonclassical Weyl law. Namely, if we set $q:=\#\{i;\,n_i=n\}$, then, for any $\alpha \in (0, \frac{1}{2}(q+1)^{-1})$, we have 
\begin{equation}\label{eq:Intro.Examples.WL-multiL}
 N(A; \lambda) = \sum_{k=0}^{q-1} b_k \lambda^{\frac{n}{2}} (\log \lambda)^k + \op{O}\big( \lambda^{\frac{n}{2}-\alpha}\big), 
  \end{equation}
where the $b_k$, $k=0, \ldots, q-1$, are explicit coefficients, with
\[
b_{q-1} = \frac{1}{(q-1)!} \left(\frac{n}{2}\right)^{q-1}c(n)^q \Vol_{g_1}(M_1)\cdots \Vol_{g_q}(M_q). 
\]
Therefore, if we set $h(t)=(t+1)^{-1}(\log (t+2))^{q-1}$, $t\geq 0$, then (see Proposition~\ref{prop:tensor Laplacians}) the following hold: 
 \begin{itemize}
 	\item The operator $( \Delta_{g_1}\otimes \cdots \otimes \Delta_{g_r})^{-\frac{n}{2}}$ is spectrally measurable in $\sL_h$, with \[ 
	\hbint  \left(\Delta_{g_1}\otimes \cdots \otimes \Delta_{g_r}\right)^{-\frac{n}{2}}=\frac{1}{(q-1)!}c(n)^{q}\,\Vol_{g_1}(M_1)\cdots \Vol_{g_q}(M_q). 
	\]\smallskip 

          \item If $q=1$, then $( \Delta_{g_1}\otimes \cdots \otimes \Delta_{g_r})^{-\frac{n}{2}}$ is $(t+1)^{-1}$-hypermeasurable.\smallskip 

 	\item If $q\geq 2$, then $( \Delta_{g_1}\otimes \cdots \otimes \Delta_{g_r})^{-\frac{n}{2}}$ is not $h$-hypermeasurable, but it is hypermeasurable with respect to any  
 	$\RV_{-1}$-function $h_1(t)$ such that
 	\begin{equation*}
 		h_1(t)=\frac{1}{t}(\log t)^{q-1}\left(1-(q-1)^2\frac{\log\log t}{\log t} +  \op{O}\left((\log t)^{-1}\right)\right).
 	\end{equation*}
 \end{itemize}

Note that only the manifolds of maximum dimension $n$ contribute to the asymptotic~(\ref{eq:Intro.Examples.WL-multiL}) and the measurability properties of $(\Delta_{g_1}\otimes\cdots\otimes\Delta_{g_r})^{-n/2}$.

\subsection{Organization of the paper}
This paper is organized as follows. In Section~\ref{sec:prelim}, we recall the background and notation from~\cite{PT:Part1} that is needed in this paper.
In Section~\ref{sec:hypermeasurability} we study $g$-hypermeasurability and give a spectral characterization for this property. We also examine in more detail its relationship with spectral measurability.
In Section~\ref{sec:Pietsch} after deriving a spectral version of Pietsch's correspondence for traces on $\sL_g$, we specialize to $g$-normalized traces and positive $g$-normalized traces. As an application we exhibit an example of a $g$-hypermeasurable operator that is not spectrally measurable.  
In Section~\ref{sec:strong-hyper}, we use these results to give a spectral characterization of strongly measurable operators.
In Section~\ref{sec:examples} we describe the concrete examples arising from nonclassical Weyl laws mentioned above, including the Riemann Hypothesis example.
In Appendix~\ref{app:counting} we collect technical lemmas relating  sharp asymptotics for non-increasing non-negative null sequences to sharp asymptotics for their counting functions. Finally, for the reader's convenience, we include in Appendix~\ref{app:proof-Pietsch-Dixmier} an alternative proof of the characterization of Dixmier traces under Pietsch's correspondence of~\cite{LU:JMAA25}. 

\subsection*{Acknowledgements} This paper originated from discussions with Alexander Usachev. We would like to thank him warmly for sharing his insights with us. The first-named author also wishes to thank Magnus Goffeng, Nigel Higson, Galina Levitina, Edward McDonald, and Grigori Rozenblum for discussions related to the subject matter of this paper. His research was partially supported by NSFC grant No.~11971328 (China).

\section{Preliminaries}\label{sec:prelim}
In this section we recall the main definitions and results from the prequel paper~\cite{PT:Part1} that are used throughout this paper. We refer to~\cite{PT:Part1} for further details. 

\subsection{Functions of regular variation}
As in~\cite{PT:Part1} we shall say that a continuous function $g:[a,\infty)\rightarrow (0,\infty)$ has \emph{regular variation of index} $\rho\in \R$, or is $\RV_\rho$, if it is 
 ultimately monotonic,  and
\begin{equation}\label{cond:rv}
 \lim_{t\rightarrow \infty} \frac{g(\lambda t)} {g(t)}= \lambda^\rho \qquad \forall \lambda>0.
\end{equation}
Examples of $\RV_\rho$-functions include the functions, 
\begin{equation*}
 g(t)=(t+1)^{\rho}, \qquad g(t)=(t+1)^\rho\left(\log(t+2)\right)^q, \quad q\in \R^*. 
\end{equation*}

Note that~(\ref{cond:rv}) implies that 
\begin{equation*}
 \lim_{t\rightarrow \infty} t^{-\alpha} g(t)=\infty \quad \text{if $ \alpha <\rho$}, \qquad \textup{and}\qquad
 \lim_{t\rightarrow \infty} t^{-\alpha} g(t)=0 \quad \text{if $ \alpha >\rho$}.
\end{equation*}

In addition, by the Uniform Convergence Theorem (UCT) (see, e.g., \cite[Theorem~1.5.2]{BGT:Cambridge87}) the convergence~(\ref{cond:rv}) holds uniformly with respect to $\lambda$ as it varies within compact subsets of $(0,\infty)$. If $\rho>0$ (resp., $\rho<0$), we even have uniform convergence on each interval $(0,a]$ (resp., $[a,\infty)$). In what follows we will make frequent use of the following consequence of the UCT.

\begin{lemma}\label{lem:UCT-consequence}
 Let $g:[a,\infty)\rightarrow (0,\infty)$ be $\RV_\rho$, $\rho\in \R$.
 \begin{enumerate}
 \item If $t_\alpha \rightarrow \infty$ and $\lambda_\alpha \rightarrow \lambda>0$, then $g(\lambda_\alpha t_\alpha)\sim \lambda^\rho g(t_\alpha)$.

 \item If $\lim u(s)=\lim v(s)=\infty$ and $u(s)\sim v(s)$ for $s$ large, then $g(u(s))\sim g(v(s))$.
 \end{enumerate}
 \end{lemma}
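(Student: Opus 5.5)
Both parts follow from the Uniform Convergence Theorem together with the definition of $\RV_\rho$, using $g(\lambda t)/g(t)\to\lambda^\rho$ uniformly for $\lambda$ in compact subsets of $(0,\infty)$.

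For (i), I fix a compact interval $K=[\lambda/2,2\lambda]\subset(0,\infty)$. Since $\lambda_\alpha\to\lambda>0$, eventually $\lambda_\alpha\in K$. I then split
\[
\frac{g(\lambda_\alpha t_\alpha)}{\lambda^\rho g(t_\alpha)}-1=\lambda^{-\rho}\Big(\frac{g(\lambda_\alpha t_\alpha)}{g(t_\alpha)}-\lambda_\alpha^\rho\Big)+\Big(\frac{\lambda_\alpha^\rho}{\lambda^\rho}-1\Big).
\]
The first term is bounded by $\lambda^{-\rho}\sup_{\mu\in K}|g(\mu t_\alpha)/g(t_\alpha)-\mu^\rho|$, which tends to $0$ because $t_\alpha\to\infty$ and by the UCT. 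The second term tends to $0$ by continuity of $\mu\mapsto\mu^\rho$. This gives $g(\lambda_\alpha t_\alpha)\sim\lambda^\rho g(t_\alpha)$. One point needs care: $g$ is defined only on $[a,\infty)$. This is harmless, since $\lambda_\alpha t_\alpha\geq (\lambda/2)t_\alpha\to\infty$, so all arguments eventually lie in the domain. The statement is for nets (indexed by $\alpha$), and the argument above is insensitive to that.

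For (ii), I set $t_s=v(s)$ and $\lambda_s=u(s)/v(s)$. Then $t_s\to\infty$ and $\lambda_s\to1$, and $u(s)=\lambda_s t_s$. Applying (i) with $\lambda=1$ gives $g(u(s))\sim 1^\rho g(v(s))=g(v(s))$.

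The main (mild) obstacle is simply invoking the UCT correctly in its compact-set form. Everything else is bookkeeping with the domain $[a,\infty)$ and eventual positivity of $v(s)$.
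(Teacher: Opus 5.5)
Your argument is correct and follows the paper's route: the paper states this lemma without proof as a direct consequence of the Uniform Convergence Theorem. That theorem applies because $\RV_\rho$-functions are continuous in the paper's convention. Your splitting of the ratio, the domain bookkeeping, and the reduction of (ii) to (i) with $\lambda=1$ via $t_s=v(s)$, $\lambda_s=u(s)/v(s)$ supply exactly the details the paper omits.
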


In particular, this implies that 
 \begin{equation*}
 \lim_{t\rightarrow \infty} \frac{g(t+b)}{g(t)}=1 \qquad \forall b\in \R.
\end{equation*}

\subsection{Weak Lorentz ideals}
Throughout this paper $\sH$ is a separable Hilbert space, and $\sK$ denotes the ideal of compact operators on $\sH$. For $T\in \sK$ we let \[
\mu_0(T)\geq \mu_1(T)\geq \cdots\] be its sequence of \emph{singular values}, i.e., the eigenvalues of $|T|=(T^*T)^{1/2}$ arranged in non-increasing order and repeated according to multiplicity. 

Let $g:[0,\infty) \rightarrow (0,\infty)$ be an $\RV_\rho$-function with $\rho<0$. The \emph{weak Lorentz ideal} $\sL_g$ associated with $g$ is 
\begin{equation*}
\sL_g:= \left\{T \in \sK; \ \mu_j(T)= \op{O}\left(g(j)\right)\right\}.
\end{equation*}
This is a quasi-Banach ideal with respect to the quasi-norm, 
\begin{equation*}
 \|T\|_g := \sup_{j\geq 0} g(j)^{-1}\mu_j(T).
\end{equation*}
For $g(t)=(t+1)^{-1/p}$, $t\geq 0$, we recover the weak Schatten class $\sL_{p,\infty}$. 

The quasi-Banach ideal is not separable; the closure of finite rank operators in $\sL_g$ is the closed ideal, 
\begin{equation*}
 (\sL_{g})_0= \left\{T \in \sK; \ \mu_j(T)= \op{o}\left(g(j)\right)\right\} \subsetneq \sL_g.
\end{equation*}

\subsection{Inequalities for eigenvalues} 
From now on, we let  $g:[0,\infty) \rightarrow (0,\infty)$ be an $\RV_{-1}$-function such that $\int_0^\infty g(t)\,dt=\infty$. In addition, we set
\begin{equation*}
 G(t):= \int_0^t g(s)ds, \qquad t\geq 0. 
\end{equation*}
By Karamata's theorem (see, e.g., {\cite{BGT:Cambridge87, BIKS:Springer18}}), the $\RV_{-1}$-property implies that
\begin{equation}\label{eq:Karamata}
 tg(t)=\op{o}(G(t)) \qquad \text{as}\ t\longrightarrow \infty.  
\end{equation}
This is the main property that is needed to extend Connes' integration beyond the weak trace-class $\sL_{1,\infty}$ (which is the ideal $\sL_g$ with $g(t)=(t+1)^{-1}$). 

Given an operator $T \in \sL_g$ we shall denote by $\lambda(T)=(\lambda_j(T))_{j\geq 0}$ any eigenvalue sequence, where each eigenvalue is repeated according to its (algebraic) multiplicity with the convention that
\begin{equation*}
 |\lambda_0(T)|\geq |\lambda_1(T)|\geq  |\lambda_2(T)|\geq \cdots \geq 0.
\end{equation*}
Such a sequence need not be unique, but the results of this paper do not depend on the choice of a specific sequence. 

The main impetus for the approach of~\cite{PT:Part1} and this paper is the following asymptotic additivity lemma. 

\begin{lemma}[\cite{LSZ:Book, PT:Part1}]  
 For all $S,T\in \sL_g$, we have
\begin{equation}\label{eq:prelim.asymptotic-additivity}
 \sum_{j<N}\lambda_j(S+T)=\sum_{j<N}\lambda_j(S)+\sum_{j<N}\lambda_j(T)+\op{o}(G(N)).
\end{equation}
\end{lemma}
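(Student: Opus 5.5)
The plan is to reduce the statement to an estimate in which the error is $\op{O}(Ng(N))$. Karamata's relation~(\ref{eq:Karamata}) then upgrades this to $\op{o}(G(N))$. So the goal is
\begin{equation*}
 \Big|\sum_{j<N}\lambda_j(S+T)-\sum_{j<N}\lambda_j(S)-\sum_{j<N}\lambda_j(T)\Big| \leq C\big(\|S\|_g+\|T\|_g\big)\,Ng(N)
\end{equation*}
for all large $N$, with a constant $C$ depending only on $g$. This is the strategy of~\cite{LSZ:Book} for $\sL_{1,\infty}$. Regular variation, through Lemma~\ref{lem:UCT-consequence}, lets us replace the power-type estimates used there.

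\emph{Step 1: threshold sums.} For a compact operator $A$ and $r>0$, set $S_r(A):=\sum_{|\lambda_j(A)|>r}\lambda_j(A)$. We compare $S_r(A)$ with the partial sums at the index $N$, taking $r=\kappa g(N)$ for a suitable constant $\kappa$. Because $|\lambda_j(A)|$ is non-increasing and controlled by $\mu_j(A)=\op{O}(g(j))$ (Weyl's inequality, in the form of the Dykema--Kalton bound), the following holds: for $A\in\sL_g$ and $\kappa$ large, the number of eigenvalues with $|\lambda_j(A)|>\kappa g(N)$ is at most $N$. By Lemma~\ref{lem:UCT-consequence}, the indices $j$ with $|\lambda_j(A)|\in(\kappa' g(N),\kappa g(N)]$ range over an interval of length $\op{O}(N)$. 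Hence
\begin{equation*}
 \Big|\sum_{j<N}\lambda_j(A)-S_{\kappa g(N)}(A)\Big|=\op{O}(Ng(N)),
\end{equation*}
and the same holds with $\kappa$ replaced by any fixed constant. It therefore suffices to prove the additivity estimate for $S_r$ with $r\asymp g(N)$.

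\emph{Step 2: additivity of $S_r$ up to $r\,n(r)$.} This is the heart of the proof and the main obstacle. Let $P_r(A)$ be the Riesz projection onto the sum of the root subspaces of $A$ for the eigenvalues with $|\lambda|>r$. Then $S_r(A)=\Tr(AP_r(A))$, and $\op{rank}P_r(A)=n_A(r)=\op{O}(N)$.

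The Riesz projections of $S$, $T$ and $S+T$ are not compatible, so I would instead work in a Schur (upper triangular) basis adapted to $A$. Write $A=D+V$ with $D$ normal and $V$ quasinilpotent, as in the Ringrose/Dykema--Kalton decomposition. Then compare $\Tr(AP)$ over finite-rank projections $P$ of rank $\op{O}(N)$ whose range contains the ranges of $P_r(S)$, $P_r(T)$ and $P_r(S+T)$.

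For such a projection $P$ of rank $m=\op{O}(N)$, the quantity $\Tr(A P)-S_r(A)$ is the trace of a compression of the part of $A$ carrying the eigenvalues of modulus $\leq r$. I expect to bound it by the Ky Fan-type estimate
\begin{equation*}
 \sum_{j<m}\mu_j\big(A(1-P_r(A))\big)\lesssim \sum_{j<m}\min(\mu_j(A),r),
\end{equation*}
which is $\op{O}(m\,g(N))=\op{O}(Ng(N))$. The delicate point is making the triangular-form argument produce the truncation at $r$ on the singular-value side rather than merely the full sum $\sum_{j<m}\mu_j(A)\sim G(m)$. Here I would follow the Kalton argument, splitting the quasinilpotent part dyadically. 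Linearity of $\Tr(\,\cdot\,P)$ then gives additivity of $S_r$ up to $\op{O}(Ng(N))$.

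\emph{Step 3: conclusion.} Combining Steps 1 and 2 yields the displayed $\op{O}(Ng(N))$ bound. Since $Ng(N)=\op{o}(G(N))$ by~(\ref{eq:Karamata}), this gives~(\ref{eq:prelim.asymptotic-additivity}).
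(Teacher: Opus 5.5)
Your outer frame matches the paper's. The paper simply quotes the $\op{O}(Ng(N))$ bound from the general asymptotic additivity theorem for quasi-Banach ideals in \cite{LSZ:Book}, and then uses (\ref{eq:Karamata}) to get $\op{o}(G(N))$. Your Step~1 is also fine.

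The gap is Step~2, which carries all the content. Let $E_A=\ran P_r(A)$ and let $P'_A$ be the orthogonal projection onto $\ran P\ominus E_A$. Since $E_A$ is $A$-invariant, $\Tr(AP)=S_r(A)+\Tr(P'_AAP'_A)$ exactly, and $P'_AAP'_A=P'_AA(1-P_r(A))P'_A$. So you need $|\Tr(P'_AAP'_A)|=\op{O}(Ng(N))$ separately for $A=S$, $T$ and $S+T$.

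The Ky Fan-type estimate you expect to give this is false. Take the nilpotent operator $V=\sum_{k\geq 0}g(k)\,e_{2k+1}^*\otimes e_{2k}$, so $Ve_{2k+1}=g(k)e_{2k}$ and $Ve_{2k}=0$. Then $P_r(V)=0$, so $\sum_{j<m}\mu_j(V(1-P_r(V)))=\sum_{j<m}g(j)\sim G(m)$. On the other hand, $\sum_{j<m}\min(\mu_j(V),r)\leq mr=\op{O}(Ng(N))=\op{o}(G(N))$. Small spectrum gives no control over traces of compressions: the compression of $V$ to $\mathrm{span}\{e_{2k}+e_{2k+1};\ k<m\}$ has trace $\frac12\sum_{k<m}g(k)$.

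This is not an artifact of a bad choice of $P$. Take $S=V$ and let $T$ be block diagonal on the planes $\mathrm{span}\{e_{2k},e_{2k+1}\}$, with block $\left(\begin{smallmatrix} r/2-\epsilon & r+\epsilon\\ -\epsilon & 3r/2+\epsilon\end{smallmatrix}\right)$, where $\epsilon=r^2/(4g(k))$. This block has eigenvalues $\frac32 r$, with eigenvector $e_{2k}+e_{2k+1}$, and $\frac12 r$. The corresponding block of $S+T$ has trace $2r$ and determinant $r^2$, so it has the double eigenvalue $r$. Do this for $k\in[N_{i-1},N_i)$ at scale $r_i\asymp g(N_i)$, with $N_i/N_{i-1}\rightarrow\infty$. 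This gives $T\in\sL_g$ with the following property: at $N=N_i$, with $P$ the projection onto $\ran P_r(S)+\ran P_r(T)+\ran P_r(S+T)$, the $S$-error is $\Tr(SP)=\frac12\sum_{N_{i-1}\leq k<N_i}g(k)\gg N_ig(N_i)$. It is cancelled only by the $(S+T)$-error. So the three errors cannot be bounded one at a time. Your sketch gives no other choice of $P$ and no cancellation mechanism; ``follow the Kalton argument, splitting the quasinilpotent part dyadically'' names the missing ingredient without supplying it.

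The missing input is the Kalton/Dykema--Kalton theorem recorded in Proposition~\ref{prop:DK}: $A-\Diag(\lambda(A))\in\Com(\sL_g)$. Its core is that the quasinilpotent part of $A$ lies in $\Com(\sL_g)$. Once you use it, no projections are needed. Put the three diagonal operators on one orthonormal family; then $\Diag(x)\in\Com(\sL_g)$ with $x_j=\lambda_j(S+T)-\lambda_j(S)-\lambda_j(T)$. By (\ref{eq:spectral1}), the partial sums of $x$ rearranged by decreasing modulus are $\op{O}(Ng(N))$. Since $|x_j|=\op{O}(g(j))$, an argument like your Step~1 shows that $\sum_{j<N}x_j$ differs from these by $\op{O}(Ng(N))$. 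This is in substance the quasi-Banach result of \cite{LSZ:Book} that the paper quotes, and your Step~3 then finishes the proof.
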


This result follows by combining a general asymptotic additivity result of~\cite{LSZ:Book} for quasi-Banach ideals with Karamata's theorem~(\ref{eq:Karamata}) (see~\cite{PT:Part1}).

By definition the commutator space $\Com(\sL_g)$ is spanned by commutators $[A,T]$ with $A\in \sL(\sH)$ and $T\in \sL_g$. For weak Lorentz ideals, the spectral characterization of the commutator spaces of quasi-Banach ideals of Kalton~\cite{Ka:Crelle98} gives
\begin{equation}\label{eq:spectral1}
 \Com\left(\sL_g\right)= \bigg\{T\in \sL_g; \ \sum_{j<N} \lambda_j(T)= \op{O}(Ng(N))\bigg\}. 
\end{equation}
More generally (see~\cite{LSZ:Book}), we have
\begin{equation}\label{eq:com difference}
S-T \in \Com\left(\sL_g\right) \ \Longleftrightarrow \  \sum_{j<N} \lambda_j(S)=\sum_{j<N} \lambda_j(T)+ \op{O}(Ng(N)).
\end{equation}

\subsection{Dixmier-measurability} 
An extended limit is any positive linear functional on $\ell_\infty$ which is an extension of the limit, i.e., a state that annihilates the null sequence ideal $\co$. In particular, given any sequence $a=(a_j)_{j\geq 0}$ in $\ell_\infty$, we have
\begin{equation*}
 \lim_{j\rightarrow \infty} a_j =L \ \Longleftrightarrow \ \big(\omega(a)=L \ \text{for every extended limit}\ \omega\big). 
\end{equation*}
 
If $\omega$ is any extended limit, then~(\ref{eq:prelim.asymptotic-additivity}) implies that we obtain a positive linear trace on $\sL_g$ by letting
\begin{equation*}
 \Tr_\omega(T):=\omega\Big(\Big\{\tfrac{1}{G(N)}\sum_{j<N}\lambda_j(T)\Big\}_{N\geq 1}\Big), \qquad T\in \sL_g. 
\end{equation*}
We call $\Trw$ the \emph{Dixmier trace} on $\sL_g$ associated with $\omega$. 

An operator $T\in \sL_g$ is called \emph{Dixmier-measurable} (or simply \emph{measurable}) if the value of $\Trw(T)$ does not depend on $\omega$. This unique value is the \emph{noncommutative integral of $T$} and is denoted $\gbint T$. 
The space of Dixmier-measurable operators is denoted $\sM(\sL_g)$. This is a closed subspace of $\sL_g$ containing $\Com(\sL_g)$ and $(\sL_g)_0$ (see~\cite{PT:Part1}). 

In~\cite{PT:Part1} the following spectral characterization of the NC integral is established: given any $T\in \sL_g$, we have
\begin{equation*}
 \Big(\text{$T$ is measurable and}\ \gbint T=L\Big) \Longleftrightarrow \lim_{N\to\infty}\frac{1}{G(N)}\sum_{j<N}\lambda_j(T)=L. 
\end{equation*}

\subsection{Strong measurability}
In what follows, we let $T_g$ be any operator for which there is an orthonormal eigenbasis $\xi_j$, $j\geq 0$, such that $T_g \xi_j=g(j)\xi_j$ for all $j\geq 0$. 
A trace $\varphi$ on $\sL_g$ is called \emph{$g$-normalized} if $\varphi(T_g)=1$. Dixmier traces are instances of such traces. 

An operator $T\in \sL_g$ is  called \emph{strongly measurable} if all $g$-normalized positive traces take the same value on $T$. Such an operator is automatically Dixmier-measurable. We denote by $\sM_s(\sL_g)$ the space of strongly measurable operators in $\sL_g$. As shown in~\cite{PT:Part1}, we then have
\begin{equation*}
 \sM_s(\sL_g)=\C T_g\oplus \overline{\Com(\sL_g)}.
\end{equation*}
A spectral characterization of strong measurability is given in Section~\ref{sec:strong-hyper}. It extends to all weak Lorentz ideals the spectral characterization of strong measurability in $\sL_{1,\infty}$ of Semenov--Sukochev--Usachev--Zanin~\cite{SSUZ:AIM15}. 

\subsection{Spectral measurability} 
If $T=T^*\in \sK$, then we let 
\begin{equation*}
 \lambda_0^\pm(T)\geq \lambda_1^\pm(T)\geq \cdots 
\end{equation*}
be its sequences of positive and negative eigenvalues, i.e., $\lambda_j^\pm =\lambda_j(T^\pm)$, where $T^\pm=\frac{1}{2}(|T|\pm T)$. 

In~\cite{PT:Part1} the perturbation theory of Birman--Solomyak~\cite{BS:JFAA70, BS:Book} for positive/negative eigenvalues and singular values of operators in weak Schatten classes is extended to all weak Lorentz ideals (see also~\cite{Ro:MUS74}). In particular, if $T=T^*\in \sL_g$ and $(T_\ell)_{\ell\geq 0}$ is a selfadjoint family in $\sL_g$ such that $\lim g(j)^{-1}\lambda_j^\pm(T_\ell)$ exist and $T_\ell \rightarrow T$ in $\sL_g/(\sL_g)_0$, then 
\begin{equation*}
 \lim_{j \rightarrow \infty} g(j)^{-1}\lambda_j^\pm(T) = \lim_{\ell \rightarrow \infty} \big( \lim_{j \rightarrow \infty}  g(j)^{-1} \lambda_j^\pm(T_\ell) \big). 
\end{equation*}
There is a similar perturbation result for singular values (see~\cite{PT:Part1}). 

An operator $T=T^*\in \sL_g$ is called \emph{spectrally measurable} if $\lim g(j)^{-1}\lambda_j^\pm(T)$ both exist. More generally, we say that a non-selfadjoint operator $T\in \sL_g$ is spectrally measurable if its real and imaginary parts are. It is shown in~\cite{PT:Part1} that if $T=T^*\in \sL_g$ is spectrally measurable, then $T$ is strongly measurable, with 
\begin{equation*}
 \gbint T =  \lim_{j \rightarrow \infty} g(j)^{-1}\lambda_j^+(T) -  \lim_{j \rightarrow \infty} g(j)^{-1}\lambda_j^{-}(T). 
\end{equation*}
There is a version of this result in the non-selfadjoint case (see~\cite{PT:Part1}). This provides a further spectral interpretation of the NC integral. Moreover, it follows from the perturbation theory of~\cite{PT:Part1} that the set of spectrally measurable operators is a closed subset of $\sL_g$ which is invariant under perturbations by operators in $(\sL_g)_0$. 

In this article, we will exhibit an example of a strongly measurable operator which is not spectrally measurable (see~\S\ref{subsec:hyper-not-spec}). Therefore, spectral measurability is a much stronger notion of measurability than strong measurability. 

We refer to~\cite{PT:Part1} for various examples of spectrally measurable operators arising from nonclassical Weyl laws (see also~\cite{GS:JFA14, GU:JMAA20, LU:JMAA25, McD:arXiv26, TU:JMAA27} for related examples). Further examples are presented in Section~\ref{sec:examples} of this paper. 

\section{Hypermeasurability}\label{sec:hypermeasurability}
From the point of view of operator theory it also makes sense to consider a version of measurability with respect to the class of all $g$-normalized traces, not just Dixmier traces or positive normalized traces as in the previous section. In this section, we look at this notion and compare it to spectral measurability. Even in the case of the weak trace-class $\sL_{1,\infty}$ this comparison seems to be new.

Throughout this section we further require that
\begin{equation}\label{cond:minimal}
 t^{-1}=\op{O}\left(g(t)\right) \qquad \text{as}\ t\rightarrow \infty.
\end{equation}

\subsection{$g$-Hypermeasurable operators}
The condition~(\ref{cond:minimal}) ensures that $\sL_{1,\infty}\subseteq \sL_g$. Moreover, we have the following result.

\begin{proposition}\label{prop:Hyper.trace-singular}
 If~(\ref{cond:minimal}) holds, then every trace on $\sL_g$ annihilates the trace-class $\sL_1$. In particular, every trace on $\sL_g$ is singular.
\end{proposition}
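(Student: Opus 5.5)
The plan is to show that every trace-class operator lies in the commutator subspace $\Com(\sL_g)$; every trace on $\sL_g$ vanishes on commutators, hence on $\sL_1$. To do this we use Kalton's spectral characterization~(\ref{eq:spectral1}). Let $T\in \sL_1$.

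First we check that $T\in\sL_g$. Since $\mu_j(T)$ is summable and non-increasing, $j\mu_j(T)\leq 2\sum_{j/2\le k\le j}\mu_k(T)\to 0$. Hence $\mu_j(T)=\op{o}(j^{-1})$, and by~(\ref{cond:minimal}) we get $\mu_j(T)=\op{O}(g(j))$. (In fact $T\in(\sL_g)_0$.)

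Next we bound the partial eigenvalue sums. By Weyl's inequality, $\sum_j|\lambda_j(T)|\leq \sum_j\mu_j(T)<\infty$, so
\begin{equation*}
\Big|\sum_{j<N}\lambda_j(T)\Big|\leq \|T\|_{\sL_1}=\op{O}(1).
\end{equation*}
Condition~(\ref{cond:minimal}) says that $Ng(N)$ is bounded below by a positive constant for $N$ large. Since $g>0$, it is bounded below on every finite range too, so $Ng(N)\geq c>0$ for all $N\geq1$. Therefore $\sum_{j<N}\lambda_j(T)=\op{O}(Ng(N))$, and~(\ref{eq:spectral1}) gives $T\in\Com(\sL_g)$. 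Any trace $\varphi$ on $\sL_g$ is linear and satisfies $\varphi([A,S])=0$, so it vanishes on $\Com(\sL_g)$ and hence on $\sL_1$.

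For singularity, finite-rank operators belong to $\sL_1$, so every trace on $\sL_g$ vanishes on all finite-rank operators. This is exactly the definition of a singular trace.

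The only step needing care is the lower bound $Ng(N)\gtrsim 1$ uniformly in $N$. It follows directly from~(\ref{cond:minimal}) together with the positivity of $g$. The rest consists of standard facts: Weyl's inequality and Kalton's theorem as recalled in~(\ref{eq:spectral1}).
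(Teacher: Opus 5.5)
Your proof is correct and follows the paper's argument: for $T\in\sL_1$ the partial eigenvalue sums are $\op{O}(1)$, which is $\op{O}(Ng(N))$ by~(\ref{cond:minimal}), so Kalton's characterization~(\ref{eq:spectral1}) gives $\sL_1\subseteq\Com(\sL_g)$. You additionally spell out points the paper leaves implicit: that $\sL_1\subseteq\sL_g$, that Weyl's inequality bounds the eigenvalue sums, and that singularity follows because finite-rank operators lie in $\sL_1$.
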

\begin{proof}
The condition~(\ref{cond:minimal}) ensures that $1=\op{O}(tg(t))$. Thus, if $T\in \sL_1$, then~(\ref{cond:minimal}) ensures that
\begin{equation*}
 \sum_{j<N}\lambda_j(T) =\op{O}(1)=\op{O}(Ng(N)).
\end{equation*}
It then follows from~(\ref{eq:spectral1}) that $T\in \Com(\sL_g)$, and so $T$ is annihilated by all traces on $\sL_g$. This gives the result.
\end{proof}

\begin{definition}\label{def:hyper}
 We say that an operator $T\in \sL_g$ is \emph{$g$-hypermeasurable} if it takes on the same value on all $g$-normalized traces.
 \end{definition}

We shall denote by $\sM_h^g(\sL_g)$ the space of operators in $\sL_g$ that are $g$-hypermeasurable.

\begin{remark}
 For $g(t)=(t+1)^{-1}$, in which case $\sL_g$ is the weak trace-class, $(t+1)^{-1}$-hypermeasurability is also called universal measurability by some authors
(see, e.g.,  \cite{CRSZ:JST16, LSZ:Book, SZ:Ast23}). We refrain from using the same terminology for weak Lorentz ideals, since the property is sensitive to the choice of the function $g$ to define the ideal $\sL_g$. Namely, two equivalent $\RV_{-1}$-functions $g(t)$ and $g_1(t)$ define the same weak Lorentz ideal,
but a $g$-hypermeasurable operator need not be $g_1$-hypermeasurable (see Proposition~\ref{prop:dependence} below). Therefore, hypermeasurability does not seem
to be a universal property.
\end{remark}

\begin{remark}\label{rmk:hyper implies strong}
 If $T$ is $g$-hypermeasurable, then it is strongly measurable, and so $T$ is measurable. Thus, as noted in the previous section, for every normalized trace $\varphi:\sL_g\rightarrow \C$, we have
 \begin{equation*}
 \varphi(T) =\gbint T.
\end{equation*}
\end{remark}

\subsection{Spectral characterization}
We have the following spectral criterion for $g$-hypermeasurability. It extends to weak Lorentz ideals the spectral characterization of universal measurability for the weak trace-class $\sL_{1,\infty}$ in~\cite{CRSZ:JST16, LSZ:Book}.

\begin{theorem}\label{thm:hyper}
  Let $T\in \sL_g$. The following are equivalent:
 \begin{enumerate}
 \item[(i)] $T$ is $g$-hypermeasurable and $\gbint T=c$.\smallskip

 \item[(ii)] $T=cT_{g} \ \bmod \Com(\sL_g)$.\smallskip

 \item[(iii)] We have
  \begin{equation*}
 \sum_{j<N} \lambda_j(T) = cG(N) +\op{O}(Ng(N)).
\end{equation*}
\end{enumerate}
In particular, we have
\begin{equation*}
 \sM_h^g(\sL_g)=\C T_{g} \oplus \Com(\sL_g).
\end{equation*}
\end{theorem}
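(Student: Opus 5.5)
We prove the cycle (ii)$\Rightarrow$(i), (i)$\Rightarrow$(ii), and (ii)$\Leftrightarrow$(iii); the decomposition of $\sM_h^g(\sL_g)$ then follows from (i)$\Leftrightarrow$(ii).

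First, (ii)$\Rightarrow$(i). Every trace annihilates $\Com(\sL_g)$. So if $T=cT_g \bmod \Com(\sL_g)$, then $\varphi(T)=c\varphi(T_g)=c$ for every $g$-normalized trace $\varphi$. Hence $T$ is $g$-hypermeasurable. Since Dixmier traces are $g$-normalized, $\gbint T=c$.

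Second, (ii)$\Leftrightarrow$(iii). We apply (\ref{eq:com difference}) with $S=cT_g$; note that $\lambda(cT_g)=(cg(j))_{j\ge0}$ is an eigenvalue sequence. This gives
\[
T-cT_g\in\Com(\sL_g) \iff \sum_{j<N}\lambda_j(T)=c\sum_{j<N}g(j)+\op{O}(Ng(N)).
\]
It remains to show $\sum_{j<N}g(j)=G(N)+\op{O}(Ng(N))$. The function $g$ is ultimately monotone, say monotone on $[a,\infty)$. For $j\ge a$, the difference $|g(j)-\int_j^{j+1}g|$ is at most $|g(j)-g(j+1)|$. These differences telescope, so their sum over $a\le j<N$ is $\op{O}(1)$. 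The finitely many indices $j<a$ also contribute $\op{O}(1)$. By (\ref{cond:minimal}) we have $1=\op{O}(Ng(N))$, so the total error is $\op{O}(Ng(N))$.

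Third, (i)$\Rightarrow$(ii), which is the main step. Suppose $\varphi(T)=c$ for every $g$-normalized trace, and set $S=T-cT_g$. Then every $g$-normalized trace vanishes on $S$. We claim that every trace vanishes on $S$. Let $\psi$ be an arbitrary trace and fix a $g$-normalized trace $\varphi_0$, for instance a Dixmier trace.
\begin{itemize}
\item If $\psi(T_g)\neq 0$, then $\psi/\psi(T_g)$ is $g$-normalized, so $\psi(S)=0$.
\item If $\psi(T_g)=0$, then $\varphi_0+\psi$ is $g$-normalized, and $\psi(S)=(\varphi_0+\psi)(S)-\varphi_0(S)=0$.
\end{itemize}
Traces on $\sL_g$ are exactly the linear functionals on the quotient $\sL_g/\Com(\sL_g)$; no continuity is assumed. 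By Hamel-basis linear algebra, a vector annihilated by every linear functional on this quotient is zero. Hence $S\in\Com(\sL_g)$, which is (ii).

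The only subtle point is this last step. We must use arbitrary (possibly discontinuous) traces, not just continuous ones: continuous traces would only detect the closure $\overline{\Com(\sL_g)}$, which is why the strong-measurability space involves that closure instead. Since $g$-hypermeasurability is defined using all $g$-normalized traces, the purely algebraic duality applies.

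Finally, $\C T_g\cap\Com(\sL_g)=\{0\}$ because $\varphi_0(T_g)=1$. The equivalence (i)$\Leftrightarrow$(ii) then gives $\sM_h^g(\sL_g)=\C T_g\oplus\Com(\sL_g)$.
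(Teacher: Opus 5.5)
Your proof is correct and takes essentially the same route as the paper. Like the paper, you get (ii)$\Leftrightarrow$(iii) from (\ref{eq:com difference}) together with $\sum_{j<N}g(j)=G(N)+\op{O}(1)=G(N)+\op{O}(Ng(N))$, and you get (i)$\Rightarrow$(ii) from purely algebraic duality on $\sL_g/\Com(\sL_g)$. The only difference is packaging: the paper argues by contraposition and builds two $g$-normalized traces taking the values $0$ and $1$ on $T$ when $T\notin\C T_g\oplus\Com(\sL_g)$, whereas you show directly that every trace annihilates $T-cT_g$, by rescaling or by adding a fixed normalized trace.
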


\begin{proof} Using~(\ref{cond:minimal}), we get
\begin{equation*}
 \sum_{j<N} \lambda_j(T_{g})=\sum_{j<N}g(j)=G(N)+\op{O}(1)=G(N)+\op{O}(Ng(N)).
\end{equation*}
Therefore, we see that~(iii) is equivalent to
\begin{equation*}
  \sum_{j<N} \lambda_j(T) = \sum_{j<N} \lambda_j(cT_{g})+\op{O}(Ng(N)),
\end{equation*}
which is equivalent to (ii) by~(\ref{eq:com difference}). 

It is immediate that (ii)$\Rightarrow$(i). It remains to show that (i)$\Rightarrow$(ii). Suppose that (ii) does not hold. If $T\in \C T_{g} \oplus \Com(\sL_g)$, then $T=c_1T_{g} \bmod \Com(\sL_g)$ with $c_1\neq c$, and so $T$ is $g$-hypermeasurable with $\gbint T =c_1\neq c$, meaning (i) fails. If  $T\not \in \C T_{g} \oplus \Com(\sL_g)$, then, for $i=0,1$,  we can find linear forms $\varphi_i:\sL_g \rightarrow \C$ such that
\begin{equation*}
 \varphi_i(T)=i, \qquad \varphi_i(T_{g})=1, \qquad \varphi_{i}=0 \quad \text{on}\ \Com(\sL_g).
\end{equation*}
We thus have two $g$-normalized traces such that $\varphi_0(T)\neq \varphi_1(T)$, and so $T$ is not $g$-hypermeasurable. Therefore, we see that in either case (i) fails.  By contraposition, it follows that (i)$\Rightarrow$(ii). The proof is complete.
 \end{proof}

The characterization of $g$-hypermeasurable operators in terms of eigenvalue sequences provided by Theorem~\ref{thm:hyper} implies the following spectral invariance of $g$-hypermeasurability.

\begin{corollary}\label{cor:Hyper.spectral-invariance}
If  $S\in \sL_g(\sH)$ and $T\in \sL_g(\sH')$ have the same non-zero spectrum up to multiplicity, then $S$ is $g$-hypermeasurable if and only if $T$ is $g$-hypermeasurable. \end{corollary}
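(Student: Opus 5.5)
The plan is to deduce the corollary directly from the spectral criterion~(iii) of Theorem~\ref{thm:hyper}, which depends only on an eigenvalue sequence. First I would make precise what ``same non-zero spectrum up to multiplicity'' gives at the level of eigenvalue sequences. Since $S$ and $T$ are compact, their non-zero spectra consist of eigenvalues of finite algebraic multiplicity, accumulating only at $0$. If these non-zero eigenvalues, counted with algebraic multiplicity, agree, then any eigenvalue sequence $\lambda(S)$ can also serve as an eigenvalue sequence $\lambda(T)$. The only ambiguity is the number of zeros appended when the non-zero spectrum is finite, and that does not change the sequence, which is padded with zeros in either case. The ordering ambiguity among eigenvalues of equal modulus is harmless, because the paper notes that its results do not depend on the choice of eigenvalue sequence. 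It is also the case that partial sums $\sum_{j<N}$ can differ between two admissible orderings only inside one block of equal modulus $r$; such a discrepancy is at most $2mr$, where $m$ is the size of the block. I would avoid that estimate altogether by simply choosing the same sequence for both operators.

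With $\lambda(S)=\lambda(T)$ fixed, the partial sums $\sum_{j<N}\lambda_j(S)$ and $\sum_{j<N}\lambda_j(T)$ coincide for every $N$. Hence, for a given constant $c$, the condition
\begin{equation*}
\sum_{j<N}\lambda_j(S)=cG(N)+\op{O}(Ng(N))
\end{equation*}
holds if and only if the same condition holds with $T$ in place of $S$. By Theorem~\ref{thm:hyper} applied in $\sL_g(\sH)$ and in $\sL_g(\sH')$ separately, $S$ is $g$-hypermeasurable with integral $c$ if and only if $T$ is $g$-hypermeasurable with integral $c$. This gives the equivalence, and as a bonus the two noncommutative integrals agree.

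I expect no real obstacle here. The one point needing care is justifying that a common eigenvalue sequence exists when $\sH$ and $\sH'$ may differ, including the case where one of the operators has a non-trivial kernel of different dimension. This is exactly where the hypothesis about the \emph{non-zero} spectrum enters, since zeros contribute nothing to the partial sums.
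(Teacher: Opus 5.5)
Your proposal is correct and follows the paper's route: the paper deduces this corollary directly from criterion~(iii) of Theorem~\ref{thm:hyper}, which involves only the partial sums of an eigenvalue sequence. Since $S$ and $T$ have the same non-zero eigenvalues with algebraic multiplicity, you can use one eigenvalue sequence for both operators; zeros add nothing to the partial sums, so the same constant $c$ works for both and their noncommutative integrals coincide.
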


In the special case where $\sH'$ is the Hilbert space $\sH$ with an equivalent inner product we obtain the following result.

\begin{corollary}
 The space $\sM_h^g(\sL_g(\sH))$ does not depend on the inner product of $\sH$.
\end{corollary}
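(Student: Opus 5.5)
The plan is to derive this directly from Corollary~\ref{cor:Hyper.spectral-invariance}. Let $\sH'$ denote the vector space $\sH$ equipped with an inner product $\langle\cdot,\cdot\rangle'$ that is equivalent to the original one $\langle\cdot,\cdot\rangle$. Equivalence means the identity map $U:\sH\rightarrow\sH'$ is a bounded linear isomorphism with bounded inverse. Consequently the bounded operators are the same for both inner products, and so are the compact operators. For $T\in\sK$, singular values computed for the two inner products are comparable. Indeed, $\mu_j(T)$ in $\sH'$ is the $j$th singular value of $U T U^{-1}$ regarded as an operator on $\sH$ via the original norm. Hence, by the ideal property, $\mu_j'(T)\leq \|U\|\,\|U^{-1}\|\,\mu_j(T)$, and symmetrically with the roles of the two inner products exchanged. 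It follows that $\sL_g(\sH)=\sL_g(\sH')$ as sets of operators.

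Next, I observe that the spectrum of an operator, together with the algebraic multiplicities of its nonzero eigenvalues, is a purely algebraic notion. It does not depend on the inner product: $\lambda-T$ is invertible, and $\ker(\lambda-T)^k$ has a given dimension, regardless of which equivalent norm we use. So a given $T\in\sL_g(\sH)$, viewed in $\sL_g(\sH)$ and in $\sL_g(\sH')$, has the same nonzero spectrum up to multiplicity.

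Corollary~\ref{cor:Hyper.spectral-invariance} then shows that $T$ is $g$-hypermeasurable in $\sL_g(\sH)$ if and only if it is $g$-hypermeasurable in $\sL_g(\sH')$. Hence the two spaces $\sM_h^g$ coincide. Equivalently, one can see this directly from Theorem~\ref{thm:hyper}(iii): that condition involves only eigenvalue sequences, and the same eigenvalue sequence serves for both inner products.

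The only point needing care, and it is minor, is to check that $T_g$ and normalization play no hidden role. Condition~(iii) of Theorem~\ref{thm:hyper} avoids $T_g$ entirely, so it suffices to use that equivalence. There is no real obstacle beyond verifying the equality of the ideals, which is done above.
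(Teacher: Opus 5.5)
Your proposal is correct and follows the paper's route: the paper obtains this corollary by specializing Corollary~\ref{cor:Hyper.spectral-invariance} to $\sH'=\sH$ with an equivalent inner product, which is what you do. You also check two points the paper leaves implicit, namely $\sL_g(\sH)=\sL_g(\sH')$ and the inner-product independence of nonzero spectra with algebraic multiplicities; the alternative via condition~(iii) of Theorem~\ref{thm:hyper} is equally valid. One small fix: since $U$ is the identity map, ``$UTU^{-1}$'' is just $T$, so that justification does not work as written. Instead obtain $\mu_j'(T)\leq \|U\|\,\|U^{-1}\|\,\mu_j(T)$ from the approximation-number formula $\mu_j(T)=\inf\{\|T-F\|;\ \op{rk}F\leq j\}$ together with $\|A\|'\leq \|U\|\,\|U^{-1}\|\,\|A\|$. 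Alternatively, conjugate by the unitary $P^{1/2}:\sH'\rightarrow\sH$, where $\langle x,y\rangle'=\langle Px,y\rangle$.
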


\subsection{Hypermeasurability vs.\ spectral measurability}
We know from~\cite[Theorem~5.8]{PT:Part1} that spectral measurability implies strong measurability. The relationship between spectral measurability and $g$-hypermeasurability is much less transparent.

On the one hand, there are examples of spectrally measurable operators that are not $g$-hypermeasurable (see Proposition~\ref{prop:Stronger-meas.Weyk-non-hyper} below). On the other hand, in Section~\ref{sec:Pietsch} we will exhibit an example of a $g$-hypermeasurable operator that is not spectrally measurable (see Proposition~\ref{prop:hyper non-weyl}). Therefore, those two notions of measurability are incomparable. 

Nevertheless, as we shall now see, if Weyl laws hold with a remainder term of sufficiently lower order, then we obtain $g$-hypermeasurability.

\begin{proposition}\label{prop:hyper cond by asymptotic}
Let $T=T^*\in \sL_g$ be such that
\begin{equation}\label{cond:hyper with remainder h}
 \lambda_j^\pm(T)= g(j)\left(c^\pm + \op{O}\left( \varepsilon(j)\right)\right),
\end{equation}
where $\varepsilon:[t_0,\infty)\rightarrow (0,\infty)$ is a decreasing null function such that
\begin{equation}\label{cond: h}
\int_{t_0}^t \varepsilon(s)g(s)ds=\op{O}\left(tg(t)\right).
\end{equation}
 Then $T$ is $g$-hypermeasurable.
\end{proposition}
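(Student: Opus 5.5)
The proof will go through the spectral criterion of Theorem~\ref{thm:hyper}: it suffices to show that
\begin{equation*}
 \sum_{j<N}\lambda_j(T)=cG(N)+\op{O}(Ng(N)), \qquad c:=c^+-c^-.
\end{equation*}
First we reduce to positive and negative parts. Since $T=T^*$, we have $T=T^+-T^-$ with $T^\pm\geq 0$ and $\lambda_j(T^\pm)=\lambda_j^\pm(T)$. By~(\ref{eq:com difference}), the property ``$S=aT_g \bmod \Com(\sL_g)$'' is linear in $S$. Indeed, $\Com(\sL_g)$ is a subspace and $\C T_g\oplus\Com(\sL_g)$ is a subspace, so it suffices to show that $T^\pm = c^\pm T_g \bmod \Com(\sL_g)$. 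By Theorem~\ref{thm:hyper}, this is equivalent to
\begin{equation*}
 \sum_{j<N}\lambda_j^\pm(T)=c^\pm G(N)+\op{O}(Ng(N)).
\end{equation*}
Then $T=(c^+-c^-)T_g \bmod \Com(\sL_g)$, and Theorem~\ref{thm:hyper} (ii)$\Rightarrow$(i) gives the result. Working with $T^\pm$ separately avoids any issue with ordering the eigenvalues of $T$ by modulus.

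Next we estimate the partial sums for a positive operator $S$ with $\lambda_j(S)=g(j)(c_0+\op{O}(\varepsilon(j)))$. Write
\begin{equation*}
 \sum_{j<N}\lambda_j(S)=c_0\sum_{j<N}g(j)+\op{O}\Big(1+\sum_{t_0\leq j<N}\varepsilon(j)g(j)\Big).
\end{equation*}
As in the proof of Theorem~\ref{thm:hyper}, $\sum_{j<N}g(j)=G(N)+\op{O}(1)$, because $g$ is ultimately monotone, so the sum and the integral differ by a bounded amount. By~(\ref{cond:minimal}) we have $1=\op{O}(Ng(N))$, so the $\op{O}(1)$ terms are harmless.

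The main step is to compare $\sum_{t_0\le j<N}\varepsilon(j)g(j)$ with $\int_{t_0}^N\varepsilon(s)g(s)\,ds$. Both $\varepsilon$ and (ultimately) $g$ are decreasing, so the product $\varepsilon g$ is ultimately decreasing. Hence
\begin{equation*}
 \sum_{t_1\le j<N}\varepsilon(j)g(j)\leq \varepsilon(t_1)g(t_1)+\int_{t_1}^{N}\varepsilon(s)g(s)\,ds
\end{equation*}
for $t_1$ large. The finitely many initial terms contribute $\op{O}(1)=\op{O}(Ng(N))$. Condition~(\ref{cond: h}) then gives $\op{O}(Ng(N))$ for the integral. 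If $g$ is only ultimately monotone, I will start the comparison beyond the point where $g$ is monotone, which costs only a constant.

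Combining these estimates yields the required partial sum asymptotics for $T^\pm$, and hence the proposition. I expect no real obstacle. The only care needed is the sum–integral comparison and checking that all constant errors are absorbed into $\op{O}(Ng(N))$ via~(\ref{cond:minimal}).
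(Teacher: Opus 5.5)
Your proof is correct and follows essentially the same route as the paper. Both reduce by linearity to positive operators, apply the eigenvalue-sum criterion of Theorem~\ref{thm:hyper}, and bound $\sum_{t_0\le j<N}\varepsilon(j)g(j)$ by $\int_{t_0}^N\varepsilon(s)g(s)\,ds+\op{O}(1)=\op{O}(Ng(N))$ using (\ref{cond: h}) and (\ref{cond:minimal}). Your explicit splitting $T=T^+-T^-$ with $T^\pm\equiv c^\pm T_g \bmod \Com(\sL_g)$ spells out the paper's ``by linearity we may assume $T\geq 0$'', and it lets you skip the paper's appeal to \cite[Theorem~5.8]{PT:Part1}, since (iii)$\Rightarrow$(ii)$\Rightarrow$(i) of Theorem~\ref{thm:hyper} already gives the value of the integral.
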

\begin{proof}
By linearity we may assume that $T\geq 0$. Set $c=c^+$. We know from ~\cite[Theorem~5.8]{PT:Part1} that $T$ is strongly measurable and $\gbint T=c$. Therefore, by Theorem~\ref{thm:hyper} we have
\begin{equation*}
 T\ \text{is $g$-hypermeasurable}\ \Longleftrightarrow \ \sum_{j<N} \lambda_j(T)=cG(N)+\op{O}\left(Ng(N)\right).
\end{equation*}
Set $a_j =\lambda_j(T)-cg(j)$. It follows from~(\ref{cond:minimal}) that
\begin{equation*}
 \sum_{j<N} \lambda_j(T)=c\sum_{j<N}g(j) + \sum_{j<N}a_j = cG(N)+ \sum_{j<N}a_j +\op{O}\left(Ng(N)\right).
\end{equation*}
Thus,
\begin{equation*}
 T\ \text{is $g$-hypermeasurable}\ \Longleftrightarrow \ \sum_{j<N} a_j=\op{O}\left(Ng(N)\right).
\end{equation*}

The asymptotic~(\ref{cond:hyper with remainder h}) implies that
\begin{equation*}
\sum_{j<N}a_j= \op{O}\big(\sum_{t_0\leq j<N}\varepsilon(j)g(j)\big).
\end{equation*}
By using~(\ref{cond: h}) and the fact that $1=\op{O}(tg(t))$ we get
\begin{equation*}
 \sum_{t_0\leq j<N}\varepsilon(j)g(j)= \int_{t_0}^N \varepsilon(s)g(s)ds +\op{O}(1)=\op{O}\left(Ng(N)\right).
\end{equation*}
This gives the result.
\end{proof}

\begin{corollary}
 If $T\in \sL_g$ is such that
\begin{equation*}
 \mu_j(T)= g(j)\left(c + \op{O}\left( \varepsilon(j)\right)\right),
\end{equation*}
where $\varepsilon:[t_0,\infty)\rightarrow (0,\infty)$ is a decreasing null function satisfying~(\ref{cond: h}), then $|T|$ is $g$-hypermeasurable.
\end{corollary}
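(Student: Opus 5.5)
The corollary is the special case of Proposition~\ref{prop:hyper cond by asymptotic} in which the operator is $|T|$. I would reduce to that proposition as follows.

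First, set $S=|T|$. Then $S=S^*\geq 0$ and $S\in\sL_g$, since $\mu_j(S)=\mu_j(T)$. Because $S$ is positive, it has no negative eigenvalues, so $S^-=0$ and $\lambda_j^-(S)=0$ for all $j$. Its positive eigenvalues, counted with multiplicity and in non-increasing order, are exactly its singular values, so $\lambda_j^+(S)=\mu_j(S)=\mu_j(T)$.

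Next, I would check the hypotheses of Proposition~\ref{prop:hyper cond by asymptotic} with $c^+=c$ and $c^-=0$.
\begin{itemize}
 \item The hypothesis on $\mu_j(T)$ gives $\lambda_j^+(S)=g(j)\big(c+\op{O}(\varepsilon(j))\big)$.
 \item Trivially $\lambda_j^-(S)=0=g(j)\big(0+\op{O}(\varepsilon(j))\big)$.
\end{itemize}
Thus condition~(\ref{cond:hyper with remainder h}) holds for $S$. The function $\varepsilon$ is the same decreasing null function, and it satisfies~(\ref{cond: h}) by assumption.

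Finally, Proposition~\ref{prop:hyper cond by asymptotic} then gives that $S=|T|$ is $g$-hypermeasurable. From its proof, the value is $\gbint |T|=c$.

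There is no real obstacle here. The only point needing care is the case where $|T|$ has a nontrivial kernel or finite rank. Eigenvalue sequences only record the non-zero eigenvalues, padded with zeros. These zeros agree with the zeros in the singular value sequence, so the identity $\lambda_j^+(|T|)=\mu_j(T)$ holds for every $j$.
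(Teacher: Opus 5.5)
Your proof is correct and is the intended argument. The paper states this corollary without a separate proof, as an immediate consequence of Proposition~\ref{prop:hyper cond by asymptotic} applied to $|T|$, using $\lambda_j^+(|T|)=\mu_j(T)$ and $\lambda_j^-(|T|)=0$ (so that $c^+=c$ and $c^-=0$), exactly as you do.
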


\begin{remark}\label{rmk:Hyper.RV-neg-eps-hyper}
 If $\varepsilon(t)$ is $\RV_{\rho}$ with $\rho<0$, then $\varepsilon(t)$ is always a null function that satisfies~(\ref{cond: h}). Indeed, as $\varepsilon(t)g(t)$ is $\RV_{-1+\rho}$, 
 by using~(\ref{cond:minimal}) we get 
 \begin{equation*}
 \int_0^t \varepsilon(s)g(s)ds = \op{O}(1)= \op{O}(tg(t)).
\end{equation*}
In particular, the condition~(\ref{cond: h}) is always satisfied by the power functions $\varepsilon(t)=t^{-\alpha}$ with $\alpha>1$.
\end{remark}

\begin{remark}\label{rmk:2nd h}
 The condition~(\ref{cond: h}) may also be satisfied by some $\RV_0$-functions. Let $\varphi:[G(t_0),\infty)\rightarrow (0,\infty)$ be a monotonic $C^1$-function such that $|\varphi'(t)|$ is a decreasing null function, and
 \begin{equation*}
 \varphi(G(t))=\op{O}(tg(t)).
\end{equation*}
If we set $\varepsilon(t)=|\varphi'(G(t))|$, $t\geq t_0$, then $\varepsilon(t)$ is a decreasing null function, and we have
\begin{equation*}
 \int_{t_0}^t \varepsilon(s)g(s)ds= \pm \int_{t_0}^t \varphi'(G(s))g(s)ds= \pm \int_{G(t_0)}^{G(t)} \varphi(u)du=\op{O}\left(\varphi(G(t))\right)=\op{O}(tg(t)).
\end{equation*}
That is, the condition~(\ref{cond: h}) holds.
\end{remark}

Proposition~\ref{prop:hyper cond by asymptotic} shows that the remainder term in~(\ref{cond:hyper with remainder h}) need not be of very low order to have $g$-hypermeasurability. Nevertheless, as the following shows, if the order of the remainder term is not low enough, then $g$-hypermeasurability may fail.

\begin{proposition}\label{prop:Stronger-meas.Weyk-non-hyper}
 Let $T\in \sL_g$, $T\geq 0$, be such that
 \begin{equation}\label{cond:non-hyper-remainder}
 \lambda_j(T)=g(j)\big(c_0 + c_1 \varepsilon(j) +\op{o}\left( \varepsilon(j)\right)\big),  \qquad c_1\neq 0,
\end{equation}
where $\varepsilon:[t_0,\infty)\rightarrow (0,\infty)$ is a decreasing null function such that
\begin{equation}\label{cond: non-h}
tg(t)  =\op{o}\bigg( \int_{t_0}^t \varepsilon(s)g(s)ds\bigg).
\end{equation}
 Then $T$ is not $g$-hypermeasurable.
\end{proposition}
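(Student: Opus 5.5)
We argue by contradiction, using the spectral characterization of Theorem~\ref{thm:hyper}. First note that the asymptotic~(\ref{cond:non-hyper-remainder}) gives $\lambda_j(T)\sim c_0 g(j)$, so $T$ is spectrally measurable and, by~\cite[Theorem~5.8]{PT:Part1}, it is strongly measurable with $\gbint T=c_0$. Suppose that $T$ were $g$-hypermeasurable. By Remark~\ref{rmk:hyper implies strong} its hypermeasurable value must then be $c_0$. Theorem~\ref{thm:hyper} would then give
\begin{equation*}
 \sum_{j<N}\lambda_j(T)=c_0G(N)+\op{O}\left(Ng(N)\right).
\end{equation*}

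As in the proof of Proposition~\ref{prop:hyper cond by asymptotic}, set $a_j=\lambda_j(T)-c_0g(j)$. By~(\ref{cond:minimal}) we have $\sum_{j<N}g(j)=G(N)+\op{O}(1)=G(N)+\op{O}(Ng(N))$. Hence hypermeasurability with value $c_0$ is equivalent to
\begin{equation*}
 \sum_{j<N}a_j=\op{O}\left(Ng(N)\right).
\end{equation*}
By~(\ref{cond:non-hyper-remainder}) we have $a_j=c_1\varepsilon(j)g(j)+\op{o}(\varepsilon(j)g(j))$ for $j\geq t_0$. Since $\varepsilon(j)g(j)>0$, the little-o terms sum to $\op{o}\big(\sum_{t_0\leq j<N}\varepsilon(j)g(j)\big)$, provided this sum diverges. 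Divergence follows from~(\ref{cond: non-h}) together with $1=\op{O}(tg(t))$, since the integral then tends to infinity. Thus
\begin{equation*}
 \sum_{j<N}a_j=\big(c_1+\op{o}(1)\big)\sum_{t_0\leq j<N}\varepsilon(j)g(j)+\op{O}(1).
\end{equation*}

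The remaining step is to compare the sum with the integral $\int_{t_0}^N\varepsilon(s)g(s)\,ds$, and this is the main technical point. The function $\varepsilon$ is decreasing and $g$ is ultimately monotone. By Lemma~\ref{lem:UCT-consequence} we have $g(s)\sim g(j)$ uniformly for $s\in[j,j+1]$. Since $\varepsilon(j+1)\leq\varepsilon(s)\leq\varepsilon(j)$ on that interval, we get
\begin{equation*}
 \sum_{t_0\leq j<N}\varepsilon(j)g(j)\geq(1-\op{o}(1))\int_{\lceil t_0\rceil}^{N}\varepsilon(s)g(s)\,ds
\end{equation*}
up to a bounded error. A lower bound is all that is required here.

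Combining these steps, $\big|\sum_{j<N}a_j\big|\geq\frac{|c_1|}{2}\int_{t_0}^N\varepsilon g\,ds-\op{O}(1)$ for large $N$. By~(\ref{cond: non-h}), the ratio of this quantity to $Ng(N)$ tends to infinity. This contradicts $\sum_{j<N}a_j=\op{O}(Ng(N))$, so $T$ is not $g$-hypermeasurable.

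Care is needed only to ensure that the $\op{O}(1)$ boundary terms and the finitely many indices $j<t_0$ are absorbed. Both are $\op{O}(1)=\op{O}(Ng(N))$ by~(\ref{cond:minimal}), so they do not affect the argument.
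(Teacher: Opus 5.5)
Your proof is correct and takes essentially the same route as the paper: use Theorem~\ref{thm:hyper} to reduce to showing that $\sum_{j<N}a_j$, with $a_j=\lambda_j(T)-c_0g(j)$, is not $\op{O}(Ng(N))$, then compare $\sum|c_1|\varepsilon(j)g(j)$ with $\int_{t_0}^N\varepsilon(s)g(s)\,ds$ and invoke~(\ref{cond: non-h}). You make explicit what the paper leaves implicit, namely the divergence of the sum and the absorption of the little-o, initial and boundary terms. For the sum--integral comparison, note that $\varepsilon g$ is ultimately decreasing (since $g$ is $\RV_{-1}$ and ultimately monotone), so the sum and the integral differ by $\op{O}(1)$ directly, without needing the UCT.
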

\begin{proof}
 In the same way as in the proof of Proposition~\ref{prop:hyper cond by asymptotic}, if we set $a_j=\lambda_j(T)-c_0g(j)$, $j\geq 0$, then we have
\begin{equation*}
 T\ \text{is $g$-hypermeasurable}\ \Longleftrightarrow \ \sum_{j<N} a_j=\op{O}\left(Ng(N)\right).
\end{equation*}
It follows from~(\ref{cond:non-hyper-remainder}) that
\begin{equation*}
\bigg| \sum_{j<N} a_j\bigg|  \sim  \sum_{j<N} |c_1|\varepsilon(j)g(j) =|c_1| \int_{t_0}^N \varepsilon(s)g(s)ds +\op{O}(1).
\end{equation*}
The condition~(\ref{cond: non-h}) then ensures that $ \sum_{j<N} a_j$ is not $\op{O}(Ng(N))$, and so $T$ is not $g$-hypermeasurable. The proof is complete.
\end{proof}

\begin{remark}\label{rmk:non-hyper}
 Let $\varphi:[G(t_0),\infty)\rightarrow (0,\infty)$ be a monotonic $C^1$-function such that $|\varphi'(t)|$ is a decreasing null function, and
 \begin{equation}\label{eq:non-hyper-condition}
\lim_{t\rightarrow \infty} \frac{ \varphi(G(t))}{tg(t)}=\infty.
\end{equation}
As in Remark~\ref{rmk:2nd h}, if we set $\varepsilon(t)=|\varphi'(G(t))|$, $t\geq t_0$, then $\varepsilon(t)$ is a decreasing null function, and we have
\begin{equation*}
 \frac{\pm 1}{tg(t)}\int_{t_0}^t \varepsilon(s)g(s)ds=   \frac{1}{tg(t)} \int_{G(t_0)}^{G(t)} \varphi(u)du\sim \frac{\varphi(G(t))}{tg(t)}\longrightarrow\infty.
\end{equation*}
That is, the condition~(\ref{cond: non-h}) holds.
 \end{remark}

It is worth specializing Proposition~\ref{prop:hyper cond by asymptotic}  and Proposition~\ref{prop:Stronger-meas.Weyk-non-hyper} to the case $g(t)\sim t^{-1}$ (in which case $\sL_g$ agrees with the weak trace-class $\sL_{1,\infty}$) and to the case $g(t)\sim t^{-1}(\log t)^q$, $q>0$.

\begin{corollary}\label{cor:Hyper.Weyl-hyper.sLp}
 Suppose that $g(t)\sim t^{-1}$.
 \begin{enumerate}
 \item If $T=T^*\in \sL_{1,\infty}$ is such that
 \begin{equation*}
 \lambda_j^\pm(T) = g(j)\bigg(c^{\pm} + \op{O}\left(\frac{1}{(\log j)^\alpha}\right) \bigg), \qquad \alpha>1,
\end{equation*}
then $T$ is $g$-hypermeasurable.

\item If $0\leq T\in \sL_{1,\infty}$ is such that
   \begin{equation*}
 \lambda_j(T) = g(j)\bigg(c_0 + \frac{c_1}{\log j}+ \op{o}\left(\frac{1}{\log j}\right) \bigg), \qquad c_1\neq 0,
\end{equation*}
then $T$ is not $g$-hypermeasurable.
\end{enumerate}
\end{corollary}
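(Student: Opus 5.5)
The plan is to derive both parts from Proposition~\ref{prop:hyper cond by asymptotic} and Proposition~\ref{prop:Stronger-meas.Weyk-non-hyper}. Once $g(t)\sim t^{-1}$, all that remains is to check the integral conditions~(\ref{cond: h}) and~(\ref{cond: non-h}) for $\varepsilon(t)=(\log t)^{-\alpha}$. Throughout, $tg(t)\to 1$. Hence $\op{O}(tg(t))=\op{O}(1)$, and $tg(t)=\op{o}(F(t))$ just means $F(t)\to\infty$. Condition~(\ref{cond:minimal}) is automatic.

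For part (1), take $t_0=3$ (any $t_0>1$ will do) and $\varepsilon(t)=(\log t)^{-\alpha}$ with $\alpha>1$. This is a positive decreasing null function, and the hypothesis says precisely that $\lambda_j^\pm(T)=g(j)(c^\pm+\op{O}(\varepsilon(j)))$. Since $g(s)\le C s^{-1}$ for $s\ge t_0$, we have
\begin{equation*}
\int_{t_0}^t \varepsilon(s)g(s)\,ds\le C\int_{t_0}^t \frac{ds}{s(\log s)^{\alpha}}\le C\int_{\log t_0}^\infty u^{-\alpha}\,du<\infty .
\end{equation*}
So this integral is $\op{O}(1)=\op{O}(tg(t))$, which is~(\ref{cond: h}), and Proposition~\ref{prop:hyper cond by asymptotic} gives $g$-hypermeasurability. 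The earlier remark about $\RV_\rho$ functions with $\rho<0$ does not apply here, because $\varepsilon$ is $\RV_0$. The direct computation above replaces it; alternatively, one could use Remark~\ref{rmk:2nd h} with $\varphi(u)=u^{1-\alpha}$, noting that $G(t)\sim\log t$.

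For part (2), take $\varepsilon(t)=(\log t)^{-1}$ on $[t_0,\infty)$ with $t_0>1$. It is again a decreasing null function, and the hypothesis reads $\lambda_j(T)=g(j)(c_0+c_1\varepsilon(j)+\op{o}(\varepsilon(j)))$ with $c_1\neq0$, which is exactly~(\ref{cond:non-hyper-remainder}). Since $g(s)\ge c s^{-1}$ for large $s$ (with $c>0$), we get
\begin{equation*}
\int_{t_0}^t \frac{g(s)}{\log s}\,ds\ge c\,\log\log t-\op{O}(1)\longrightarrow\infty .
\end{equation*}
Because $tg(t)=\op{O}(1)$, this gives $tg(t)=\op{o}\big(\int_{t_0}^t\varepsilon g\big)$, which is~(\ref{cond: non-h}). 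Proposition~\ref{prop:Stronger-meas.Weyk-non-hyper} then shows that $T$ is not $g$-hypermeasurable.

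There is no real obstacle. The only points needing care are (a) choosing $t_0>1$ so that $\log t>0$ and $\varepsilon$ is defined, positive and decreasing, and (b) replacing $g$ by $t^{-1}$ inside the integrals. Step (b) is justified because $g(t)\sim t^{-1}$ gives two-sided bounds $c s^{-1}\le g(s)\le Cs^{-1}$ for large $s$, and any compact initial segment of the integral contributes only $\op{O}(1)$.
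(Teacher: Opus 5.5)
Your proof is correct and follows the paper's route: both parts reduce to Propositions~\ref{prop:hyper cond by asymptotic} and~\ref{prop:Stronger-meas.Weyk-non-hyper} once conditions~(\ref{cond: h}) and~(\ref{cond: non-h}) are checked. The only difference is in how those conditions are checked. The paper goes through Remarks~\ref{rmk:2nd h} and~\ref{rmk:non-hyper} with $\varphi(u)=u^{-\alpha}$ and $\varphi(u)=\log u$, so its $\varepsilon=|\varphi'\circ G|$ is only asymptotic to a power of $1/\log t$; you instead estimate the integrals directly for $\varepsilon(t)=(\log t)^{-\alpha}$ using $g(s)\asymp s^{-1}$, which is slightly cleaner.
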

\begin{proof}
If $g(t)\sim t^{-1}$, then $tg(t)\sim 1$ and $G(t)\sim \log t$. Therefore, the conditions of Remark~\ref{rmk:2nd h} are satisfied by any power function $\varphi(t)=t^{-\alpha}$, $\alpha>0$, in which case $|\varphi'(G(t))|\sim (\log t)^{-(1+\alpha)}$. Proposition~\ref{prop:hyper cond by asymptotic} then gives the first part.

Moreover, the condition~(\ref{cond: non-h}) is satisfied by $\varphi(t)=\log t$. In this case $\varphi'(G(t))=(\log (t+1))^{-1}\sim (\log t)^{-1}$. We then get the second part by applying Proposition~\ref{prop:Stronger-meas.Weyk-non-hyper}. The proof is complete.
 \end{proof}

\begin{corollary}\label{cor:Hyper.Weyl-hyper.sLpq}
 Suppose that  $g(t)\sim t^{-1}(\log t)^q$, $q>0$.
 \begin{enumerate}
 \item If $T=T^*\in \sL_g$ is such that
   \begin{equation*}
 \lambda_j^\pm(T) =g(j)\bigg(c^{\pm} + \op{O}\left(\frac{1}{\log j}\right) \bigg),
\end{equation*}
then $T$ is $g$-hypermeasurable.

\item If $0\leq T\in \sL_g$ is such that
  \begin{equation*}
 \lambda_j(T) = \frac{(\log j)^q}{j}\bigg(c_0 + \frac{c_1}{(\log j)^\alpha}+ \op{o}\left(\frac{1}{(\log j)^\alpha}\right) \bigg), \qquad c_1\neq 0,  \quad 0<\alpha<1,
\end{equation*}
then $T$ is not $g$-hypermeasurable.
\end{enumerate}
\end{corollary}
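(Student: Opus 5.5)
We prove Corollary~\ref{cor:Hyper.Weyl-hyper.sLpq} exactly as Corollary~\ref{cor:Hyper.Weyl-hyper.sLp}: part~(1) comes from Proposition~\ref{prop:hyper cond by asymptotic} and part~(2) from Proposition~\ref{prop:Stronger-meas.Weyk-non-hyper}. The remainder functions are produced via Remarks~\ref{rmk:2nd h} and~\ref{rmk:non-hyper}. The basic asymptotics come first. If $g(t)\sim t^{-1}(\log t)^q$, then $tg(t)\sim (\log t)^q$. By Karamata's theorem, or by direct integration of an equivalent function, $G(t)\sim (q+1)^{-1}(\log t)^{q+1}$, so $\log t\sim \big((q+1)G(t)\big)^{1/(q+1)}$. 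Note that~(\ref{cond:minimal}) holds since $q>0$.

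\textbf{Part (1).} We take $\varepsilon(t)=(\log t)^{-1}$ on $[t_0,\infty)$ with $t_0>1$; this is a decreasing null function. We check~(\ref{cond: h}) directly. Since $\varepsilon(s)g(s)\sim s^{-1}(\log s)^{q-1}$ and $q>0$, we get
\begin{equation*}
\int_{t_0}^t \varepsilon(s)g(s)\,ds \sim q^{-1}(\log t)^{q} \sim q^{-1}\,tg(t),
\end{equation*}
which is $\op{O}(tg(t))$. The hypothesis of part~(1) is~(\ref{cond:hyper with remainder h}) with this $\varepsilon$, so Proposition~\ref{prop:hyper cond by asymptotic} gives $g$-hypermeasurability. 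Alternatively, one could use Remark~\ref{rmk:2nd h} with $\varphi(u)=u^{q/(q+1)}$. Then $\varphi(G(t))\asymp(\log t)^q$ and $\varphi'(G(t))\asymp(\log t)^{-1}$, which gives the same $\varepsilon$ up to constants.

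\textbf{Part (2).} The hypothesis is stated with the weight $j^{-1}(\log j)^q$ rather than $g(j)$. Since $g(j)\sim j^{-1}(\log j)^q$, we can write $j^{-1}(\log j)^q=g(j)(1+\op{o}(1))$. That factor could spoil the $c_1(\log j)^{-\alpha}$ term unless the $\op{o}(1)$ is itself $\op{o}((\log j)^{-\alpha})$. This is the main subtlety, and I would handle it by avoiding the rewriting altogether. Rerun the argument of Proposition~\ref{prop:Stronger-meas.Weyk-non-hyper} with the comparison sequence $c_0j^{-1}(\log j)^q$ in place of $c_0g(j)$. Set $a_j=\lambda_j(T)-c_0 j^{-1}(\log j)^q$ for $j\ge2$. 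Then
\begin{equation*}
\sum_{j<N}a_j\sim c_1\sum_{2\le j<N} j^{-1}(\log j)^{q-\alpha}\sim \frac{c_1}{q+1-\alpha}(\log N)^{q+1-\alpha},
\end{equation*}
which is not $\op{O}((\log N)^q)=\op{O}(Ng(N))$ because $\alpha<1$. It remains to compare $c_0\sum_{j<N} j^{-1}(\log j)^q$ with $c_0G(N)$. If the difference is $\op{O}(Ng(N))$, then Theorem~\ref{thm:hyper} shows that $T$ is not $g$-hypermeasurable with value $c_0$. By Remark~\ref{rmk:hyper implies strong} and the Dixmier limit, this value is the only possible one, so $T$ is not $g$-hypermeasurable.

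If the difference is not $\op{O}(Ng(N))$, the statement is really meant with $g$ normalized so that $g(j)=j^{-1}(\log j)^q$ for large $j$. In that reading, part~(2) is a direct application of Proposition~\ref{prop:Stronger-meas.Weyk-non-hyper} with $\varepsilon(t)=(\log t)^{-\alpha}$. Here $\int_{t_0}^t\varepsilon g\sim (q+1-\alpha)^{-1}(\log t)^{q+1-\alpha}$, which dominates $tg(t)\sim(\log t)^q$, so~(\ref{cond: non-h}) holds. Equivalently, this is Remark~\ref{rmk:non-hyper} with $\varphi(u)=u^{(q+1-\alpha)/(q+1)}$. I expect this identification of $g$ with $t^{-1}(\log t)^q$ at the level needed for the remainder to be the only delicate point; the rest is routine integration.
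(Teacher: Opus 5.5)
Your argument is the paper's. Part~(1) is Proposition~\ref{prop:hyper cond by asymptotic} with $\varepsilon(t)\asymp(\log t)^{-1}$, and part~(2) is Proposition~\ref{prop:Stronger-meas.Weyk-non-hyper} with $\varepsilon(t)\asymp(\log t)^{-\alpha}$. The paper obtains these $\varepsilon$'s from Remarks~\ref{rmk:2nd h} and~\ref{rmk:non-hyper} with $\varphi(u)=u^{\beta}$, taking $\beta=q/(q+1)$ for part~(1) and $\beta=1-\alpha/(q+1)$ for part~(2). Your direct integration of $\varepsilon g$ is equivalent and a little cleaner.

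The point you flag in part~(2) is real, and the paper's proof passes over it. It applies Proposition~\ref{prop:Stronger-meas.Weyk-non-hyper}, whose hypothesis is written with $g(j)$, to an expansion written with $j^{-1}(\log j)^q$. Your first case is a complete and correct argument; it includes $c_0=0$, where no condition on $g$ is needed. Your second case, however, cannot be closed by any proof, because for $c_0\neq 0$ part~(2) is false as literally stated. Take $c_0>0$, $c_1\neq 0$, and a positive continuous decreasing $g$ on $[0,\infty)$ with, for $t_1$ large,
\begin{equation*}
 g(t)=\frac{(\log t)^q}{t}\Bigl(1+\frac{c_1}{c_0}(\log t)^{-\alpha}\Bigr) \qquad \text{for}\ t\geq t_1 .
\end{equation*}
This $g$ is $\RV_{-1}$ with $g(t)\sim t^{-1}(\log t)^q$. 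The operator $T=c_0T_g$ satisfies the hypothesis of part~(2) with zero $\op{o}$-term, yet $\varphi(T)=c_0$ for every $g$-normalized trace, so $T$ is $g$-hypermeasurable.

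So instead of appealing to what the statement ``really means'', say that part~(2) needs an extra assumption. Any of the following suffices:
\begin{itemize}
\item write $g(j)$ in place of $j^{-1}(\log j)^q$, as in Corollary~\ref{cor:Hyper.Weyl-hyper.sLp}(2), so that part~(2) becomes your direct application of Proposition~\ref{prop:Stronger-meas.Weyk-non-hyper};
\item assume $g(j)=j^{-1}(\log j)^q\bigl(1+\op{o}((\log j)^{-\alpha})\bigr)$, which is what the paper tacitly uses;
\item assume your condition $c_0\bigl(\sum_{2\leq j<N}j^{-1}(\log j)^q-G(N)\bigr)=\op{O}(Ng(N))$.
\end{itemize}
For $g(t)=(t+1)^{-1}(\log(t+2))^q$, the function used in Section~\ref{sec:examples}, one has $\sum_{2\leq j<N}\bigl|g(j)-j^{-1}(\log j)^q\bigr|=\op{O}(1)$. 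So your first case is the one that applies there.
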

\begin{proof}
 If $g(t)\sim t^{-1}(\log t)^q$, $q>0$, then $tg(t)\sim (\log t)^q$ and $G(t)\sim (q+1)^{-1}(\log t)^{q+1}$. Thus, the conditions of Remark~\ref{rmk:2nd h} are satisfied by
the power function $\varphi(t)=t^{\frac{q}{q+1}}$. In this case $\varphi'(t)=c_q t^{-\frac{1}{q+1}}$ and $\varphi'(G(t))\sim c_q'(\log t)^{-1}$, where $c_q$ and $c'_q$ are some positive constants. We then can apply Proposition~\ref{prop:hyper cond by asymptotic} to get the first part.

Moreover, if $\varphi(t)=t^\beta$, then $\varphi'(t)$ is a decreasing null function if and only if $\beta<1$. In addition, $\frac{1}{tg(t)}\varphi(G(t))\sim c_{q\beta}(\log t)^{\beta(q+1)-q}$, where $c_{q\beta}$ is some positive constant, and so the condition~(\ref{eq:non-hyper-condition}) is satisfied if and only if $\beta> q(q+1)^{-1}$. It follows that the assumptions of Remark~\ref{rmk:non-hyper} are satisfied if and only if $\beta\in (q(q+1)^{-1},1)$. This then ensures that the function $\varepsilon(t)=|\varphi'(G(t))|$ satisfies~(\ref{cond: non-h}).  

Note that $\varphi'(G(t))\sim c_{q\beta}'(\log t)^{-\alpha}$, where  $\alpha:=(1-\beta)(q+1)$ and $c_{q\beta}'$ is some positive constant. As $\beta$ ranges over $(q(q+1)^{-1},1)$, the exponent $\alpha$ ranges over $(0,1)$. We then get the second part by applying Proposition~\ref{prop:Stronger-meas.Weyk-non-hyper} with $\varepsilon(t)$ as above. The proof is complete.
\end{proof}

\subsection{Dependence on $g(t)$}
We end this section by looking at the dependence of $\sM_h^g(\sL_g)$ on the choice of $g$. In what follows we let $g_1:[0,\infty)\rightarrow (0,\infty)$ be a continuous
$\RV_{-1}$-function of the form,
\begin{equation*}
 g_1(t)=g(t)(1+\varepsilon(t)), \qquad t\geq 0,
\end{equation*}
where $\varepsilon:[0,\infty)\rightarrow (0,\infty)$ is a continuous decreasing null function. In particular, as $g_1(t)\sim g(t)$ the ideals $\sL_g$ and $\sL_{g_1}$ agree.

The following shows that $g$-hypermeasurability is sensitive to the choice of the function $g(t)$.

\begin{proposition}\label{prop:dependence}
 The following hold.
 \begin{enumerate}
\item If $\varepsilon(t)$ satisfies~(\ref{cond: h}), then the normalization conditions $\varphi(T_g)=1$ and $\varphi(T_{g_1})=1$ are equivalent, and so
$\sM_h^{g_1}(\sL_g)= \sM_h^{g}(\sL_g)$.

\item  If $\varepsilon(t)$ satisfies~(\ref{cond: non-h}), then $T_{g_1}$ (resp., $T_g$) is not $g$-hypermeasurable (resp., $g_1$-hypermeasurable), and so $\sM_h^{g_1}(\sL_g)\neq \sM_h^g(\sL_g)$.
\end{enumerate}
\end{proposition}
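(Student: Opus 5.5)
Both parts reduce to Theorem~\ref{thm:hyper}, applied once with $g$ and once with $g_1$, combined with the asymptotics of $\sum_{j<N}(g_1(j)-g(j))=\sum_{j<N}\varepsilon(j)g(j)$. First I check that $g_1$ satisfies the standing hypotheses. It is continuous and $\RV_{-1}$. Since $g_1\sim g$, we have $\int_0^\infty g_1=\infty$ and~(\ref{cond:minimal}) holds for $g_1$. Also $\sL_{g_1}=\sL_g$, so $\Com(\sL_{g_1})=\Com(\sL_g)$. Moreover $tg_1(t)\sim tg(t)$, so the two commutator characterizations~(\ref{eq:spectral1}) coincide. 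Throughout, $T_g$ and $T_{g_1}$ are positive with eigenvalue sequences $g(j)$ and $g_1(j)$; these are ordered sequences for $j$ large, which suffices up to finite rank.

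For part~(1), the idea is that $T_{g_1}-T_g\in\Com(\sL_g)$. By~(\ref{eq:com difference}) this amounts to
\begin{equation*}
\sum_{j<N}g_1(j)-\sum_{j<N}g(j)=\sum_{j<N}\varepsilon(j)g(j)=\op{O}(Ng(N)).
\end{equation*}
This follows from~(\ref{cond: h}) exactly as in the proof of Proposition~\ref{prop:hyper cond by asymptotic}. Since $\varepsilon$ is decreasing and $g$ is ultimately monotone, the sum and the integral differ by $\op{O}(1)$, and $1=\op{O}(Ng(N))$. Every trace vanishes on $\Com(\sL_g)$, so $\varphi(T_g)=\varphi(T_{g_1})$ for every trace $\varphi$. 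Hence the two classes of normalized traces coincide, and the definition gives $\sM_h^{g_1}(\sL_g)=\sM_h^{g}(\sL_g)$. Alternatively, Theorem~\ref{thm:hyper} gives $\C T_g\oplus\Com=\C T_{g_1}\oplus\Com$.

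For part~(2), I first show that $T_{g_1}$ is not $g$-hypermeasurable. The plan is to apply Proposition~\ref{prop:Stronger-meas.Weyk-non-hyper} with $c_0=1$, $c_1=1$ and the given $\varepsilon$, since $\lambda_j(T_{g_1})=g(j)(1+\varepsilon(j))$ exactly. A direct argument is equally easy. $T_{g_1}$ is spectrally measurable, so it is strongly measurable with $\gbint T_{g_1}=1$. If it were $g$-hypermeasurable, Theorem~\ref{thm:hyper}(iii) would force $\sum_{j<N}g_1(j)=G(N)+\op{O}(Ng(N))$, that is $\sum_{j<N}\varepsilon(j)g(j)=\op{O}(Ng(N))$. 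This contradicts~(\ref{cond: non-h}), because that sum equals $\int_{t_0}^N\varepsilon g+\op{O}(1)$.

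Symmetrically, for $T_g$ and $g_1$ I apply Theorem~\ref{thm:hyper} with $g_1$. The integral of $T_g$ is $1$ for all positive normalized traces, again by spectral measurability, since $g(j)/g_1(j)\to1$. So $g_1$-hypermeasurability would require $\sum_{j<N}g(j)=G_1(N)+\op{O}(Ng_1(N))$. Here $G_1(N)=G(N)+\int_0^N\varepsilon g$, which gives the same contradiction. This uses $G(N)=\sum_{j<N}g(j)+\op{O}(1)$ and the analogous statement for $g_1$, both guaranteed by~(\ref{cond:minimal}) and monotonicity. Finally, $T_{g_1}$ lies in $\sM_h^{g_1}(\sL_g)$ but not in $\sM_h^g(\sL_g)$, so the two spaces differ.

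The main point needing care is the sum–integral comparison: $\varepsilon g$ is only ultimately decreasing, which holds because $\varepsilon$ is decreasing and $g$ is ultimately decreasing as an $\RV_{-1}$ ultimately monotone function. Everything else is bookkeeping of $\op{O}(1)$ terms using $1=\op{O}(tg(t))$.
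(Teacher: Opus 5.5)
Your proof is correct and follows the paper's argument. Part (2) is exactly the paper's appeal to Proposition~\ref{prop:Stronger-meas.Weyk-non-hyper}. Your direct computation via Theorem~\ref{thm:hyper} with $G_1(N)=G(N)+\int_0^N\varepsilon g$ is that proposition unrolled, and it handles the reverse case ($T_g$ versus $g_1$) more transparently than the paper's ``likewise''.

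In part (1) you go straight to $T_{g_1}-T_g\in\Com(\sL_g)$ via~(\ref{eq:com difference}), instead of invoking strong measurability plus Proposition~\ref{prop:hyper cond by asymptotic} and then swapping $g$ and $g_1$. This is the same computation with the intermediate lemmas removed, and it gives slightly more: $\varphi(T_g)=\varphi(T_{g_1})$ for every trace $\varphi$.
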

\begin{proof}
 As for $j$ large enough we have $\lambda_j(T_{g_1})=g_1(j)=g(j)(1+\varepsilon(j))$, we see that
 \begin{equation*}
 \lambda_j(T_{g_1})=g(j)\left(1+\varepsilon(j)+\op{o}\left(\varepsilon(j)\right) \right).
\end{equation*}
As in the proof of~\cite[Lemma~5.11]{PT:Part1} this implies that $T_{g_1}$ is strongly measurable and $\gbint T_{g_1}=1$.

Suppose now that $\varepsilon(t)$ satisfies~(\ref{cond: h}). Proposition~\ref{prop:hyper cond by asymptotic} then ensures that $T_{g_1}$ is $g$-hypermeasurable. Thus, by Remark~\ref{rmk:hyper implies strong}, for every $g$-normalized trace on $\sL_g$, we have
\begin{equation*}
 \varphi(T_{g_1})= \gbint T_{g_1}=1.
\end{equation*}
Swapping the roles of $g$ and $g_1$ shows that if $\varphi$ is a $g_1$-normalized trace, then $\varphi(T_g)=1$. This shows that the normalization conditions $\varphi(T_g)=1$ and $\varphi(T_{g_1})=1$ are equivalent, proving the first part.

Finally, if $\varepsilon(t)$ satisfies~(\ref{cond: non-h}), then Proposition~\ref{prop:Stronger-meas.Weyk-non-hyper} ensures that $T_{g_1}$ is not $g$-hypermeasurable. Likewise, the operator $T_g$ is not  $g_1$-hypermeasurable. This gives the second part. The proof is complete.
\end{proof}

 \section{Pietsch's Correspondence for Traces on Weak Lorentz Ideals}\label{sec:Pietsch}

In this section, we work out in the special setting of weak Lorentz ideals Pietsch's correspondence for traces on operator ideals~\cite{Pi:IEOT17, Pi:AM20}. Our presentation merely follows the description of this correspondence given in~\cite{LSZ:Book}. We use it to give a precise description of the class of $g$-normalized positive traces and establish a spectral characterization of strongly measurable operators. We also explain how Dixmier traces fit into this framework. These results extend to weak Lorentz ideals results of Semenov--Sukochev--Usachev--Zanin~\cite{SSUZ:AIM15} for traces and strongly measurable operators on $\sL_{1,\infty}$ (see also~\cite{LSZ:Book}). Incidentally, the considerations in this section further lead to an elementary construction of $g$-hypermeasurable operators that are not spectrally measurable.

Throughout this section, we let $g:[0,\infty)\rightarrow (0,\infty)$ be a continuous $\RV_{-1}$-function such that $\int_0^\infty g(t)dt=\infty$. For the sake of simplicity we further assume that $g$ is decreasing. Note that, as $g(t)$ is ultimately decreasing, we may always replace $g(t)$ by the equivalent function $g(t+a)$, with $a$ large enough so that $g(t)$ is decreasing on $[0,\infty)$. This does not affect the definition of $\sL_g$ (\emph{cf}.~\cite[Remark~2.19]{PT:Part1}).

\subsection{Pietsch's correspondence for ideals}
Pietsch's correspondence for traces grew out of his correspondence for ideals. We briefly recall this correspondence for ideals and what it entails for the ideal $\sL_g$.

In what follows, we let  $S:\ell_\infty \rightarrow\ell_\infty$ be the (forward) shift operator defined by
\begin{equation}\label{def:forward}
 Sa=(0,a_0, a_1,a_2, \ldots ), \qquad a=(a_j)\in \ell_\infty.
\end{equation}
An ideal $\fs$ of $\ell_\infty$ is called \emph{shift-invariant} if $S(\fs)\subseteq \fs$. It is called \emph{monotone} if for every sequence
$x=(x_j)_{j\geq 0}$ in $\fs$, for any sequence $y=(y_j)_{j\geq 0}$ in $\ell_\infty$ we have
\begin{equation*}
\bigg( \sup_{k \geq j}|y_k| \leq  \sup_{k \geq j}|x_k| \quad \forall j\geq 0 \bigg) \Longrightarrow y\in\fs.
\end{equation*}

Pietsch's correspondence for ideals provides a one-to-one correspondence between two-sided ideals of $\sL(\sH)$ and shift-invariant monotone ideals of $\ell_\infty$ (see~\cite{Pi:IEOT17}; see also~\cite{LSZ:Book}).

To state Pietsch's result, we need to introduce some notation. Set
\begin{equation*}
 \D_m:=\left\{j\in \N_0; \ 2^{m}-1\leq j \leq 2^{m+1}-2\right\}, \qquad m\geq 0.
\end{equation*}
Note that these sets form a partition of $\N_0$. Moreover, given any sequence $a=(a_j)\in \co$ and any orthonormal family $(\xi_j)_{j \geq 0}$ in $\sH$ we set
\begin{equation*}
 \Diag(a)= \sum_{j\geq 0} a_j \left(\xi_j^*\otimes \xi_j\right).
\end{equation*}
Note that if $J$ is any ideal of $\sL(\sH)$, then the relation $\Diag(a)\in J$ does not depend on the choice of the orthonormal family $(\xi_j)_{j \geq 0}$.

In addition, we consider the \emph{dyadic dilation} operator $\delta: \ell_\infty \rightarrow \ell_\infty$ and the dyadic average operator $\alpha:\ell_\infty \rightarrow \ell_\infty$ given by
\begin{gather*}
\delta (a)_j=a_m \qquad \text{if} \ j \in \D_m,\\
\alpha (a)_j= 2^{-j}\sum_{k\in \D_j} a_k, \qquad a=(a_j)_{j\geq 0}\in \ell_\infty.
\end{gather*}
The operator $\alpha$ is a right-inverse of $\delta$, i.e., $\alpha\circ \delta=\op{id}$. In~\cite{LSZ:Book} the operators $\delta$ and $\alpha$ are denoted $L$ and $L'$, respectively. In the notation of~\cite{Pi:IEOT17} the operators $D_a$, $a\in \ell_\infty$,  are the operators $\Diag(\delta(a))$.

We are now in a position to state Pietsch's correspondence for ideals.

\begin{proposition}[Pietsch~\cite{Pi:IEOT17}]\label{prop:corpdns on ideals}
 We have a one-to-one correspondence between two-sided ideals of $\sL(\sH)$ and shift-invariant monotone ideals of $\ell_\infty$ as follows.
 \begin{enumerate}
 \item To any ideal $J$ of $\sL(\sH)$ corresponds the shift-invariant monotone ideal,
 \begin{equation*}
 \fs_J:=\left\{ x\in \ell_\infty; \ \Diag\left(\delta(x)\right)\in J\right\}.
\end{equation*}

\item To any shift-invariant monotone ideal of $\ell_\infty$ corresponds the ideal,
\begin{equation*}
 J_\fs:= \left\{T\in \sK; \ \left(\mu_{2^m-1}(T)\right)_{m\geq 0}\in \fs\right\}.
\end{equation*}

\item The correspondences $J\rightarrow \fs_J$ and $\fs\rightarrow J_\fs$ are inverses of each other.
\end{enumerate}
\end{proposition}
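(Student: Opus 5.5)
We will establish the three parts of Proposition~\ref{prop:corpdns on ideals} in order, working throughout with the dyadic blocks $\D_m$. The basic observation is a two-sided comparison between a compact operator $T$ and a diagonal operator built from the dyadic singular values $x_m:=\mu_{2^m-1}(T)$. First, monotonicity of singular values gives $\mu_j(T)\leq \mu_{2^m-1}(T)=\delta(x)_j$ for $j\in\D_m$. Hence $\mu_j(T)\leq \mu_j(\Diag(\delta(x)))$. Second, for $j\in\D_{m+1}$ we have $j\leq 2^{m+2}-2$, so $\mu_j(T)\geq \mu_{2^{m+2}-1}(T)=x_{m+2}$. This gives the reverse bound $\mu_j(\Diag(\delta(S^2 \cdots)))\lesssim\mu_j(T)$, up to the shift $S$ and a dyadic reindexing. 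We will use repeatedly the standard fact that a two-sided ideal $J$ of $\sL(\sH)$ contains every compact $A$ with $\mu_j(A)\leq \mu_j(B)$ for some $B\in J$, since $A=UBV$ for suitable bounded $U,V$ by polar decomposition.

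\textbf{Step 1: $\fs_J$ is a shift-invariant monotone ideal.} The linear-ideal property in $\ell_\infty$ follows because $\Diag(\delta(ax))=\Diag(\delta(a))\Diag(\delta(x))$ and $\Diag$ is linear. For monotonicity, suppose $\sup_{k\ge j}|y_k|\leq\sup_{k\geq j}|x_k|$. Then the decreasing rearrangement of $|\delta(y)|$ is dominated by that of the nonincreasing majorant of $|\delta(x)|$. We will check that this majorant lies in $\fs_J$ by dominating $\sup_{k\ge m}|x_k|$ through finitely many shifts. This is the main obstacle. I expect to resolve it using $J\ni\Diag(\delta(x))$ together with the fact that each $\D_m$ block has size $2^m$: the sizes of later blocks allow the singular values of $\Diag(\delta(x))$ to dominate the values $\delta(\sup_{k\ge m}|x_k|)$, after a unitary rearrangement and an index shift of one block. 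Shift invariance holds because $\Diag(\delta(Sx))$ is obtained from $\Diag(\delta(x))$ by spreading each block to a block of double size. That operator equals $\Diag(\delta(x))\otimes 1_{\C^2}$ up to unitary equivalence, and an ideal is stable under doubling, since $A\oplus A=UAU^*+VAV^*$ with isometries $U,V$.

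\textbf{Step 2: $J_\fs$ is a two-sided ideal, and the maps are mutually inverse.} Closure of $J_\fs$ under left and right multiplication by bounded operators follows from $\mu_j(ATB)\leq\|A\|\|B\|\mu_j(T)$ and monotonicity of $\fs$. Closure under sums uses Ky Fan's inequality $\mu_{2k}(T_1+T_2)\leq \mu_k(T_1)+\mu_k(T_2)$. This gives $\mu_{2^{m+1}-1}(T_1+T_2)\leq x^{(1)}_{m}+x^{(2)}_m$, so the sequence for $T_1+T_2$ is dominated by the shifted sequence $S^{*}$-type reindexing of $x^{(1)}+x^{(2)}$. We will handle this with shift invariance of $\fs$ and monotonicity.

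For the inverse relations, the two-sided comparison above shows that $T\in J$ if and only if $\Diag(\delta((\mu_{2^m-1}(T))_m))\in J$, using shift invariance and the doubling trick to absorb the one- or two-block offsets; this gives $J_{\fs_J}=J$. Conversely, for $x\in\fs$, the operator $\Diag(\delta(x))$ has dyadic singular values $(\sup_{k\ge m}|x_k|)$ up to a bounded shift. Monotonicity and shift invariance then give $\fs_{J_\fs}=\fs$.
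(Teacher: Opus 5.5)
The paper only quotes this result from Pietsch, so there is no in-paper proof to compare against. Your outline follows the standard route: singular-value domination, the doubling trick $A\oplus A\in J$, Ky Fan for sums, and dyadic block counting. The parts you actually carry out are sound.

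\textbf{The step you flag as the main obstacle is not carried out.} You leave it as an expectation, and it is the crux of both the monotonicity of $\fs_J$ and the identity $\fs_{J_\fs}=\fs$. It closes via an exact identity. For $x\in\co$ and $\bar{x}_m:=\sup_{k\geq m}|x_k|$,
\[
\mu_{2^m-1}\big(\Diag(\delta(x))\big)=\bar{x}_m .
\]
Only $2^m-1$ diagonal entries lie in $\D_0\cup\cdots\cup\D_{m-1}$, so at most $2^m-1$ entries exceed $\bar{x}_m$. Meanwhile the block $\D_{k^*}$, $k^*\geq m$, on which the supremum is attained contributes $2^{k^*}\geq 2^m$ entries equal to $\bar{x}_m$.

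Now suppose $y$ is monotonically dominated by $x\in\fs_J$, and let $j\in\D_m$. Since $\lfloor j/2\rfloor\leq 2^m-1$,
\[
\mu_j\big(\Diag(\delta(y))\big)\leq \bar{y}_m\leq \bar{x}_m\leq \mu_{\lfloor j/2\rfloor}\big(\Diag(\delta(x))\big)=\mu_j\big(\Diag(\delta(x))\oplus\Diag(\delta(x))\big).
\]
So $y\in\fs_J$ by domination and doubling. The same identity shows that the dyadic singular values of $\Diag(\delta(x))$ are exactly $\bar{x}$; contrary to your ``up to a bounded shift'', no shift is needed. Since $x$ and $\bar{x}$ dominate each other in the monotone sense, $\fs_{J_\fs}=\fs\cap\co$ follows from monotonicity alone. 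Likewise, for $T\in\sK$ the bound $\delta\big((\mu_{2^m-1}(T))_m\big)_j\leq\mu_{\lfloor j/2\rfloor}(T)=\mu_j(T\oplus T)$ cleanly replaces your garbled ``$S^2\cdots$'' comparison. It gives $J_{\fs_J}=J\cap\sK$ directly.

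\textbf{Your argument silently assumes $T\in\sK$ and $x\in\co$, and at the top element this fails.} For proper ideals the assumption is justified by two facts:
\begin{itemize}
\item Calkin's theorem: $J\neq\sL(\sH)$ implies $J\subseteq\sK$, for separable $\sH$.
\item If $x\in\fs$ has $\limsup|x_k|=c>0$, then the constant sequence $(c,c,\ldots)$ is monotonically dominated by $x$, so $\fs=\ell_\infty$.
\end{itemize}
At the top element, however, the equalities you claim are false for the statement as written. We have $\fs_{\sL(\sH)}=\ell_\infty$, but $J_{\ell_\infty}=\sK$ because $J_\fs\subseteq\sK$ by definition, and $\fs_{\sK}=\co$. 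So part (3) must be restricted to proper ideals on both sides, with $\sL(\sH)$ matched to $\ell_\infty$ by hand. Alternatively, replace $\mu_{2^m-1}(T)$ by Pietsch's approximation numbers $a_{2^m}(T)$, which are defined for all bounded $T$.

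\textbf{Minor slips.}
\begin{itemize}
\item In the sum step, the correct bound is $S^*z\leq x^{(1)}+x^{(2)}$. That is, $z$ is controlled by the forward shift $S(x^{(1)}+x^{(2)})$, not an ``$S^*$-type'' reindexing. This matters because invariance under $S^*$ is automatic for monotone ideals, while $S$-invariance is the actual hypothesis. Index $0$ is absorbed by monotonicity because the $x^{(i)}$ are non-increasing.
\item $\Diag(\delta(Sx))$ is not unitarily equivalent to $\Diag(\delta(x))\otimes 1_{\C^2}$, because of the extra zero on $\D_0$ and possible differences in kernel dimension. It does equal $W\big(\Diag(\delta(x))\oplus\Diag(\delta(x))\big)W^*$ for a partial isometry $W$, which suffices.
\end{itemize}
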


In what follows given any sequence $x=(x_j)_{j\geq 0}$ in $\co$ we denote by $\mu(x)=(\mu_j(x))_{j\geq 0}$ the non-increasing re-arrangement of the sequence of absolute values $(|x_j|)_{j\geq 0}$. Equivalently, $\mu_j(x)=\mu_j(\Diag(x))$, $j\geq 0$.

If $J$ is a two-sided ideal of $\sL(\sH)$, then its \emph{Calkin space} (also called the \emph{commutative core}) is
\begin{equation*}
 \ell_J:=\{ x\in \co; \ \Diag(x)\in J\}.
\end{equation*}
The first part of Proposition~\ref{prop:corpdns on ideals} means that $\delta^{-1}(\ell_J)=\fs_J$. In particular, we see that $\delta$ induces a linear operator $\delta: \fs_J \rightarrow \ell_J$.

The second part of Proposition~\ref{prop:corpdns on ideals} can also be re-interpreted in terms of the operator $\alpha$. Let $\fs$ be a shift-invariant monotone ideal.
If $x\in \co$ is such that $x=\mu(x)$, then
\begin{equation*}
 \mu_{2^{m+1}-1}(x) \leq \alpha(x)_m \leq \mu_{2^{m}-1}(x), \qquad m\geq 0.
\end{equation*}
Combining this with the shift-invariance of $\fs$, we see that
\begin{equation*}
 (x_{2^m-1})_{m\geq 0} \in \fs \Longleftrightarrow \alpha(x)\in \fs.
\end{equation*}
It then follows that
\begin{equation}\label{def:J_s}
 J_\fs = \left\{ T\in \sK; \ \alpha(\mu(T))\in \fs\right\}.
\end{equation}
We thus recover the definition of $J_{\fs}$ given in~\cite[Chapter~4]{LSZ:Book}.

From~(\ref{def:J_s}) we obtain that if $x\in \co$, then
\begin{equation*}
 x\in \ell_{J_\fs}  \Longleftrightarrow  \Diag(\mu(x))\in J_\fs \Longleftrightarrow \alpha(\mu(x))\in \fs.
\end{equation*}
 Thus, if $x\in \ell_{J_\fs}$ and $|x_0|\geq |x_1|\geq \cdots$, then $\mu_j(x)=|x_j|$, and so we have $|\alpha(x)|_m\leq \alpha(\mu(x))_m$. As $\alpha(\mu(x))\in \fs$, the monotonicity of $\fs$ then implies that $\alpha(x)\in \fs$. Therefore, if we set
 \begin{equation*}
\tilde{\ell}_{J_\fs}=\big\{x \in  \ell_{J_\fs}; |x_0|\geq |x_1|\geq \cdots\big\},
\end{equation*}
 then we see that $\alpha$ induces a linear operator $\alpha:\tilde{\ell}_{J_\fs} \rightarrow \fs$.

For $J=\sL_g$ the corresponding shift-invariant monotone ideal is identified as follows. Set $\zeta_g= (g(2^{j}))_{j\geq 0}$, and denote by $\ell_\infty(\zeta_g)$ the ideal of $\ell_\infty$ that is generated by $\zeta_g$, i.e.,
\begin{equation*}
 \ell_\infty(\zeta_g)=\left\{x=(x_j)_{j\geq 0}\in \ell_\infty; \ |x_j|=\op{O}\big(g(2^j)\big)\right\}.
\end{equation*}
This is a monotone ideal of $\ell_\infty$. Moreover, as the $\RV_{-1}$-property of $g(t)$ implies that $g(2^{j-1})\sim 2 g(2^j)$, we see that $\ell_\infty(\zeta_g)$ is shift-invariant as well.

The $\RV_{-1}$-property of $g(t)$ further implies that if $j\in\D_m$, then
\begin{equation*}
 g(2^m)\leq \frac{g(2^m)}{g(2^{m+1}-2)}g(j)\sim 2g(j).
\end{equation*}
Thus, for any operator $T\in \sK$, we have
\begin{equation*}
 \mu_{2^m-1}(T) =\op{O}(g(2^m)) \Longleftrightarrow \mu_j(T)=\op{O}(g(j))  \Longleftrightarrow T\in \sL_g.
\end{equation*}
That is, $J_{\ell_\infty(\zeta_g)}=\sL_g$, and hence we have
\begin{equation*}
 \fs_{\sL_g}=\ell_\infty(\zeta_g).
\end{equation*}

In addition, the Calkin space of $\sL_g$ is the weak Lorentz sequence space,
\begin{equation*}
 \ell_g:=\big\{ x\in \co; \ \mu_j(x)= \op{O}(g(j))\big\}.
\end{equation*}
We thus get linear operators,
\begin{equation*}
 \delta:\ell_\infty(\zeta_g) \longrightarrow \ell_g \qquad \text{and} \qquad \alpha:\tilde{\ell}_g \longrightarrow \ell_\infty(\zeta_g).
\end{equation*}

\subsection{Pietsch's correspondence for traces on $\sL_g$}
Given any two-sided ideal $J$ Pietsch's correspondence provides a one-to-one correspondence between traces on $J$ and linear functionals on $\fs_J$ that are $1/2$-shift invariant, i.e., linear functionals $\ell:\fs_J\rightarrow \C$ such that
\begin{equation*}
\frac12 \ell(Sx)=\ell(x) \qquad \text{for all}\ x\in \fs_J.
\end{equation*}

For the ideal $\sL_g$ the corresponding shift-invariant monotone ideal is $\ell_\infty(\zeta_g)$. We shall specialize Pietsch's results to this case.

A key ingredient in Pietsch's construction of traces is the notion of \emph{$\zeta_g$-representations} for operators in $\sL_g$. These are representations $T=\sum_{m\geq 0} T_m$, where
 \begin{equation}\label{eq:zeta_g reps}
 \op{rk} (T_m)\leq 2^m \qquad \text{and} \qquad \big\|T -\sum_{m<j} T_m\big\| =\op{O}\big(g(2^j)\big).
\end{equation}

Every $T\in \sL_g$ admits such a representation. If $T=\sum_{j\geq 0} \mu_j(T) \eta_j^*\otimes \xi_j$ is any Schmidt series representation of $T$, then $T$ admits the $\zeta_g$-representation,
\begin{equation}\label{eq:2nd zeta_g reps}
 T=\sum_{m\geq 0}T_m, \qquad T_m:= \sum_{j\in \D_m} \mu_j(T)\eta_j^*\otimes \xi_j.
\end{equation}

Another key ingredient is the following observation which, according to Pietsch~\cite[p.~603]{Pi:IEOT17}, goes back to unpublished results of Figiel in the 80s. It shows that traces on $\sL_g$ are uniquely determined by their values on the operators $\Diag[\delta(x)]$, $x\in \ell_\infty(\zeta_g)$.

\begin{lemma}[see {\cite[Proposition~4.3]{Pi:IEOT17}}] \label{lem:Pietsch.Figiel}
If $T=T^*\in \sL_g$, then
\begin{equation*}
T -\Diag\big[(\delta\circ\alpha)(\lambda(T))\big] \in \Com\big(\sL_g\big).
\end{equation*}
In particular, for every trace $\varphi$ on $\sL_g$, we have
\begin{equation*}
 \varphi(T)= \varphi\big( \Diag[(\delta\circ\alpha)(\lambda(T))]\big).
\end{equation*}
 \end{lemma}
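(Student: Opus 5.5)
The plan is to use the spectral characterization of differences modulo commutators, (\ref{eq:com difference}). Set $x=\lambda(T)$ and $D:=\Diag[(\delta\circ\alpha)(x)]$. It then suffices to show
\begin{equation*}
 \sum_{j<N}\lambda_j(T)=\sum_{j<N}\lambda_j(D)+\op{O}(Ng(N)).
\end{equation*}
Before that I check that $D\in\sL_g$. Since $T\in\sL_g$, the Dykema--Kalton result gives $x\in\tilde{\ell}_g$, so $\alpha(x)\in\ell_\infty(\zeta_g)$ and $\delta(\alpha(x))\in\ell_g$. Hence $D\in\sL_g$.

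Because $T=T^*$, the sequence $x$ is real. The diagonal operator $D$ has eigenvalues $\alpha(x)_m$, each repeated $2^m$ times on the block $\D_m$. The main subtlety is that its eigenvalue sequence must be ordered by decreasing modulus, and $(\delta\circ\alpha)(x)$ need not be in that order. I would handle this with the symmetric-function characterization underlying (\ref{eq:com difference}) (cf.~\cite{LSZ:Book}). The form I want is: for $A\in\sL_g$ diagonal with entries $a\in\ell_g$ that are ``almost'' ordered, meaning $|a_j|=\op{O}(g(j))$ holds entrywise in the given order, we have $\sum_{j<N}\lambda_j(A)=\sum_{j<N}a_j+\op{O}(Ng(N))$. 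The argument is as follows. The two partial sums differ only by entries of modulus at most $Cg(k)$ with $k\sim N$ up to constant factors, and the number of such entries is $\op{O}(N)$. By regular variation this gives a total discrepancy of $\op{O}(Ng(N))$. Concretely, I would compare the set of indices of the $N$ largest moduli with $\{0,\dots,N-1\}$. Entries outside $[0,cN)$ have modulus at most $C'g(cN)\lesssim g(N)$. Entries inside the top $N$ must have modulus at least $\mu_{N}(a)$, which is $\op{O}(g(N))$. I expect this to be the main obstacle, since the ordering issue is where care is needed; with the bounds above it should reduce to Lemma~\ref{lem:UCT-consequence}.

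Granting this, it remains to compare $\sum_{j<N}x_j$ with $\sum_{j<N}\delta(\alpha(x))_j$. Let $m$ be such that $N\in\D_m$. Summing over the complete blocks $\D_0,\dots,\D_{m-1}$ gives the same value on both sides, since $\sum_{k\in\D_p}\delta(\alpha(x))_k=2^p\alpha(x)_p=\sum_{k\in\D_p}x_k$. The difference is therefore confined to the partial block $\D_m\cap[0,N)$. That block has at most $2^m\le N+1$ terms, each of modulus $\op{O}(g(2^m))$ on both sides, using $|x_k|=\op{O}(g(k))$ and $|\alpha(x)_m|=\op{O}(g(2^m))$. Since $g(2^m)\sim 2g(2^{m+1})$ and $N\in[2^m-1,2^{m+1}-2]$, we get $g(2^m)=\op{O}(g(N))$, so the difference is $\op{O}(Ng(N))$. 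By (\ref{eq:com difference}), this shows $T-D\in\Com(\sL_g)$.

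The second assertion is then immediate, because every trace vanishes on $\Com(\sL_g)$.
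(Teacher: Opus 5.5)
Your proof is correct. It takes a different route from the paper, which gives no argument of its own and simply quotes Pietsch's Proposition~4.3 (Figiel's observation).

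\textbf{Comparison of routes.} In the cited setting one uses $T=T^*$ to write $T=\Diag(\lambda(T))$ in an eigenbasis. Then $T-\Diag[(\delta\circ\alpha)(\lambda(T))]$ is a diagonal operator with trace zero on each dyadic block $\D_m$. That argument works directly at the level of these blocks and does not need the spectral description of $\Com(\sL_g)$.

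You instead feed everything into (\ref{eq:com difference}). The lemma then becomes bookkeeping on partial sums: complete blocks cancel, and the incomplete block costs $\op{O}(Ng(N))$. This makes the proof a short computation. It also works verbatim for non-selfadjoint $T$, using Dykema--Kalton to get $\lambda(T)\in\tilde{\ell}_g$. For $T=T^*$ that appeal is unnecessary, since $|\lambda_j(T)|=\mu_j(T)$. The price is that you rest on the Kalton/Dykema--Kalton characterization behind (\ref{eq:com difference}). There is no circularity, because the paper imports that characterization independently from~\cite{LSZ:Book}.

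\textbf{Correction to the rearrangement step.} Write $a=(\delta\circ\alpha)(x)$, and let $I_N$ be the set of indices of the $N$ largest moduli. The estimate you need is the following.
\begin{itemize}
\item Entries indexed by $[0,N)\setminus I_N$ have modulus \emph{at most} $\mu_N(a)\le Cg(N)$. Your text says ``at least'', which bounds nothing.
\item Entries indexed by $I_N\setminus[0,N)$ have index $k\geq N$. Hence their modulus is at most $Cg(k)\le Cg(N)$, since $g$ is decreasing.
\item These two index sets have the same cardinality, which is at most $N$. So the discrepancy is at most $2CNg(N)$.
\end{itemize}
The bound $\mu_j(a)\le Cg(j)$ used in the first item follows from the entrywise bound $|a_j|\le Cg(j)$ together with the monotonicity of $g$.

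The auxiliary constant $c$ is unnecessary. Your remark that the discrepant indices satisfy $k\sim N$ is not true in general. For instance, if some block averages $\alpha(x)_m$ vanish, indices from distant blocks can enter $I_N$. Only the bound on their moduli matters.
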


 We are now in a position to state Pietsch's correspondence for traces in $\sL_g$ in the form given in~\cite{Pi:IEOT17}.

\begin{proposition}[Pietsch~\cite{Pi:IEOT17}]\label{prop:corpdns on traces}
We have a one-to-one correspondence between traces on $\sL_g$ and  $1/2$-shift invariant linear functionals on $\ell_\infty(\zeta_g)$ as follows.
\begin{enumerate}
 \item For every $1/2$-shift invariant linear functional $\ell$ on $\ell_\infty(\zeta_g)$, the following formula defines a trace on $\sL_g$,
\begin{equation*}
 \varphi_\ell(T):=\ell\bigg( \bigg\{\frac{1}{2^{m}} \Tr(T_m)\bigg\}_{m\geq 0}\bigg), \qquad T\in \sL_g,
\end{equation*}
where $T=\sum T_m$ is any $\zeta_g$-representation of $T$, the choice of which is irrelevant.

\item  For every  trace $\varphi$ on $\sL_g$, the following formula defines a $1/2$-shift invariant linear functional on $\ell_\infty(\zeta_g)$,
\begin{equation}\label{eq:Pietsch.ell-varphi}
 \ell_\varphi(x) = \varphi\big(\Diag(\delta(x))\big), \qquad x \in \ell_\infty(\zeta_g).
\end{equation}

\item The correspondences $\ell \rightarrow \varphi_\ell$ and $\varphi \rightarrow \ell_\varphi$ are inverses of each other.
\end{enumerate}
\end{proposition}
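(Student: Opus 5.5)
The plan is to establish the three parts in the order (2), (1), (3), using Lemma~\ref{lem:Pietsch.Figiel} as the central tool.

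\emph{Part (2).} First, $\ell_\varphi$ is well defined and linear on $\ell_\infty(\zeta_g)$, since $\delta$ maps $\ell_\infty(\zeta_g)$ linearly into the Calkin space $\ell_g$ and $\Diag$ is linear. For the $1/2$-shift invariance, I would compare $\delta(Sx)$ with $\delta(x)$. The sequence $\delta(Sx)$ takes the value $x_{m-1}$ on the block $\D_m$, which has $2^m$ elements. Thus $\Diag(\delta(Sx))$ is unitarily equivalent, modulo rank-one changes, to $\Diag(\delta(x))\oplus\Diag(\delta(x))$, since each block $\D_{m-1}$ is doubled. Unitarily equivalent operators in $\sL_g$ differ by an element of $\Com(\sL_g)$, and $\varphi$ vanishes on finite-rank operators because they lie in $\Com(\sL_g)$. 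Realizing the direct sum inside $\sH$ through an isometric splitting $\sH\cong\sH\oplus\sH$, we get $\varphi(A\oplus A)=2\varphi(A)$. This gives $\ell_\varphi(Sx)=2\ell_\varphi(x)$.

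\emph{Part (1).} For a $\zeta_g$-representation, each $T_m$ has rank at most $2^m$ and $\|T_m\|=\op{O}(g(2^m))$, using that $g(2^{m-1})\sim2g(2^m)$. Hence $|2^{-m}\Tr(T_m)|\le\|T_m\|$, and the sequence lies in $\ell_\infty(\zeta_g)$.

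The main obstacle is independence of the representation. If $\sum T_m$ and $\sum T'_m$ both represent $T$, I would show that the difference sequence $d_m=2^{-m}\Tr(T_m-T'_m)$ lies in the range of $\tfrac12S-\mathrm{id}$ on $\ell_\infty(\zeta_g)$, so that $\ell$ kills it. The natural candidate preimage is $y_m=-2^{-m}\Tr\big(\sum_{k\le m}(T_k-T'_k)\big)$. The partial differences $R_m=\sum_{k\le m}(T_k-T'_k)$ have rank at most $2^{m+2}$ and norm $\op{O}(g(2^m))$, so $y\in\ell_\infty(\zeta_g)$. A direct telescoping check then gives $\tfrac12(Sy)_m-y_m=d_m$.

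Additivity of $\varphi_\ell$ follows because the sum of representations, with the indices shifted by one, is again a representation, and $\ell(Sx)=2\ell(x)$ compensates the shift. Unitary invariance is immediate, since conjugating a representation by a unitary gives a representation with the same traces. This makes $\varphi_\ell$ a trace.

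\emph{Part (3).} Take the canonical representation of $\Diag(\delta(x))$ whose block $m$ is $x_m$ times the projection onto $\D_m$. Its traces are $2^{-m}\Tr(T_m)=x_m$, so $\ell_{\varphi_\ell}=\ell$. Conversely, write an arbitrary $T$ as a combination of selfadjoint parts and apply Lemma~\ref{lem:Pietsch.Figiel}. This gives $\varphi(T)=\varphi(\Diag[\delta\alpha\lambda(T)])=\ell_\varphi(\alpha\lambda(T))$. Using the Schmidt-type representation~(\ref{eq:2nd zeta_g reps}) adapted to eigenvalues, we get $\varphi_{\ell_\varphi}(T)=\ell_\varphi(\alpha(\lambda(T)))$ as well, so $\varphi_{\ell_\varphi}=\varphi$.
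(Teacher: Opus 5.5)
Your proof is correct in substance, and it covers more than the paper does. The paper takes parts (1) and (2) from Pietsch~\cite{Pi:IEOT17} without proof. It only sketches (3) in two remarks: $\ell_{\varphi_\ell}=\ell$ via the canonical representation $\Diag(\delta(x))=\sum_m x_mD_m$, and $\varphi_{\ell_\varphi}=\varphi$ via Lemma~\ref{lem:Pietsch.Figiel} together with $\alpha\circ\delta=\op{id}$. Your part (3) is essentially that sketch.

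What you add is a self-contained verification of (1) and (2):
\begin{itemize}
\item Writing the difference of the two trace sequences as $(\tfrac12 S-1)y$, with the telescoped $y\in\ell_\infty(\zeta_g)$, is exactly what gives independence of the representation.
\item The index shift $U_0=0$, $U_{m+1}=T_m+T'_m$ gives additivity. This is a valid $\zeta_g$-representation because $g(2^{j-1})=\op{O}(g(2^j))$.
\item Conjugation by unitaries gives the trace property.
\item The block doubling $\#\D_{m+1}=2\,\#\D_m$ gives $\ell_\varphi(Sx)=2\ell_\varphi(x)$.
\end{itemize}
This shows precisely where the factor $1/2$ enters, at the cost of length. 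The citation buys brevity and Pietsch's full generality (any ideal $J$ with its sequence ideal $\fs_J$).

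One justification in part (2) is wrong and should be dropped. You claim that finite-rank operators lie in $\Com(\sL_g)$. That is not true in this section, because condition~(\ref{cond:minimal}) is not assumed here; Proposition~\ref{prop:Hyper.trace-singular} needs it. For instance, take $g(t)=((t+2)\log(t+2))^{-1}$. A rank-one projection $P$ then has $\sum_{j<N}\lambda_j(P)=1\neq\op{O}(Ng(N))$. So $P\notin\Com(\sL_g)$ by~(\ref{eq:spectral1}), and some traces satisfy $\varphi(P)\neq0$.

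You do not need this claim. Since $\delta(Sx)_0=0$, the operators $\Diag(\delta(Sx))$ and $B:=\Diag(\delta(x))\oplus\Diag(\delta(x))$ have exactly the same nonzero diagonal entries with the same multiplicities. Here $B$ is realized in $\sH$ as $W_1AW_1^*+W_2AW_2^*$, with $A:=\Diag(\delta(x))$ and $W_1,W_2$ isometries with complementary ranges. Let $V$ be the partial isometry sending the basis vectors of the two copies of $\D_m$ onto those of $\D_{m+1}$. Then $V^*VB=B$, and
\[
\Diag(\delta(Sx))-B=VBV^*-B=[VB,V^*]\in\Com(\sL_g).
\]
No finite-rank correction is needed.
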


\begin{remark}\label{rmk:zeta_g reps}
 If $T\in (\sL_g)_+$ has Schmidt series representation $T=\sum \mu_j(T) \xi_j^*\otimes \xi_j$, then the $\zeta_g$-representation~(\ref{eq:zeta_g reps}) becomes $T= \sum T_m$ with $T_m:=\sum_{j\in \D_m} \mu_j(T)\xi_j^*\otimes \xi_j$. Note that
\begin{equation*}
 \frac{1}{2^m}\Tr(T_m)=  \frac{1}{2^m}\sum_{j\in \D_m} \mu_j(T) \Tr[ \xi_j^*\otimes \xi_j] = \frac{1}{2^m}\sum_{j\in \D_m} \mu_j(T)=\alpha(\mu(T))_m.
\end{equation*}
Therefore, if $\ell$ is any $1/2$-shift invariant linear functional on $\ell_\infty(\zeta_g)$, then we get
\begin{equation*}
 \varphi_\ell(T) = \ell\circ\alpha(\mu(T)) \qquad \forall T\in \big(\sL_g\big)_+.
\end{equation*}
We thus recover the definition of $\varphi_\ell$ given in~\cite[Chapter~4]{LSZ:Book}.
\end{remark}

\begin{remark}
 If $x\in \ell_\infty(\zeta_g)$, then an  $ \ell_\infty(\zeta_g)$-representation of $\Diag(\delta(x))$ is
\begin{equation*}
 \Diag(\delta(x)) = \sum_{m\geq 0} a_m D_m, \qquad D_m:=\sum_{j\in \D_m} \xi_j^*\otimes \xi_j.
\end{equation*}
As $2^{-m}\Tr[x_mD_m]=x_m$, we see that
\begin{equation*}
\ell_{\varphi_{\ell}}(x)=\varphi_\ell\big(\Diag(\delta(x))\big) = \ell\big(\{2^{-m}\Tr[D_m]\}\big)=\ell(x).
\end{equation*}
This shows that $\varphi \rightarrow \ell_\varphi$ is a right-inverse of $\ell \rightarrow \varphi_\ell$. The fact this is a left-inverse is a consequence of Lemma~\ref{lem:Pietsch.Figiel} and the fact that $\delta$ is a right-inverse of $\alpha$.
\end{remark}

As further observed by Pietsch~\cite[Lemma~6.1]{Pi:AM20}, for principal generated ideals his correspondence for traces can be reformulated in terms of shift-invariant linear functionals on $\ell_\infty$, i.e., linear functionals $\ell: \ell_\infty \rightarrow \C$ such that $\ell(Sx)=\ell(x)$ for all $x\in \ell_\infty$.

Let $Z_g:\ell_\infty \rightarrow \ell_\infty(\zeta_g)$ be the linear isomorphism given by
\begin{equation*}
 Z_g(x)_j=(\zeta_g)_jx_j=g(2^j)x_j, \qquad x=(x_j)_{j\geq 0} \in \ell_\infty.
\end{equation*}
By duality this gives rise to a linear isomorphisms between the respective (algebraic) duals of $\ell_\infty$ and $\ell_\infty(\zeta_g)$, i.e.,
\begin{equation*}
 Z_g^*:\ell_\infty(\zeta_g)^*\longrightarrow \ell_\infty^*, \qquad Z_g^*\ell =\ell\circ Z_g.
\end{equation*}

\begin{lemma}[Pietsch~{\cite[Lemma~6.1]{Pi:AM20}}]
 The linear map  $Z_g^*:\ell_\infty(\zeta_g)^*\longrightarrow \ell_\infty^*$ induces a one-to-one correspondence between  $1/2$-shift invariant linear functionals on $\ell_\infty(\zeta_g)$ and shift-invariant linear functionals on $\ell_\infty$.
\end{lemma}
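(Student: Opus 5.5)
The plan is to check directly that $Z_g^*$ and its inverse $(Z_g^{-1})^*$ preserve the two invariance conditions. We already know that $Z_g^*$ is a linear bijection between the algebraic duals, because $Z_g$ is a linear isomorphism $\ell_\infty\to\ell_\infty(\zeta_g)$. So the only issue is how $Z_g$ intertwines the forward shift $S$ on $\ell_\infty$ with $S$ on $\ell_\infty(\zeta_g)$. For $x\in\ell_\infty$ we have
\begin{equation*}
 (S Z_g x)_j = g(2^{j-1})x_{j-1}, \qquad (Z_g S x)_j = g(2^j)x_{j-1}, \qquad j\geq 1,
\end{equation*}
and both vanish at $j=0$. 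Hence $SZ_g x = Z_g(S(r\cdot x))$, where
\begin{equation*}
 r_j:=\frac{g(2^{j})}{g(2^{j+1})}.
\end{equation*}
By the $\RV_{-1}$-property, $r_j\to 2$.

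Write $r\cdot x = 2x + (r-2)\cdot x$. The second term is a null sequence, since $r_j-2\to 0$ and $x$ is bounded. The key step is to show that both kinds of invariant functionals annihilate null sequences, after transport. For a shift-invariant $\ell'$ on $\ell_\infty$ I would argue as follows. A null sequence $y$ can be written as $y=(1-S)z$ with $z_j=\sum_{k\leq j}y_k$, but $z$ need not be bounded, so this naive telescoping fails.

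This is the main obstacle: purely algebraic shift-invariant functionals on $\ell_\infty$ need not vanish on $\co$. Indeed, $(1-S)\ell_\infty$ does not contain all null sequences, for instance $y_j=1/(j+1)$. So the identification must be handled more carefully. I would instead work on the $\ell_\infty(\zeta_g)$ side using the decisive fact already available there. Every $1/2$-shift invariant $\ell$ equals $\ell_\varphi$ for a trace $\varphi$ on $\sL_g$, by Proposition~\ref{prop:corpdns on traces}. For $y\in Z_g(\co)$, the operator $\Diag(\delta(y))$ lies in $(\sL_g)_0$, but traces need not vanish there either. They do vanish on $\Com(\sL_g)$, and by Proposition~\ref{prop:Hyper.trace-singular}-type reasoning with~(\ref{eq:spectral1}), $\Diag(\delta(y))\in\Com(\sL_g)$ whenever the partial eigenvalue sums are $\op{O}(Ng(N))$.

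So I would prove the following: for a null $u$ we have $Z_g((r-2)\cdot u)$ such that its dyadic diagonal has partial sums $\sum_{m\leq M}2^m g(2^{m})(r_m-2)u_m$. This is $\op{O}(2^Mg(2^M))$ provided the terms $2^mg(2^m)$ grow at most geometrically with ratio tending to $1$. Here $2^mg(2^m)$ is slowly varying along dyadics, and that growth condition is what is needed. That fails in general, though, so I would rather redefine the correspondence to avoid the error term altogether.

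Concretely, I would replace $Z_g$ by $\tilde Z_g(x)_j=\gamma_j x_j$ with $\gamma_j:=2^{-j}\prod_{k<j}\ldots$. The weights $\gamma_j$ are chosen so that $\gamma_{j-1}=2\gamma_j$ exactly, that is $\gamma_j=2^{-j}$. This does not generate $\ell_\infty(\zeta_g)$, so I would instead take $\gamma_j$ with $\gamma_{j}/\gamma_{j+1}$ adjusted on a sparse set. My fallback is to show directly that, for $\ell$ $1/2$-shift invariant, the functional $x\mapsto\ell(Z_g x)$ satisfies $\ell(Z_gSx)=\ell(Z_g x)$. To do this I would use the trace $\varphi$ and the fact that $\Diag(\delta(Z_g(Sx)))$ and $\Diag(\delta(S Z_g x))/2$ differ by an operator whose eigenvalue partial sums are $\sum_{m\le M}2^{m}x_{m-1}(g(2^m)-\tfrac12 g(2^{m-1}))$. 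I would then verify that this is $\op{o}$ of the leading term $2^Mg(2^M)$ via Karamata-type summation, and deduce that it lies in $\Com(\sL_g)$ by~(\ref{eq:com difference}). Checking that this remainder is genuinely $\op{O}(2^Mg(2^M))$, rather than merely $\op{o}(G)$, is the step I expect to be hardest. If it is not, the argument must rely on the specific structure in Pietsch's original Lemma~6.1.
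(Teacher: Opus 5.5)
Your proposal does not close, and the step it leaves open cannot be supplied in the stated generality. The paper gives no argument of its own here beyond the citation to Pietsch, so the comparison below is with the statement itself.

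\textbf{What is right.} Your intertwining relation $SZ_g=Z_gS(r\,\cdot)$, with $r_m=g(2^m)/g(2^{m+1})$, is correct. It reduces everything to two correction terms. Write $w_m:=2^mg(2^m)$, so that $r_m=2w_m/w_{m+1}$.

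\textbf{The direction $\ell\mapsto\ell\circ Z_g$.} Here one gets $\ell(Z_gSx)-\ell(Z_gx)=\ell(v)$ with $v_m=2^{-m}(w_{m+1}-w_m)x_m$. The common kernel of all $1/2$-shift invariant functionals is exactly $(S-2)\ell_\infty(\zeta_g)$. Hence this difference vanishes for all such $\ell$ and all bounded $x$ if and only if $\sum_{m\leq M}|w_{m+1}-w_m|=\op{O}(w_M)$. This is precisely the estimate your fallback leaves open.

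\begin{itemize}
\item The target is $\op{O}(w_M)$, not $\op{o}(w_M)$. For $g(t)=(t+1)^{-1}\log(t+2)$ and $x\equiv 1$ your remainder equals $w_M-w_0\sim w_M$.
\item The estimate holds when $w_m$ is eventually monotone, but it does not follow from the $\RV_{-1}$ property. For $g(t)=t^{-1}(2+\sin\log\log t)$ at large $t$ one has $w_M=\op{O}(1)$, while $\sum_{m\leq M}|w_{m+1}-w_m|\asymp\log M$.
\item Karamata-type bounds only give $\op{o}(G)$, which says nothing about membership in $\Com(\sL_g)$.
\end{itemize}

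\textbf{The converse direction.} You never treat it, and it is where the statement actually fails, for exactly the reason you noticed about $\co$. The common kernel of all shift-invariant functionals on $\ell_\infty$ is $(S-1)\ell_\infty$. For $y=Z_gz$ one has $\ell'(Z_g^{-1}Sy)-2\ell'(Z_g^{-1}y)=\ell'\big((r-2)z\big)$. Hence $Z_g^*$ maps the $1/2$-shift invariant functionals onto the shift-invariant ones if and only if $(r-2)z\in(S-1)\ell_\infty$ for every bounded $z$, i.e. $\sum_m|r_m-2|<\infty$. Since $|r_m-2|\asymp|\log(w_{m+1}/w_m)|$, this forces $w_m$ to converge to a positive limit, i.e. $g(t)\sim c/t$.

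Concretely, take $g(t)=(t+1)^{-1}\log(t+2)$. Then $r_m-2\sim-2/(m+1)$, so $r-2\notin(S-1)\ell_\infty$. Choose a shift-invariant $\ell'$ with $\ell'(r-2)=1$ and set $\ell:=\ell'\circ Z_g^{-1}$. Then $\ell(S\zeta_g)-2\ell(\zeta_g)=\ell'(Sr)-2\ell'(1)=\ell'(r-2)=1$. So $\ell$ is not $1/2$-shift invariant, although $Z_g^*\ell=\ell'$ is shift-invariant. Read with algebraic functionals, as the paper does, the statement is therefore false for this $g$.

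Re-weighting (your $\gamma_j$ idea) cannot help. The same computation for any weight $\tilde\zeta\asymp\zeta_g$ forces $2^m\tilde\zeta_m$ to converge to a positive limit, which is impossible when $2^mg(2^m)\to\infty$.

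\textbf{What your argument does prove.} It proves the bijection restricted to functionals whose transport to $\ell_\infty$ annihilates $\co$, since both correction terms are null sequences. This class contains the positive functionals and all linear combinations of Banach limits. That restricted version is what the positive-trace results, Proposition~\ref{prop:Pietsch.positive-traces} and Theorem~\ref{thm:Pietsch.PNT}, rely on. Alternatively, one can normalize $g$ so that $2^mg(2^m)$ is constant. Then $Z_gS=\tfrac12 SZ_g$ and the statement is immediate, but only in the $\sL_{1,\infty}$ case.
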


This allows us to reformulate Proposition~\ref{prop:corpdns on traces} in terms of shift-invariant linear functionals on $\ell_\infty$. Let $\hat{\delta}_g: \ell_\infty\rightarrow \ell_g$ and
$\hat{\alpha}_g:\tilde{\ell}_g \rightarrow \ell_\infty$ be the operators $\hat{\delta}_g=\delta\circ Z_g$ and $\hat{\alpha}_g:=Z_g^{-1}\circ \delta$. That is,
\begin{gather}
  \hat{\delta}_g (x)_j= g(2^m)x_m \quad \text{for} \ j\in \D_m, \qquad x=(x_j)\in \ell_\infty,\label{def:hat delta}\\
\hat{\alpha}_g(x)_j= \frac{1}{2^jg(2^j)} \sum_{k\in \D_j} x_k, \qquad x=(x_j)\in \tilde{\ell}_g.\label{def:hat alpha}
\end{gather}
Note that $\hat{\alpha}_g$ is a right-inverse of $\hat{\delta}_g$.

 If $\ell$ is a shift-invariant functional on $\ell_\infty$ and $T\in \sL_g$ has $\ell_\infty(\zeta_g)$-decomposition $T= \sum T_m$, then we have
\begin{equation*}
 \varphi_{(Z_g^*)^{-1}\ell}(T)=\varphi_{\ell\circ Z_g^{-1}}(T)= \left(\ell\circ Z_g^{-1}\right)\left(\left\{\frac{1}{2^{m}} \Tr(T_m)\right\}\right)
 =  \ell\left(\left\{\frac{1}{g(2^m)2^{m}} \Tr(T_m)\right\}\right).
\end{equation*}
If $\varphi$ is a trace on $\sL_g$, then, for every $x\in \ell_\infty$, we have
 \begin{equation*}
Z_g^*\ell_\varphi(x)= \ell_\varphi\big(Z_g(x)\big)= \varphi\big(\Diag\big[\delta(Z_g(x))\big]\big)= \varphi\big(\Diag\big[\hat{\delta}_g(x)\big]\big).
\end{equation*}
Therefore, we arrive at the following reformulation of Pietsch's correspondence for traces on $\sL_g$.

\begin{theorem}[Pietsch~\cite{Pi:IEOT17, Pi:AM20}]\label{thm:Pietsch}
 We have a one-to-one correspondence between traces on $\sL_g$ and shift-invariant linear functionals on $\ell_\infty$ as follows.
\begin{enumerate}
 \item For every shift-invariant linear functional $\ell$ on $\ell_\infty$, the following formula defines a trace on $\sL_g$,
\begin{equation}\label{def:varphi ell}
 \varphi_\ell(T):=\ell\left(\left\{\frac{1}{g(2^m)2^{m}} \Tr(T_m)\right\}_{m\geq 0}\right),  \qquad T\in \sL_g,
\end{equation}
where $T=\sum T_m$ is any $\zeta_g$-representation of $T$, the choice of which is irrelevant.

\item  For every  trace $\varphi$ on $\sL_g$, the following formula defines a shift-invariant linear functional on $\ell_\infty$,
\begin{equation}\label{def:ell varphi}
 \ell_\varphi(x) = \varphi\big(\Diag(\hat{\delta}_g(x))\big), \qquad x \in \ell_\infty.
\end{equation}

\item The correspondences $\ell \rightarrow \varphi_\ell$ and $\varphi \rightarrow \ell_\varphi$ are inverses of each other.
\end{enumerate}
\end{theorem}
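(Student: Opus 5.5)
The plan is to obtain the theorem by transporting Proposition~\ref{prop:corpdns on traces} along the linear isomorphism $Z_g:\ell_\infty\rightarrow \ell_\infty(\zeta_g)$, using Pietsch's Lemma~6.1 of~\cite{Pi:AM20} (stated above) to match the two kinds of invariance. I will rely on that lemma as given and will not try to reprove it. Re-proving it is the genuinely delicate point, because
\begin{equation*}
Z_g^{-1}(Sx)_j=\frac{x_{j-1}}{g(2^j)}, \qquad \frac{1}{2}\,S\bigl(Z_g^{-1}x\bigr)_j\cdot 2=\frac{x_{j-1}}{g(2^{j-1})},
\end{equation*}
and the ratio $g(2^{j-1})/g(2^j)$ only tends to $2$ by the $\RV_{-1}$-property rather than being equal to $2$. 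The remaining steps are bookkeeping.

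\textbf{Step 1 (from $\ell$ to a trace).} Let $\ell$ be a shift-invariant linear functional on $\ell_\infty$. By the lemma, $\ell':=(Z_g^*)^{-1}\ell=\ell\circ Z_g^{-1}$ is a $1/2$-shift invariant linear functional on $\ell_\infty(\zeta_g)$. Part~(1) of Proposition~\ref{prop:corpdns on traces} then shows that $\varphi_{\ell'}$ is a trace on $\sL_g$, and that its value does not depend on the chosen $\zeta_g$-representation. Unwinding $Z_g^{-1}$ gives
\begin{equation*}
\varphi_{\ell'}(T)=\ell\Bigl(\Bigl\{\tfrac{1}{g(2^m)2^m}\Tr(T_m)\Bigr\}_{m\geq 0}\Bigr),
\end{equation*}
which is exactly~(\ref{def:varphi ell}). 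So $\varphi_\ell:=\varphi_{\ell'}$ is well defined and is a trace, independently of the representation.

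\textbf{Step 2 (from a trace to $\ell$).} Let $\varphi$ be a trace on $\sL_g$. Part~(2) of Proposition~\ref{prop:corpdns on traces} gives the $1/2$-shift invariant functional $\ell_\varphi$ of~(\ref{eq:Pietsch.ell-varphi}) on $\ell_\infty(\zeta_g)$. By the lemma, $Z_g^*\ell_\varphi$ is shift-invariant on $\ell_\infty$. The computation displayed just before the theorem, using $\hat\delta_g=\delta\circ Z_g$, shows that $Z_g^*\ell_\varphi(x)=\varphi\bigl(\Diag(\hat\delta_g(x))\bigr)$, which is~(\ref{def:ell varphi}).

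\textbf{Step 3 (mutual inverses).} By construction the new maps are
\begin{equation*}
\ell\mapsto \varphi_{(Z_g^*)^{-1}\ell} \qquad\text{and}\qquad \varphi\mapsto Z_g^*\ell_\varphi .
\end{equation*}
Each is the composition of a bijection from part~(3) of Proposition~\ref{prop:corpdns on traces} with the bijection $Z_g^*$ or its inverse, restricted to invariant functionals. Hence they are inverse to each other: $Z_g^*\ell_{\varphi_{(Z_g^*)^{-1}\ell}}=Z_g^*(Z_g^*)^{-1}\ell=\ell$, and symmetrically in the other order. This gives the one-to-one correspondence and completes the argument.
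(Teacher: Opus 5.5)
Your Steps~1--3 are exactly the paper's derivation: Theorem~\ref{thm:Pietsch} is obtained there by transporting Proposition~\ref{prop:corpdns on traces} along $Z_g$ with the quoted Lemma~6.1 of~\cite{Pi:AM20}, then unwinding $Z_g^{-1}$ and $\hat{\delta}_g=\delta\circ Z_g$. Your bookkeeping matches.

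The step you take on faith, however, is not merely delicate. As quoted, it is false whenever $\sL_g\neq\sL_{1,\infty}$. Put $w_j=2^jg(2^j)$ and $d_j=w_{j-1}/w_j$, so that $Z_g^{-1}SZ_g(x)=2(0,d_1x_0,d_2x_1,\ldots)$. Then $\ell\circ Z_g^{-1}$ is $\tfrac12$-shift invariant if and only if $\ell$ is invariant under the weighted shift $x\mapsto(0,d_1x_0,d_2x_1,\ldots)$.

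Every shift-invariant $\ell$ has this property if and only if each sequence $(0,(d_1-1)x_0,(d_2-1)x_1,\ldots)$, $x\in\ell_\infty$, is of the form $Sz-z$ with $z\in\ell_\infty$, i.e.\ has bounded partial sums. The "only if" direction holds because arbitrary linear functionals are allowed: a Hamel-basis extension separates any vector from the subspace $\{Sz-z;\ z\in\ell_\infty\}$. Choosing $x_{j-1}=\op{sgn}(d_j-1)$ shows that this requires $\sum_j|d_j-1|<\infty$. That forces $tg(t)$ to converge to a positive constant, hence $\sL_g=\sL_{1,\infty}$.

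For example, take $g(t)=(t+1)^{-1}\log(t+2)$. Then $w_j\sim j\log 2$ and $d_j-1\sim -1/j$, so the partial sums of $(0,d_1-1,d_2-1,\ldots)$ tend to $-\infty$. Hence there is a shift-invariant $\ell$ with $\ell\bigl((0,d_1-1,d_2-1,\ldots)\bigr)=1$.

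For such an $\ell$ the conclusion of part~(1) itself fails, so no better proof of the lemma can close the gap. Consider $T=\Diag(\hat{\delta}_g(1))$, with $1=(1,1,\ldots)$. It has the $\zeta_g$-representation $T_m=g(2^m)\sum_{j\in\D_m}\xi_j^*\otimes\xi_j$, and also the shifted one $T_0'=0$, $T_m'=T_{m-1}$. The latter is admissible, since $\op{rk}T_m'\leq 2^m$ and $g(2^{m-1})=\op{O}(g(2^m))$. In~(\ref{def:varphi ell}) the two representations produce the sequences $1$ and $(0,d_1,d_2,\ldots)=S1+(0,d_1-1,d_2-1,\ldots)$. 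The resulting values of $\varphi_\ell(T)$ therefore differ by $1$.

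What transport along $Z_g$ genuinely gives is a bijection between traces and linear functionals invariant under the weighted shift above. Since $d_j\rightarrow 1$, this agrees with plain shift invariance on functionals vanishing on $\co$. In particular it agrees on linear combinations of Banach limits, which covers the later applications to positive, continuous and $g$-normalized positive traces. So your argument is as complete as the paper's, but a correct statement must either restrict to such functionals or replace plain shift invariance by the weighted one.
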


\begin{remark}[see~{\cite[Lemma~8.1]{Pi:IEOT15}}] \label{rmk:Pietsch.shift-singular}
For a linear functional on $\ell_\infty$, invariance by the forward shift operator $S$ in~(\ref{def:forward}) is equivalent to invariance by the backward shift operator,
\begin{equation*}
 S^*(a_0,a_1, a_2, \ldots)= (a_1,a_2,\ldots), \qquad (a_j)_{j\geq 0}\in \ell_\infty.
\end{equation*}
It follows that every shift-invariant linear functional on $\ell_\infty$ annihilates finitely supported sequences. Combining this with Theorem~\ref{thm:Pietsch} we thus recover the fact that every trace on $\sL_g$ is singular.
\end{remark}

\begin{remark}
 As pointed out in Remark~\ref{rmk:zeta_g reps}, every $T\in (\sL_g)_+$ admits a $\zeta_g$-representation $T=\sum T_m$, with $2^{-m}\Tr(T_m)=\alpha(\mu(T))_m$. Thus, in view of~(\ref{def:hat alpha}) we have
 \begin{equation*}
 \frac{1}{g(2^m)2^m} \Tr(T_m)= \frac{1}{g(2^m)}\alpha(\mu(T))_m=\hat{\alpha}_g(\mu(T))_m.
\end{equation*}
Therefore, given any shift-invariant linear functional on $\ell_\infty$, the formula~(\ref{def:varphi ell}) gives
\begin{equation*}
  \varphi_\ell(T):= \ell\left( \big\{\hat{\alpha}_g(\mu(T))_m \big\}_{m\geq 0}\right)= \ell\circ \hat{\alpha}_g(\mu(T)).
\end{equation*}
\end{remark}

\begin{remark}\label{rmk:semi-finite}
We refer to~\cite[Theorem~3.5]{LU:JMAA25} for a version of Pietsch's correspondence for weak Lorentz ideals in the semi-finite setting. The correspondence there is stated for continuous traces. (In the terminology of~\cite{LU:JMAA25} traces are called symmetric functionals.) We refer to Corollary~\ref{cor:Pietsch.continuous-traces} for a more precise description of continuous traces on $\sL_g(\sH)$.
\end{remark}

\subsection{Pietsch's correspondence. Spectral version}
We seek a spectral reformulation of Pietsch's correspondence, i.e., in terms of eigenvalue sequences. For one thing, in ~\cite{PT:Part1} Dixmier traces were constructed directly out of eigenvalue sequences. On the other hand, by a result of Dykema--Kalton~\cite{DK:Crelle98} every trace on a quasi-Banach ideal is spectral, i.e., its values depend only on the spectrum of the operators at stake. Therefore, it is natural to ask how this fits with Pietsch's construction of traces.

For the weak trace-class $\sL_{1,\infty}$ a spectral reformulation of Pietsch is provided in~\cite[Chapter~6]{LSZ:Book}. This can also be deduced from the results of Semenov--Sukochev--Usachev--Zanin~\cite{SSUZ:AIM15} by combining~\cite[Theorem~4.1]{SSUZ:AIM15} with the Lidskii-type formula provided by~\cite[Theorem~6.3]{SSUZ:AIM15}.

We shall now extend this spectral formulation to weak Lorentz ideals. In fact, as we shall see, the result is a direct consequence of combining Dykema--Kalton's result with Pietsch's construction of traces in terms of $\zeta_g$-representations. Therefore, even in the case of $\sL_{1,\infty}$, our approach simplifies the approaches of~\cite{LSZ:Book, SSUZ:AIM15}.
 
 The aforementioned result of Dykema--Kalton~\cite{DK:Crelle98} can be formulated as follows.

\begin{proposition}[\cite{DK:Crelle98}] \label{prop:DK}
Let $T\in \sL_g$. The following hold. 
\begin{enumerate}
 \item Every eigenvalue sequence $\lambda(T)$ is in $\tilde{\ell}_g$, and we have
 \begin{equation*}
 T- \Diag\big(\lambda(T)\big)\in \Com\big(\sL_g\big). 
\end{equation*}

\item For every trace $\varphi$ on $\sL_g$, we have
\begin{equation*}
 \varphi(T)=\varphi\big(\Diag( \lambda(T))\big).
\end{equation*}
In particular, the value of $\varphi(T)$ depends only on the spectrum of $T$.
\end{enumerate}
\end{proposition}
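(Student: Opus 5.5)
The plan is to derive part (1) from Kalton's spectral description of the commutator subspace, and part (2) from part (1) by a one-line argument.

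\emph{Membership.} First, $\lambda(T)\in\tilde{\ell}_g$. By definition $|\lambda_0(T)|\geq|\lambda_1(T)|\geq\cdots$, so $\mu_j(\lambda(T))=|\lambda_j(T)|$. Weyl's inequality in quasi-Banach form (as in Dykema--Kalton, or~\cite{LSZ:Book}) gives
\[
\sum_{j<N}|\lambda_j(T)|^p\leq\sum_{j<N}\mu_j(T)^p, \qquad 0<p<1 .
\]
We then use monotonicity of $|\lambda_j|$ together with the $\RV_{-1}$-property of $g$:
\[
N|\lambda_{N}(T)|^p\leq\sum_{j\le N}\mu_j(T)^p=\op{O}\Bigl(\sum_{j\le N}g(j)^p\Bigr)=\op{O}\bigl(Ng(N)^p\bigr),
\]
where the last step is Karamata's theorem, since $g^p$ is $\RV_{-p}$ with $-p>-1$. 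This gives $|\lambda_j(T)|=\op{O}(g(j))$, so $\lambda(T)\in\tilde{\ell}_g$ and $\Diag(\lambda(T))\in\sL_g$.

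\emph{Part (1), $T-\Diag(\lambda(T))\in\Com(\sL_g)$.} By~(\ref{eq:com difference}) it suffices to show
\[
\sum_{j<N}\lambda_j(T)=\sum_{j<N}\lambda_j\bigl(\Diag(\lambda(T))\bigr)+\op{O}(Ng(N)).
\]
The eigenvalues of $\Diag(\lambda(T))$ are exactly the $\lambda_j(T)$, with the same ordering by modulus, so we may take $\lambda(\Diag(\lambda(T)))=\lambda(T)$. The two partial sums then coincide, and the difference is $0$. The only remaining point is that the results are independent of the choice of eigenvalue sequence. Two admissible orderings differ only by permuting eigenvalues of equal modulus, and the resulting partial sums differ by at most about $|\lambda_N|\cdot(\text{multiplicity})$. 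I expect this to be the main technical point. If the multiplicity is large this is not obviously $\op{O}(Ng(N))$, and the clean route is to invoke Kalton's characterization~(\ref{eq:com difference}) in the form stated in~\cite{LSZ:Book}. That form is insensitive to this choice: within a block of equal modulus $r$ and length $m$ ending past $N$, the discrepancy is at most $2rm$, and $r=\op{O}(g(\text{start}))$, so a block straddling $N$ contributes $\op{O}(Ng(N))$ after the $\RV$ comparison. Applying the block estimate at the single block containing $N$ gives the bound.

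\emph{Part (2).} Every trace $\varphi$ vanishes on $\Com(\sL_g)$, so part (1) gives $\varphi(T)=\varphi(\Diag(\lambda(T)))$. The right-hand side depends only on the nonzero eigenvalues with multiplicity, because $\Diag$ is independent of the orthonormal family up to unitary conjugation, which traces respect, and zero entries are irrelevant. Hence $\varphi(T)$ depends only on the spectrum of $T$.
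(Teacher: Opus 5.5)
Your Weyl-inequality argument for $\lambda(T)\in\tilde{\ell}_g$ is fine. It uses Karamata for the $\RV_{-p}$-function $g^p$ with $0<p<1$. Part (2) also follows from part (1) as you say.

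\textbf{The gap is part (1).} The paper does not prove this proposition; it cites Dykema--Kalton, using that quasi-Banach ideals are geometrically stable. Your derivation is formally valid only because you take (\ref{eq:com difference}) as a black box. But that black box, applied to the pair $T$, $\Diag(\lambda(T))$, \emph{is} the statement you want. The two-operator criterion (\ref{eq:com difference}) is at least as strong as the Dykema--Kalton theorem, so citing it amounts to citing that theorem. Your proof therefore has no content of its own. The real issue is why the non-normal part of $T$ (for instance the quasinilpotent part in a triangular decomposition) lies in $\Com(\sL_g)$, and you never address it. It is also inaccurate to call this ``Kalton's spectral description'': Kalton's result is the one-operator criterion (\ref{eq:spectral1}).

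\textbf{A non-circular reduction} uses only (\ref{eq:spectral1}), which the paper states for arbitrary, non-normal $T\in\sL_g$. Choose isometries $V_1,V_2$ on $\sH$ with $V_1V_1^*+V_2V_2^*=1$, and set $Y=V_1TV_1^*-V_2\Diag(\lambda(T))V_2^*$.
\begin{itemize}
\item For any isometry $V$ and $A\in\sL_g$ we have $VAV^*-A=[V,AV^*]$. Hence $Y\equiv T-\Diag(\lambda(T))$ modulo $\Com(\sL_g)$.
\item $Y$ is block diagonal, with blocks unitarily equivalent to $T$ and $-\Diag(\lambda(T))$. So one eigenvalue sequence for $Y$ is $\lambda_0(T),-\lambda_0(T),\lambda_1(T),-\lambda_1(T),\ldots$.
\item The partial sums of this sequence are either $0$ or $\lambda_{\lfloor N/2\rfloor}(T)$. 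By your Weyl bound this is $\op{O}(g(N))=\op{O}(Ng(N))$.
\item Then (\ref{eq:spectral1}) gives $Y\in\Com(\sL_g)$, which proves part (1).
\end{itemize}
This makes clear that the deep input is Kalton's theorem for non-normal operators.

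\textbf{Minor points.}
\begin{itemize}
\item Your worry about the choice of eigenvalue sequence is not a real issue. Two admissible sequences have the same moduli and agree, as multisets, on all eigenvalues of modulus $>|\lambda_{N-1}(T)|$. Hence their $N$-th partial sums differ by at most $2N|\lambda_{N-1}(T)|=\op{O}(Ng(N))$.
\item Your block estimate, using $r=\op{O}(g(\text{start}))$ and the block length $m$, is not the right bound, since a block can start early and be long.
\item In part (2), two orthonormal families need not be related by a unitary when their orthogonal complements have different dimensions. Use the partial isometry $W$ with $W\xi_j=\xi'_j$ that vanishes on $\{\xi_j\}^\perp$. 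Then $W\Diag(a)W^*-\Diag(a)=[W,\Diag(a)W^*]$.
\end{itemize}
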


\begin{remark}
In~\cite{DK:Crelle98} the above result is proved on geometrically stable ideals. Every quasi-Banach ideal is geometrically stable (see~\cite{Ka:Crelle98}). Dykema--Kalton's result was extended to ideals that are closed under logarithmic submajorization by Sukochev--Zanin~\cite{SZ:AIM14} (see also~\cite[Chapter~5]{LSZ:Book}).
\end{remark}

The following result extends to weak Lorentz ideals the Lidskii formula for traces on $\sL_{1,\infty}$ of Semenov--Sukochev--Usachev--Zanin~\cite[Theorem~6.3]{SSUZ:AIM15}.

\begin{proposition}\label{prop:Pietsch.Lidskii}
 For every shift-invariant linear functional $\ell$ on $\ell_\infty$, we have
 \begin{equation}\label{eq:Lidskii by hat alpha}
 \varphi_\ell(T)= \ell\circ \hat{\alpha}_g\big( \lambda(T)\big) \qquad \forall T\in \sL_g.
\end{equation}
\end{proposition}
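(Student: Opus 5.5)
We reduce to diagonal operators and then compute $\varphi_\ell$ on them through an explicit $\zeta_g$-representation. Fix $T\in\sL_g$ and an eigenvalue sequence $x=\lambda(T)$. By Proposition~\ref{prop:DK}, $x\in\tilde{\ell}_g$ and $T-\Diag(x)\in\Com(\sL_g)$. Since $\varphi_\ell$ is a trace (Theorem~\ref{thm:Pietsch}), this gives $\varphi_\ell(T)=\varphi_\ell(\Diag(x))$. It therefore suffices to show that
\[
\varphi_\ell\big(\Diag(x)\big)=\ell\circ\hat{\alpha}_g(x)
\]
for every $x\in\tilde{\ell}_g$, that is, for every sequence with $|x_0|\geq|x_1|\geq\cdots$ and $|x_j|=\op{O}(g(j))$.

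For such $x$, we write $\Diag(x)=\sum_j x_j\,\xi_j^*\otimes\xi_j$ and split it along the dyadic blocks:
\[
T_m:=\sum_{j\in\D_m}x_j\,\xi_j^*\otimes\xi_j.
\]
Then $\op{rk}(T_m)\leq|\D_m|=2^m$. The tail satisfies
\[
\big\|\Diag(x)-\sum_{m<k}T_m\big\|=\sup_{j\geq 2^k-1}|x_j|=|x_{2^k-1}|=\op{O}\big(g(2^k-1)\big)=\op{O}\big(g(2^k)\big).
\]
Here we use the monotonicity of $|x_j|$ and the $\RV_{-1}$-property (Lemma~\ref{lem:UCT-consequence}, or simply $g(2^k-1)\sim g(2^k)$). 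Hence $\Diag(x)=\sum_m T_m$ is a $\zeta_g$-representation in the sense of~(\ref{eq:zeta_g reps}).

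Its traces are $\Tr(T_m)=\sum_{k\in\D_m}x_k$. Formula~(\ref{def:varphi ell}) in Theorem~\ref{thm:Pietsch} then gives
\[
\varphi_\ell\big(\Diag(x)\big)=\ell\Big(\Big\{\tfrac{1}{2^mg(2^m)}\sum_{k\in\D_m}x_k\Big\}_{m\geq0}\Big)=\ell\big(\hat{\alpha}_g(x)\big).
\]
We also check that $\hat{\alpha}_g(x)\in\ell_\infty$. The block sum is $\op{O}(2^m g(2^m-1))=\op{O}(2^mg(2^m))$, again by the $\RV_{-1}$-property, so $\hat{\alpha}_g(x)$ is bounded.

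The main point requiring care is that the non-selfadjoint case is handled entirely by Dykema--Kalton. Lemma~\ref{lem:Pietsch.Figiel} is stated only for selfadjoint operators, whereas Proposition~\ref{prop:DK} works for every $T\in\sL_g$. We also rely on the independence of $\varphi_\ell$ from the chosen $\zeta_g$-representation, as asserted in Theorem~\ref{thm:Pietsch}; this is what lets us use the diagonal block decomposition even though $\Diag(x)$ is not positive. Finally, the value does not depend on the choice of eigenvalue sequence, since different choices give diagonal operators with the same spectrum, which a trace cannot distinguish by Proposition~\ref{prop:DK}.
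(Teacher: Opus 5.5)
Your proof is correct and follows the paper's argument: reduce to $\Diag(\lambda(T))$ via Dykema--Kalton, take the dyadic block decomposition as a $\zeta_g$-representation, and read off $\hat{\alpha}_g(\lambda(T))$ from the block traces. The one difference is cosmetic. The paper certifies the block decomposition by rewriting $\Diag(\lambda(T))$ as a Schmidt series $\sum_j \mu_j\,\eta_j^*\otimes\xi_j$ with phase-rotated vectors $\eta_j$ and invoking~(\ref{eq:2nd zeta_g reps}), whereas you check the rank and tail-norm conditions of~(\ref{eq:zeta_g reps}) directly, which is equally valid.
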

\begin{proof}
 Let $T\in \sL_g$. As by Proposition~\ref{prop:DK} we have $\varphi_\ell(T)= \varphi_\ell(\Diag( \lambda(T)))$, it is enough to establish~(\ref{eq:Lidskii by hat alpha}) for the operator $N=\Diag( \lambda(T))$.

 By definition $N= \sum \lambda_j(T) \xi_j^*\otimes \xi_j$. As $|\lambda_0(T)|\geq |\lambda_1(T)|\geq \cdots$, we see that $\mu_j(N)=|\lambda_j(T)|$.  Thus, if we put $\lambda_j(T)=e^{i\theta_j}\mu_j(N)$ and $\eta_j:=e^{i\theta_j}\xi_j$, then $(\eta_j)_{j\geq 0}$ is an orthonormal family in $\sH$, and we have
 \begin{equation*}
 N=\sum_{j\geq 0} \lambda_j(T)\xi_j^*\otimes \xi_j= \sum_{j\geq 0} \mu_j(N)\eta_j^*\otimes \xi_j.
\end{equation*}
 In particular, we see that $N=\sum_{j\geq 0} \mu_j(N)\eta_j^*\otimes \xi_j$ is a Schmidt series representation of $N$. Therefore, by~(\ref{eq:2nd zeta_g reps}) we get the $\zeta_g$-representation,
 \begin{equation*}
 N=\sum_{m\geq 0} N_m, \qquad N_m:= \sum_{j\in \D_m} \mu_j(N)\eta_j^*\otimes \xi_j=  \sum_{j\in \D_m} \lambda_j(T)\xi_j^*\otimes \xi_j.
\end{equation*}
 As $\Tr(\xi_j^*\otimes \xi_j)=1$, we get
 \begin{equation*}
 \frac{1}{g(2^m)2^m} \Tr(N_m)=   \frac{1}{g(2^m)2^m} \sum_{j\in \D_m} \lambda_j(T) =\hat{\alpha}_g\big(\lambda(T)\big)_m.
\end{equation*}
 The formula~(\ref{def:varphi ell}) then yields
 \begin{equation*}
 \varphi_\ell(N)= \ell\bigg( \bigg\{\frac{1}{2^{m}g(2^m)} \Tr(N_m)\bigg\}\bigg) =\ell\big(\hat{\alpha}_g\big(\lambda(T)\big)\big).
\end{equation*}
 This gives~(\ref{eq:Lidskii by hat alpha}). The proof is complete.
\end{proof}

Combining Theorem~\ref{thm:Pietsch} and Proposition~\ref{prop:Pietsch.Lidskii} we then arrive at the following spectral reformulation of Pietsch's correspondence for traces on~$\sL_g$. This extends to all weak Lorentz ideals the corresponding result for traces on $\sL_{1,\infty}$ (see~\cite[Theorem~6.3.1(a)--(c)]{LSZ:Book}).

\begin{theorem}\label{thm:Pietsch.Spectral}
 For every shift-invariant linear functional $\ell$ on $\ell_\infty$, the following formula defines a trace on $\sL_g$,
 \begin{equation}\label{eq:final Lidskii}
  \varphi_\ell(T)= \ell\circ \hat{\alpha}_g\big( \lambda(T)\big),  \qquad T\in \sL_g.
\end{equation}
Conversely, given any trace $\varphi$ on $\sL_g$, there is a unique shift-invariant linear functional $\ell=\ell_\varphi$ given by~(\ref{def:ell varphi}) such that $\varphi=\varphi_\ell$.
\end{theorem}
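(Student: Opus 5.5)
The statement is essentially a repackaging of Theorem~\ref{thm:Pietsch} by means of Proposition~\ref{prop:Pietsch.Lidskii}, so the plan is to combine these two results. It is convenient to treat existence, surjectivity and uniqueness as three separate steps.

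\textbf{Existence.} Let $\ell$ be a shift-invariant linear functional on $\ell_\infty$. The first part of Theorem~\ref{thm:Pietsch} shows that the functional $\varphi_\ell$ defined by~(\ref{def:varphi ell}) via $\zeta_g$-representations is a trace on $\sL_g$. Proposition~\ref{prop:Pietsch.Lidskii} then identifies this trace with $T\mapsto \ell\circ\hat{\alpha}_g(\lambda(T))$. Hence~(\ref{eq:final Lidskii}) defines a trace.

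Implicit here is that the right-hand side makes sense and does not depend on the choice of eigenvalue sequence. Both points follow from earlier results:
\begin{itemize}
\item By Proposition~\ref{prop:DK}, $\lambda(T)\in\tilde{\ell}_g$. Moreover $|\hat{\alpha}_g(x)_m|\le (2^m g(2^m))^{-1}\sum_{k\in\D_m}|x_k|=\op{O}(1)$, by the $\RV_{-1}$ comparison $g(k)\le C g(2^m)$ for $k\in\D_m$ noted earlier. So $\hat{\alpha}_g(\lambda(T))\in\ell_\infty$.
\item The value equals $\varphi_\ell(T)$, which is intrinsic to $T$. So independence of the choice of $\lambda(T)$ is automatic.
\end{itemize}

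\textbf{Surjectivity.} Let $\varphi$ be any trace on $\sL_g$ and set $\ell_\varphi(x)=\varphi(\Diag(\hat{\delta}_g(x)))$ as in~(\ref{def:ell varphi}). By the second and third parts of Theorem~\ref{thm:Pietsch}, $\ell_\varphi$ is shift-invariant and $\varphi=\varphi_{\ell_\varphi}$. Applying Proposition~\ref{prop:Pietsch.Lidskii} to $\ell=\ell_\varphi$ gives $\varphi(T)=\ell_\varphi\circ\hat{\alpha}_g(\lambda(T))$ for all $T\in\sL_g$.

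\textbf{Uniqueness.} Suppose $\ell$ is shift-invariant and $\varphi=\varphi_\ell$ in the sense of~(\ref{eq:final Lidskii}). By Proposition~\ref{prop:Pietsch.Lidskii}, this $\varphi_\ell$ coincides with the trace of Theorem~\ref{thm:Pietsch}(1). The correspondence $\ell\mapsto\varphi_\ell$ in Theorem~\ref{thm:Pietsch} is injective, with left inverse $\varphi\mapsto\ell_\varphi$, so $\ell=\ell_\varphi$.

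A direct check is also available. Take $x\in\ell_\infty$; the sequence $\hat{\delta}_g(x)$ need not be ordered by modulus, so pass to a permutation of it that is. The diagonal operator is unchanged up to unitary equivalence. Averaging over the dyadic blocks then recovers $x$ up to a bounded rearrangement effect. Making this rigorous is delicate, which is why I would rely on the abstract injectivity argument instead.

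\textbf{Main obstacle.} The only subtle step is ensuring that Proposition~\ref{prop:Pietsch.Lidskii} applies verbatim to every shift-invariant $\ell$, including non-positive and discontinuous ones. Its proof uses only the Dykema--Kalton spectrality of traces (Proposition~\ref{prop:DK}) and a Schmidt-series $\zeta_g$-representation of $\Diag(\lambda(T))$, and neither ingredient requires positivity or continuity of $\ell$. So no further work should be needed.
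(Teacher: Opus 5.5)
Your proposal is correct and follows the paper's own route: the paper obtains the theorem simply by combining Pietsch's correspondence (Theorem~\ref{thm:Pietsch}) with the Lidskii-type formula of Proposition~\ref{prop:Pietsch.Lidskii}, which holds for every shift-invariant $\ell$ because it rests only on the Dykema--Kalton spectrality of traces. Uniqueness comes, as you say, from the fact that $\varphi\mapsto\ell_\varphi$ is a left inverse of $\ell\mapsto\varphi_\ell$.
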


\subsection{Positive normalized traces}
We shall now specialize Theorem~\ref{thm:Pietsch.Spectral} to positive traces and positive normalized traces. As in~\cite{SSUZ:AIM15, LSZ:Book} this involves using \emph{Banach limits}, i.e., shift-invariant extended limits.

For positive traces we have the following characterization which, in the formulation given below, seems to be new, even in the case of the weak trace-class $\sL_{1,\infty}$ (compare~\cite[Theorem~6.3.1(d)]{LSZ:Book}).

\begin{proposition}\label{prop:Pietsch.positive-traces}
 Under the correspondence provided by Theorem~\ref{thm:Pietsch.Spectral}, the (non-zero) positive traces on $\sL_g$ are in one-to-one correspondence with positive scalar multiples of Banach limits.
\end{proposition}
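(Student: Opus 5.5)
The plan is to prove the two directions of the correspondence separately and then read off injectivity from Theorem~\ref{thm:Pietsch.Spectral}. Throughout I would use the explicit formulas $\varphi_\ell(T)=\ell\circ\hat{\alpha}_g(\lambda(T))$ and $\ell_\varphi(x)=\varphi(\Diag(\hat{\delta}_g(x)))$.

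\emph{Banach limits give positive traces.} Let $\ell=c\,\theta$ with $c>0$ and $\theta$ a Banach limit, and let $T\in(\sL_g)_+$. Then every eigenvalue sequence of $T$ is $\lambda(T)=\mu(T)$, which has non-negative entries. By~(\ref{def:hat alpha}), $\hat{\alpha}_g(\mu(T))_m$ is a non-negative multiple of a sum of non-negative terms, so $\hat{\alpha}_g(\mu(T))\geq 0$ in $\ell_\infty$. Since $\theta$ is positive, we get $\varphi_\ell(T)=c\,\theta(\hat{\alpha}_g(\mu(T)))\geq 0$. Hence $\varphi_\ell$ is a positive trace. It is non-zero because, for instance, $\hat{\alpha}_g(\mu(T_g))_m\to\log 2$ by the $\RV_{-1}$ property, so $\varphi_\ell(T_g)=c\log 2\neq 0$.

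\emph{Positive traces give multiples of Banach limits.} Let $\varphi$ be a non-zero positive trace and set $\ell=\ell_\varphi$, which is shift-invariant by Theorem~\ref{thm:Pietsch}.
\begin{enumerate}
 \item $\ell$ is positive. If $x\in\ell_\infty$ has $x\geq 0$, then $\hat{\delta}_g(x)\geq 0$ by~(\ref{def:hat delta}), so $\Diag(\hat{\delta}_g(x))\geq 0$ and $\ell(x)\geq 0$.
 \item $\ell$ is bounded by $\ell(\mathbf 1)$. Positivity yields $|\ell(x)|\leq \ell(\mathbf 1)\|x\|_\infty$: for real $x$ use $-\|x\|_\infty\mathbf 1\leq x\leq \|x\|_\infty\mathbf 1$, and for complex $x$ split into real and imaginary parts, which gives the bound up to a factor $2$ (this is all we need).
 \item $\ell(\mathbf 1)>0$. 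If $\ell(\mathbf 1)=0$, then step~2 gives $\ell=0$. In that case $\varphi=\varphi_\ell=0$ by Theorem~\ref{thm:Pietsch.Spectral}, which is a contradiction.
\end{enumerate}
Now set $\theta=\ell(\mathbf 1)^{-1}\ell$. This is a positive, shift-invariant functional with $\theta(\mathbf 1)=1$. By Remark~\ref{rmk:Pietsch.shift-singular}, $\theta$ annihilates finitely supported sequences. Being bounded by step~2, it therefore annihilates their closure $\co$. Thus $\theta$ is a state vanishing on $\co$, i.e., an extended limit, and being shift-invariant it is a Banach limit. Finally, $\varphi=\varphi_\ell=\ell(\mathbf 1)\,\varphi_\theta$.

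\emph{Bijectivity.} Since $\ell\mapsto\varphi_\ell$ is a bijection by Theorem~\ref{thm:Pietsch.Spectral}, the two steps show that it restricts to a bijection between positive scalar multiples of Banach limits and non-zero positive traces.

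There is no serious obstacle. The only point requiring care is the passage from ``positive and shift-invariant'' to ``extended limit''. This uses automatic boundedness of positive functionals on $\ell_\infty$ together with the vanishing on finitely supported sequences from Remark~\ref{rmk:Pietsch.shift-singular}.
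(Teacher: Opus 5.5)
Your proof is correct and follows essentially the same route as the paper: you transfer positivity in both directions through the positivity of $\hat{\alpha}_g$ and $\hat{\delta}_g$, normalize a non-zero positive shift-invariant functional by $\ell(\mathbf{1})>0$, and show it vanishes on $\co$ via automatic continuity and Remark~\ref{rmk:Pietsch.shift-singular}. The only differences are cosmetic: you obtain $\ell(\mathbf{1})>0$ from $\varphi\neq 0$ rather than from $\|\ell\|=\ell(\mathbf{1})$, and you check $\varphi_\ell\neq 0$ directly on $T_g$.
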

\begin{proof}
It is immediate from~(\ref{def:hat alpha}) that if $\ell$ is a positive shift-invariant linear functional, then~(\ref{eq:final Lidskii}) defines a positive trace. Conversely, as the operator $\hat{\delta}_g$ given by~\eqref{def:hat delta} preserves positivity we see that if $\varphi$ is a positive trace, then the linear functional $\ell_\varphi$ given by~(\ref{def:ell varphi}) is positive. Thus, Pietsch's correspondence induces a one-to-one correspondence between positive shift-invariant linear functionals on $\ell_\infty$ and positive traces on $\sL_g$.

It remains now to identify (non-zero) positive shift-invariant linear functionals on $\ell_\infty$. Every positive scalar multiple of a Banach limit is such a functional.
Conversely, let $\ell$ be a (non-zero) positive shift-invariant linear functional on $\ell_\infty$.  For one thing, as mentioned in Remark~\ref{rmk:Pietsch.shift-singular} the shift-invariance of $\ell$ implies that it annihilates finitely supported sequences. On the other hand, the positivity of $\ell$ ensures it is continuous with $\|\ell\|=\ell(1)$.  It then follows that $\ell$ annihilates the null sequence space $\co$.

Furthermore, as $\ell\neq 0$, we have $\ell(1)=\|\ell\|>0$. Set $\theta= \ell(1)^{-1}\ell$. Then $\theta$ is a positive linear functional on $\ell_\infty$ that annihilates $\co$. By definition $\theta(1)= \ell(1)^{-1}\ell(1)=1$, and so $\theta$ is an extended limit. The shift-invariance of $\ell$ is transferred to $\theta$, and so we see that $\theta$ is a Banach limit. As $\ell=\ell(1)\theta$ we then see that $\ell$ is a positive scalar multiple of a Banach limit.

To sum up, (non-zero) positive shift-invariant linear functionals on $\ell_\infty$ are exactly the positive multiples of Banach limits. As they are in one-to-one correspondence with (non-zero)  positive traces on $\sL_g$ we get the result.
 \end{proof}

As every continuous trace on $\sL_g$ is a linear combination of positive traces (\emph{cf}.~\cite[Theorem~4.1.10]{LSZ:Book}), from Proposition~\ref{prop:Pietsch.positive-traces} we also obtain the following characterization of continuous traces.

\begin{corollary}\label{cor:Pietsch.continuous-traces}
 Under the correspondence provided by Theorem~\ref{thm:Pietsch.Spectral}, the space of continuous traces on $\sL_g$ is in one-to-one correspondence with the space spanned by
 Banach limits.
 \end{corollary}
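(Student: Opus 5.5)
The plan is to reduce the statement to Proposition~\ref{prop:Pietsch.positive-traces} and the known structure of continuous traces. Throughout, we work inside the bijection $\ell\mapsto\varphi_\ell$ of Theorem~\ref{thm:Pietsch.Spectral}. That map is linear in $\ell$, since $\varphi_\ell(T)=\ell\circ\hat{\alpha}_g(\lambda(T))$, and its inverse $\varphi\mapsto\ell_\varphi=\varphi\circ\Diag\circ\hat{\delta}_g$ is also linear. So it is a linear isomorphism between the vector space of traces on $\sL_g$ and the vector space of shift-invariant linear functionals on $\ell_\infty$. Linear spans are therefore carried to linear spans. It then suffices to identify both sides as spans:
\begin{itemize}
\item the continuous traces as the span of the positive traces;
\item the span of Banach limits as the image of that span.
\end{itemize}

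First, every continuous trace on $\sL_g$ is a linear combination of positive traces, by~\cite[Theorem~4.1.10]{LSZ:Book}. Conversely, every positive trace on the quasi-Banach ideal $\sL_g$ is continuous. The plan for this direction is as follows. A positive trace $\varphi$ is monotone on positive operators. For $0\le T$ we have $T\le \|T\|_g\,\Diag(\hat{\delta}_g$-type majorant$)$, or more simply $\mu(T)\le \|T\|_g\,(g(j))_j$. Unitary invariance (traces are unitarily invariant) then gives $0\le\varphi(T)\le\|T\|_g\,\varphi(T_g)$ for $T\ge0$. Splitting a general $T$ into real and imaginary parts and then into positive and negative parts, with quasi-norm control, yields continuity. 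Hence the space of continuous traces equals the linear span of the positive traces.

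Second, I apply Proposition~\ref{prop:Pietsch.positive-traces}, which says the nonzero positive traces correspond exactly to the positive multiples of Banach limits. Since the correspondence is linear, the span of positive traces corresponds to the span of positive multiples of Banach limits, which is just the span of Banach limits. Combining this with the first step, continuous traces correspond bijectively to the span of Banach limits. Injectivity is inherited from Theorem~\ref{thm:Pietsch.Spectral}.

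The only point needing care is the automatic continuity of positive traces, which is what makes "span of positive traces" coincide with "continuous traces". If one prefers to avoid it, a cleaner route is to use that the span of positive traces contains all continuous traces (the cited theorem), together with the standard fact that positive traces on quasi-Banach ideals are bounded~\cite{LSZ:Book}. I expect this step to be the main obstacle, but only a minor one; everything else is formal linear algebra.
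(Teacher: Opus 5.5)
Your proposal is correct and follows the paper's route. The paper likewise obtains the corollary by combining the fact that continuous traces are linear combinations of positive traces (\cite[Theorem~4.1.10]{LSZ:Book}) with Proposition~\ref{prop:Pietsch.positive-traces} and the linearity of $\ell\mapsto\varphi_\ell$; your bound $0\le\varphi(T)\le\|T\|_g\,\varphi(T_g)$ for $T\ge 0$, together with quasi-norm control of the real/imaginary and positive/negative parts, supplies the converse inclusion (positive traces are continuous) that the paper leaves implicit.
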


\begin{remark}
 As with Proposition~\ref{prop:Pietsch.positive-traces}, Corollary~\ref{cor:Pietsch.continuous-traces} seems to be a new result, even in the case of $\sL_{1,\infty}$ (compare~\cite[Theorem~6.3.1(f)]{LSZ:Book} and~\cite[Theorem~3.5]{LU:JMAA25}).
\end{remark}

To understand how $g$-normalized positive traces fit under Pietsch's correspondence, it is convenient to use the following lemma which, at least for $a=b=0$,  is a standard result from the theory of $\RV$-functions (see, e.g, \cite[Chapter~3]{BGT:Cambridge87}).

\begin{lemma}\label{lem:auxiliary result} Given any $a,b\in \R$, we have
 \begin{equation*}
 \lim_{t\rightarrow \infty} \frac{G(2t+a)-G(t+b)}{tg(t)}= \log 2.
\end{equation*}
If $a$ and $b$ are integers, then we also have
\begin{equation}\label{eq:uni convergence}
 \lim_{j\rightarrow \infty} \frac{1}{jg(j)} \sum_{k=j+a}^{2j+b} g(k)= \log 2.
\end{equation}
\end{lemma}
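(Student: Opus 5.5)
The plan is to prove the integral statement first and then obtain the discrete statement~(\ref{eq:uni convergence}) by comparing the sum with an integral. In the standing setting of this section, $g$ is continuous, decreasing and $\RV_{-1}$.

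For the first limit, I write
\begin{equation*}
 G(2t+a)-G(t+b)=\int_{t+b}^{2t+a} g(s)\,ds = t\int_{1+b/t}^{2+a/t} g(tu)\,du,
\end{equation*}
using the substitution $s=tu$. Dividing by $tg(t)$ gives $\int_{1+b/t}^{2+a/t} \frac{g(tu)}{g(t)}\,du$. For $t$ large, the integration interval lies in the compact set $[1/2,3]$. By the Uniform Convergence Theorem, $g(tu)/g(t)\to u^{-1}$ uniformly for $u\in[1/2,3]$. The endpoints converge to $1$ and $2$, and the integrand is uniformly bounded. Hence the integral converges to $\int_1^2 u^{-1}\,du=\log 2$. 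Alternatively, one can split it as $\int_1^2$ plus two pieces of length $\op{O}(1/t)$ with bounded integrand, and these pieces vanish.

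For the discrete statement, I use the monotonicity of $g$. For integers $m\le n$ with $m\ge 1$,
\begin{equation*}
 \int_{m}^{n+1} g(s)\,ds \le \sum_{k=m}^{n} g(k) \le \int_{m-1}^{n} g(s)\,ds .
\end{equation*}
Taking $m=j+a$ and $n=2j+b$, both bounds have the form $G(2j+a')-G(j+b')$ with fixed integer shifts. By the first part, each bound divided by $jg(j)$ tends to $\log 2$, and the squeeze gives~(\ref{eq:uni convergence}).

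If one does not want to rely on global monotonicity, it suffices that $g$ is ultimately decreasing. The other option is to note that each term $g(k)$ is $\op{O}(g(j))=\op{o}(jg(j))$ for $k$ in a fixed-size boundary window.

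The only delicate point is justifying the uniform convergence on the moving interval, and the UCT on compact subsets of $(0,\infty)$ handles this directly. I expect no real obstacle.
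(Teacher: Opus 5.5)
Your proof is correct and follows essentially the same route as the paper. Both use the Uniform Convergence Theorem on a compact interval for the integral limit, then squeeze the sum between two integrals of $g$ by monotonicity; the only difference is that the paper first reduces to $a=b=0$ via $|G(t+a)-G(t)|\leq |a|g(t-|a|)=\op{o}(tg(t))$ and works on $[1,2]$, whereas you absorb the shifts into the moving endpoints $1+b/t$, $2+a/t$ and work on $[1/2,3]$.
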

\begin{proof}
 As for $t$ large enough, we have $|G(t+a)-G(t)|\leq |a|g(t-|a|)\sim |a|g(t)=o(tg(t))$, we may assume that $a=b=0$. In that case the limit~(\ref{eq:uni convergence}) is a consequence of the uniform convergence theorem for $\RV$-functions (\emph{cf}.~\cite[Remark~2.16]{PT:Part1}). This result ensures that $g(t)^{-1}g(\lambda t)\rightarrow \lambda^{-1}$ as $t\rightarrow \infty$ uniformly in $\lambda$ on the compact interval $[1,2]$. Thus,
 \begin{equation*}
 \frac{G(2t)-G(t)}{tg(t)}= \int_t^{2t} \frac{g(s)}{g(t)}\frac{ds}{t}=\int_1^2 \frac{g(\lambda t)}{g(t)} d\lambda \longrightarrow \int_1^2 \frac{d\lambda}{\lambda}=\log 2.
\end{equation*}
 This proves the first part.

 To prove the second part we may also assume $a=b=0$. In this case, the result follows from the first part and the inequalities
 \begin{equation*}
 G(2j+1)-G(j)=\int_{j}^{2j+1}\!\!\! g(t)dt \leq \sum_{k=j}^{2j} g(k) \leq \int_{j-1}^{2j}\!\!\! g(t)dt=G(2j)-G(j-1).
\end{equation*}
The proof is complete.
 \end{proof}

In what follows we denote by $\gamma$ the sequence $(g(j))_{j\geq 0}$. As $\lambda(T_{g})=\gamma$ we see that, if $\ell$ is a linear functional on $\ell_\infty$, then
\begin{equation*}
\varphi_\ell(T_{g})= \ell\circ\hat{\alpha}_g\left(\lambda(T_{g})\right)=  \ell\circ\hat{\alpha}_g(\gamma).
\end{equation*}
Thus,
\begin{equation*}
\varphi_\ell\ \text{is a $g$-normalized trace} \ \Longleftrightarrow\  \ell\left(\hat{\alpha}_g(\gamma)\right)=1.
\end{equation*}
By Lemma~\ref{lem:auxiliary result} we have
\begin{equation*}
 \hat{\alpha}_g(\gamma)_j = g(2^j)^{-1}\alpha(\gamma)_j=  \frac{1}{g(2^j)2^j} \sum_{k\in \D_j}g(k)\longrightarrow \log 2.
\end{equation*}
Therefore, for any extended limit $\omega$ on $\ell_\infty$, we have
\begin{equation*}
 \omega( \hat{\alpha}_g(\gamma))= \log 2.
\end{equation*}
It then follows that, for every Banach limit $\theta$ on $\ell_\infty$, we have
\begin{equation}\label{eq:Pietsch.phitheta-Tg}
 \varphi_\theta(T_g)= \theta\circ \hat{\alpha}_g(\gamma)= \log 2.
\end{equation}

By combining~(\ref{eq:Pietsch.phitheta-Tg}) with Proposition~\ref{prop:Pietsch.positive-traces} we then arrive at the following result, which extends~\cite[Theorem~6.1.1(a)]{LSZ:Book} to weak Lorentz ideals (see also~\cite[Corollary~4.2]{SSUZ:AIM15}).

 \begin{theorem}\label{thm:Pietsch.PNT}
 Under the correspondence provided by Theorem~\ref{thm:Pietsch.Spectral}, the $g$-normalized positive traces on $\sL_g$ are exactly the traces of the form,
 \begin{equation*}
\hat{\varphi}_\theta(T):= \frac{1}{\log 2}\theta\circ \hat{\alpha}_g\big(\lambda(T)\big), \qquad T\in \sL_g,
\end{equation*}
where $\theta$ ranges over all Banach limits on $\ell_\infty$.
\end{theorem}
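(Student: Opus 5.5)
The plan is to combine Proposition~\ref{prop:Pietsch.positive-traces} with the normalization computation~(\ref{eq:Pietsch.phitheta-Tg}). There are two inclusions to prove: every $\hat{\varphi}_\theta$ is a $g$-normalized positive trace, and every $g$-normalized positive trace is of this form. Uniqueness of $\theta$ will come from the injectivity of Pietsch's correspondence in Theorem~\ref{thm:Pietsch.Spectral}.

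First, let $\theta$ be a Banach limit and set $\ell=(\log 2)^{-1}\theta$. This is a positive shift-invariant linear functional on $\ell_\infty$. By Theorem~\ref{thm:Pietsch.Spectral}, the functional $\hat{\varphi}_\theta=\varphi_\ell=\ell\circ\hat{\alpha}_g(\lambda(\cdot))$ is a trace on $\sL_g$. It is positive by Proposition~\ref{prop:Pietsch.positive-traces}; more directly, for $T\geq 0$ the eigenvalue sequence $\lambda(T)=\mu(T)$ is nonnegative, hence so is $\hat{\alpha}_g(\lambda(T))$. For the normalization, we use $\lambda(T_g)=\gamma$ and the fact that $\hat{\alpha}_g(\gamma)_j\to\log 2$, which follows from Lemma~\ref{lem:auxiliary result} with $\D_j=\{2^j-1,\dots,2^{j+1}-2\}$. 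Since $\theta$ is an extended limit, we get $\theta(\hat{\alpha}_g(\gamma))=\log 2$, and therefore $\hat{\varphi}_\theta(T_g)=1$.

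Conversely, let $\varphi$ be a $g$-normalized positive trace. It is non-zero because $\varphi(T_g)=1$. By Proposition~\ref{prop:Pietsch.positive-traces}, the associated functional $\ell_\varphi$ has the form $c\,\theta$ with $c>0$ and $\theta$ a Banach limit. Theorem~\ref{thm:Pietsch.Spectral} then gives $\varphi(T)=c\,\theta(\hat{\alpha}_g(\lambda(T)))$ for all $T\in\sL_g$. Evaluating at $T_g$ and using~(\ref{eq:Pietsch.phitheta-Tg}) yields $1=c\log 2$, so $c=(\log 2)^{-1}$ and $\varphi=\hat{\varphi}_\theta$.

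For uniqueness, suppose $\hat{\varphi}_\theta=\hat{\varphi}_{\theta'}$. The one-to-one nature of the correspondence gives $(\log 2)^{-1}\theta=(\log 2)^{-1}\theta'$, hence $\theta=\theta'$. No step is a real obstacle. The only point needing care is that the normalizing constant is forced to be exactly $(\log 2)^{-1}$. This relies on $\hat{\alpha}_g(\gamma)$ genuinely converging, which is the only place the $\RV_{-1}$ hypothesis enters through Lemma~\ref{lem:auxiliary result}, so that every Banach limit assigns it the same value.
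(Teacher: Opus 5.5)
Your proposal is correct and follows the paper's argument. The paper likewise combines Proposition~\ref{prop:Pietsch.positive-traces} (positive traces correspond to positive multiples of Banach limits) with the computation $\theta(\hat{\alpha}_g(\gamma))=\log 2$ coming from Lemma~\ref{lem:auxiliary result}, which forces the constant to be $(\log 2)^{-1}$; your extra uniqueness remark via injectivity of the correspondence is a harmless addition.
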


\begin{remark}
As mentioned in Remark~\ref{rmk:semi-finite}, a version of Pietsch's correspondence for weak Lorentz ideals in the semi-finite setting is given in~\cite{LU:JMAA25}.  The correspondence there is stated for continuous traces. It seems that by using Lemma~\ref{lem:characterization of ac} it should be possible to specialize their correspondence to $g$-normalized positive traces and get a semi-finite version of Theorem~\ref{thm:Pietsch.PNT}.
\end{remark}

\begin{remark}\label{rmk:Pietsch.independence-g}
 Rigorously speaking, for Proposition~\ref{prop:Pietsch.positive-traces}, Corollary~\ref{cor:Pietsch.continuous-traces}, and Theorem~\ref{thm:Pietsch.PNT} to hold the function $g(t)$ needs to be decreasing, since this is a running assumption of this section. This condition actually can be dropped. To see this let $g_1(t)$, $t\geq 0$, be any continuous $\RV_{-1}$-function such that $g_1(t)\sim g(t)$. Given any $T\in \sL_g$, we then have
 \begin{equation*}
 \hat{\alpha}_{g_1}\big(\lambda(T)\big)_j= \frac{1}{2^jg_1(2^j)} \sum_{k\in \D_j} \lambda_k (T)\sim  \frac{1}{2^jg(2^j)} \sum_{k\in \D_j}\lambda_k (T)= \hat{\alpha}_g\big(\lambda(T)\big)_j.
\end{equation*}
As $ \hat{\alpha}_{g}(\lambda(T))$ and  $ \hat{\alpha}_{g_1}(\lambda(T))$ are bounded sequences, this implies that  $ \hat{\alpha}_{g}(\lambda(T))- \hat{\alpha}_{g_1}(\lambda(T))$ is in $\co$. Therefore, for any shift-invariant linear functional $\ell$ on $\ell_\infty$ that annihilates $\co$, we have
\begin{equation*}
 \varphi_\ell\left(\lambda(T)\right)= \ell\left(\hat{\alpha}_g\big(\lambda(T)\big)\right) = \ell\left(\hat{\alpha}_{g_1}\big(\lambda(T)\big)\right).
\end{equation*}
This holds if $\ell$ is a positive multiple of a Banach limit, and more generally if $\ell$ is any linear combinations of Banach limits. Therefore, in the statements of Proposition~\ref{prop:Pietsch.positive-traces}, Corollary~\ref{cor:Pietsch.continuous-traces}, and Theorem~\ref{thm:Pietsch.PNT} we may replace the function $g(t)$ by any continuous $\RV_{-1}$-function  $g_1(t)$, $t\geq 0$, such that $g_1(t)\sim g(t)$.
\end{remark}

\subsection{A $g$-hypermeasurable operator which is not spectrally measurable}\label{subsec:hyper-not-spec}
As a further application of the considerations that lead to Theorem~\ref{thm:Pietsch.Spectral}, we produce an explicit example of a $g$-hypermeasurable operator which is not spectrally measurable.

Set $T=\Diag(\delta\circ \alpha(\gamma))$, where as above $\gamma=(g(j))_{j\geq 0}$. That is, $T\xi_j=\lambda_j(T)\xi_j$, with
\begin{equation*}
 \lambda_j(T)=\delta\circ \alpha(\gamma)_j=\alpha(\gamma)_m=2^{-m} \sum_{k\in \D_m} g(k) \qquad \text{if}\ j\in \D_m.
\end{equation*}
In particular, $T$ is a positive compact operator, and we have
\begin{equation*}
 \mu_j(T)=\lambda_j(T)\leq g(2^m)\leq g(j/2)\sim 2g(j).
\end{equation*}
Thus, $T$ is an operator in $\sL_g$.

\begin{proposition}\label{prop:hyper non-weyl}
 The operator $T$ is $g$-hypermeasurable in $\sL_g$ with $\gbint T=1$. However, $T$ is not spectrally measurable.
\end{proposition}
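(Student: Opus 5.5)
Two things have to be shown: that $T$ is $g$-hypermeasurable with $\gbint T=1$, and that it is not spectrally measurable.

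\emph{Hypermeasurability.} The quickest route is Lemma~\ref{lem:Pietsch.Figiel} applied to $T_g$. Since $T_g=T_g^*$ and $\lambda(T_g)=\gamma$, the lemma gives
\begin{equation*}
 T_g-\Diag\big[(\delta\circ\alpha)(\gamma)\big]\in\Com(\sL_g),
\end{equation*}
that is, $T=T_g \bmod \Com(\sL_g)$. Theorem~\ref{thm:hyper}, (ii)$\Rightarrow$(i) with $c=1$, then yields that $T$ is $g$-hypermeasurable with $\gbint T=1$.

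As a cross-check, independent of Figiel's lemma, one can verify (iii) of Theorem~\ref{thm:hyper} directly. For $N=2^{M}-1$ the partial sum $\sum_{j<N}\lambda_j(T)$ equals $\sum_{j<N}g(j)$ exactly, because $T$ preserves the sum over each dyadic block. For general $N$ with $2^M-1\le N<2^{M+1}-1$, the two partial sums differ by at most the mass of the single block $\D_M$. That mass is $\sum_{k\in\D_M}g(k)=\op{O}(2^Mg(2^M))=\op{O}(Ng(N))$, by Lemma~\ref{lem:auxiliary result} and the $\RV_{-1}$ property. Combined with $\sum_{j<N}g(j)=G(N)+\op{O}(1)$ and~(\ref{cond:minimal}), this gives (iii) with $c=1$. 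Either argument suffices; I would present the Figiel-lemma one.

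\emph{Failure of spectral measurability.} Since $T\ge 0$, we have $\lambda_j^+(T)=\lambda_j(T)$ and no negative eigenvalues. So it suffices to show that $g(j)^{-1}\lambda_j(T)$ has no limit, by exhibiting two subsequences with different limits.

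By Lemma~\ref{lem:auxiliary result}, $\alpha(\gamma)_m=2^{-m}\sum_{k\in\D_m}g(k)\sim(\log 2)\,g(2^m)$.
\begin{itemize}
 \item At the left endpoint $j=2^m-1$, Lemma~\ref{lem:UCT-consequence} gives $g(j)\sim g(2^m)$, so $g(j)^{-1}\lambda_j(T)\to\log 2$.
 \item At the right endpoint $j=2^{m+1}-2$, Lemma~\ref{lem:UCT-consequence} gives $g(j)\sim\tfrac12 g(2^m)$, so $g(j)^{-1}\lambda_j(T)\to 2\log 2$.
\end{itemize}
Since $\log 2\neq 2\log 2$, the limit $\lim g(j)^{-1}\lambda_j^+(T)$ does not exist, and $T$ is not spectrally measurable.

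The main point requiring care is the asymptotic bookkeeping, i.e. the applications of the uniform convergence theorem for $\RV_{-1}$ functions at the block endpoints. These are handled directly by Lemmas~\ref{lem:UCT-consequence} and~\ref{lem:auxiliary result}. Conceptually the key step is the identification $T\equiv T_g \bmod \Com(\sL_g)$, which Figiel's lemma provides at once.
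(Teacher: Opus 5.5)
Your proof is correct and follows the paper's argument. Figiel's lemma (Lemma~\ref{lem:Pietsch.Figiel}) gives $T-T_g\in\Com(\sL_g)$, Theorem~\ref{thm:hyper} then yields $g$-hypermeasurability with $\gbint T=1$, and comparing $g(j)^{-1}\lambda_j(T)$ at the two ends of each dyadic block (limits $\log 2$ and $2\log 2$) rules out spectral measurability. The only differences are cosmetic: the paper tests at $j=2^m$ rather than $2^m-1$, and your direct check of condition (iii) is a correct extra that the paper omits.
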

\begin{proof}
 As $T_{g}=\Diag(\gamma)$, by Lemma~\ref{lem:Pietsch.Figiel} we have
 \begin{equation*}
 T-T_{g}=\Diag(\delta\circ \alpha(\gamma))-\Diag(\gamma)\in \Com(\sL_g).
\end{equation*}
It then follows from Theorem~\ref{thm:hyper} that $T$  is $g$-hypermeasurable with $\gbint T=1$.

In addition, thanks to Lemma~\ref{lem:auxiliary result} we have
\begin{equation*}
 \frac{1}{g(2^m)} \lambda_{2^m}(T)= \frac{1}{g(2^m)2^m}\sum_{k\in \D_m} g(k) \longrightarrow \log 2.
\end{equation*}
However, if $j_m=2^{m+1}-2$, then $j_m\sim 2\cdot 2^m$, and so $g(j_m)\sim \frac{1}{2} g(2^m)$ by Lemma~\ref{lem:UCT-consequence}. Thus,
\begin{equation*}
  \frac{1}{g(j_m)} \lambda_{j_m}(T)\sim  \frac{2}{g(2^m)2^m}\sum_{k\in \D_m} g(k) \longrightarrow 2 \log 2.
\end{equation*}
It follows that the sequence $g(j)^{-1}\lambda_j(T)$, $j\geq 0$, does not converge, and so $T$ is not spectrally measurable. The proof is complete.
\end{proof}

\begin{remark}
 As $g$-hypermeasurability implies strong measurability, the above example is also an example of strongly measurable operator that is not spectrally measurable. As spectral measurability implies strong measurability,  this shows that spectral measurability is a stronger property than strong measurability. 
\end{remark}

\subsection{Dixmier traces revisited} It is shown in~\cite{LSZ:Book, SSUZ:AIM15} that on the weak trace-class $\sL_{1,\infty}$ under Pietsch's correspondence Dixmier traces correspond to \emph{factorizable} Banach limits. These are Banach limits of the form $\theta=\omega\circ C$, where $\omega$ ranges over all extended limits, and $C:\ell_\infty \rightarrow \ell_\infty$ is the Ces\`aro operator,
\begin{equation}\label{eq:classical Cesaro}
 (Ca)_j = \frac{1}{j+1}\sum_{k\leq j} a_k, \qquad a=(a_j)_{j \geq 0} \in \ell_\infty.
\end{equation}
We refer to~\cite[Example~6.3.6]{LSZ:Book} for the existence of non-factorizable Banach limits on $\ell_\infty$.

The aforementioned characterization of Dixmier traces was extended to weak Lorentz ideals by Levitina--Usachev~\cite{LU:JMAA25}. The role of the Ces\`aro operator is played by the ``weighted Ces\`aro operator'' (or Riesz operator) $C_g:\ell_\infty\rightarrow \ell_\infty$ given by
 \begin{equation*}
 (C_ga)_j = \frac{\sum_{k=0}^{j} 2^k g(2^k)a_k}{\sum_{k=0}^{j}  2^k g(2^k)}, \qquad a=(a_j)_{j \geq 0} \in \ell_\infty.
\end{equation*}
In particular, if $g(t)=t^{-1}$ for $t\geq 1$, then we recover the Ces\`aro operator~(\ref{eq:classical Cesaro}).

If $\omega$ is any extended limit, then it can be shown that $\omega\circ C_g$ is a Banach limit (see Lemma~\ref{lem:Cesaro -banach}). We shall say that such a Banach limit is \emph{$C_g$-factorizable}. We then have the following characterization of Dixmier traces. This extends to weak Lorentz ideals the characterization of Dixmier traces on the weak trace $\sL_{1,\infty}$ in~\cite{LSZ:Book, SSUZ:AIM15}.

 \begin{proposition}[compare {\cite[Theorem~4.7]{LU:JMAA25}}]\label{prop:Pietsch-Dixmier}
 Under the correspondence provided by Theorem~\ref{thm:Pietsch.PNT}, Dixmier traces are exactly the traces that arise from $C_g$-factorizable Banach limits.
\end{proposition}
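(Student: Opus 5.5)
The plan is to compute the Dixmier trace $\Trw(T)$ directly along the dyadic subsequence $N_m:=2^{m+1}-1$ and recognise there the weighted Ces\`aro means $C_g\hat{\alpha}_g(\lambda(T))$. Since the sets $\D_0,\ldots,\D_m$ partition $\{0,\ldots,N_m-1\}$, the definition~(\ref{def:hat alpha}) gives
\begin{equation*}
 \sum_{j<N_m}\lambda_j(T)=\sum_{k\leq m}2^kg(2^k)\,\hat{\alpha}_g\big(\lambda(T)\big)_k .
\end{equation*}

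The key step is the normalisation asymptotic
\begin{equation*}
 G(N_m)\sim \log 2\sum_{k\leq m}2^kg(2^k) \qquad \text{as}\ m\rightarrow\infty.
\end{equation*}
By Lemma~\ref{lem:auxiliary result}, each increment satisfies $G(2^{k+1}-1)-G(2^k-1)\sim \log 2\cdot 2^kg(2^k)$. Both sides diverge, because $G(t)\rightarrow\infty$. So the Stolz--Ces\`aro theorem applies and yields the normalisation asymptotic. Combining it with the display above, and using that $\hat{\alpha}_g(\lambda(T))$ is bounded, we get
\begin{equation*}
 \frac{1}{G(N_m)}\sum_{j<N_m}\lambda_j(T)=\frac{1}{\log 2}\big(C_g\hat{\alpha}_g(\lambda(T))\big)_m+\op{o}(1).
\end{equation*}

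Next I pass from the subsequence $N_m$ to all $N$. Suppose $N_m\leq N<N_{m+1}$. The partial sums then differ by at most $\sum_{j\in\D_{m+1}}|\lambda_j(T)|=\op{O}(2^mg(2^m))$. By Karamata's theorem~(\ref{eq:Karamata}) this is $\op{o}(G(N))$. Moreover $G$ is $\RV_0$ (slowly varying), so $G(N)/G(N_m)\rightarrow 1$. Hence, if $m(N)$ denotes the integer with $N_{m(N)}\leq N<N_{m(N)+1}$, we obtain
\begin{equation*}
 \frac{1}{G(N)}\sum_{j<N}\lambda_j(T)=\frac{1}{\log 2}\big(C_g\hat{\alpha}_g(\lambda(T))\big)_{m(N)}+\op{o}(1).
\end{equation*}

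Both directions of the correspondence now follow.

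\emph{Dixmier traces are of this form.} Let $\omega$ be an extended limit. Define $\omega'(x):=\omega\big((x_{m(N)})_{N\geq1}\big)$. This $\omega'$ is positive, unital, and vanishes on $\co$, so it is an extended limit. Since $\omega$ kills the $\op{o}(1)$ term, $\Trw(T)=\frac{1}{\log 2}(\omega'\circ C_g)(\hat{\alpha}_g(\lambda(T)))=\hat{\varphi}_{\omega'\circ C_g}(T)$.

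\emph{Every trace of this form is a Dixmier trace.} Given an extended limit $\omega'$, define $\omega(a):=\omega'\big((a_{N_m})_{m\geq0}\big)$. This is again an extended limit, and evaluating the subsequence identity gives $\Trw=\hat{\varphi}_{\omega'\circ C_g}$. Here $\omega'\circ C_g$ is a Banach limit by Lemma~\ref{lem:Cesaro -banach}, so $\hat{\varphi}_{\omega'\circ C_g}$ is indeed one of the traces of Theorem~\ref{thm:Pietsch.PNT}.

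The main obstacle is the normalisation asymptotic together with the interpolation step. Everything hinges on turning the local estimate of Lemma~\ref{lem:auxiliary result} into a global one via Stolz--Ces\`aro, and on slow variation of $G$ so that non-dyadic $N$ contribute only $\op{o}(1)$. If $g$ is not decreasing, Remark~\ref{rmk:Pietsch.independence-g} lets us replace it by an equivalent decreasing function without changing these asymptotics.
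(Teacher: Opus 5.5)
Your proof is correct and is essentially the paper's argument. The paper's Lemma~\ref{lem:new} is exactly your comparison of the Dixmier means $\frac{1}{G(N)}\sum_{j<N}\lambda_j(T)$ with $(\log 2)^{-1}C_g\hat{\alpha}_g(\lambda(T))$ along dyadic blocks, proved the same way via $\sum_{k\leq m}2^kg(2^k)\sim(\log 2)G(2^{m+1})$ and Karamata's theorem. The only difference is cosmetic: you transport extended limits by sampling at the block index $m(N)$ and along $N_m=2^{m+1}-1$, whereas the paper uses $\omega\circ\delta$ and $\omega\circ\alpha$ together with $\alpha\circ\delta=\op{id}$.
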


The above result can be proved by combining \cite[Theorem~4.7]{LU:JMAA25} with Proposition~\ref{prop:Pietsch.Lidskii}. For the sake of completeness and the reader's convenience, we give a direct proof of Proposition~\ref{prop:Pietsch-Dixmier} in Appendix~\ref{app:proof-Pietsch-Dixmier}.

\section{Spectral Characterization of Strong Measurability}\label{sec:strong-hyper}
In this section, we make use of the results of the previous section to get a spectral characterization of strongly measurable operators in weak Lorentz ideals. This extends to weak Lorentz ideals the spectral characterization of strongly measurable operators in $\sL_{1,\infty}$ of Semenov--Sukochev--Usachev--Zanin~\cite[Proposition~7.2]{SSUZ:AIM15}. 

We keep on using the notation of the previous section. In particular, $g(t)$, $t\geq 0$, is a continuous $\RV_{-1}$-function such that $\int_0^\infty g(t)dt=\infty$. 

Recall that a sequence $a=(a_j)_{j\geq 0}\in \ell_\infty$ converges to $c$ if and only if $\omega(a)=c$ for every extended limit $\omega$ (see, e.g., \cite[Lemma~3.15(iii)]{PT:Part1}). We then say that the sequence $(a_j)_{j\geq 0}$ \emph{almost converges} to $c$ if $\theta(a)=c$ for every Banach limit $\theta$, i.e., for every shift-invariant extended limit. 

We have the following (spectral) characterization of almost convergence.

\begin{lemma}[see, e.g., \cite{Su:AMM67}] \label{lem:characterization of ac}
Let $a=(a_j)_{j\geq 0}\in \ell_\infty$. The following are equivalent:
\begin{enumerate}
 \item[(i)] The sequence $(a_j)_{j\geq 0}$ \emph{almost converges} to $L$.

 \item[(ii)] We have
 \begin{equation*}
 \lim_{N\rightarrow \infty} \frac1{N+1}\sum_{j=m}^{m+N} a_j =L,
\end{equation*}
uniformly with respect to $m$.
\end{enumerate}
 \end{lemma}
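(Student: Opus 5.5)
The plan is to prove Lemma~\ref{lem:characterization of ac} (Lorentz's theorem) by establishing both implications separately, writing $A_N(m):=\frac{1}{N+1}\sum_{j=m}^{m+N}a_j$ throughout.

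\emph{(ii)$\Rightarrow$(i).} Let $\theta$ be a Banach limit. Shift-invariance gives $\theta(a)=\theta\big((S^*)^ka\big)$ for every $k$, where $S^*$ is the backward shift of Remark~\ref{rmk:Pietsch.shift-singular}. Averaging over $k=0,\ldots,N$ yields
\[
\theta(a)=\theta\big((A_N(m))_{m\geq 0}\big).
\]
By (ii), $\sup_m|A_N(m)-L|\to 0$ as $N\to\infty$. Since $\theta$ is a state, $|\theta(x)|\leq \|x\|_\infty$, so $|\theta(a)-L|\leq \sup_m|A_N(m)-L|$. Letting $N\to\infty$ gives $\theta(a)=L$.

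\emph{(i)$\Rightarrow$(ii).} This is the main obstacle; I would argue by contraposition using a Hahn--Banach/sublinear functional construction. Set
\[
q(x):=\lim_{N\to\infty}\sup_m \frac{1}{N+1}\sum_{j=m}^{m+N}x_j .
\]
The key steps, in order, are:
\begin{enumerate}
\item Show the limit defining $q$ exists, and in fact equals $\inf_N\sup_m$. The quantity $N\mapsto (N+1)\sup_m A_N(m)$ is subadditive, so Fekete's lemma applies.
\item Check that $q$ is sublinear, satisfies $q(x)\leq \limsup_j x_j$, and vanishes on sequences of the form $x-S^*x$. The last point holds because the window averages of $x-S^*x$ telescope and are $O(1/N)$.
\item Show that every linear functional $\theta\leq q$ is a Banach limit. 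Indeed, $\theta\leq q\leq\limsup$ forces positivity, $\theta(1)=1$, and annihilation of $\co$, while $\theta(x-S^*x)\leq 0$ and $\theta(S^*x-x)\leq 0$ give shift-invariance.
\end{enumerate}
Conversely, any Banach limit $\theta$ satisfies $\theta\leq q$, by the averaging identity from the first implication.

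Now suppose (ii) fails. Then either $q(a)>L$ or $-q(-a)<L$, since uniform convergence of $A_N(m)$ to $L$ is equivalent to $q(a)=-q(-a)=L$. In the first case, define $\theta_0(ta):=t\,q(a)$ on $\R a$; this is dominated by $q$, since for $t<0$ we have $t\,q(a)\leq -t\,q(-a)$ by $q(a)+q(-a)\geq q(0)=0$. Extend $\theta_0$ by Hahn--Banach to $\theta\leq q$ on real $\ell_\infty$, and then complexify. By step~3, $\theta$ is a Banach limit with $\theta(a)=q(a)\neq L$, contradicting (i). The second case is symmetric, working with $-a$.

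For complex-valued $a$, I would apply the argument to the real and imaginary parts separately; this is legitimate because Banach limits commute with conjugation. The delicate point to verify carefully is the equivalence between (ii) and $q(a)=-q(-a)=L$, that is, that both the $\sup_m$ and the $\inf_m$ of the window averages converge to $L$, together with the existence of the limit in step~1.
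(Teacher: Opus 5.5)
Your proof is correct. The paper gives no argument of its own and only cites Sucheston, and your route is essentially Sucheston's proof of Lorentz's characterization: the sublinear functional $q(x)=\lim_N\sup_m\frac{1}{N+1}\sum_{j=m}^{m+N}x_j$, the fact that the Banach limits are exactly the linear functionals dominated by $q$, and Hahn--Banach. The one step worth stating explicitly is that the dichotomy ``$q(a)>L$ or $-q(-a)<L$'' relies on $q(a)\geq -q(-a)$, which you do establish in the domination check.
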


 \begin{example}
 The sequence $(-1)^j$, $j\geq 1$, almost converges to $0$ (although it does not have any limit).
\end{example}

We have the following spectral characterization of strongly measurable operators in $\sL_g$. This extends to weak Lorentz ideals the spectral characterization of strong measurability in $\sL_{1,\infty}$ of~\cite{SSUZ:AIM15}. 

\begin{theorem}\label{thm:strong measurable--ac}
 If $T\in \sL_g$, then
 \begin{equation*}
 \bigg(T\in \sM_s(\sL_g) \quad \text{and} \quad \gbint T=c\bigg) \Longleftrightarrow \frac{1}{\log 2} \hat{\alpha}_g\big(\lambda(T)\big)_j \ \text{almost converges to}\ c .
\end{equation*}
\end{theorem}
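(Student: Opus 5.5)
The plan is to read the statement off directly from Theorem~\ref{thm:Pietsch.PNT}. By definition, $T\in \sM_s(\sL_g)$ with $\gbint T=c$ means that $\varphi(T)=c$ for every $g$-normalized positive trace $\varphi$ on $\sL_g$. Here we use that the common value equals the NC integral, because Dixmier traces are among the $g$-normalized positive traces. Theorem~\ref{thm:Pietsch.PNT} says these traces are exactly the functionals
\[
\hat{\varphi}_\theta(T)=\frac{1}{\log 2}\,\theta\big(\hat{\alpha}_g(\lambda(T))\big),
\]
where $\theta$ ranges over all Banach limits on $\ell_\infty$. So the left-hand side is equivalent to
\[
\theta\Big(\tfrac{1}{\log 2}\hat{\alpha}_g(\lambda(T))\Big)=c \quad\text{for every Banach limit }\theta,
\]
which is precisely the definition of almost convergence to $c$ of the sequence $\frac{1}{\log 2}\hat{\alpha}_g(\lambda(T))_j$.

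Two preliminary checks are needed before this can be applied.

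\emph{Boundedness.} We must verify that $\hat{\alpha}_g(\lambda(T))$ lies in $\ell_\infty$, so that almost convergence makes sense. By Proposition~\ref{prop:DK}, $\lambda(T)\in\tilde{\ell}_g$, so $|\lambda_k(T)|\le C g(k)$. Hence
\[
|\hat{\alpha}_g(\lambda(T))_j|\le \frac{C}{2^jg(2^j)}\sum_{k\in\D_j}g(k),
\]
and the right-hand side is bounded by Lemma~\ref{lem:auxiliary result}.

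\emph{Monotonicity of $g$.} Section~\ref{sec:Pietsch} assumes $g$ is decreasing. I would handle this through Remark~\ref{rmk:Pietsch.independence-g}. Replacing $g$ by an equivalent decreasing $g_1$ changes $\hat{\alpha}_g(\lambda(T))$ only by a $\co$-sequence, which every Banach limit annihilates. It also leaves the class of $g$-normalized positive traces unchanged, since $T_{g_1}-T_g\in(\sL_g)_0$ when $g_1\sim g$ and positive traces vanish on $(\sL_g)_0$. Thus both sides of the equivalence are unaffected, and we may assume $g$ is decreasing.

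With both checks done, the two implications follow immediately. The only point needing care is the invariance under the choice of $g$, which the remark already settles. If desired, Lemma~\ref{lem:characterization of ac} then turns the statement into an explicit uniform Cesàro-type condition on $\hat{\alpha}_g(\lambda(T))$.
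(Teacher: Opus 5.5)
Your proposal is correct and is essentially the paper's proof: the paper also reads the equivalence directly off Theorem~\ref{thm:Pietsch.PNT}, together with Remark~\ref{rmk:Pietsch.independence-g} to drop the assumption that $g$ is decreasing. Your two preliminary checks are details the paper leaves implicit: the boundedness of $\hat{\alpha}_g(\lambda(T))$, and the fact that $g$- and $g_1$-normalization coincide for positive traces, which holds because these traces vanish on $(\sL_g)_0$.
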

\begin{proof}
 Denote  by $\BL(\N_0)$ the space of Banach limits on $\ell_\infty$ and  by $\NPT(\sL_g)$ the set of $g$-normalized positive traces on $\sL_g$. 
Given any $T\in \sL_g$, by Theorem~\ref{thm:Pietsch.PNT} and Remark~\ref{rmk:Pietsch.independence-g} we have
 \begin{align*}
 \bigg(T\in \sM_s(\sL_g) \ \text{and}\ \gbint T =c\bigg) & \Longleftrightarrow \ \varphi(T) =c \ \text{for all}\ \varphi \in \NPT(\sL_g), \\
 & \Longleftrightarrow \ \frac{1}{\log 2}\theta(\hat{\alpha}_g(\lambda(T)))= c \ \text{for all}\ \theta \in \BL(\N_0),\\
 & \Longleftrightarrow \ \frac{1}{\log 2}\hat{\alpha}_g(\lambda(T))_j \ \text{almost converges to}\ c .
\end{align*}
This proves the result.
\end{proof}

The fact that Theorem~\ref{thm:strong measurable--ac} is formulated in terms of eigenvalue sequences implies the following form of spectral invariance for strong measurability.

\begin{corollary}\label{cor:Strong.spectral-invariance}
If  $S\in \sL_g(\sH)$ and $T\in \sL_g(\sH')$ have the same non-zero spectrum up to multiplicity, then $S$ is strongly measurable if and only if $T$ is strongly measurable. \end{corollary}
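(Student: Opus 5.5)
The plan is to read the statement off from Theorem~\ref{thm:strong measurable--ac}, whose criterion depends only on an eigenvalue sequence. Let $S\in\sL_g(\sH)$ and $T\in\sL_g(\sH')$ have the same non-zero spectrum up to multiplicity. Choose an eigenvalue sequence $\lambda(S)$ for $S$, with eigenvalues repeated according to algebraic multiplicity and ordered by non-increasing modulus. Since the non-zero eigenvalues of $T$ coincide with those of $S$, with the same multiplicities, the same sequence $\lambda(S)$ is also an eigenvalue sequence for $T$. Here the tail of zeros is appended when there are only finitely many non-zero eigenvalues, and it is irrelevant whether $0$ itself is an eigenvalue.

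The only subtlety is that an eigenvalue sequence need not be unique. Ties in modulus allow eigenvalues of equal modulus to be permuted. Such permutations act within finite blocks, since only finitely many eigenvalues share a given non-zero modulus, but a block may straddle a dyadic boundary of the sets $\D_m$. I would dispose of this by appealing to the paper's standing convention that the results do not depend on the choice of eigenvalue sequence. Alternatively, I would note that Theorem~\ref{thm:strong measurable--ac} applies to \emph{any} eigenvalue sequence, so fixing the common choice $\lambda:=\lambda(S)=\lambda(T)$ is legitimate for both operators. Theorem~\ref{thm:strong measurable--ac} is stated for operators on a given Hilbert space, but its proof uses only traces on $\sL_g(\sH)$ through Theorem~\ref{thm:Pietsch.PNT}, which holds for any separable $\sH$, so it applies equally on $\sH'$.

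Applying Theorem~\ref{thm:strong measurable--ac} on $\sH$ and on $\sH'$ gives, for every $c\in\C$:
\[
 S\in\sM_s(\sL_g(\sH)),\ \gbint S=c \iff \tfrac{1}{\log 2}\hat{\alpha}_g(\lambda)\ \text{almost converges to}\ c \iff T\in\sM_s(\sL_g(\sH')),\ \gbint T=c.
\]
Taking the union over $c$ yields the equivalence of strong measurability for $S$ and $T$, and also shows that their noncommutative integrals agree.

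The only step needing care is the independence from the choice of eigenvalue sequence. If one wanted a self-contained check, I would argue as follows. Permuting within a finite block of equal-modulus eigenvalues changes each dyadic block sum $\sum_{k\in\D_m}\lambda_k$ by at most $2\cdot\#\{\text{block}\}\cdot|\lambda|$ at the two boundary indices. This makes $\hat{\alpha}_g(\lambda)$ change by a bounded amount supported on the indices $m$ where blocks cross dyadic boundaries. This is in fact not automatically a null sequence, so rather than estimate it, I would rely on the fact that Theorem~\ref{thm:strong measurable--ac} is an equivalence valid for each fixed eigenvalue sequence. By that equivalence, almost convergence of $\hat{\alpha}_g(\lambda)$ is independent of the choice of $\lambda$, because the left-hand side is.
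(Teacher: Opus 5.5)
Your proposal is correct and is essentially the paper's argument, which reads the corollary directly off Theorem~\ref{thm:strong measurable--ac}. You apply that criterion to an eigenvalue sequence common to $S$ and $T$, which is legitimate because Proposition~\ref{prop:DK}, and hence the theorem, holds for every eigenvalue sequence; your final paragraph on permuting equal-modulus eigenvalues is therefore not needed.
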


In the special case where $\sH'$ is the Hilbert space $\sH$ with an equivalent inner product we obtain the following result.

\begin{corollary}
 The space $\sM_s(\sL_g(\sH))$ does not depend on the inner product of $\sH$.
\end{corollary}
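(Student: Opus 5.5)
The statement to be proved is the last corollary: $\sM_s(\sL_g(\sH))$ does not depend on the inner product of $\sH$. The plan is to reduce it to Corollary~\ref{cor:Strong.spectral-invariance}, in the same way the analogous corollary for $\sM_h^g$ follows from Corollary~\ref{cor:Hyper.spectral-invariance}.

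First, fix two equivalent inner products on $\sH$ and write $\sH'$ for $\sH$ equipped with the second one. Equivalence means the two norms are comparable, so the identity map $U:\sH\rightarrow\sH'$ is a bounded invertible operator. As a consequence, $\sL(\sH)=\sL(\sH')$ and $\sK(\sH)=\sK(\sH')$ as algebras. We can therefore fix a unitary $V:\sH'\rightarrow\sH$ and write $\Phi(T):=VT V^{-1}$ for $T$ viewed on $\sH'$. Then $\Phi(T)=(VU)\,T\,(VU)^{-1}$ on $\sH$, which is a similarity by the bounded invertible operator $VU$.

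Next, I would check that the ideal itself does not change. The singular values of $\Phi(T)$ and of $T$ on $\sH$ are comparable up to the constant $\|VU\|\,\|(VU)^{-1}\|$, because $\mu_j(ATB)\leq\|A\|\mu_j(T)\|B\|$. Hence $\sL_g(\sH)=\sL_g(\sH')$ as sets of operators.

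Then I would use spectral invariance. A given operator $T$ has the same eigenvalues with the same algebraic multiplicities whether it is viewed on $\sH$ or on $\sH'$, since the spectrum and algebraic multiplicities are purely algebraic notions. Corollary~\ref{cor:Strong.spectral-invariance} then shows that $T$ is strongly measurable in $\sL_g(\sH)$ if and only if it is strongly measurable in $\sL_g(\sH')$. The underlying reason is Theorem~\ref{thm:strong measurable--ac}: the criterion there involves only $\hat{\alpha}_g(\lambda(T))$, and this sequence depends only on an eigenvalue sequence of $T$. It follows that the two spaces of strongly measurable operators coincide.

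The only point that needs care is the identification of operators across the two inner products. The hypothesis of Corollary~\ref{cor:Strong.spectral-invariance} is about spectra, and those are independent of the inner product. The definition of strong measurability does involve $T_g$, whose eigenbasis is orthonormal for a particular inner product, but this dependence disappears through the characterization in Theorem~\ref{thm:strong measurable--ac}. So no real obstacle is expected.
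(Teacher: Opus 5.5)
Your proposal is correct and is essentially the paper's argument: the corollary is obtained by applying Corollary~\ref{cor:Strong.spectral-invariance} with $S=T$ and with $\sH'$ equal to $\sH$ endowed with an equivalent inner product. The purely spectral criterion of Theorem~\ref{thm:strong measurable--ac} is what removes the dependence on the inner-product-dependent operator $T_g$, and your check via the similarity $VU$ that $\sL_g(\sH)=\sL_g(\sH')$ as sets is a correct detail that the paper leaves implicit.
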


\section{Examples Arising from Nonclassical Weyl Laws}\label{sec:examples}
In this section, we describe various concrete examples of $g$-hypermeasurable operators, and compute their NC integrals. These examples come from a variety of settings. Most of them arise from nonclassical Weyl laws in the sense of Simon~\cite{Si:JFA83}. We also describe a general recipe to construct such examples from operators satisfying nonclassical Weyl laws.

We refer to~\cite{GS:JFA14, GU:JMAA20, LU:JMAA25, PT:Part1, TU:JMAA27} for further examples arising from nonclassical Weyl laws. We stress that in these references the focus is mostly on Dixmier measurability, as well as spectral measurability in~\cite{PT:Part1}, whereas this section exhibits hypermeasurable examples. 

\subsection{General construction of examples}
Let $A:\dom(A)\rightarrow \sH$ be a selfadjoint operator with non-negative spectrum such that
\begin{itemize}
 \item $0$ is isolated in the spectrum of $A$

 \item The non-zero part of the spectrum of $A$ consists of isolated eigenvalues with finite multiplicity.
\end{itemize}
We then can arrange the positive eigenvalues of $A$ as a non-decreasing sequence,
\begin{equation*}
 \lambda_0(A)\leq  \lambda_1(A) \leq \lambda_2(A)\leq \cdots
\end{equation*}
where each eigenvalue is repeated according to multiplicity. We then define the counting function of $A$ by
\begin{equation*}
 N(A;\lambda)=\#\left\{j; \ \lambda_j\left(A\right) <\lambda\right\},  \qquad \lambda>0.
\end{equation*}

For $p>0$ we define $A^{-p}$ to be $f(A)$ with $f(t)=\car_{(0,\infty)}(t)t^{-p}$.  Equivalently, we let $(\xi_j)_{j\geq 0}$ be any orthonormal family in $\sH$ such that $A\xi_j=\lambda_j(A)\xi_j$ for all $j\geq 0$, then we have
\begin{equation*}
 A^{-p}=0 \quad \text{on}\ \ker A \qquad \text{and} \qquad A^{-p}\xi_j= \lambda_j(A)^{-p} \xi_j \quad \text{for all}\ j\geq 0.
\end{equation*}
In particular, $A^{-p}$ is a positive compact operator, and we have
\begin{equation*}
 \lambda_{j}(A^{-p})=\lambda_{j}(A)^{-p} \qquad \forall j\geq 0.
\end{equation*}
Thus, if we define the counting function of $A^{-p}$ by
\begin{equation*}
 N\left(A^{-p};\lambda\right)=\#\left\{j; \ \lambda_j\big(A^{-p}\big) >\lambda\right\},  \qquad \lambda>0,
\end{equation*}
then we have
\begin{equation*}
 N\left(A^{-p};\lambda\right)= N\big(A;\lambda^{-\frac{1}{p}}\big) \qquad \forall \lambda>0.
\end{equation*}

We are interested in the case where $A$ satisfies a Weyl law of the form, 
\begin{equation}\label{eq:example.A-Weyl-law}
  N(A;\lambda)\sim c \lambda^p (\log \lambda)^q, \qquad c>0, \quad p>0, \quad q\geq -1.
\end{equation}
In addition, we set
\begin{equation*}
 g(t)=\frac{1}{t+1}(\log(t+2))^q, \qquad t\geq 0. 
\end{equation*}
This is a continuous $\RV_{-1}$-function. Note that for $q=0$ the ideal $\sL_g$ is just the weak trace-class $\sL_{1,\infty}$. 

The following result is proved in~\cite{PT:Part1}.

\begin{lemma}[\cite{PT:Part1}] \label{lem:example.Ap-spectral-meas}
If $A$ satisfies a Weyl law of the form~(\ref{eq:example.A-Weyl-law}), then $A^{-p}$ is spectrally measurable in $\sL_{g}$, with
 \begin{equation}\label{eq:example.Ap-int-formula}
 \gbint A^{-p} = cp^{-q}.
\end{equation}
\end{lemma}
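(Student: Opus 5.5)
The plan is to convert the Weyl law~(\ref{eq:example.A-Weyl-law}) for the counting function into an asymptotic for the eigenvalues $\lambda_j(A^{-p})$ as $j\to\infty$. Spectral measurability then reduces to the existence of $\lim g(j)^{-1}\lambda_j(A^{-p})$. Since $A^{-p}\geq 0$, there are no negative eigenvalues, so $\lambda_j^-(A^{-p})=0$ and only the positive part needs to be treated. By Lemma~\ref{lem:example.Ap-spectral-meas}'s conclusion as formulated in \S\ref{sec:prelim}, the NC integral is then this limit.

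\textbf{Step 1 (inversion of the counting function).} Set $\mu_j=\lambda_j(A)$, which is non-decreasing and tends to $\infty$. From the definition of $N(A;\lambda)$ we get
\[
N(A;\mu_j)\leq j< N(A;\mu_j+\epsilon)
\]
for every $\epsilon>0$. Since $\lambda\mapsto c\lambda^p(\log\lambda)^q$ is regularly varying of index $p>0$, I will use Lemma~\ref{lem:UCT-consequence} with $\mu_j+\epsilon\sim\mu_j$ to conclude that $j\sim c\,\mu_j^{p}(\log\mu_j)^q$. Strictly speaking the $\epsilon$ must be taken as $\epsilon=\mu_j\eta$ with $\eta\to 0$, and one invokes the uniform convergence theorem to make this legitimate. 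Multiplicities cause no trouble because we only use the monotone sandwich above.

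\textbf{Step 2 (solving for $\mu_j$).} Taking logarithms in $j\sim c\mu_j^p(\log\mu_j)^q$ gives $\log j = p\log\mu_j+q\log\log\mu_j+O(1)$, hence $\log\mu_j\sim p^{-1}\log j$. Substituting back yields
\[
\mu_j^{-p}\sim \frac{c(\log\mu_j)^q}{j}\sim \frac{c\,p^{-q}(\log j)^q}{j}.
\]
For $q=-1$ the same computation works, since $(\log\mu_j)^{-1}\sim p(\log j)^{-1}$. As $g(j)\sim j^{-1}(\log j)^q$, this gives
\[
\lim_{j\to\infty} g(j)^{-1}\lambda_j(A^{-p})=cp^{-q}.
\]

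\textbf{Step 3 (conclusion).} Since $\lambda_j(A^{-p})=O(g(j))$, the operator $A^{-p}$ lies in $\sL_g$. It is spectrally measurable with $\lambda^-\equiv 0$, so the formula from \S\ref{sec:prelim} gives $\gbint A^{-p}=cp^{-q}-0$, which is~(\ref{eq:example.Ap-int-formula}).

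The main obstacle is Step 1. Passing from the asymptotic for $N(A;\lambda)$, which is a left-continuous step function, to the asymptotic for the individual $\mu_j$ needs care, for instance at large jumps caused by high multiplicities. I would handle this through the regular variation of the Weyl function: the inequalities $N(A;\mu_j)\leq j\leq N(A;\mu_j(1+\eta))$ hold for every $\eta>0$, and combined with the Weyl law they give
\[
1\leq \liminf_{j\to\infty} \frac{j}{c\mu_j^p(\log\mu_j)^q}, \qquad \limsup_{j\to\infty} \frac{j}{c\mu_j^p(\log\mu_j)^q}\leq (1+\eta)^p.
\]
Letting $\eta\to 0$ then gives $j\sim c\mu_j^p(\log\mu_j)^q$.
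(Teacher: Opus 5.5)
Your proposal is correct and follows essentially the route the paper takes via \cite{PT:Part1}. You convert the Weyl law into $\lambda_j(A^{-p})\sim cp^{-q}g(j)$, note that $\lambda_j^-(A^{-p})=0$, and then apply the result recalled in Section~\ref{sec:prelim}: a spectrally measurable selfadjoint operator is strongly measurable, and $\gbint T$ equals the difference of the two limits. That result, and not Lemma~\ref{lem:example.Ap-spectral-meas} itself, is what your opening paragraph should cite.

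The only difference is in how the counting function is inverted. You do it by hand, through the sandwich $N(A;\lambda_j(A))\leq j< N\big(A;(1+\eta)\lambda_j(A)\big)$ and a logarithm computation. The paper instead writes $N(A^{-p};\lambda)=N(A;\lambda^{-1/p})\sim cp^{-q}\lambda^{-1}|\log\lambda|^{q}$ and invokes the general equivalence between counting-function and eigenvalue asymptotics via asymptotic inverses (Lemma~\ref{lem:app.principal-equiv} with Example~\ref{ex:asymp inverse}); your computation is the special case at hand.
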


We shall provide refinements of this result when we have sharper Weyl laws of the form, 
\begin{equation}\label{eq:Examples.A-Weyl-law-remainder}
	N(A;\lambda)= c \lambda^p (\log \lambda)^q \left( 1+ \op{O}\big((\log \lambda)^{-\alpha}\big)\right), \qquad c>0, \quad p>0, \quad q\geq 0, \quad \alpha\geq 1.
\end{equation}

\begin{lemma}\label{lem:Examples.NAL-remainder1}
 Suppose that $q=0$ and $A$ satisfies the Weyl law~(\ref{eq:Examples.A-Weyl-law-remainder}) with $\alpha>1$. Then, the following hold.
\begin{enumerate}
 \item[(i)] We have
 \begin{equation*}
  \lambda_j\big(A^{-p}\big) = \frac{c}{j}  \left( 1+ \op{O}\big((\log j)^{-\alpha}\big)\right).
\end{equation*}

\item[(ii)] The operator $A^{-p}$ is $(t+1)^{-1}$-hypermeasurable.
\end{enumerate}
\end{lemma}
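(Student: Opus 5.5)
Part (i) comes from inverting the Weyl law, and part (ii) then follows from Corollary~\ref{cor:Hyper.Weyl-hyper.sLp}(1).

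For (i), set $\mu_j:=\lambda_j(A^{-p})=\lambda_j(A)^{-p}$ and $N(\lambda):=N(A^{-p};\lambda)=N(A;\lambda^{-1/p})$. For $q=0$ the hypothesis~(\ref{eq:Examples.A-Weyl-law-remainder}) reads
\begin{equation*}
N(\lambda)=c\lambda^{-1}\bigl(1+\op{O}(|\log\lambda|^{-\alpha})\bigr) \qquad \text{as}\ \lambda\rightarrow 0^+ .
\end{equation*}
The standard inversion argument uses the counting inequalities
\begin{equation*}
N(\mu_j+0)\leq j \qquad \text{and} \qquad N(\mu_j-0)\geq j+1 .
\end{equation*}
The first holds because eigenvalues strictly larger than $\mu_j$ have indices $<j$. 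The second holds because $\mu_0,\ldots,\mu_j$ are all $\geq\mu_j$, so all of them exceed any $\lambda<\mu_j$.

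Evaluate the Weyl law at $\lambda=\mu_j(1\pm\eta)$ and let $\eta\to0$. Since the main term and the remainder are continuous in $\lambda$, this gives
\begin{equation*}
c\mu_j^{-1}\bigl(1+\op{O}(|\log\mu_j|^{-\alpha})\bigr)=j+\op{O}(1).
\end{equation*}
Hence $\mu_j\sim c/j$, so $|\log \mu_j|\sim\log j$, and solving for $\mu_j$ gives $\mu_j=\frac{c}{j}(1+\op{O}((\log j)^{-\alpha}))$. The only point needing care is the multiplicity/jump issue. It is absorbed because the $\op{O}(1)$ error is $\op{O}(j^{-1})$ in relative terms, and $j^{-1}=\op{O}((\log j)^{-\alpha})$. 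The paper's Appendix~\ref{app:counting} presumably records exactly this inversion lemma, and I would cite it if available.

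For (ii), take $g(t)=(t+1)^{-1}$. Since $c/j=g(j)(c+\op{O}(j^{-1}))$ and $j^{-1}=\op{O}((\log j)^{-\alpha})$, part (i) gives
\begin{equation*}
\lambda_j^+(A^{-p})=g(j)\bigl(c+\op{O}((\log j)^{-\alpha})\bigr), \qquad \lambda^-_j(A^{-p})=0,
\end{equation*}
because $A^{-p}\geq0$. With $\alpha>1$, Corollary~\ref{cor:Hyper.Weyl-hyper.sLp}(1) applies directly and yields $(t+1)^{-1}$-hypermeasurability. Alternatively, one can apply Proposition~\ref{prop:hyper cond by asymptotic} with $\varepsilon(t)=(\log t)^{-\alpha}$: here $\int^t \varepsilon(s)s^{-1}ds$ is bounded, so it is $\op{O}(1)=\op{O}(tg(t))$.

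The main obstacle is only the bookkeeping in the inversion step of (i); (ii) is immediate from the earlier results.
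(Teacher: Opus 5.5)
Your proof is correct and follows the paper's route. The paper obtains (i) from its appendix inversion results and then deduces (ii) from Corollary~\ref{cor:Hyper.Weyl-hyper.sLp}, exactly as you do. For (i) it cites Lemma~\ref{lem:app.example-tp}, though the result that actually applies is Lemma~\ref{lem:app.Ntolambda1} with $h(t)=t$ and $\epsilon(t)=(\log t)^{-\alpha}$, whose proof reduces to essentially your counting-inequality argument. Doing the inversion by hand is if anything cleaner, since Lemma~\ref{lem:app.example-tp} as stated only covers remainders $\lambda^{a}|\log\lambda|^{b}$ with $a>0$, not the pure logarithmic remainder at hand.
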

\begin{proof}
As $\lambda \rightarrow 0^+$ we have
\begin{equation*}
   N\left(A^{-p};\lambda\right)= N\big(A;\lambda^{-\frac{1}{p}}\big) = c\lambda^{-1} \left( 1+ \op{O}\big(|\log \lambda|^{-\alpha}\big)\right).
\end{equation*}
Applying Lemma~\ref{lem:app.example-tp} then shows that as $j\rightarrow \infty$ we have
\begin{equation*}
  \lambda_j\big(A^{-p}\big) = \frac{c}{j}  \left( 1+ \op{O}\big((\log j)^{-\alpha}\big)\right).
\end{equation*}
This gives the first part. Furthermore, as $\alpha>1$ Corollary~\ref{cor:Hyper.Weyl-hyper.sLp} ensures that $A^{-p}$ is $(t+1)^{-1}$-hypermeasurable.  The proof is complete.
\end{proof}

\begin{lemma}\label{lem:distribution asymptotic with remainder}
 Suppose that $q>0$ and  $A$ satisfies the sharper Weyl law~(\ref{eq:Examples.A-Weyl-law-remainder}) with $\alpha=1$. The following hold.
\begin{enumerate}
 \item[(i)] We have
 \begin{equation*}
 \lambda_j\big(A^{-p}\big) =cp^{-q} \frac{1}{j} (\log j)^q\left(1- q^2\frac{\log\log j}{\log j}+ \op{O}\left (\frac{1}{\log j}\right)\right).
\end{equation*}

\item[(ii)] The operator $A^{-p}$ is not $g$-hypermeasurable, but it is hypermeasurable with respect to any (continuous) $\RV_{-1}$-function $g_1(t)$ such that
\begin{equation}\label{eq:g_1}
 g_1(t)=\frac{1}{t}(\log t)^q\left(1-q^2\frac{\log\log t}{\log t} + \op{O}\left(\frac{1}{\log t}\right)\right).
\end{equation}
 \end{enumerate}
\end{lemma}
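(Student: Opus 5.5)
For part (i) I will invert the Weyl law for the counting function. As $\lambda\to 0^+$ we have
\[
N(A^{-p};\lambda)=N(A;\lambda^{-1/p})=c p^{-q}\lambda^{-1}|\log\lambda|^q\bigl(1+\op{O}(|\log\lambda|^{-1})\bigr).
\]
I will then invoke the appendix lemma on sharp asymptotics (the analogue of Lemma~\ref{lem:app.example-tp} for the case $q>0$) to pass from the counting function to the eigenvalues. To see where the $\log\log$ term comes from, set $\lambda_j=\lambda_j(A^{-p})$. Then $N(A^{-p};\lambda_j)\approx j$ up to $\op{O}(1)$ multiplicity effects, which are negligible against $j/\log j$ because $N$ is regularly varying. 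Solving
\[
j=cp^{-q}\lambda_j^{-1}|\log\lambda_j|^q\bigl(1+\op{O}(1/\log j)\bigr)
\]
iteratively gives first $|\log\lambda_j|=\log j-q\log\log j+\op{O}(1)$. Hence
\[
|\log\lambda_j|^q=(\log j)^q\Bigl(1-q^2\tfrac{\log\log j}{\log j}+\op{O}\bigl(\tfrac{(\log\log j)^2}{(\log j)^2}+\tfrac1{\log j}\bigr)\Bigr),
\]
and substituting back yields (i). The main technical point is controlling the error in this inversion uniformly. I would do it by the monotone squeeze $N(\lambda_j+0)\le j\le N(\lambda_j)$ and by comparing with the explicit inverse function of $\lambda\mapsto\lambda^{-1}|\log\lambda|^q$.

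For part (ii), non-$g$-hypermeasurability follows from Proposition~\ref{prop:Stronger-meas.Weyk-non-hyper}. Indeed, $g(j)=j^{-1}(\log j)^q(1+\op{O}(1/j))$, so (i) takes the form
\[
\lambda_j=g(j)\bigl(c_0+c_1\varepsilon(j)+\op{o}(\varepsilon(j))\bigr)
\]
with $c_0=cp^{-q}$, $c_1=-c_0q^2\neq0$, and $\varepsilon(t)=\log\log t/\log t$, which is decreasing for large $t$. It remains to check \eqref{cond: non-h}:
\[
\int^t \varepsilon g=\int^t s^{-1}(\log s)^{q-1}\log\log s\,ds\sim q^{-1}(\log t)^q\log\log t,
\]
which dominates $tg(t)\sim(\log t)^q$.

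For $g_1$-hypermeasurability I will apply Proposition~\ref{prop:hyper cond by asymptotic} with $g_1$ in place of $g$. The hypotheses there hold for $g_1$: it is $\RV_{-1}$, $\int g_1=\infty$, and \eqref{cond:minimal} holds. By (i) and \eqref{eq:g_1},
\[
\lambda_j=g_1(j)\bigl(c_0+\op{O}(1/\log j)\bigr),
\]
since the factor $1-q^2\log\log j/\log j+\op{O}(1/\log j)$ divides out up to $1+\op{O}(1/\log j)$. Take $\varepsilon(t)=1/\log t$, which is decreasing. Then
\[
\int^t g_1/\log s\sim\int^t s^{-1}(\log s)^{q-1}\,ds=\op{O}\bigl((\log t)^q\bigr)=\op{O}(tg_1(t)),
\]
so \eqref{cond: h} holds; this is essentially the computation in Corollary~\ref{cor:Hyper.Weyl-hyper.sLpq}(1) with $g_1\sim t^{-1}(\log t)^q$. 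This gives the result. The expected obstacle is purely step (i), namely getting the second-order term with an honest $\op{O}(1/\log j)$ remainder.
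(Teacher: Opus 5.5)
Your proposal is correct and follows the paper's proof. Part (i) is exactly Lemma~\ref{lem:app.example-tplogq} applied with exponent $1$ and constant $cp^{-q}$, so your hand inversion is not needed; as written, your squeeze should read $N(\lambda_j)\le j<N(\lambda_j-0)$, and the jump of $N$ at $\lambda_j$ is controlled by the $\op{O}(1/\log j)$ remainder rather than by bounded multiplicities. For part (ii), you follow the paper on $g_1$, using Proposition~\ref{prop:hyper cond by asymptotic} with $\varepsilon(t)=1/\log t$ as in Corollary~\ref{cor:Hyper.Weyl-hyper.sLpq}(1), while for non-$g$-hypermeasurability you apply Proposition~\ref{prop:Stronger-meas.Weyk-non-hyper} directly with $\varepsilon(t)=\log\log t/\log t$, which is the precise justification, since Corollary~\ref{cor:Hyper.Weyl-hyper.sLpq}(2), which the paper cites, only covers second-order terms $c_1(\log j)^{-\alpha}$ with $0<\alpha<1$.
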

\begin{proof}
As $\lambda \rightarrow 0^+$ we have
\begin{equation*}
N\left(A^{-p};\lambda\right)= N\big(A;\lambda^{-\frac{1}{p}}\big) = cp^{-q} \lambda^{-1} |\log \lambda|^q  \left( 1+ \op{O}\big(|\log \lambda|^{-1}\big)\right).
\end{equation*}
Combining this with Lemma~\ref{lem:app.example-tplogq} shows that as $j\rightarrow \infty$ we have
\begin{equation}\label{eq:example.Weyl-Ap-log}
 \lambda_j\big(A^{-p}\big) = cp^{-q} \frac{1}{j} (\log j)^q\left(1- q^2\frac{\log\log j}{\log j}+  \op{O}\left (\frac{1}{\log j}\right)\right).
\end{equation}
This gives the first part. Combining this with Corollary~\ref{cor:Hyper.Weyl-hyper.sLpq} further shows that $A^{-p}$ is not $g$-hypermeasurable.

Finally, if $g_1(t)$ is any continuous $\RV_{-1}$-function satisfying~(\ref{eq:g_1}), then we can rewrite the asymptotic~(\ref{eq:example.Weyl-Ap-log}) in the form,
\begin{align*}
  \lambda_j\big(A^{-p}\big) &= cp^{-q} j^{-1} (\log j)^q\left(1- q^2\frac{\log\log j}{\log j}\right) \left(1+ \op{O}\left (\frac{1}{\log j}\right)\right) \\
  &= cp^{-q} g_1(j) \left(1+ \op{O}\left (\frac{1}{\log j}\right)\right).
\end{align*}
Corollary~\ref{cor:Hyper.Weyl-hyper.sLpq} then ensures that $A^{-p}$ is $g_1$-hypermeasurable. The proof is complete.
\end{proof}

Often Weyl laws of the form~(\ref{eq:example.A-Weyl-law}) can be deduced from Tauberian theorems (see, e.g., \cite{Ar:HMJ83,  De:ASENS54, Ko:Springer04}). There are further refinements of Ikehara's Tauberian theorem that provide nonclassical Weyl laws with lower order remainders of the type considered in Lemma~\ref{lem:Examples.NAL-remainder1} and Lemma~\ref{lem:distribution asymptotic with remainder} (see, e.g., \cite{Ar:AMH96, PTZ:arXiv25} and the references therein).  In particular, by using the recent refinement of Pierce~\emph{et al.}~\cite[Theorem~B]{PTZ:arXiv25} we get the following result. 

\begin{lemma}[\cite{PTZ:arXiv25}]\label{lem:PTZ}
Assume that there are real numbers $\delta>0$ and $a_1, \ldots, a_q$ with $a_q>0$, such that the function
\begin{equation*}
 \zeta(A;s) - \sum_{1\leq \ell \leq q} \frac{a_\ell}{(s-p)^{\ell}}, \qquad \Re s>p, 
\end{equation*}
has an analytic extension to a neighborhood of the half-plane $\Re s\geq p-\delta$. Assume further there is $\kappa>0$ such that 
\begin{equation*}
\left|\zeta(A;p-\delta+iy)\right| = \op{O}\left(|y|^\kappa \left(\log |y|\right)^{q-1}\right)\qquad \text{as}\ |y|\rightarrow \infty. 
\end{equation*}
	Then, we have 
\begin{equation*}
 N(\lambda) = \sum_{0\leq k \leq q-1} b_k \lambda^p (\log \lambda)^{k} + \op{O}\left(\lambda^{p-\frac{\delta}{\kappa+1}}(\log \lambda)^{q-1}\right), 
\end{equation*}
where 
\begin{equation}\label{eq:PTZ-coeff}
 b_k = \frac{1}{k!} \sum_{m=0}^{q-k-1} \frac{1}{m!}\frac{(-1)^m}{p^{m+1}} a_{m+k+1}, \qquad k=0, \ldots, q-1. 
\end{equation}
\end{lemma}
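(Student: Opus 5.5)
The plan is to obtain the lemma as a direct specialization of \cite[Theorem~B]{PTZ:arXiv25}. It helps, though, to keep in mind the contour-shifting argument behind that result, since it explains both the coefficients~(\ref{eq:PTZ-coeff}) and the exponent $\delta/(\kappa+1)$.

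\textbf{Setting up the transform.} Write $\zeta(A;s)=\sum_j\lambda_j(A)^{-s}=\int_0^\infty \lambda^{-s}\,dN(A;\lambda)$ as a Laplace--Stieltjes transform of the non-decreasing counting function. Substituting $\lambda=e^u$ puts us exactly in the Ikehara/Delange setting of \cite{PTZ:arXiv25}:
\begin{itemize}
\item a non-decreasing function $N(A;e^u)$ whose Laplace transform has a pole of order $q$ at $s=p$, with principal part $\sum_\ell a_\ell (s-p)^{-\ell}$;
\item an analytic continuation to $\Re s\geq p-\delta$;
\item polynomial growth of order $|y|^\kappa(\log|y|)^{q-1}$ on the line $\Re s=p-\delta$.
\end{itemize}
These are precisely the hypotheses of their Theorem~B. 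So the main step is to check that the normalizations match: Dirichlet series versus Laplace transform, and strict versus non-strict inequality in $N$, which only affects $N$ at its jump points and is harmless for the asymptotic. Then read off the conclusion.

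\textbf{Coefficients from the residue.} To identify the $b_k$, I would compute the main term as the residue at $s=p$ of $\zeta(A;s)\lambda^s s^{-1}$. Only the polar part contributes, and for each $\ell$ Leibniz' rule gives
\begin{equation*}
\Res_{s=p}\frac{a_\ell\lambda^s}{s(s-p)^\ell}=\frac{a_\ell}{(\ell-1)!}\sum_{k+m=\ell-1}\binom{\ell-1}{k}(\log\lambda)^k\lambda^p\,\frac{d^m}{ds^m}\Big(\frac1s\Big)\Big|_{s=p}.
\end{equation*}
Collecting the coefficient of $\lambda^p(\log\lambda)^k$ over $\ell=m+k+1$ gives a formula of the shape~(\ref{eq:PTZ-coeff}). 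I would carefully check the factorial normalization against the conventions of \cite{PTZ:arXiv25}. In particular, $\frac{d^m}{ds^m}(1/s)=(-1)^m m!\,s^{-m-1}$, so a factor $m!$ appears here and must be reconciled with the $1/m!$ in~(\ref{eq:PTZ-coeff}). The leading coefficient $b_{q-1}=a_q/((q-1)!\,p)$ is unambiguous in any case.

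\textbf{The remainder term.} Conceptually, one uses a truncated or smoothed Perron formula at height $T$ and shifts the contour to $\Re s=p-\delta$. The growth bound gives a contribution of order $\lambda^{p-\delta}T^{\kappa}(\log T)^{q-1}$. The truncation, controlled via the monotonicity of $N$ and a smoothing kernel of width $1/T$, costs $\lambda^p(\log\lambda)^{q-1}T^{-1}$. Balancing the two with $T\asymp\lambda^{\delta/(\kappa+1)}$ yields the stated error $\op{O}(\lambda^{p-\delta/(\kappa+1)}(\log\lambda)^{q-1})$.

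\textbf{Main obstacle.} The genuinely delicate step is this Tauberian passage from smoothed to unsmoothed counts. It is where the positivity of the measure $dN$ is essential, and it is exactly what \cite{PTZ:arXiv25} provides. I would therefore not reprove it, but invoke their Theorem~B after verifying the hypothesis translation above.
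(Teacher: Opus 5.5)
Your route is the paper's: the lemma is simply quoted from \cite[Theorem~B]{PTZ:arXiv25}, and your Perron/contour-shift heuristic with $T\asymp\lambda^{\delta/(\kappa+1)}$ correctly explains the remainder. The point you leave open, however, cannot be settled by ``checking conventions'' in \cite{PTZ:arXiv25}. The hypothesis already fixes the $a_\ell$ as the Laurent coefficients of $\zeta(A;s)$ at $s=p$, and your own residue computation then determines the $b_k$. Since $\frac{1}{(\ell-1)!}\binom{\ell-1}{k}=\frac{1}{k!\,m!}$ and $\frac{d^m}{ds^m}(s^{-1})=(-1)^m m!\,s^{-m-1}$, the factor $m!$ cancels completely, and you obtain
\[
 b_k=\frac{1}{k!}\sum_{m=0}^{q-k-1}\frac{(-1)^m}{p^{m+1}}\,a_{m+k+1},
\]
with no factor $1/m!$. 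This is forced independently of any Tauberian argument. Up to an entire function, $\int_1^\infty\lambda^{-s}\,d\bigl[\lambda^p(\log\lambda)^k\bigr]=k!\,s\,(s-p)^{-k-1}$. A remainder $\op{O}(\lambda^{p-\beta}(\log\lambda)^{q-1})$ with $\beta>0$ contributes a function analytic near $s=p$. Hence $k!\,b_k$ must be the coefficient of $(s-p)^{-k-1}$ in $\zeta(A;s)/s$, and expanding $1/s=\sum_{m\geq 0}(-1)^m p^{-m-1}(s-p)^m$ gives the display above.

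Consequently, formula (\ref{eq:PTZ-coeff}) as printed is incorrect whenever $q\geq 3$. The $m=2$ term makes $b_{q-3}$ too small by $\frac{a_q}{2\,(q-3)!\,p^3}\neq 0$. For instance, for $q=3$ the correct value is $b_0=\frac{a_1}{p}-\frac{a_2}{p^2}+\frac{a_3}{p^3}$, not $\frac{a_1}{p}-\frac{a_2}{p^2}+\frac{a_3}{2p^3}$. The printed expansion and the true one therefore differ by $\lambda^p$ times a nonzero polynomial in $\log\lambda$, which the error term cannot absorb.

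So instead of ``reconciling'', you should state and prove the lemma with the corrected coefficients. For $q\leq 2$, and for $b_{q-1}$ and $b_{q-2}$ in general, only $m\in\{0,1\}$ occurs and the two formulas agree. The paper only uses $b_{q-1}=a_q/((q-1)!\,p)$ (and $q=2$ for the double Laplacian), so its applications are unaffected.
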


\begin{remark}
 For $k=q-1$ the formula~(\ref{eq:PTZ-coeff}) reduces to 
 \begin{equation*}
 b_{q-1}= \frac{1}{(q-1)!}  \frac{1}{p}a_q. 
\end{equation*}
\end{remark}

\subsection{Zeros of Riemann's zeta function (Riemann Hypothesis)}
Assume the Riemann Hypothesis (RH) holds, and let $\sD$ be the ``Dirac operator'' whose spectrum consists of the imaginary parts of non-trivial zeros of the Riemann zeta function  $\zeta(s):=\sum_{n\geq 1} n^{-s}$ (see~\cite{Co:AFA24, CM:PNAS22}). Thus, $\sD$ is a selfadjoint unbounded operator on some suitable Hilbert space. Moreover, as the zeros of $\zeta(s)$ are symmetric with respect to the real axis, we see that the counting function $N(|\sD|;\lambda)$, $\lambda>0$, is two times the number of zeros of
$\zeta(s)$ with imaginary part in $[0,\lambda)$. 

Set $g(t)=(t+1)^{-1}\log (t+2)$, $t\geq 0$. The following result is shown in~\cite{PT:Part1}. 

\begin{proposition}[\cite{PT:Part1}] \label{prop:RH-spectral-meas}
 If (RH) holds, then $|\sD|^{-1}$ is spectrally measurable in $\sL_g$, with 
 \begin{equation*}
 \gbint |\sD|^{-1} = \frac{1}{\pi}.
\end{equation*}
\end{proposition}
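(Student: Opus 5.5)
The plan is to reduce the claim to Lemma~\ref{lem:example.Ap-spectral-meas} with $A=|\sD|$, $p=1$, $q=1$. Since the paper itself quotes this result from~\cite{PT:Part1}, I only sketch the argument. First we need the Weyl law for $|\sD|$. Let $N_0(T)$ be the number of zeros $\rho=\tfrac12+i\gamma$ of $\zeta(s)$ with $0<\gamma<T$, counted with multiplicity. The Riemann--von Mangoldt formula gives
\[
N_0(T)=\frac{T}{2\pi}\log\frac{T}{2\pi}-\frac{T}{2\pi}+\op{O}(\log T).
\]
The zeros are symmetric with respect to the real axis, and $\zeta(\tfrac12)\neq 0$. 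Hence $0$ is not an eigenvalue of $\sD$, and the positive spectrum of $|\sD|$ is discrete with finite multiplicities and bounded below by $14.13\ldots$; in particular $0$ is isolated. We also get $N(|\sD|;\lambda)=2N_0(\lambda)$ up to the boundary convention at $\lambda$, which only changes the count by $\op{O}(\log\lambda)$. Therefore
\[
N(|\sD|;\lambda)=\frac{1}{\pi}\lambda\log\lambda\,\bigl(1+\op{O}((\log\lambda)^{-1})\bigr).
\]
In particular $N(|\sD|;\lambda)\sim \pi^{-1}\lambda\log\lambda$.

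Next, $|\sD|$ satisfies the hypotheses of the general construction: it is selfadjoint with non-negative spectrum, $0$ is isolated in the spectrum (and in fact $\ker|\sD|=0$), and the positive spectrum consists of isolated eigenvalues of finite multiplicity. The Weyl law~(\ref{eq:example.A-Weyl-law}) holds with $c=\pi^{-1}$, $p=1$, $q=1$. The associated weight is exactly $g(t)=(t+1)^{-1}\log(t+2)$.

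Lemma~\ref{lem:example.Ap-spectral-meas} then shows that $|\sD|^{-1}$ is spectrally measurable in $\sL_g$, and~(\ref{eq:example.Ap-int-formula}) gives $\gbint|\sD|^{-1}=cp^{-q}=\pi^{-1}$.

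The only step needing care is the first one: justifying the Riemann--von Mangoldt input, which is classical, and checking that the isolation and multiplicity hypotheses on the spectrum hold. The rest is direct substitution. If one wanted a self-contained argument instead of citing the lemma, one would invert the counting function to obtain $\lambda_j(|\sD|^{-1})\sim \pi^{-1}j^{-1}\log j$, using $\lambda\log\lambda\sim\pi j\Rightarrow\lambda\sim\pi j/\log j$. This shows that $g(j)^{-1}\lambda_j(|\sD|^{-1})\to\pi^{-1}$, and there are no negative eigenvalues.
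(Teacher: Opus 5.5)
Your proposal is correct and follows the paper's argument: the Riemann--von Mangoldt formula gives $N(|\sD|;\lambda)=\frac{1}{\pi}\lambda\log\lambda\bigl(1+\op{O}((\log\lambda)^{-1})\bigr)$. Lemma~\ref{lem:example.Ap-spectral-meas} with $A=|\sD|$, $c=\pi^{-1}$, $p=q=1$ then yields spectral measurability and $\gbint|\sD|^{-1}=cp^{-q}=\pi^{-1}$. Your extra checks ($\zeta(\tfrac12)\neq 0$, isolation of $0$, the boundary convention costing only $\op{O}(\log\lambda)$) are consistent with the paper's setup.
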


This result is a direct consequence of the Riemann-von Mangoldt formula (see, e.g., \cite{Ti:Zeta}) and Lemma~\ref{lem:example.Ap-spectral-meas}. In fact, the Riemann-von Mangoldt formula implies that, 
 as $\lambda\rightarrow \infty$, we have
\begin{align*}
 N\big(|\sD|;\lambda\big) & = 2\frac{\lambda}{2\pi}\left( \log\left(\frac{\lambda}{2\pi}\right)-1\right)+\op{O}(\log \lambda)\\
 & = \frac{1}{\pi} \lambda \log \lambda\left( 1+ \op{O}\left(\frac{1}{\log \lambda} \right)\right).
\end{align*}
Applying Lemma~\ref{lem:distribution asymptotic with remainder} then gives the following refinement of Proposition~\ref{prop:RH-spectral-meas}. 

\begin{proposition}\label{prop:RH}
 The following hold.
 \begin{enumerate}
 \item We have
 \begin{equation}\label{eq:asymptotic of RH dirac}
\lambda_j\left(|\sD|^{-1}\right)=  \frac{1}{\pi}  \frac{\log j}{j}\left(1- \frac{\log\log j}{\log j}+ \op{O}\left(\frac{1}{\log j}\right)\right).
\end{equation}

 \item The operator $|\sD|^{-1}$ is not $g$-hypermeasurable, but it is hypermeasurable with respect to any  
continuous $\RV_{-1}$-function $g_1(t)$ such that
\begin{equation}\label{eq:Examples.g1-RH}
	g_1(t)=\frac{1}{t}\log t \left(1-\frac{\log\log t}{\log t} + \op{O}\left(\frac{1}{\log t}\right)\right).
\end{equation}
\end{enumerate}
\end{proposition}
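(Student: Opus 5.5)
The plan is to reduce both parts to Lemma~\ref{lem:distribution asymptotic with remainder}, taking $A=|\sD|$, $p=1$, $q=1$ and $c=\pi^{-1}$.

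The first step is to check the hypotheses of that lemma. Under (RH), $|\sD|$ is selfadjoint with nonnegative spectrum. Its nonzero spectrum consists of eigenvalues of finite multiplicity, namely the absolute values of the ordinates of the nontrivial zeros. The smallest ordinate is about $14.13>0$, so $0$ is not an eigenvalue and is isolated. The Riemann--von Mangoldt computation displayed just before the statement gives
\begin{equation*}
N(|\sD|;\lambda)=\frac{1}{\pi}\lambda\log\lambda\left(1+\op{O}\big((\log\lambda)^{-1}\big)\right).
\end{equation*}
This is exactly the Weyl law~(\ref{eq:Examples.A-Weyl-law-remainder}) with $c=\pi^{-1}$, $p=1$, $q=1$ and $\alpha=1$. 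The only mildly delicate point here is absorbing the term $-\frac{\lambda}{\pi}(1+\log 2\pi)+\op{O}(\log\lambda)$ into the relative error $\op{O}((\log\lambda)^{-1})$. This is immediate, since $\lambda=\lambda\log\lambda\cdot(\log\lambda)^{-1}$.

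The second step is to apply Lemma~\ref{lem:distribution asymptotic with remainder}(i). With $cp^{-q}=\pi^{-1}$ and $q^2=1$ it gives the asymptotic~(\ref{eq:asymptotic of RH dirac}) directly, which is part~(1).

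For part~(2), we have $g(t)=(t+1)^{-1}\log(t+2)$, which is the function $g$ of that lemma for $q=1$. Lemma~\ref{lem:distribution asymptotic with remainder}(ii) then says that $|\sD|^{-1}$ is not $g$-hypermeasurable. It also says that $|\sD|^{-1}$ is $g_1$-hypermeasurable for every continuous $\RV_{-1}$-function $g_1$ of the form~(\ref{eq:g_1}) with $q=1$, and that form is precisely~(\ref{eq:Examples.g1-RH}).

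There is no genuine obstacle, because all the analytic work sits in Lemma~\ref{lem:app.example-tplogq} and Corollary~\ref{cor:Hyper.Weyl-hyper.sLpq}. The one point I would double-check is implicit in the lemma: $g$ must satisfy~(\ref{cond:minimal}) and $g_1$ must be defined on $[0,\infty)$ so that $\sL_{g_1}=\sL_g$. If $g_1$ is only specified for large $t$, I would note that we can modify it on a compact interval without changing its $\RV_{-1}$ asymptotics or the conclusion.
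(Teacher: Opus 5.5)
Your proposal is correct and matches the paper's own argument: the Riemann--von Mangoldt formula gives the Weyl law with $c=\pi^{-1}$, $p=q=1$, $\alpha=1$, and Lemma~\ref{lem:distribution asymptotic with remainder} then yields both the eigenvalue asymptotic and the (non-)hypermeasurability statements. Your extra remarks, absorbing the $\op{O}(\lambda)$ term into the relative error and adjusting $g_1$ on a compact interval so that it is positive on $[0,\infty)$, are harmless bookkeeping that the paper leaves implicit.
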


\begin{remark}
 The symmetry between positive and negative eigenvalues of $\sD$ ensures that $\lambda_j^\pm(\sD)=\lambda_{2j}(|\sD|)$ for all $j\geq 0$. Therefore, the positive and negative eigenvalues satisfy asymptotics of the form~(\ref{eq:asymptotic of RH dirac}). It then follows that $\sD^{-1}$ is spectrally measurable in $\sL_g$ with $g(t)=(t+1)^{-1}\log (t+2)$. The symmetry further ensures that $\sH$ has an orthogonal splitting $\sH=\sH^+\oplus \sH^-$, and we can find an orthonormal eigenbasis $(\xi_j^\pm)$ of $\sH^{\pm}$ in such a way that
 $\sD^{-1}\xi_j^\pm =\pm \lambda_{2j}(|\sD|)\xi_j^\pm$. If $J$ is the involution on $\sH$ such that $J\xi_j^\pm=\xi_j^\mp$, then we have $J\sD^{-1}J=-\sD^{-1}$. Equivalently,
 \begin{equation*}
 \sD^{-1}= \frac{1}{2}\big(\sD^{-1}-J\sD^{-1}J\big)= \frac12\big[J,J\sD^{-1}\big].
\end{equation*}
This shows that $\sD^{-1}$ is in the commutator space $\Com(\sL_g)$, and so it annihilates \emph{every} trace on $\sL_g$. In particular, the operator $\sD^{-1}$ is $g$-hypermeasurable. More generally, it is $g_1$-hypermeasurable for any $\RV_{-1}$-function $g_1(t)\sim g(t)$.
\end{remark}

\subsection{Logarithm of the Laplacian}
Let $\Delta_g$ be the Laplacian on a closed manifold $(M^n,g)$. We know (see, e.g.,~\cite{Ho:ActaM68}) that as $j\rightarrow \infty$ we have
\begin{equation}\label{eq:eigenvalue of Delta_g}
  \lambda_j\left(\Delta_g\right)= \left(\frac{j}{c(n)\Vol_g(M)}\right)^{\frac{2}{n}}\left(1 +\op{O}\big(j^{-\frac1{n}}\big)\right), \qquad c(n):=(2\pi)^{-n}|\bB^n|.
\end{equation}

Set $T= (\log \Delta_g)\Delta_g^{-n/2}$. Let $(\xi_j)_{j\geq 0}$ be an orthonormal family such that $\Delta_g\xi_j=\lambda_j(\Delta_g)\xi_j$ for all $j\geq 0$. We have
\begin{equation*}
T=0 \quad \text{on}\ \ker \Delta_g \qquad \text{and} \qquad T\xi_j=\log\big( \lambda_j(\Delta_g)\big)\lambda_j(\Delta_g)^{-\frac{n}{2}}\xi_j \quad \forall j\geq 0.
\end{equation*}

Set $n_1=N(\Delta_g;1)$. We have
\begin{equation*}
 \lambda_j^{-}(T)= \left\{
 \begin{array}{cl}
 \big|\log\big( \lambda_{j}(\Delta_g)\big)\big|\lambda_{j}(\Delta_g)^{-\frac{n}{2}}
 & \text{if}\ j <n_1,\\
 0 & \text{if}\ j\geq n_1.
\end{array}\right.
\end{equation*}
In particular, as $j\rightarrow \infty$ we have
\begin{equation*}
 \lambda_j^{-}(T) = \op{O}\big(j^{-r}\big) \qquad \forall r>0.
\end{equation*}

Set $\overline{n}_1=n_1 +\dim \ker (\Delta_g-1)$. We have
\begin{equation*}
  \lambda_j^{+}(T)= \log\big( \lambda_{j+\overline{n}_1}(\Delta_g)\big) \lambda_{j+\overline{n}_1}(\Delta_g)^{-\frac{n}{2}} \qquad \forall j\geq 0.
\end{equation*}
Combining this with~(\ref{eq:eigenvalue of Delta_g}) we get
\begin{align*}
 \lambda_j^{+}(T) & = \log\left[ ((c(n)\Vol_g(M))^{-1}(j+\overline{n}_1))^{\frac{2}{n}}\right] c(n)\Vol_g(M)(j+\overline{n}_1)^{-1}\left(1 +\op{O}\big(j^{-\frac1{n}}\big)\right)\\
  &= \frac{2}{n} c(n)\Vol_g(M) \frac{\log j}{j}\left(1 +\op{O}\big(j^{-\frac1{n}}\big)\right).
\end{align*}
Therefore, by applying~\cite[Theorem~5.8]{PT:Part1} and Corollary~\ref{cor:Hyper.Weyl-hyper.sLpq} we obtain the following integration result.

\begin{proposition}\label{prop:log Laplacian}
 Set $h(t)=(t+1)^{-1}\log(t+2)$, $t\geq 0$. The operator $(\log\Delta_g)\Delta_g^{-n/2}$ is spectrally measurable  and $h$-hypermeasurable in $\sL_h$, with
 \begin{equation*}
 \hbint \left(\log\Delta_g\right)\Delta_g^{-\frac{n}{2}} =   \frac{2}{n}c(n)\Vol_g(M).
\end{equation*}
\end{proposition}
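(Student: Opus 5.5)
The plan is to read Proposition~\ref{prop:log Laplacian} off the eigenvalue asymptotics just derived. The operator $T=(\log\Delta_g)\Delta_g^{-n/2}$ is selfadjoint, and its negative part has finite rank (rank at most $n_1$). We have $\lambda_j^-(T)=\op{O}(j^{-r})$ for all $r>0$, and indeed $\lambda_j^-(T)=0$ for $j\geq n_1$. So I would write
\begin{equation*}
\lambda_j^-(T)=h(j)\bigl(0+\op{O}(j^{-1/n})\bigr)
\end{equation*}
trivially.

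For the positive part, I would rewrite the asymptotic for $\lambda_j^+(T)$ in terms of $h$. Since $h(j)=(j+1)^{-1}\log(j+2)=j^{-1}\log j\,\bigl(1+\op{O}((\log j)^{-1}\cdot j^{-1})\bigr)$, the discrepancy between $h(j)$ and $j^{-1}\log j$ is $\op{O}(j^{-1})$ in relative size, which is absorbed in $\op{O}(j^{-1/n})$. This gives
\begin{equation*}
\lambda_j^+(T)=h(j)\Bigl(\tfrac{2}{n}c(n)\Vol_g(M)+\op{O}\bigl(j^{-1/n}\bigr)\Bigr).
\end{equation*}
In particular $T\in\sL_h$, and both limits $\lim h(j)^{-1}\lambda_j^\pm(T)$ exist, equal to $\frac{2}{n}c(n)\Vol_g(M)$ and $0$ respectively. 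Hence $T$ is spectrally measurable, and by~\cite[Theorem~5.8]{PT:Part1} it is strongly measurable with $\hbint T=\frac{2}{n}c(n)\Vol_g(M)-0$.

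For $h$-hypermeasurability, I would apply Proposition~\ref{prop:hyper cond by asymptotic}, or equivalently the first part of Corollary~\ref{cor:Hyper.Weyl-hyper.sLpq} with $q=1$, since $j^{-1/n}=\op{O}((\log j)^{-1})$. The hypothesis~(\ref{cond:minimal}) holds because $t^{-1}=\op{O}(h(t))$. To use the proposition directly, I would take $\varepsilon(t)=t^{-1/n}$. This is a decreasing null function that is $\RV_{-1/n}$ with negative index, so by Remark~\ref{rmk:Hyper.RV-neg-eps-hyper} it satisfies~(\ref{cond: h}). Then $T$ is $h$-hypermeasurable, and by Remark~\ref{rmk:hyper implies strong} every $h$-normalized trace takes the value $\hbint T$ on it.

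The only point needing care is the replacement of $j^{-1}\log j$ by $h(j)$ and of $j+\overline{n}_1$ by $j$ within the error term. The shift changes $\log(j+\overline{n}_1)/(j+\overline{n}_1)$ by a relative factor $1+\op{O}(j^{-1})$, which is again dominated by $j^{-1/n}$. I therefore expect no real obstacle: the proof is a direct combination of results already in the paper.
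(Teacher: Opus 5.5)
Your overall route is the paper's: read off $\lambda_j^\pm(T)$, invoke \cite[Theorem~5.8]{PT:Part1} for spectral measurability and the value of the integral, then apply an eigenvalue criterion for $h$-hypermeasurability. The problem is the remainder. The step you flagged as the only delicate one (the shift $j+\overline{n}_1$ and the replacement of $j^{-1}\log j$ by $h(j)$) is harmless. What you missed is the constant inside the logarithm, and because of it the $\op{O}(j^{-1/n})$ remainder is false in general. Put $V:=c(n)\Vol_g(M)$. From $\lambda_k(\Delta_g)=(k/V)^{2/n}(1+\op{O}(k^{-1/n}))$ you get $\log\lambda_k(\Delta_g)=\frac{2}{n}\log k-\frac{2}{n}\log V+\op{O}(k^{-1/n})$. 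Hence
\[
\lambda_j^+(T)=\tfrac{2}{n}V\,\frac{\log j}{j}\Bigl(1-\frac{\log V}{\log j}+\op{O}\bigl(j^{-1/n}\bigr)\Bigr)=h(j)\Bigl(\tfrac{2}{n}V+\op{O}\bigl((\log j)^{-1}\bigr)\Bigr).
\]
The $(\log j)^{-1}$ term is genuinely present unless $V=1$; for example, it is present on a circle of length $L\neq\pi$. So the hypothesis of Proposition~\ref{prop:hyper cond by asymptotic} with $\varepsilon(t)=t^{-1/n}$ fails, and Remark~\ref{rmk:Hyper.RV-neg-eps-hyper} cannot be used.

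The fix is the alternative you mention only in passing, but it must rest on the correct remainder, not on $j^{-1/n}=\op{O}((\log j)^{-1})$. Since $h(t)\sim t^{-1}\log t$, the first part of Corollary~\ref{cor:Hyper.Weyl-hyper.sLpq} with $q=1$ tolerates exactly an $\op{O}((\log j)^{-1})$ remainder. This follows from Remark~\ref{rmk:2nd h} with $\varphi(t)=t^{1/2}$, or directly from $\sum_{2\leq j<N}h(j)(\log j)^{-1}=\op{O}(\log N)=\op{O}(Nh(N))$. Hence $T$ is $h$-hypermeasurable, and this corollary is exactly what the paper invokes. Spectral measurability and the value $\frac{2}{n}c(n)\Vol_g(M)$ are unaffected, since they only need $h(j)^{-1}\lambda_j^+(T)\rightarrow\frac{2}{n}V$.

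The paper's displayed asymptotic also states an $\op{O}(j^{-1/n})$ remainder. Its conclusion is unaffected, because it only uses Corollary~\ref{cor:Hyper.Weyl-hyper.sLpq}. Your $t^{-1/n}$ route is the one that actually depends on the overstated remainder.

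A minor slip: $h(j)=j^{-1}\log j\,(1+\op{O}(j^{-1}))$, not $1+\op{O}(j^{-1}(\log j)^{-1})$. You do state the correct relative error $\op{O}(j^{-1})$ immediately afterwards.
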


\subsection{Double Laplacian} Let $(M^n,g)$ be a closed Riemannian manifold with Laplacian $\Delta_g$. We equip the double manifold $M\times M$ with the product metric $g\otimes g$. In what follows we shall use letters $x$, $\xi$ to denote variables arising from the first factor and letters $y$, $\eta$ for variables arising from the second factor.

On $M\times M$ we consider the \emph{double Laplacian},
\begin{equation}\label{eq:double Laplacian}
 \Delta_g\otimes \Delta_g.
\end{equation}
This is a 4th order differential operator. Its principal symbol is
\begin{equation*}
 \sigma_4(x,\xi;y,\eta)= |\xi|^2_{g(x)}|\eta|^2_{g(y)}, \qquad \xi\in T_x^*M, \quad \eta\in T_y^*M.
\end{equation*}
In particular, it is a quadratic form of signature $(1,1)$ in $(|\xi|^2_{g(x)},|\eta|^2_{g(y)})$. Note also that $\Delta_g\otimes \Delta_g$ annihilates every function that does not depend on one of the two variables $x,y$. Therefore,  $ \Delta_g\otimes \Delta_g$ is not elliptic or even hypoelliptic.

General results on tensor products of selfadjoint unbounded operators (see, e.g., \cite[\S7.5]{Sc:Springer12}) ensure that  $ \Delta_g\otimes \Delta_g$ is essentially selfadjoint. It has a non-negative spectrum with an infinite-dimensional nullspace. The positive part of its spectrum is discrete and consists of the products of the eigenvalues of $\Delta_g$. More precisely, we have
\begin{equation*}
 \ker  \big( \Delta_g\otimes \Delta_g\big) = \left[\big(\ker \Delta_g\big)\otimes L^2_g(M)\right] +  \left[L^2_g(M)\otimes \big(\ker \Delta_g\big)\right].
\end{equation*}
Moreover, if  $(\xi_j)_{j\geq 0}$ is any orthonormal basis of $L_g^2(M)$ such that $\Delta_g \xi_j = \lambda_j(\Delta_g)\xi_j$ for all $j\geq 0$, then we have
\begin{equation*}
 \big( \Delta_g\otimes \Delta_g\big)(\xi_k\otimes \xi_\ell)= \lambda_k(\Delta_g) \lambda_\ell(\Delta_g)(\xi_k\otimes \xi_\ell) \qquad \forall k,\ell\geq 0.
\end{equation*}

Given any $s\in \C$, we then have
\begin{gather}
 \big( \Delta_g\otimes \Delta_g\big)^{-s}= 0 \qquad \text{on}\ \ker\big( \Delta_g\otimes \Delta_g\big),\label{eq:1st tensor product}\\
\big( \Delta_g\otimes \Delta_g\big)^{-s}(\xi_k\otimes \xi_\ell)= \lambda_k(\Delta_g)^{-s} \lambda_\ell(\Delta_g)^{-s} (\xi_k\otimes \xi_\ell) \qquad \forall k,\ell\geq 0.\label{eq:2nd tensor product}
\end{gather}
Equivalently,
\begin{equation}\label{eq:tensor identity}
 \big( \Delta_g\otimes \Delta_g\big)^{-s}=  \big( \Delta_g\big)^{-s}\otimes \big( \Delta_g\big)^{-s}.
\end{equation}
In particular, as $\Delta_g^{-s}$ is trace-class for $\Re s>n/2$, we see that $( \Delta_g\otimes \Delta_g)^{-s}$ is trace-class for $\Re s>n/2$ as well.

Let $\zeta(\Delta_g;s)=\Tr[\Delta_g^{-s}]$ and $\zeta(\Delta_g\otimes \Delta_g;s)=\Tr[(\Delta_g\otimes \Delta_g)^{-s}]$ be the respective zeta functions of
$\Delta_g$ and $\Delta_g\otimes \Delta_g$. By~(\ref{eq:1st tensor product})--(\ref{eq:2nd tensor product}) we have
\begin{align*}
 \zeta\big(\Delta_g\otimes \Delta_g;s\big) &= \sum_{k,\ell\geq n_0} \lambda_k(\Delta_g)^{-s}\lambda_\ell(\Delta_g)^{-s}\\
  &=  \bigg(\sum_{k\geq n_0} \lambda_k(\Delta_g)^{-s}\bigg) \bigg(\sum_{\ell\geq n_0} \lambda_\ell(\Delta_g)^{-s}\bigg) \\
  &=\zeta(\Delta_g;s)^2,
\end{align*}
where $n_0$ is the least integer such that $\lambda_j(\Delta_g)>0$ for all $j\geq n_0$.
We know (see, e.g., \cite{Sh:Springer01}) that $\zeta(\Delta_g;s)$ has a meromorphic extension to $\C$ with at worst \emph{simple} pole singularities at non-zero half-integers and integers~$\leq n/2$. Moreover, the residue at $s=n/2$ is given by
\begin{equation}\label{eq:Intro.Residue-Deltag}
 \Res_{s=\frac{n}{2}} \zeta(\Delta_g;s)= \frac{n}{2}c(n) \Vol_g(M). 
\end{equation}
It follows that $\zeta(\Delta_g\otimes \Delta_g;s)$  has a meromorphic extension to $\C$ with at worst \emph{double} pole singularities at non-zero integers and half-integers~$\leq n/2$. Moreover, at $s=n/2$ we have
\begin{equation*}
  \Res_{s=\frac{n}{2}} \big(s-\frac{n}{2}\big)\zeta(\Delta_g\otimes\Delta_g;s)= \left(\frac{n}{2}\right)^2c(n)^2 \Vol_g(M)^2. 
\end{equation*}

We also recall the following result. 

\begin{lemma}[see~{\cite[Corollary~2.2]{DG:IM75}}]\label{lem:Example.DG}
The following hold.
\begin{enumerate}
 \item The zeta function $\zeta(\Delta_g;s)$ is uniformly bounded on vertical lines $\Re s=c$, $c>n/2$. 
 
 \item For any $k=0,1,2,\ldots$ and $\frac12(n-k-1)<c<\frac12(n-k)$ we have
\begin{equation*}
\left| \zeta(\Delta_g;c+iy)\right| =\op{O}\left(|y|^{k+1}\right) \qquad \text{as}\ |y|\rightarrow \infty. 
\end{equation*}
\end{enumerate}
 \end{lemma}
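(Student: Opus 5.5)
The statement to prove is Lemma~\ref{lem:Example.DG}, a cited result of Duistermaat--Guillemin. I would argue part~(1) directly and derive part~(2) from the Weyl law with remainder~(\ref{eq:eigenvalue of Delta_g}), using summation by parts and Mellin-type integration by parts.

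\textbf{Part~(1).} For $\Re s=c>n/2$ we have
\begin{equation*}
|\zeta(\Delta_g;c+iy)|\leq \sum_{j\geq n_0}\lambda_j(\Delta_g)^{-c}=\zeta(\Delta_g;c)<\infty.
\end{equation*}
The series converges absolutely because $\lambda_j(\Delta_g)\asymp j^{2/n}$ by~(\ref{eq:eigenvalue of Delta_g}). This bound is uniform in $y$, which is what part~(1) asserts.

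\textbf{Setup for part~(2).} Write
\begin{equation*}
\zeta(\Delta_g;s)=\int_{\lambda_{n_0}^-}^\infty \lambda^{-s}\,dN(\Delta_g;\lambda),
\end{equation*}
and integrate by parts to get
\begin{equation*}
\zeta(\Delta_g;s)=s\int \lambda^{-s-1}N(\Delta_g;\lambda)\,d\lambda.
\end{equation*}
Hörmander's remainder estimate gives $N(\Delta_g;\lambda)=c(n)\Vol_g(M)\lambda^{n/2}+R(\lambda)$ with $R(\lambda)=\op{O}(\lambda^{(n-1)/2})$. The main term integrates explicitly to $c(n)\Vol_g(M)\,s/(s-n/2)$, which is $\op{O}(1)$ on vertical lines. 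The remainder integral $s\int \lambda^{-s-1}R(\lambda)\,d\lambda$ converges absolutely for $\Re s>(n-1)/2$ and is $\op{O}(|y|)$ there. This already settles the case $k=0$.

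\textbf{Induction on $k$.} For larger $k$, a crude remainder does not suffice; I would use the full heat-kernel expansion. Take the Mellin representation
\begin{equation*}
\Gamma(s)\zeta(\Delta_g;s)=\int_0^\infty t^{s-1}\bigl(\Tr e^{-t\Delta_g}-\dim\ker\Delta_g\bigr)\,dt,
\end{equation*}
together with the asymptotic expansion $\Tr e^{-t\Delta_g}\sim\sum_m a_m t^{(m-n)/2}$ as $t\to0^+$. This gives the meromorphic continuation, but $\Gamma(s)^{-1}$ grows exponentially in $|y|$, so it yields no polynomial bounds. Instead I would subtract the Weyl-type terms and integrate by parts $k$ times in $\lambda$. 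Each integration by parts costs one factor of $|s|$ and gains one half-order of decay in $\lambda$, provided the smoothed counting-function remainder admits iterated antiderivatives with improving bounds. Duistermaat--Guillemin obtain exactly such bounds from the Fourier integral operator parametrix of $e^{-it\sqrt{\Delta_g}}$: the Riesz means of order $k$ of $N(\Delta_g;\lambda)$ have remainders $\op{O}(\lambda^{(n-k-1)/2})$. Plugging this into Riesz-mean Mellin formulas gives $\zeta(\Delta_g;c+iy)=\op{O}(|y|^{k+1})$ on the strip $\frac12(n-k-1)<c<\frac12(n-k)$, where poles are absent.

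\textbf{Main obstacle.} The hard step is establishing the Riesz-mean remainder bounds of order $k$. This is microlocal input that I would simply import from~\cite{DG:IM75}, or alternatively from Hörmander's wave-kernel analysis, rather than reprove.
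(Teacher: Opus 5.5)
The paper does not prove this lemma at all; it simply quotes Corollary~2.2 of Duistermaat--Guillemin, whose proof goes through the parametrix of the wave group $e^{-it\sqrt{\Delta_g}}$. Your argument is correct, but it splits the work differently.

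\textbf{Part~(1) and the case $k=0$.} You prove these by elementary means: absolute convergence for part~(1), and a single Abel summation against the sharp Weyl remainder $N(\Delta_g;\lambda)=c(n)\Vol_g(M)\lambda^{n/2}+\op{O}(\lambda^{(n-1)/2})$ for $k=0$. That remainder is equivalent to (\ref{eq:eigenvalue of Delta_g}), so it is already available in the paper. This buys something real. The paper only ever uses part~(1) and the strip $\frac12(n-1)<c<\frac12 n$, namely for $\Delta_g\otimes\Delta_g$ and for the multi-tensor products. Your self-contained argument therefore covers every downstream application without any microlocal input.

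Two small fixes here. With lower limit $a<\lambda_{n_0}$, the main term is $c(n)\Vol_g(M)\,s\,a^{n/2-s}/(s-\frac n2)$ rather than $s/(s-\frac n2)$; it is still bounded on vertical lines. You should also say explicitly that the splitting, established for $\Re s>\frac n2$, persists to $\Re s>\frac12(n-1)$, $s\neq\frac n2$, by analytic continuation.

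\textbf{The case $k\geq 1$.} There is no induction involved. The cleanest form of your reduction is the following identity, valid for $\Re s>\frac n2$ because $\int_0^1u^{s-1}(1-u)^k\,du=k!/(s(s+1)\cdots(s+k))$:
\begin{equation*}
 \zeta(\Delta_g;s)=\frac{s(s+1)\cdots(s+k)}{k!}\int_0^\infty \lambda^{-s-1}\sum_{\lambda_j<\lambda}\Bigl(1-\frac{\lambda_j}{\lambda}\Bigr)^k d\lambda .
\end{equation*}
Now feed in the Riesz-mean expansion $\sum_{i\leq k}c_i\lambda^{(n-i)/2}+\op{O}(\lambda^{(n-k-1)/2})$. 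These estimates are classical, due to H\"ormander, and follow from the same wave parametrix. The explicit terms contribute $\op{O}(|y|^k)$. The remainder integral converges absolutely for $c>\frac12(n-k-1)$, and so contributes $\op{O}(|y|^{k+1})$. The exponent $k+1$ is just the degree of the polynomial prefactor.

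So for $k\geq1$ your route and the cited one rest on the same microlocal input. Yours isolates that input as a single black box (Riesz-mean remainders), while the citation delivers all $k$ at once.
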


It follows from Lemma~\ref{lem:Example.DG} that, if $\frac1{2}(n-1)<c<\frac1{2}n$, then 
\begin{equation*}
 \left| \zeta(\Delta_g\otimes\Delta_g;c+iy)\right| =\left| \zeta(\Delta_g;c+iy)\right|^2 =\op{O}\left(|y|^2\right) \qquad \text{as}\ |y|\rightarrow \infty. 
\end{equation*}
Therefore, the assumptions of Lemma~\ref{lem:PTZ} are satisfied. We then obtain that, for any $\alpha \in (0,1/6)$, we have
 \begin{equation}\label{eq:weyl law tensor case}
 N\big(\Delta_g\otimes \Delta_g;\lambda\big) = b_1 \lambda^{\frac{n}{2}} \log \lambda + b_0 \lambda^{\frac{n}{2}} + \op{O}\big(\lambda^{\frac{n}{2}-\alpha}\big), 
\end{equation}
where the coefficients $b_0$ and $b_1$ are computed by using~(\ref{eq:PTZ-coeff}). In particular, 
\begin{equation*}
b_1= \frac{n}{2}c(n)^2\Vol_g(M)^2.
\end{equation*}
It follows that 
\begin{equation*}
 N\big(\Delta_g\otimes \Delta_g;\lambda\big) =  \frac{n}{2}c(n)^2\Vol_g(M)^2 \lambda^{\frac{n}{2}}\log \lambda \left( 1+ \op{O}\left(\frac{1}{\log \lambda}\right)\right).  
\end{equation*}
Combining this with Lemma~\ref{lem:example.Ap-spectral-meas} and Lemma~\ref{lem:distribution asymptotic with remainder} we then arrive at the following statement. 

\begin{proposition}\label{prop:Examples.Double-Laplacian}
Set $h(t)=(t+1)^{-1}\log(t+2)$,  $t\geq 0$. The following hold.
\begin{enumerate}
\item We have
 \begin{equation*}
	\lambda_j((\Delta_g\otimes\Delta_g)^{-\frac{n}{2}})=c(n)^2\Vol_g(M)^2 \frac{\log j}{j}\left(1- \frac{\log\log j}{\log j}+ \op{O}\left(\frac{1}{\log j}\right)\right).
\end{equation*}
\item The operator $(\Delta_g\otimes\Delta_g)^{-\frac{n}{2}}$ is spectrally measurable in $\sL_h$, with
\begin{equation*}
	\hbint   \big( \Delta_g\otimes \Delta_g\big)^{-\frac{n}{2}}=c(n)^2 \Vol_{g}(M)^2.
\end{equation*}
\item It is not $h$-hypermeasurable,  but it is hypermeasurable with respect to any  
$\RV_{-1}$-function $h_1(t)$ of the form~(\ref{eq:Examples.g1-RH}). 
\end{enumerate}
\end{proposition}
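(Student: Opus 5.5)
The plan is to reduce everything to Lemma~\ref{lem:distribution asymptotic with remainder} with $A=\Delta_g\otimes\Delta_g$, $p=n/2$ and $q=1$. The sharp Weyl law~(\ref{eq:weyl law tensor case}) gives
\begin{equation*}
N\big(\Delta_g\otimes \Delta_g;\lambda\big)= b_1\lambda^{\frac{n}{2}}\log\lambda\left(1+\op{O}\big((\log\lambda)^{-1}\big)\right),
\end{equation*}
because $b_0\lambda^{n/2}$ and the power-saving remainder $\op{O}(\lambda^{n/2-\alpha})$ are both $\op{O}(\lambda^{n/2})$. This is exactly~(\ref{eq:Examples.A-Weyl-law-remainder}) with $c=b_1=\frac{n}{2}c(n)^2\Vol_g(M)^2$, $p=n/2$, $q=1$ and $\alpha=1$.

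The first thing to verify is that $A$ fits the general construction. It is selfadjoint with nonnegative spectrum. Its positive spectrum consists of the products $\lambda_k(\Delta_g)\lambda_\ell(\Delta_g)$ with $k,\ell\geq n_0$. These are bounded below by $\lambda_{n_0}(\Delta_g)^2>0$, so $0$ is isolated in the spectrum. Each level set is finite, since only finitely many such products lie below any bound. Hence each positive eigenvalue has finite multiplicity, and these eigenvalues accumulate only at infinity. Note that the infinite-dimensional nullspace is harmless here, because $A^{-p}$ is defined to be zero on $\ker A$.

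With this in place, the proposition follows from the two lemmas.

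\begin{itemize}
\item \textbf{Part (1).} Lemma~\ref{lem:distribution asymptotic with remainder}(i) gives the eigenvalue asymptotic with leading constant $cp^{-q}=b_1(n/2)^{-1}=c(n)^2\Vol_g(M)^2$. Since $q=1$, the coefficient of $\frac{\log\log j}{\log j}$ is $q^2=1$.
\item \textbf{Part (2).} Lemma~\ref{lem:example.Ap-spectral-meas}, applied with $q=1$ and $h=g$, gives spectral measurability in $\sL_h$ with $\hbint(\Delta_g\otimes\Delta_g)^{-n/2}=cp^{-1}=c(n)^2\Vol_g(M)^2$.
\item \textbf{Part (3).} Lemma~\ref{lem:distribution asymptotic with remainder}(ii) gives the non-$h$-hypermeasurability and the $h_1$-hypermeasurability. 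The admissible $h_1$ are those of the form~(\ref{eq:g_1}) with $q=1$, which is exactly the form~(\ref{eq:Examples.g1-RH}).
\end{itemize}

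The main point requiring care is the derivation of~(\ref{eq:weyl law tensor case}) itself from Lemma~\ref{lem:PTZ}, with $q=2$ there (a double pole), $p=n/2$ and $\delta$ chosen so that $p-\delta\in(\frac{n-1}{2},\frac n2)$. I would check the hypotheses in order.

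\begin{itemize}
\item \textbf{Analytic extension.} Since $\zeta(\Delta_g;s)$ has only a simple pole at $n/2$ and no other poles in $\Re s>\frac{n-1}{2}$, the function $\zeta(A;s)=\zeta(\Delta_g;s)^2$ minus its principal part $a_2(s-p)^{-2}+a_1(s-p)^{-1}$ extends analytically across the line. Here $a_2=(\frac n2 c(n)\Vol_g(M))^2>0$.
\item \textbf{Growth bound.} The required bound on the line $\Re s=p-\delta$ is $\op{O}(|y|^{2})$, by Lemma~\ref{lem:Example.DG} applied with $k=0$. This means $\kappa=2$, which is compatible with the permitted $(\log|y|)^{q-1}$ factor.
\item \textbf{Resulting remainder.} Lemma~\ref{lem:PTZ} then gives the remainder $\op{O}(\lambda^{p-\delta/3}\log\lambda)$. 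Since $\delta$ can be taken arbitrarily close to $1/2$, this yields any exponent $\alpha<1/6$.
\item \textbf{Leading coefficient.} Finally, $b_1=a_2/p=\frac n2c(n)^2\Vol_g(M)^2$, by the Remark following Lemma~\ref{lem:PTZ}.
\end{itemize}

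I expect the analytic-extension and vertical-growth check to be the only genuinely delicate step; everything after it is a direct substitution into the lemmas.
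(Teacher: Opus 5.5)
Your proposal is correct and follows the paper's route. You derive the sharp Weyl law~(\ref{eq:weyl law tensor case}) from Lemma~\ref{lem:PTZ} (double pole, $\kappa=2$ via Lemma~\ref{lem:Example.DG}, any $\alpha<1/6$, $b_1=\frac{n}{2}c(n)^2\Vol_g(M)^2$), then read off the three parts from Lemma~\ref{lem:example.Ap-spectral-meas} and Lemma~\ref{lem:distribution asymptotic with remainder} with $p=n/2$, $q=1$, and you additionally make explicit the check (that $0$ is isolated and multiplicities are finite) which the paper leaves implicit.
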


\begin{remark}
 Battisti~\cite{Ba:MZ12} obtained a Weyl law for bi-singular operators on closed manifolds, including the double Laplacian $\Delta_g\otimes \Delta_g$. Battisti's result was actually the main impetus for this example. We stress that~(\ref{eq:weyl law tensor case}) provides us with a sharper version of Battisti's Weyl law. 
\end{remark}

\subsection{Multi-tensor products of Laplacians}
This is a generalization of the previous example.  For $i=1,\ldots, r$ let $(M_i^{n_i},g_i)$ be closed Riemannian manifolds with $n=n_1\geq n_2\geq \cdots \geq n_r$ and $r\geq 2$. We equip the product manifold $M_1\times \cdots \times M_r$ with the product metric $g_1\otimes \cdots \otimes g_r$. For $i=1,\ldots, r$ let $\Delta_{g_i}$ be the Laplacian of $(M_i,g_i)$. On $M_1\times \cdots \times M_r$ we consider the tensor product of Laplacians,
\begin{equation*}
A:=\Delta_{g_1}\otimes \cdots \otimes \Delta_{g_r}.
\end{equation*}
This is a non-hypoelliptic differential operator of order $2r$. As with the double Laplacian~(\ref{eq:double Laplacian}) this is an essentially selfadjoint operator with non-negative spectrum and an infinite-dimensional nullspace. The positive part of its spectrum is discrete and consists of the products of the positive eigenvalues of the $\Delta_{g_i}$. As in~(\ref{eq:tensor identity}) we have
\begin{equation*}
A^{-s}= \big(\Delta_{g_1}\big)^{-s}\otimes \cdots \otimes \big(\Delta_{g_r}\big)^{-s}, \qquad s\in\mathbb{C}.
\end{equation*}
The zeta function of $A$ then satisfies
\begin{equation*}
 \zeta\big(A;s\big)=  \zeta\big( \Delta_{g_1};s\big)\cdots  \zeta\big( \Delta_{g_r};s\big).
\end{equation*}
It follows that it has a meromorphic extension to $\C$ with at worst pole singularities of order~$\leq r$ at non-zero half-integers and integers~$\leq n/2$.

The order of the pole at $s=n/2$ is equal to
\begin{equation}\label{eq:def of q}
 q:=\#\{i;\ n_i=n\}.
\end{equation}
In particular, we have a simple pole if and only if $n_2<n$. In any case, we have
\begin{equation*}
 \Res_{s=\frac{n}{2}}  \big(s-\frac{n}{2}\big)^{q-1}\zeta\big(A;s\big) = \left(\frac{n}{2}\right)^q c(n)^q \Vol_{g_1}(M_1)\cdots \Vol_{g_q}(M_q),
\end{equation*}
where as in~(\ref{eq:Intro.Residue-Deltag}) we have set $c(n)=(2\pi)^{-n}|\bB^n|$. Moreover, Lemma~\ref{lem:Example.DG} ensures that if $\frac1{2}(n-1)<c<\frac1{2}n$, then 
\begin{equation*}
 \left| \zeta(\Delta_{g_i};c+iy)\right| =  \left\{ 
 \begin{array}{ll} 
  \op{O}\left(|y|\right) & \textup{for}\ 1\leq i \leq q, \smallskip \\
   \op{O}(1) & \textup{for}\ q< i \leq r. 
\end{array}
\right.  
\end{equation*}
Thus, 
\begin{equation*}
  \left| \zeta\big(A;s\big)\right|=   \left|\zeta\big( \Delta_{g_1};s\big)\right|\cdots  \left|\zeta\big( \Delta_{g_r};s\big)\right| =  \op{O}\left(|y|^q\right). 
\end{equation*}
This allows us to apply Lemma~\ref{lem:PTZ}. For any $\alpha \in (0, \frac{1}{2}(q+1)^{-1})$, we have the Weyl law, 
\begin{equation}\label{eq:Examples.WL-multiL}
 N(A; \lambda) = \sum_{k=0}^{q-1} b_k \lambda^{\frac{n}{2}} (\log \lambda)^k + \op{O}\big( \lambda^{\frac{n}{2}-\alpha}\big), 
  \end{equation}
where the coefficients $b_k$, $k=0, \ldots, q-1$, are computed by using~(\ref{eq:PTZ-coeff}). In particular, 
\begin{gather*}
 b_{q-1} =  \frac{1}{(q-1)!} \left(\frac{n}{2}\right)^{q-1}c(n)^q \Vol_{g_1}(M_1)\cdots \Vol_{g_q}(M_q). 
\end{gather*}
 It follows that 
\begin{gather*}
  N(A; \lambda) = b_{q-1} \lambda^{\frac{n}{2}} (\log \lambda)^{q-1} 
 \left(1+  \op{O}\left(\varepsilon(\lambda)\right)\right), 
\end{gather*}
where $ \varepsilon(\lambda)= \lambda^{-\alpha} $ if $q=1$,  or $ \varepsilon(\lambda)= (\log \lambda)^{-1} $ if $q\geq 2$. Therefore, by combining this with Lemma~\ref{lem:example.Ap-spectral-meas}, and either Lemma~\ref{lem:Examples.NAL-remainder1} if $q=1$, or Lemma~\ref{lem:distribution asymptotic with remainder} if $q\geq 2$, we arrive at the following statement. 

\begin{proposition}\label{prop:tensor Laplacians}
Set  $h(t)=(t+1)^{-1}(\log (t+2))^{q-1}$, $t\geq 0$, where $q$ is given by~(\ref{eq:def of q}). The following hold.
 \begin{enumerate}
 	\item The operator $( \Delta_{g_1}\otimes \cdots \otimes \Delta_{g_r})^{-\frac{n}{2}}$ is spectrally measurable in $\sL_h$, with
 	\begin{equation*}
 		\hbint  \left(\Delta_{g_1}\otimes \cdots \otimes \Delta_{g_r}\right)^{-\frac{n}{2}}=\frac{1}{(q-1)!}c(n)^{q}\,
		\Vol_{g_1}(M_1)\cdots \Vol_{g_q}(M_q). 
 	\end{equation*}

          \item If $q=1$, then $( \Delta_{g_1}\otimes \cdots \otimes \Delta_{g_r})^{-\frac{n}{2}}$ is $(t+1)^{-1}$-hypermeasurable.\smallskip 

 	\item If $q\geq 2$, then $( \Delta_{g_1}\otimes \cdots \otimes \Delta_{g_r})^{-\frac{n}{2}}$ is not $h$-hypermeasurable, but it is hypermeasurable with respect to any  
 	 $\RV_{-1}$-function $h_1(t)$ such that
 	\begin{equation*}
 		h_1(t)=\frac{1}{t}(\log t)^{q-1}\left(1-(q-1)^2\frac{\log\log t}{\log t} +  \op{O}\left(\frac{1}{\log t}\right)\right).
 	\end{equation*}
 \end{enumerate}
\end{proposition}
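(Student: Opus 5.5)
The plan is to reduce everything to the general recipe already set up in this section, with $A=\Delta_{g_1}\otimes\cdots\otimes\Delta_{g_r}$ and $p=n/2$. The preceding discussion gives the Weyl law~(\ref{eq:Examples.WL-multiL}). We first recast it in the form~(\ref{eq:Examples.A-Weyl-law-remainder}) with exponent $q-1$ in place of $q$:
\begin{equation*}
 N(A;\lambda)=b_{q-1}\lambda^{\frac n2}(\log\lambda)^{q-1}\bigl(1+\op{O}(\varepsilon(\lambda))\bigr),
\end{equation*}
where $\varepsilon(\lambda)=\lambda^{-\alpha}$ if $q=1$, and $\varepsilon(\lambda)=(\log\lambda)^{-1}$ if $q\geq 2$. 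In the case $q\geq2$ the lower terms $b_k\lambda^{n/2}(\log\lambda)^k$, $k\leq q-2$, are absorbed into the relative error $\op{O}((\log\lambda)^{-1})$, and so is the power remainder $\op{O}(\lambda^{n/2-\alpha})$. We also check that $A$ fits the general setup: the spectrum is nonnegative, $0$ is isolated, and the positive part consists of eigenvalues of finite multiplicity. This holds because the positive eigenvalues are the products $\lambda_{k_1}(\Delta_{g_1})\cdots\lambda_{k_r}(\Delta_{g_r})$ of positive eigenvalues, which are bounded below by the product of the first positive eigenvalues. Only finitely many of these products lie below any given level, since each factor is bounded below.

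Part (1) then follows from Lemma~\ref{lem:example.Ap-spectral-meas} with $p=n/2$, $c=b_{q-1}$ and exponent $q-1\geq 0$. The lemma gives spectral measurability in $\sL_h$ with $h(t)=(t+1)^{-1}(\log(t+2))^{q-1}$, and the integral
\begin{equation*}
 \hbint A^{-\frac n2}=b_{q-1}\Bigl(\frac n2\Bigr)^{-(q-1)}=\frac{1}{(q-1)!}c(n)^q\Vol_{g_1}(M_1)\cdots\Vol_{g_q}(M_q),
\end{equation*}
where we substitute the value of $b_{q-1}$ recorded just before the statement.

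For part (2), when $q=1$ we have $h(t)=(t+1)^{-1}$, and the remainder $\op{O}(\lambda^{-\alpha})$ is certainly $\op{O}((\log\lambda)^{-\beta})$ for any $\beta>1$. Lemma~\ref{lem:Examples.NAL-remainder1} therefore applies and gives $(t+1)^{-1}$-hypermeasurability. For part (3), when $q\geq2$ we apply Lemma~\ref{lem:distribution asymptotic with remainder} with its parameter $q$ replaced by $q-1>0$ and with $\alpha=1$. It yields the eigenvalue asymptotics with correction term $-(q-1)^2\frac{\log\log j}{\log j}$. It shows that $A^{-n/2}$ is not $h$-hypermeasurable, and that it is $h_1$-hypermeasurable for every continuous $\RV_{-1}$-function $h_1$ of the stated form.

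The only step requiring care is the first one, namely justifying the reduction of~(\ref{eq:Examples.WL-multiL}) to the single-term form with relative remainder. This is where the value of $\alpha$ matters:
\begin{itemize}
 \item For $q\geq2$, we need $b_{q-2}\neq0$ not to matter. It does not, since we only need an upper bound $\op{O}((\log\lambda)^{-1})$ on the relative error, not an exact second-order term.
 \item For $q=1$, we need $\alpha>0$, so that the remainder is a negative power of $\lambda$.
\end{itemize}
Both points are immediate, so the proposition is essentially a bookkeeping application of the two lemmas, together with the identification of the coefficient $b_{q-1}p^{-(q-1)}$.
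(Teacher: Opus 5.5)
Your argument is the paper's own. You recast~(\ref{eq:Examples.WL-multiL}) as a one-term Weyl law with relative remainder $\op{O}(\lambda^{-\alpha})$ (if $q=1$) or $\op{O}((\log\lambda)^{-1})$ (if $q\geq 2$). You then apply Lemma~\ref{lem:example.Ap-spectral-meas} for part~(1), Lemma~\ref{lem:Examples.NAL-remainder1} for $q=1$, and Lemma~\ref{lem:distribution asymptotic with remainder} with exponent $q-1$ for $q\geq 2$. The qualitative conclusions (spectral measurability, and the hypermeasurability and non-hypermeasurability claims) come out correctly, because they only use $b_{q-1}>0$ and the shape of the remainder. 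Your check that $A$ fits the general setup is a useful addition.

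The step you dismiss as bookkeeping, however, fails when $q<r$. The error comes from the residue formula stated just before the proposition, so the paper's proof has it too. For $i>q$ we have $n_i<n$, so $\zeta(\Delta_{g_i};s)$ is holomorphic at $s=\frac n2$. Its value there is $\zeta(\Delta_{g_i};\frac n2)=\sum_k\lambda_k(\Delta_{g_i})^{-n/2}$, a positive number that is in general not $1$. Hence
\[
 \Res_{s=\frac n2}\Big(s-\frac n2\Big)^{q-1}\zeta(A;s)=\Big(\frac n2\Big)^q c(n)^q\Vol_{g_1}(M_1)\cdots\Vol_{g_q}(M_q)\prod_{i=q+1}^{r}\zeta\Big(\Delta_{g_i};\frac n2\Big).
\]
So $b_{q-1}$, and the value of the integral in~(1), must both be multiplied by $\prod_{i=q+1}^{r}\zeta(\Delta_{g_i};\frac n2)$.

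You can confirm this directly for $r=2$, $q=1$. Write $N(A;\lambda)=\sum_\ell N(\Delta_{g_1};\lambda/\mu_\ell)$, where $\mu_\ell$ runs over the positive eigenvalues of $\Delta_{g_2}$. Since $\lambda^{-n/2}N(\Delta_{g_1};\lambda/\mu_\ell)\leq C\mu_\ell^{-n/2}$ and this bound is summable because $n>n_2$, dominated convergence gives $N(A;\lambda)\sim c(n)\Vol_{g_1}(M_1)\zeta(\Delta_{g_2};\frac n2)\lambda^{n/2}$. For instance, take $M_1=\R^2/(2\pi\mathbb{Z})^2$ and $M_2=\R/2\pi\mathbb{Z}$ with their flat metrics. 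Then $N(A;\lambda)\sim\frac{\pi^3}{3}\lambda$, whereas $c(2)\Vol_{g_1}(M_1)=\pi$.

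Thus the displayed value in~(1) is correct only when $q=r$. With the extra factor inserted, your argument proves the corrected statement without further change.
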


\begin{remark}
Battisti~\emph{et al.}~\cite{BBC:AMPA16} established (classical) Weyl laws for tensor products of elliptic pseudodifferential operators on closed manifolds under suitable assumptions. In the case of $A:=\Delta_{g_1}\otimes \cdots \otimes \Delta_{g_r}$ those assumptions amount to requiring $q=1$. We note that even in this special case the Weyl law~(\ref{eq:Examples.WL-multiL}) provides a sharper form of the Weyl law of~\cite{BBC:AMPA16}.
\end{remark}

\appendix

\section{Counting Functions and Eigenvalues} \label{app:counting}
In~\cite[Appendix~B]{PT:Part1} we explained the link between the asymptotic behaviours of non-increasing non-negative null sequences and  the asymptotic behaviours of their counting functions when the leading terms are given by $\RV$-functions. In this appendix, we further study this link in order to deal with sharp asymptotics with suitable remainder terms. 

Recall that, if $h:[0,\infty)\rightarrow [0,\infty)$ is a function converging to $\infty$ as $t\rightarrow \infty$, then an \emph{asymptotic inverse} is any function
$h^\sharp:[0,\infty)\rightarrow [0,\infty)$ converging to $\infty$ as $t\rightarrow \infty$ such that
 \begin{equation*}
 h(h^\sharp(t)) \sim t \qquad \text{and} \qquad h^\sharp(h(t)) \sim t  \qquad \text{as}\ t\rightarrow \infty.
 \end{equation*}
If $h(t)$ is an $\RV_p$-function with $p>0$, then it always admits $h^\sharp(t)$ which is $\RV_{1/p}$, and a function $\tilde{h}(t)$ is an asymptotic inverse of $h(t)$ if and only if $\tilde{h}(t)\sim h^\sharp(t)$ (see~\cite[Theorem~5.1.12]{BGT:Cambridge87}). Moreover, we always can choose $h^\sharp(t)$ to be continuous (see~\cite[Remark~B.2]{PT:Part1}). 

\subsection{Principal term asymptotics}
As in~\cite[Appendix~B]{PT:Part1} we let
\begin{equation*}
 \lambda_0\geq \lambda_1 \geq \lambda_2 \geq \cdots \geq 0
\end{equation*}
be a non-increasing non-negative sequence converging to $0$. Its counting function is given by
\begin{equation*}
 N(\lambda):= \#\{j; \lambda_j>\lambda\}, \qquad \lambda>0.
\end{equation*}
In addition, we let $h:[0,\infty)\rightarrow [0,\infty)$ be an $\RV_p$-function with $p>0$, and let $h^\sharp: [0,\infty)\rightarrow [0,\infty)$ be an
$\RV_{1/p}$ asymptotic inverse.

\begin{lemma}[see~\cite{PT:Part1}]\label{lem:app.principal-equiv}
We have
\begin{equation}\label{eq:asymptotic equivalence}
 \lim_{\lambda\rightarrow 0^+} h(\lambda^{-1})^{-1}N(\lambda)=c \ \Longleftrightarrow \ \lim_{j\rightarrow \infty} h^\sharp(j)\lambda_j =c^{\frac1p}.
\end{equation}
\end{lemma}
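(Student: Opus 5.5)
The equivalence will follow from monotonicity of both sides, the regular variation of $h$ and $h^\sharp$, and the identities $h(h^\sharp(t))\sim t$, $h^\sharp(h(t))\sim t$. The link between the sequence and its counting function is the pair of elementary inequalities
\begin{equation*}
 N(\lambda)\leq j \ \text{ whenever } \lambda\geq \lambda_j, \qquad N(\lambda)\geq j+1 \ \text{ whenever } \lambda<\lambda_j,
\end{equation*}
which hold because the sequence is non-increasing. Assume $c>0$ (the case $c=0$ is handled in the same way using only upper bounds) and set $\lambda=\lambda_j$ or $\lambda\to\lambda_j^-$. Each condition will be turned into the other by evaluating at these points and using Lemma~\ref{lem:UCT-consequence}.

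For ($\Rightarrow$), suppose $N(\lambda)\sim c\,h(\lambda^{-1})$. Since $N(\lambda_j)\leq j\leq N(\lambda_j^-)$ and $h$ is continuous, we get $j\sim c\,h(\lambda_j^{-1})$ along indices with $\lambda_j>0$. (If $\lambda_j=0$ eventually, then $N$ is bounded, contradicting $c>0$.) Applying $h^\sharp$ and using Lemma~\ref{lem:UCT-consequence}(2) with the $\RV_{1/p}$ property gives
\begin{equation*}
 h^\sharp(j)\sim h^\sharp\big(c\,h(\lambda_j^{-1})\big)\sim c^{1/p}h^\sharp\big(h(\lambda_j^{-1})\big)\sim c^{1/p}\lambda_j^{-1}.
\end{equation*}
That is, $h^\sharp(j)\lambda_j\to c^{1/p}$.

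For ($\Leftarrow$), suppose $\lambda_j\sim c^{1/p}/h^\sharp(j)$. Given small $\lambda>0$, let $j=N(\lambda)$, so that $\lambda_j\leq\lambda<\lambda_{j-1}$. Then $\lambda^{-1}\sim c^{-1/p}h^\sharp(j)$, using $h^\sharp(j-1)\sim h^\sharp(j)$. Applying $h$, which is $\RV_p$, gives $h(\lambda^{-1})\sim c^{-1}h(h^\sharp(j))\sim c^{-1}j=c^{-1}N(\lambda)$.

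The only delicate point is bookkeeping of the asymptotic-inverse compositions: replacing $h^\sharp(x_\alpha)$ by $h^\sharp(y_\alpha)$ when $x_\alpha\sim y_\alpha$. This is exactly what Lemma~\ref{lem:UCT-consequence}(2) provides. The case $c=0$ needs a separate one-sided version, which uses the monotonicity of $h$ (ultimately) instead.
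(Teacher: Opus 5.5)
Your proposal is correct. It uses the standard sandwich $N(\lambda_j)\le j<N(\lambda)$ for $\lambda<\lambda_j$ and then transfers the asymptotics through $h$ and $h^\sharp$ via Lemma~\ref{lem:UCT-consequence}; this lemma is only cited from \cite{PT:Part1}, but it is the same technique the paper applies, with remainder terms, in the proofs of Lemmas~\ref{lem:app.Ntolambda1} and~\ref{lem:app.Ntolambda2}. Two minor points on the case $c=0$: the ($\Rightarrow$) direction needs the ultimate monotonicity of $h^\sharp$ rather than of $h$ (exactly as in the proof of Lemma~\ref{lem:app.Ntolambda2}), and the situation where only finitely many $\lambda_j$ are non-zero should be dismissed separately, which is trivial.
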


\begin{example}
 If $h(t)=t^p$, $p>0$, then an exact inverse of $h(t)$ is $h^\natural(t):=t^{\frac1p}$, $t\geq 0$, which is an $\RV_{\frac1p}$-continuous function. Thus, 
 \begin{equation*}
 \lim_{\lambda\rightarrow 0^+} \lambda^pN(\lambda)=c \ \Longleftrightarrow \ \lim_{j\rightarrow \infty} j^{\frac1p}\lambda_j =c^{\frac1p}.
\end{equation*}
\end{example}

\begin{example}[see~\cite{PT:Part1}]\label{ex:asymp inverse}
 If  $h(t)=t^p(\log(t+2))^q$ with $p>0$ and $q\neq 0$, then a continuous $\RV_{1/p}$ asymptotic inverse is given by
\begin{equation*}
 h^\sharp(t):= p^{\frac{q}{p}} t^{\frac{1}{p}} \left(\log(t+2)\right)^{-\frac{q}{p}}, \qquad t\geq 0.
\end{equation*}
Therefore, we have
\begin{equation}\label{eq:App.principa-asymp-tplogtq}
 \lim_{\lambda\rightarrow 0^+} \lambda^p|\log \lambda|^qN(\lambda)=c \ \Longleftrightarrow \ \lim_{j\rightarrow \infty} j^{\frac1p} (\log j)^{-\frac{q}{p}}\lambda_j =
 \left(p^{-q}c\right)^{\frac1p}.
\end{equation}
\end{example}

\subsection{Sharp asymptotics}
It follows from~(\ref{eq:asymptotic equivalence}) that if
\begin{equation*}
 N(\lambda)=h(\lambda^{-1})\left(c+\op{o}(1)\right) \qquad \textup{as}\ \lambda\rightarrow 0^+,
\end{equation*}
then, as $j\rightarrow \infty$ we have
\begin{equation*}
 \lambda_j =h^\sharp(j)^{-1}\left(c^{\frac1p}+\op{o}(1)\right).
\end{equation*}
To look at sharper asymptotic expansions with better remainder terms we make the following additional assumptions:
\begin{enumerate}
 \item[(i)] There is $q>p$ such that $t^{-q}h(t)$ is ultimately decreasing.

 \item[(ii)] For all $a>0$, we have
 \begin{equation}\label{cond:ii}
 h(at)=a^p h(t)\left(1+\op{O}(\epsilon(t))\right) \qquad \textup{as}\ t\rightarrow \infty.
\end{equation}

\item[(iii)] As $t\rightarrow \infty$, we have
\begin{equation}\label{cond:hsharp}
 h^\sharp\circ h(t)= t\left(1+\op{O}(\epsilon(t))\right).
\end{equation}
\end{enumerate}
In the conditions (ii)--(iii) $\epsilon: [t_0,\infty)\rightarrow (0,\infty)$ is a given decreasing null function which is $\RV_\rho$ for some $\rho\leq 0$.

The conditions (i)--(ii) are satisfied by the functions $h(t)=t^p(\log (t+2))^q$, $p>0$, $q\in \R$ (see below). Moreover, the condition~(iii) is automatically satisfied if we choose $h^\sharp(t)$ so that it inverts $h(t)$ on some interval $[b,\infty)$ (which is always possible; see~\cite[Remark~B.2]{PT:Part1}).

\begin{lemma}\label{lem:app.Ntolambda1}
Assume that the conditions (i)--(iii) are satisfied, and as $ \lambda\rightarrow 0^+$ we have
\begin{equation*}
 N(\lambda)= h(\lambda^{-1}) \left[ c+\op{O}\left(\epsilon(\lambda^{-1})\right)\right]  \qquad \textup{with}\ c>0. 
\end{equation*}
Then,  we have
\begin{equation}\label{eq:case c>0 result}
 \lambda_j=c^{\frac1p} h^\sharp(j)^{-1} \left[ 1+\op{O}\left((\epsilon \circ h^\sharp)(j)\right)\right] \qquad \textup{as}\ j\rightarrow \infty.
\end{equation}
\end{lemma}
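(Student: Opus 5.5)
The idea is to invert the counting function directly, by evaluating $N$ at two test levels placed on either side of $\lambda_j$. Set $\mu_j:=c^{\frac1p}h^\sharp(j)^{-1}$ and, for a constant $K>0$ to be chosen, put
\[
\lambda^\pm(j):=\mu_j\bigl(1\pm K(\epsilon\circ h^\sharp)(j)\bigr).
\]
Since $N(\lambda)>j$ is equivalent to $\lambda_j>\lambda$, and $N(\lambda)\leq j$ is equivalent to $\lambda_j\leq\lambda$, it suffices to show that for $K$ large enough and all $j$ large we have
\[
N\bigl(\lambda^+(j)\bigr)\leq j \qquad\text{and}\qquad N\bigl(\lambda^-(j)\bigr)>j.
\]
These two inequalities give $\lambda^-(j)<\lambda_j\leq\lambda^+(j)$, which is exactly~(\ref{eq:case c>0 result}).

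To evaluate $N(\lambda^\pm(j))$, I insert the hypothesis and write
\[
N\bigl(\lambda^\pm(j)\bigr)=h\bigl(\lambda^\pm(j)^{-1}\bigr)\bigl[c+\op{O}(\epsilon(\lambda^\pm(j)^{-1}))\bigr].
\]
Here $\lambda^\pm(j)^{-1}=c^{-\frac1p}h^\sharp(j)(1\mp K\epsilon_j+\op{O}(\epsilon_j^2))$, where $\epsilon_j:=\epsilon(h^\sharp(j))$. Because $\epsilon$ is $\RV_\rho$ and decreasing, Lemma~\ref{lem:UCT-consequence} gives $\epsilon(\lambda^\pm(j)^{-1})=\op{O}(\epsilon_j)$, with constants independent of $K$ once $K\epsilon_j\le\frac12$. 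Condition~(ii) with $a=c^{-1/p}$ gives $h(c^{-1/p}u)=c^{-1}h(u)(1+\op{O}(\epsilon(u)))$. Condition~(iii), read at $t=h^\sharp(j)$ (or using that $h^\sharp$ inverts $h$ on $[b,\infty)$), gives $h(h^\sharp(j))=j(1+\op{O}(\epsilon_j))$. It remains to control the factor $(1\mp K\epsilon_j)$ inside $h$.

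That factor is handled by monotonicity. Condition~(i) says $t^{-q}h(t)$ is ultimately decreasing, with $q>p$; this gives the lower bound $h(u(1-s))\geq(1-s)^{q}h(u)$ for small $s>0$. Similarly, for the upper bound $h(u(1+s))\leq(1+s)^{q}h(u)$, I use monotonicity of $t^{-q}h(t)$ on $[u,u(1+s)]$. Combining these yields
\[
N\bigl(\lambda^+(j)\bigr)\leq j\,(1-K\epsilon_j)^{q}\bigl(1+C\epsilon_j\bigr)\leq j\bigl(1-(qK-C')\epsilon_j\bigr),
\]
where $C,C'$ do not depend on $K$. Choosing $K$ large makes this at most $j$. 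For the reverse inequality I need $h(u(1+s))\geq(1+s)^{p'}h(u)$ for some positive exponent, i.e. that $h$ really grows. I would get this from~(ii) with $a=2$ combined with the regular variation of $h$ (ultimate monotonicity). If that fails, I would instead iterate~(ii) with $a=1+s$, being careful about uniformity of the $\op{O}$ constant in $a$; this uniformity is the delicate point. Once the growth bound is in hand, $N(\lambda^-(j))\geq j(1+(p'K-C')\epsilon_j)>j$.

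The main obstacle is this uniformity: condition~(ii) is stated for fixed $a$, but we need it with $a=1\pm K\epsilon_j$ varying with $j$. My first attempt is to avoid applying~(ii) at variable $a$ altogether, using only fixed $a=c^{-1/p}$ in~(ii) and handling the $(1\pm K\epsilon_j)$ factor purely through the monotonicity of $t^{-q}h(t)$ from~(i). For the lower-growth direction, the monotonicity of $h$ itself also gives $N(\lambda^-(j))\geq N(\mu_j)$ up to strictness. If that gap appears, I would sharpen the argument by also exploiting the monotonicity of $t^{-q}h(t)$ in the reverse direction on the interval $[u(1-s),u]$.
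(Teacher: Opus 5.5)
There is a genuine gap, and it sits exactly where you flag the "delicate point". Both target inequalities, $N(\lambda^+(j))\le j$ and $N(\lambda^-(j))>j$, need a \emph{lower} power bound on the local growth of $h$. Concretely, you need $h\bigl(v/(1+s)\bigr)\le (1+s)^{-r}h(v)$ and $h\bigl(v/(1-s)\bigr)\ge (1-s)^{-r}h(v)$ for some fixed $r>0$, all large $v$ and all small $s>0$. Only then does the gain $rK\epsilon_j$ beat the $\op{O}(\epsilon_j)$ errors, whose constants do not depend on $K$.

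Condition~(i) read literally ($t^{-q}h(t)$ ultimately decreasing with $q>p$) gives $h(v(1-s))\ge (1-s)^q h(v)$ and $h(v(1+s))\le (1+s)^q h(v)$. That is only an \emph{upper} bound on the growth of $h$, and it helps with neither inequality. In particular, your estimate $N(\lambda^+(j))\le j(1-K\epsilon_j)^q(1+C\epsilon_j)$ rests on $h(v(1-s))\le (1-s)^q h(v)$. This is the reverse of what (i) yields, and it is false already for $h(t)=t^p$, since $(1-s)^p>(1-s)^q$.

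Your fallbacks do not close the gap either:
\begin{itemize}
\item (ii) at a fixed $a$ such as $2$, together with monotonicity of $h$, says nothing at the relative scale $K\epsilon_j\to 0$.
\item Monotonicity of $N$ only gives $N(\lambda^-(j))\ge N(\mu_j)=j\,(1+\op{O}(\epsilon_j))$, which may lie below $j$.
\item Using $t^{-q}h(t)$ on $[u(1-s),u]$ reproduces the same upper growth bound.
\end{itemize}

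What the paper's proof actually uses, despite the wording of (i), is precisely the missing lower bound. It invokes the inequality $\lambda^{-q}h(\lambda)\le t^{-q}h(t)$ for $t_0\le\lambda\le t$, i.e.\ that $t^{-q}h(t)$ is ultimately non-decreasing. This forces $q\le p$; for the two model functions treated in the appendix it holds with any exponent in $(0,p)$. The paper applies it through the exact inverse $h^\natural$ to the sandwich $N(\lambda_j)\le j<N(a_j\lambda_j)$, with $a_j=1-\epsilon(\lambda_j^{-1})$.

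With that hypothesis your test-level argument does work, and it is a legitimate alternative to the paper's inversion via $h^\natural$. The same holds if instead (ii) holds uniformly for $a$ near $1$. For $K$ large one gets $N(\lambda^+(j))\le(1+K\epsilon_j)^{-r}\,j\,(1+C\epsilon_j)\le j$ and $N(\lambda^-(j))\ge(1-K\epsilon_j)^{-r}\,j\,(1-C\epsilon_j)>j$.

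You should also repair the step $h(h^\sharp(j))=j\,(1+\op{O}(\epsilon_j))$. Condition (iii) at $t=h^\sharp(j)$ only controls $h^\sharp\circ h\circ h^\sharp$, and $h^\sharp$ need not invert $h$ exactly. There are two ways to fix this:
\begin{itemize}
\item Run the argument with $\mu_j=c^{1/p}h^\natural(j)^{-1}$, where $h^\natural$ inverts $h$ near infinity, and convert to $h^\sharp$ at the end via (iii) at $t=h^\natural(j)$, as the paper does.
\item Combine (iii) at $t=h^\natural(j)$ with the upper growth bound that (i) genuinely provides.
\end{itemize}
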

\begin{proof}
Let us first assume that $c=1$. Let $h^\natural:(0,\infty)\to(0,\infty)$ be a continuous function inverting $h(t)$ on some interval $[t_0,\infty)$. This is an $\RV_{1/p}$-function.
Set \begin{equation}\label{eq:case c=1 Nlambda}
N(\lambda)=h(\lambda^{-1})[1+\hat{\varepsilon}(\lambda^{-1})].
\end{equation} Note that  $|\hat{\varepsilon}(t)|=\op{O}(\epsilon(t))$. In addition, let $(a_j)_{j\geq1}\subset(0,\infty)$ be such that
\begin{equation}\label{eq:case c=1 a_j}
a_j=1-\epsilon(\lambda_j^{-1})
\end{equation} for $j$ large enough. Combining ~(\ref{eq:case c=1 Nlambda})--(\ref{eq:case c=1 a_j}) we get
\begin{equation*}
h(\lambda_j^{-1})(1-|\hat{\varepsilon}(\lambda_j^{-1})|)\leq N(\lambda_j)\leq j \leq N(a_j\lambda_j)\leq h((a_j\lambda_j)^{-1})(1+|\hat{\varepsilon}((a_j\lambda_j)^{-1})|).
\end{equation*}
As $h^\natural$ is ultimately increasing, for $j$ large enough, we have
\begin{equation*}
h^\natural[	h(\lambda_j^{-1})(1-|\hat{\varepsilon}(\lambda_j^{-1})|)]\leq h^\natural(j)\leq h^\natural[  h((a_j\lambda_j)^{-1})(1+|\hat{\varepsilon}((a_j\lambda_j)^{-1})|)].
\end{equation*}
Recall that, by assumption there is $q>p$ such that $t^{-q}h(t)$ is ultimately decreasing, i.e., there is $t_0>0$ such that
\begin{equation*}
\lambda^{-q}h(\lambda)\leq t^{-q}h(t),\qquad\mbox{for all}\quad t_0\leq \lambda\leq t.
\end{equation*}
For $\lambda=h^\natural[	h(\lambda_j^{-1})(1-|\hat{\varepsilon}(\lambda_j^{-1})|)]$ and $t=\lambda_j^{-1}$, we have
\begin{equation*}
h^\natural[	h(\lambda_j^{-1})(1-|\hat{\varepsilon}(\lambda_j^{-1})|)]^{-q}h(\lambda_j^{-1})\leq \lambda_j^qh(\lambda_j^{-1}).
\end{equation*}
That is,
\begin{equation}\label{eq:case c=1 lower}
\lambda_j^{-1}(1-|\hat{\varepsilon}(\lambda_j^{-1})|)^{1/q}\leq h^\natural[	h(\lambda_j^{-1})(1-|\hat{\varepsilon}(\lambda_j^{-1})|)].
\end{equation}
For $\lambda=(a_j\lambda_j)^{-1}$ and $t=h^\natural[  h((a_j\lambda_j)^{-1})(1+|\hat{\varepsilon}((a_j\lambda_j)^{-1})|)]$, we get
\begin{equation*}
(a_j\lambda_j)^qh((a_j\lambda_j)^{-1})\leq h^\natural[  h((a_j\lambda_j)^{-1})(1+|\hat{\varepsilon}((a_j\lambda_j)^{-1})|)]^{-q}h((a_j\lambda_j)^{-1})(1+|\hat{\varepsilon}((a_j\lambda_j)^{-1})|).
\end{equation*}
That is,
\begin{equation*}
h^\natural[  h((a_j\lambda_j)^{-1})(1+|\hat{\varepsilon}((a_j\lambda_j)^{-1})|)]\leq (a_j\lambda_j)^{-1}(1+|\hat{\varepsilon}((a_j\lambda_j)^{-1})|)^{1/q}.
\end{equation*}
Combining this with ~(\ref{eq:case c=1 lower}), we get
\begin{equation}\label{eq:case c=1 a}
\lambda_j^{-1}(1-|\hat{\varepsilon}(\lambda_j^{-1})|)^{1/q}\leq h^\natural(j)\leq a_j^{-1}\lambda_j^{-1}(1+|\hat{\varepsilon}((a_j\lambda_j)^{-1})|)^{1/q}.
\end{equation}
Clearly, we have $1-|\hat{\varepsilon}(\lambda_j^{-1})|=1-\op{O}(|\hat{\varepsilon}(\lambda_j^{-1})|)=1-\op{O}(\epsilon(\lambda_j^{-1}))$. As  $\epsilon$ is an $\RV_{\rho}$-function and $a_j\to1$, we have $ \epsilon((a_j\lambda_j)^{-1})\sim a_j^{-\rho}\epsilon(\lambda_j^{-1})\sim \epsilon(\lambda_j^{-1})$. Similarly, for $j$ large enough, we see from ~(\ref{eq:case c=1 a_j}) that
\begin{equation*}
a_j^{-1}(1+|\hat{\varepsilon}((a_j\lambda_j)^{-1})|)^{1/q}=(1-\epsilon(\lambda_j^{-1}))^{-1}[1+\op{O}(\epsilon((a_j\lambda_j)^{-1}))]^{1/q}=1+\op{O}(\epsilon(\lambda_j^{-1})).
\end{equation*}
Combining this with ~(\ref{eq:case c=1 a}) shows that
\begin{equation}\label{eq:case c=1 b}
\lambda_j=h^\natural(j)^{-1}(1+\op{O}(\epsilon(\lambda_j^{-1}))).
\end{equation}
It remains to prove~(\ref{eq:case c>0 result}) for $h^\sharp$, instead of $h^\natural$. In view of ~(\ref{cond:hsharp}), we have
\begin{equation*}
h^\sharp(t)=h^\sharp\circ h(h^\natural(t))=h^\natural(t)(1+\op{O}(\epsilon\circ h^\natural(t))).
\end{equation*}
Thus,
\begin{equation}\label{eq:case c=1 c}
h^{\natural}(t)^{-1}=h^\sharp(t)^{-1}(1+\op{O}(\epsilon\circ h^\natural(t))).
\end{equation}
This implies
$\lambda_j^{-1}\sim h^\sharp(j)$.
As $\epsilon$ is $\RV_\rho$, it follows that $\epsilon(\lambda_j^{-1})\sim \epsilon\circ h^\natural(j)$, and hence we have
\begin{equation*}
\lambda_j=h^\natural(j)^{-1}(1+\op{O}(\epsilon\circ h^\natural(j))).
\end{equation*}
In particular, $h^\sharp(t)\sim h^\natural(t)$. As $\epsilon$ is $\RV_\rho$, it follows that $\epsilon\circ h^\sharp(t)\sim \epsilon\circ h^\natural(t)$. Combining this with ~(\ref{eq:case c=1 b})--(\ref{eq:case c=1 c}), we have
\begin{equation*}
\lambda_j=h^\sharp(j)^{-1}(1+\op{O}(\epsilon\circ h^\sharp(j)))(1+\op{O}(\epsilon\circ h^\sharp(j)))=h^\sharp(j)^{-1}(1+\op{O}(\epsilon\circ h^\sharp(j))).
\end{equation*}
This proves the result $c=1$. 

Suppose now that $c$ is any positive constant. If we apply~(\ref{cond:ii}) to $a=c^{1/p}$ we get
\begin{equation*}
h(c^{1/p}t)=c h(t)\left(1+\op{O}(\epsilon(t))\right) \qquad \textup{as}\ t\rightarrow \infty.
\end{equation*}
Thus, if we set $h_c(t):=h(c^{1/p}t)$, then 
\begin{equation*}
N(\lambda)=h(\lambda^{-1}) \left[ c+\op{O}\left(\epsilon(\lambda^{-1})\right)\right]=ch(\lambda^{-1}) \left[ 1+\op{O}\left(\epsilon(\lambda^{-1})\right)\right]=h_c(\lambda^{-1}) \left[ 1+\op{O}\left(\epsilon(\lambda^{-1})\right)\right].
\end{equation*}
Setting $h_c^\sharp(t):=c^{-1/p}h^\sharp(t)$, we have
\begin{equation*}
h_c^\sharp\circ h_c(t)=c^{-1/p}h^\sharp \circ h(c^{1/p}t)=c^{-1/p}\cdot c^{1/p}t\cdot[1+\op{O}(\epsilon(c^{1/p}t))].
\end{equation*}
As $\epsilon$ is an $\RV_\rho$-function, we have $ \epsilon(c^{1/p}t)\sim c^{\rho/p}\epsilon(t)$, and so we obtain
\begin{equation*}
h_c^\sharp\circ h_c(t)=t(1+\op{O}(\epsilon(t))).
\end{equation*}
The result for $c=1$ then gives
\begin{equation*}
\lambda_j= h_c^\sharp(j)^{-1}(1+\op{O}(\epsilon(\lambda_j^{-1})))= c^{1/p}h^\sharp(j)^{-1}(1+\op{O}(\epsilon(\lambda_j^{-1})))=h^\sharp(j)^{-1}(c^{1/p}+\op{O}(\epsilon(\lambda_j^{-1}))).
\end{equation*}
This implies that $\lambda_j^{-1}\sim c^{-1/p}h^\sharp(j)$. As $\epsilon$ is $\RV_\rho$, it follows that $\epsilon(\lambda_j^{-1})\sim c^{-\rho/p}\epsilon \circ h^\sharp(j)$ and so we get the asymptotic~(\ref{eq:case c>0 result}). The proof is complete. 
\end{proof}

We have the following extension of Lemma~\ref{lem:app.Ntolambda1} to the case $c=0$. 

\begin{lemma}\label{lem:app.Ntolambda2}
 Suppose that the conditions (i)--(iii) are satisfied, and 
  \begin{equation*}
  N(\lambda)= \op{O}\left(\epsilon(\lambda^{-1}) h(\lambda^{-1})\right)  \qquad \textup{as}\ \lambda \rightarrow 0^+.
\end{equation*}
Assume further that $\epsilon(t)$ is $\RV_\rho$ with $-p<\rho\leq 0$, and we have
\begin{equation}\label{eq:app.sharp-Potter}
 \frac{h^\sharp\left(t(\epsilon\circ h^\sharp)(t)\right)}{h^\sharp(t)}=\op{O}\big( (\epsilon\circ h^\sharp) (t)^{\frac{1}{p}}\big)\qquad \textup{as}\ t\rightarrow \infty.
\end{equation}
Then, we have
 \begin{equation*}
 \lambda_j = \op{O}\left((\epsilon \circ h^\sharp)(j)^{\frac1p}h^\sharp(j)^{-1}\right) \qquad \textup{as}\ j\rightarrow \infty.
\end{equation*}
\end{lemma}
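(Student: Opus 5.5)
The plan is to mimic the sandwich argument of Lemma~\ref{lem:app.Ntolambda1}, now with $c=0$: bound $j$ by the counting function at a suitable level and then invert. Set $\eta(t):=(\epsilon\circ h^\sharp)(t)$; we want $\lambda_j=\op{O}(\eta(j)^{1/p}h^\sharp(j)^{-1})$.

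The steps are as follows.

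\textbf{Step 1.} By hypothesis there are $C>0$ and $\lambda_0>0$ such that
\begin{equation*}
N(\lambda)\leq C\epsilon(\lambda^{-1})h(\lambda^{-1}), \qquad 0<\lambda<\lambda_0 .
\end{equation*}
Since $\epsilon$ is $\RV_\rho$ with $\rho>-p$, the product $\epsilon h$ is $\RV_{p+\rho}$ with $p+\rho>0$. Hence it admits a continuous asymptotic inverse $(\epsilon h)^\sharp$ of index $1/(p+\rho)$, and we can apply Lemma~\ref{lem:app.principal-equiv}-type reasoning to it.

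Rather than working with $(\epsilon h)^\sharp$ directly, I would argue concretely. For $j$ large, put $\lambda=\lambda_j/2$ (or $a\lambda_j$ with $a<1$ fixed). Since $N(\lambda)>j$ for $\lambda<\lambda_j$, we get
\begin{equation*}
j<N(\lambda_j/2)\leq C(\epsilon h)(2\lambda_j^{-1}).
\end{equation*}
By regular variation (UCT), $(\epsilon h)(2t)\sim 2^{p+\rho}(\epsilon h)(t)$, and so $j\leq C'\epsilon(t_j)h(t_j)$ with $t_j:=\lambda_j^{-1}$. (If $\lambda_j$ does not tend to $0$ fast enough, the estimate is trivial; if $\lambda_j=0$ eventually, there is nothing to prove.)

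\textbf{Step 2.} Now convert this to an upper bound on $\lambda_j$. Let $s_j:=h^\sharp(j)\,\eta(j)^{-1/p}$. The claim is that, for a large constant $K$, we have $t_j\geq s_j/K$. Suppose instead that $t_j<s_j/K$. Since $\epsilon h$ is ultimately increasing (index $>0$; more precisely we use Potter bounds), we would get
\begin{equation*}
j\leq C''(\epsilon h)(s_j/K)\leq C''K^{-(p+\rho)/2}(\epsilon h)(s_j).
\end{equation*}
It then suffices to show that $(\epsilon h)(s_j)=\op{O}(j)$, since this yields a contradiction for $K$ large.

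\textbf{Step 3 (main obstacle).} We must estimate $\epsilon(s_j)h(s_j)$. Here hypothesis~(\ref{eq:app.sharp-Potter}) enters. Put $u=j\eta(j)$. Then $h^\sharp(u)=\op{O}(\eta(j)^{1/p}h^\sharp(j))$, i.e.\ $h^\sharp(u)\leq Ds_j\eta(j)^{2/p}$.

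This direction does not directly give what we need, so instead I would apply $h$ using (iii)/(ii): $h(s_j)\approx j\eta(j)^{-1}$ morally, since $h(h^\sharp(j)\eta^{-1/p})\approx \eta^{-1}h(h^\sharp(j))$ if the scaling by $\eta^{-1/p}$ (which is not bounded) behaves like a power. Controlling a non-constant scaling factor is precisely what (\ref{eq:app.sharp-Potter}) is designed to supply, together with (i) (ultimate decrease of $t^{-q}h(t)$, which gives $h(\mu t)\leq \mu^{q}h(t)$ for $\mu\geq 1$).

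Then $\epsilon(s_j)\leq \epsilon(h^\sharp(j))=\eta(j)$ by monotonicity, since $s_j\geq h^\sharp(j)$ and $\epsilon$ is decreasing. Hence
\begin{equation*}
(\epsilon h)(s_j)\lesssim \eta(j)\cdot j\,\eta(j)^{-1}=j .
\end{equation*}
To make the bound $h(s_j)=\op{O}(j\eta(j)^{-1})$ rigorous, I would rephrase it as $s_j\leq h^\sharp\bigl(Mj\eta(j)^{-1}\bigr)$ and try to derive this from (\ref{eq:app.sharp-Potter}) applied at a rescaled argument. If that fails, I would fall back on Potter-type bounds for $h^\sharp$ with the fixed index $1/p$ and absorb the losses into the constant $K$. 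I expect this step, matching the unbounded rescaling $\eta^{-1/p}$ against $h$, to be the delicate part.
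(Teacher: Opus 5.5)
Your Steps 1--2 are fine, with Potter bounds for the $\RV_{p+\rho}$ function $\epsilon h$ standing in for monotonicity. The proof fails at Step 3, though, and the problem is more than delicacy: as planned, that step does not close.

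Splitting $(\epsilon h)(s_j)=\epsilon(s_j)\,h(s_j)$ and using $\epsilon(s_j)\le\eta(j)$ forces you to prove $h(s_j)=\op{O}(j\eta(j)^{-1})$. That amounts to a lower bound $h^\sharp(Mj\eta(j)^{-1})\gtrsim\eta(j)^{-1/p}h^\sharp(j)$ at a scale factor $\eta(j)^{-1}\to\infty$. Hypothesis (\ref{eq:app.sharp-Potter}) gives something else: an upper bound for $h^\sharp$ at the \emph{smaller} point $t(\epsilon\circ h^\sharp)(t)$ in terms of $h^\sharp(t)$. You noticed yourself that applying it at $t=j$ points the wrong way, and it is not clear that it implies your bound at all.

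Your Potter fallback also fails. Potter only gives $h^\sharp(\Lambda x)\geq A^{-1}\Lambda^{1/p-\delta}h^\sharp(x)$. With $\Lambda\asymp\eta(j)^{-1}$ you lose a factor $\eta(j)^{\delta}\to0$, which is not a constant and cannot be absorbed into $K$. This is exactly the loss recorded in the paper's remark after the lemma.

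The missing idea is to apply (\ref{eq:app.sharp-Potter}) at $t=h(x)$ and keep the product $\epsilon h$ together. Since $h^\sharp(h(x))\sim x$, regular variation of $\epsilon$ gives $(\epsilon\circ h^\sharp)(h(x))\sim\epsilon(x)$. Hence $t(\epsilon\circ h^\sharp)(t)\sim(\epsilon h)(x)$, and Lemma~\ref{lem:UCT-consequence} yields
\[
h^\sharp\big((\epsilon h)(x)\big)=\op{O}\big(\epsilon(x)^{\frac1p}\,x\big), \qquad x\rightarrow\infty .
\]
Take $x=s_j$. Since $s_j\geq h^\sharp(j)$ and $\epsilon$ is decreasing, this gives $h^\sharp((\epsilon h)(s_j))=\op{O}\big((\epsilon(s_j)/\eta(j))^{1/p}h^\sharp(j)\big)=\op{O}(h^\sharp(j))$. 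As $h^\sharp$ is ultimately increasing and $\RV_{1/p}$, it follows that $(\epsilon h)(s_j)=\op{O}(j)$, and your contradiction argument then goes through.

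The paper avoids the contradiction argument by taking $x=\lambda_j^{-1}$ directly. Applying $h^\sharp$ to $j\leq C(\epsilon h)(a^{-1}\lambda_j^{-1})$ gives $\lambda_j h^\sharp(j)=\op{O}(\epsilon(\lambda_j^{-1})^{1/p})$. In particular $\lambda_j^{-1}\geq\alpha h^\sharp(j)$. Since $\epsilon$ is decreasing and $\RV_\rho$, this gives $\epsilon(\lambda_j^{-1})=\op{O}(\epsilon(h^\sharp(j)))$, which is the claim.
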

\begin{proof}
 By assumption there is $C>0$ such that for $\lambda$ small enough we have
 \begin{equation}\label{eq:case c=0}
 N(\lambda)\leq Ch(\lambda^{-1})\epsilon(\lambda^{-1}).
 \end{equation}
Let $a\in(0,1)$. For $j$ large enough we then have
\begin{equation*}
j\leq N(a\lambda_j)\leq Ch(a^{-1}\lambda_j^{-1})\epsilon(a^{-1}\lambda_j^{-1}).
\end{equation*}
As $h^\sharp$ is ultimately increasing, it follows that for $j$ large enough we have
\begin{equation}\label{eq:appendix.hsharpj-ineq}
h^\sharp(j)\leq h^\sharp\left[ C\epsilon(a^{-1}\lambda_j^{-1})h(a^{-1}\lambda_j^{-1})\right].
\end{equation}

Note that $\epsilon(t)h(t)$ is $\RV_{p+\rho}$ with $p+\rho>0$. This ensures that $\epsilon(t)h(t)\rightarrow \infty$ as $t\rightarrow \infty$. Moreover, by 
Lemma~\ref{lem:UCT-consequence} we have $\epsilon(a^{-1}\lambda_j^{-1})h(a^{-1}\lambda_j^{-1})\sim a^{-(p+\rho)}\epsilon(\lambda_j^{-1})h(\lambda_j^{-1})$. Combining this with the fact that $h^\sharp(t)$ is $\RV_{1/p}$, and applying Lemma~\ref{lem:UCT-consequence} once again, we get
\begin{align}
 h^\sharp\left[ C\epsilon(a^{-1}\lambda_j^{-1}) h(a^{-1}\lambda_j^{-1}) \right] &\sim  h^\sharp\left[ Ca^{-(p+\rho)}\epsilon(\lambda_j^{-1})h(\lambda_j^{-1})\right] \nonumber\\
 & \sim 
\left(Ca^{-(p+\rho)}\right)^{\frac{1}{p}} h^\sharp\left[ \epsilon(\lambda_j^{-1})h(\lambda_j^{-1})\right].
\label{eq:appendix.hsharpj-ineq2} 
\end{align}

Moreover, as $h^\sharp \circ h(t)\sim t$ and $\epsilon$ is $\RV_\rho$, Lemma~\ref{lem:UCT-consequence} further ensures that 
$\epsilon(\lambda_j^{-1})\sim (\epsilon\circ h^\sharp)(h(\lambda_j^{-1}))$. As $h^\sharp$ is $\RV_{1/p}$ by using Lemma~\ref{lem:UCT-consequence} again we get
\begin{equation*}
 h^\sharp[ \epsilon(\lambda_j^{-1})h(\lambda_j^{-1})]\sim  h^\sharp\left[ (\epsilon\circ h^\sharp)\left(h(\lambda_j^{-1})\right)h(\lambda_j^{-1})\right]. 
\end{equation*}
 Thus, 
 \begin{equation*}
 \frac{\epsilon(\lambda_j^{-1})^{-\frac1p} h^\sharp\left[ \epsilon(\lambda_j^{-1})h(\lambda_j^{-1})\right]}{\lambda_j^{-1}} \sim 
 \frac{ (\epsilon\circ h^\sharp)(h(\lambda_j^{-1}))^{-\frac1p} h^\sharp\left[ (\epsilon\circ h^\sharp)\left(h(\lambda_j^{-1})\right)h(\lambda_j^{-1})\right]}{h^\sharp (h(\lambda_j^{-1}))}
\end{equation*}
Combining this with~(\ref{eq:app.sharp-Potter}) we then deduce that
\begin{equation*}
 \lambda_j h^\sharp\left[ \epsilon(\lambda_j^{-1})h(\lambda_j^{-1})\right] =\op{O}\left(\epsilon(\lambda_j^{-1})^{\frac1p}\right). 
\end{equation*}
Together with~(\ref{eq:appendix.hsharpj-ineq})--(\ref{eq:appendix.hsharpj-ineq2}) this implies that
\begin{equation}\label{eq:appendix.lambda-hsharpj-ineq}
\lambda_jh^\sharp(j)=\op{O}\left(\epsilon(\lambda_j^{-1})^{\frac1p}\right).
\end{equation}

Note that~(\ref{eq:appendix.lambda-hsharpj-ineq})  implies that $\lambda_jh^\sharp(j)=\op{O}(1)$, and so there is $\alpha>0$ such that
\begin{equation*}
\lambda_j^{-1}\geq \alpha h^\sharp(j), \qquad \text{for}\quad j\gg 1.
\end{equation*}
As $\epsilon$ is decreasing and $\RV_\rho$, it follows that
\begin{equation*}
\epsilon\left(\lambda_j^{-1}\right)\leq \epsilon(\alpha h^\sharp(j))\sim \alpha^\rho \epsilon(h^\sharp (j)).
\end{equation*}
This implies that $ \epsilon(\lambda_j^{-1})=\op{O}(\epsilon(h^\sharp (j)))$. Combining this with~(\ref{eq:appendix.lambda-hsharpj-ineq}) we then get
\begin{equation*}
 \lambda_jh^\sharp(j)=\op{O}\left(\epsilon\left(h^\sharp(j)\right)^{\frac1p}\right), 
\end{equation*}
which gives the result. 
\end{proof}

\begin{remark}\label{rmk:app.sharp-Potter1}
 The condition~(\ref{eq:app.sharp-Potter}) is always satisfied by $h^\sharp(t)=t^{1/p}$. It is also satisfied by $h^\sharp(t)=t^{1/p}(\log(t+2))^q$, $q\neq 0$, and $\epsilon(t)=(\log t)^{-1}$ (see \S\ref{eq:app.example-N2lambda-logq}).  
\end{remark}

\begin{remark}
In general, the condition~(\ref{eq:app.sharp-Potter}) need not hold, but by Potter's Theorem (see, e.g., \cite[Theorem~1.5.6]{BGT:Cambridge87}), for all $\delta\in(0,1)$, we have
 \begin{equation*}
 \frac{h^\sharp\left(\epsilon(t)t\right)}{h^\sharp(t)}=\op{O}\left( \epsilon(t)^{\frac{1-\delta}{p}}\right)\qquad \textup{as}\ t\rightarrow \infty.
\end{equation*}
By arguing as in the proof of Lemma~\ref{lem:app.Ntolambda2} it then can be shown that as $j\rightarrow \infty$ we have
\begin{equation*}
 \lambda_j = \op{O}\left((\epsilon \circ h^\sharp)(j)^{\frac{1-\delta}{p}}h^\sharp(j)^{-1}\right) \qquad \forall \delta \in (0,1). 
\end{equation*}
\end{remark}

\subsection{Example: $h(t)=t^p$}
If $h(t)=t^p$, $p>0$, then its inverse is  $h^\natural(t):=t^{\frac1p}$, $t\geq 0$, which is a continuous $\RV_{\frac1p}$-function. In addition, the condition~(\ref{eq:app.sharp-Potter}) is automatically satisfied by $h^\natural(t)=t^{\frac1p}$ (\emph{cf}.~Remark~\ref{rmk:app.sharp-Potter1}). Note also that $h(at)=a^ph(t)$ and $t^{-q}h(t)=t^{-q+p}$ is decreasing for any $q>p$. Therefore, the assumptions of Lemma~\ref{lem:app.Ntolambda1} and Lemma~\ref{lem:app.Ntolambda2} are satisfied by any decreasing null $\RV_\rho$-function $\epsilon: [t_0,\infty)\rightarrow (0,\infty)$. For instance, we may take $\epsilon(t)=t^{-\alpha}(\log t)^{\beta}$, $t\geq 2$, with $\alpha>0$ and $\beta \in \R$. This leads to the following result.

\begin{lemma}\label{lem:app.example-tp}
 Suppose there are $\alpha>0$ and $\beta \in \R$ such that
 \begin{equation*}
 N(\lambda) = \lambda^{-p}\left[ c+ \op{O}\left(\lambda^{\alpha}|\log \lambda|^{\beta}\right)\right]  \qquad \textup{as}\ \lambda\rightarrow 0^+.
\end{equation*}
\begin{enumerate}
 \item[(i)] If $c>0$, then, as $j\rightarrow \infty$, we have
 \begin{equation*}
 \lambda_j = c^{\frac1p}j^{-\frac1p}\left[ 1+ \op{O}\left( j^{-\frac{\alpha}{p}}(\log j)^\beta\right)\right].
\end{equation*}

\item[(ii)] If $c=0$ and $\alpha<p$, then
\begin{equation*}
 \lambda_j = \op{O}\left( j^{-\frac1p-\frac{\alpha}{p^2}}(\log j)^{\frac{\beta}{p}}\right)  \qquad \textup{as}\ j\rightarrow \infty.
\end{equation*}
\end{enumerate}
\end{lemma}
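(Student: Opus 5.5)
This is a direct specialization of Lemma~\ref{lem:app.Ntolambda1} and Lemma~\ref{lem:app.Ntolambda2} to $h(t)=t^p$ with exact inverse $h^\sharp(t)=h^\natural(t)=t^{1/p}$ and remainder $\epsilon(t)=t^{-\alpha}(\log t)^{\beta}$, $t\geq t_0$. The first step is to check the standing hypotheses (i)--(iii). Condition (i) holds since $t^{-q}h(t)=t^{p-q}$ is decreasing for any $q>p$. Condition (ii) is exact, because $h(at)=a^ph(t)$. Condition (iii) is trivial since $h^\sharp\circ h(t)=t$.

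Next, $\epsilon$ is $\RV_{-\alpha}$, hence $\RV_\rho$ with $\rho=-\alpha\leq 0$. It is also ultimately decreasing, since its logarithmic derivative is $-\alpha/t+\beta/(t\log t)<0$ for $t$ large. So after choosing $t_0$ large it is a decreasing null function. We then rewrite the hypothesis: with $t=\lambda^{-1}$ we have $\epsilon(\lambda^{-1})=\lambda^{\alpha}|\log\lambda|^{\beta}$, so
\[
N(\lambda)=h(\lambda^{-1})\big[c+\op{O}(\epsilon(\lambda^{-1}))\big].
\]

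For part~(i), with $c>0$, Lemma~\ref{lem:app.Ntolambda1} gives
\[
\lambda_j=c^{1/p}j^{-1/p}\big[1+\op{O}(\epsilon(j^{1/p}))\big].
\]
We then compute $\epsilon(j^{1/p})=j^{-\alpha/p}(p^{-1}\log j)^{\beta}$. The constant $p^{-\beta}$ is absorbed into the $\op{O}$, which gives the stated remainder.

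For part~(ii), with $c=0$, we use Lemma~\ref{lem:app.Ntolambda2}. The requirement $-p<\rho$ is exactly the assumption $\alpha<p$. Condition~(\ref{eq:app.sharp-Potter}) holds trivially because $h^\sharp$ is a power: $h^\sharp(tu)/h^\sharp(t)=u^{1/p}$, as noted in Remark~\ref{rmk:app.sharp-Potter1}. The lemma then yields
\[
\lambda_j=\op{O}\big(\epsilon(j^{1/p})^{1/p}j^{-1/p}\big)=\op{O}\big(j^{-\frac1p-\frac{\alpha}{p^2}}(\log j)^{\beta/p}\big).
\]

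There is no real obstacle here. The only points needing care are that $\epsilon$ must be genuinely decreasing and positive, which is arranged by restricting to $[t_0,\infty)$, and that the $p^{-\beta}$ factors are correctly absorbed into the $\op{O}$ terms.
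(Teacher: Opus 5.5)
Your proposal is correct and follows the paper's proof essentially verbatim. Both specialize Lemma~\ref{lem:app.Ntolambda1} (for $c>0$) and Lemma~\ref{lem:app.Ntolambda2} (for $c=0$, $\alpha<p$) to $h(t)=t^p$, $h^\sharp(t)=t^{1/p}$ and $\epsilon(t)=t^{-\alpha}(\log t)^\beta$, and conclude via $\epsilon(j^{1/p})=p^{-\beta}j^{-\alpha/p}(\log j)^\beta$. Your choice of $t_0$ large enough that $\epsilon$ is genuinely decreasing is slightly more careful than the paper's $t\geq 2$, which fails for $\beta>\alpha\log 2$.
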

\begin{proof}
 Put $\epsilon(t)=t^{-\alpha}(\log t)^\beta$, $t\geq 2$. If $c>0$, then by Lemma~\ref{lem:app.Ntolambda1} we have
 \begin{equation*}
  \lambda_j = c^{\frac1p}j^{-\frac1p}\left[ 1+ \op{O}\left( \epsilon(j^{\frac1p})\right)\right].
\end{equation*}
If $c=0$ and $\alpha <p$, then $\epsilon(t)$ is $\RV_\rho$ with $\rho\in(-p,0]$. We thus may apply Lemma~\ref{lem:app.Ntolambda2} to get
\begin{equation*}
 \lambda_j = \op{O}\left( j^{-\frac1p}\epsilon(j^{\frac1p})^{\frac1p}\right)  \qquad \textup{as}\ j\rightarrow \infty.
\end{equation*}
The result follows by using the fact that
\begin{equation*}
 \epsilon(j^{\frac1p}) = j^{-\frac{\alpha}{p}} \left(\log (j^{\frac1p})\right)^\beta=p^{-\beta}  j^{-\frac{\alpha}{p}} (\log j)^\beta.
\end{equation*}
The proof is complete.
\end{proof}

\subsection{Example: $h(t)=t^p(\log(t+2))^q$}\label{eq:app.example-N2lambda-logq} Suppose that $h(t)=t^p(\log(t+2))^q$ with $p>0$ and $q\neq 0$. As mentioned in Example~\ref{ex:asymp inverse} an $\RV_{1/p}$ asymptotic inverse is  $h^\sharp(t):= p^{\frac{q}{p}} t^{\frac{1}{p}} \left(\log(t+2)\right)^{-\frac{q}{p}}$, $t\geq 0$. In particular, at the level of principal term asymptotics we have the equivalence~(\ref{eq:App.principa-asymp-tplogtq}). 

To get remainder terms we need a slight refinement of the asymptotic inverse $h^\sharp(t)$. This is provided by the following lemma.

\begin{lemma}\label{lem:app.h1sharp}
 Define
 \begin{equation*}
 h_1^\sharp(t):=p^{\frac{q}{p}} t^{\frac{1}{p}}\left[\log(t+2) - q\log \log (t+e)\right]^{-\frac{q}{p}}, \qquad t\geq 0.
\end{equation*}
\begin{enumerate}
 \item[(i)] For all $a>0$, we have
 \begin{equation*}
 h(at)=a^p h(t) \left[ 1+ \op{O}\left((\log t)^{-1}\right) \right]  \qquad \textup{as}\ t\rightarrow \infty.
\end{equation*}

\item[(ii)] As $t\rightarrow \infty$, we have
\begin{equation}\label{eq:adjustment of asymptotic inverse}
 h_1^\sharp \circ h(t)=t \left[ 1+ \op{O}\left((\log t)^{-1}\right) \right] .
\end{equation}
\end{enumerate}
\end{lemma}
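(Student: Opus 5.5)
Both parts come down to direct computation with logarithms. We have $h(t)=t^p(\log(t+2))^q$, and throughout we use the elementary expansions $\log(t+b)=\log t+\op{O}(t^{-1})$ and $\log(1+u)=u+\op{O}(u^2)$ as $u\to 0$.

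For (i), fix $a>0$. Then
\begin{equation*}
 \frac{h(at)}{a^p h(t)}=\left(\frac{\log(at+2)}{\log(t+2)}\right)^q .
\end{equation*}
We write $\log(at+2)=\log(t+2)+\log a+\op{O}(t^{-1})$, so the ratio inside the parentheses is $1+\op{O}((\log t)^{-1})$. Raising it to the power $q$ stays of this form because $q$ is fixed. This settles (i) and needs no further work.

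For (ii), set $L:=\log(t+2)$ and $u:=h(t)=t^pL^q$. Then $\log(u+2)=p\log t+q\log L+\op{O}(t^{-p})$. Since $\log t=L+\op{O}(t^{-1})$, this gives
\begin{equation*}
 \log(u+2)=pL+q\log L+\op{O}(t^{-\min(1,p)}) .
\end{equation*}
Next we handle $\log\log(u+e)$. We have $\log(u+e)=pL\bigl(1+\op{O}(\log L/L)\bigr)$, and hence
\begin{equation*}
 \log\log(u+e)=\log p+\log L+\op{O}(\log L/L).
\end{equation*}
Subtracting, the bracket appearing in $h_1^\sharp(u)$ is
\begin{equation*}
 \log(u+2)-q\log\log(u+e)=pL-q\log p+\op{O}(\log L/L).
\end{equation*}
The $q\log L$ terms cancel, and this cancellation is the whole point of the correction term $-q\log\log(t+e)$ in $h_1^\sharp$.

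It follows that the bracket equals $pL\,\bigl(1+\op{O}(L^{-1})\bigr)$. Raising to the power $-q/p$ gives $(pL)^{-q/p}\bigl(1+\op{O}(L^{-1})\bigr)$. Also $u^{1/p}=tL^{q/p}$. Combining,
\begin{equation*}
 h_1^\sharp(u)=p^{q/p}\,t\,L^{q/p}\,(pL)^{-q/p}\bigl(1+\op{O}(L^{-1})\bigr)=t\bigl(1+\op{O}((\log t)^{-1})\bigr),
\end{equation*}
where the last step uses $L\sim\log t$. This is (\ref{eq:adjustment of asymptotic inverse}).

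The main obstacle is keeping track of the error terms in the double logarithm. The error after the cancellation is $\op{O}(\log L/L)$, which is weaker than $\op{O}(L^{-1})$, but it is still harmless because it becomes $\op{O}(L^{-1})$ once divided by $pL$. I would check that relative size carefully, together with the fact that the constant $-q\log p$ only contributes to the $\op{O}(L^{-1})$ error.
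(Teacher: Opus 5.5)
Your proof is correct and is essentially the paper's own computation: (i) follows from $\log(at+2)=\log t\,(1+\op{O}((\log t)^{-1}))$, and (ii) comes from showing that, once the $q\log\log t$ terms cancel, the bracket $\log(h(t)+2)-q\log\log(h(t)+e)$ equals $p\log t+\op{O}(1)$. The differences are cosmetic: you keep $L=\log(t+2)$ so that $h(t)^{1/p}=tL^{q/p}$ holds exactly (the paper first rewrites $h(t)=t^p(\log t)^q(1+\op{O}(1/t))$), and your bound $\log(1+2/u)=\op{O}(t^{-p})$ should read $\op{O}(1/u)=\op{o}(1)$ when $q<0$, which is all the argument actually uses.
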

\begin{proof}
Let $a>0$. We have
\begin{equation*}
h(at)=(at)^p[\log (at+2)]^q.
\end{equation*}
As $t\to\infty$, we have
\begin{equation*}
\log(at+2)=\log[at(1+\frac2{at})]=\log a+\log t+\log(1+\frac2{at})=\log t\cdot(1+\op{O}((\log t)^{-1})).
\end{equation*}
Thus,
\begin{align*}
h(at)&=a^pt^p[\log t\cdot(1+\op{O}((\log t)^{-1}))]^q=a^pt^p(\log t)^q (1+\op{O}((\log t)^{-1}))\\
&=a^ph(t)(1+\op{O}((\log t)^{-1})).
\end{align*}
This proves part~(i).

It remains to prove part ~(ii). As $t\to\infty$, we have
\begin{equation*}
h^\sharp_1(t)\sim h^\sharp(t)\sim p^{\frac{q}{p}} t^{\frac1p}(\log t)^{-\frac{q}{p}}.
\end{equation*}
As $h^\sharp(t)$ is an asymptotic inverse of $h(t)$, we deduce that so is $h_1^\sharp(t)$. We have
\begin{equation}\label{eq:computation of h_1sharp}
h_1^\sharp\circ h(t)=p^{\frac{q}{p}}h(t) \left[\log(h(t)+2) - q\log \log (h(t)+e)\right]^{-\frac{q}{p}}.
\end{equation}
One can easily verify that, for some constant $c_q>0$,
\begin{align*}
\Big(\frac{\log(t+2)}{\log t}\Big)^q\leq 1+\frac{c_q}{t},\quad \forall t\geq e.
\end{align*}
We then have
\begin{equation*}
h(t)=t^p[\log(t+2)]^q=t^p(\log t)^q\Big(\frac{\log(t+2)}{\log t}\Big)^q=t^p(\log t)^q[1+\op{O}(1/t)].
\end{equation*}
This implies that
\begin{equation}\label{eq:1/p power}
h(t)^{\frac1p}=t(\log t)^{\frac{q}{p}}[1+\op{O}(1/t)].
\end{equation}
In addition, we have
\begin{align*}
\log[h(t)+2]&=\log[t^p(\log t)^q[1+\op{O}(1/t)]+2]\\
&=\log[t^p(\log t)^q(1+\op{O}(1/{\log t}))]\\
&=p\log t+q\log\log t+\op{O}(1).
\end{align*}
Likewise, $\log[h(t)+e]=p\log t(1+\op{o}(1))$ and so
\begin{equation*}
\log\log[h(t)+e]=\log[p\log t(1+\op{o}(1))]=\log\log t+\op{O}(1).
\end{equation*}
Thus,
\begin{align*}
\log[h(t)+2]-q\log\log[h(t)+e]
&=p\log t+q\log\log t-q\log\log t+\op{O}(1)\\
&=p\log t[1+\op{O}(1/{\log t})].
\end{align*}
Combining this with ~(\ref{eq:computation of h_1sharp})--(\ref{eq:1/p power}) gives
\begin{align*}
h_1^\sharp\circ h(t)&=p^{\frac{q}{p}}[t(\log t)^{\frac{q}{p}}(1+\op{O}(1/t))] [p\log t(1+\op{O}(1/{\log t}))]^{-\frac{q}{p}}\\
&=p^{\frac{q}{p}}t(\log t)^{\frac{q}{p}}p^{-\frac{q}{p}}(\log t)^{-\frac{q}{p}}(1+\op{O}(1/{\log t}))\\
&=t(1+\op{O}(1/{\log t})).
\end{align*}
This proves ~(ii). The proof is complete.
\end{proof}

\begin{remark}
The asymptotic~(\ref{eq:adjustment of asymptotic inverse}) is not satisfied by the asymptotic inverse $h^\sharp(t)$, since it can be shown that
\begin{equation*}
  h^\sharp \circ h(t)=t \left[ 1 - \frac{q^2}{p^2}\frac{\log \log t}{\log t}+\op{O}\left(\frac{1}{\log t}\right) \right] .
\end{equation*}
We need to use  $h_1^\sharp(t)$ to get an asymptotic with a $\op{O}((\log t)^{-1})$-remainder term. Note that a remainder term of that order is needed to apply Corollary~\ref{cor:Hyper.Weyl-hyper.sLpq} to get $g$-hypermeasurability.
\end{remark}

It follows from Lemma~\ref{lem:app.h1sharp} that the conditions (ii)--(iii) of Lemma~\ref{lem:app.Ntolambda1} and  Lemma~\ref{lem:app.Ntolambda2} are satisfied by $h(t)$ and $h_1^\sharp(t)$ by taking $\epsilon(t)=(\log t)^{-1}$, $t\geq 2$, which is a  decreasing null $\RV_0$-function. If $r>p$, then $t^{-r}h(t)=t^{-(r-p)}(\log t)^q$ is ultimately decreasing, and so condition~(i) holds as well.

Moreover, as $h_1^\sharp(t)\sim p^{\frac{q}{p}} t^{\frac{1}{p}}(\log t)^{-\frac{q}{p}}$, we have $\log [h_1^\sharp(t)]\sim p^{-1}\log t$, and hence
\begin{equation}\label{eq:app.epsilonh1sharp-log-example}
( \epsilon\circ h_1^\sharp)(t)= \frac{1}{\log \big[h_1^\sharp(t)\big]}\sim \frac{p}{\log t}. 
\end{equation}
Thus, 
\begin{equation*}
 \frac{(\epsilon \circ h_1^\sharp)(t)^{-\frac{1}{p}}h_1^\sharp\big[ t (\epsilon \circ h_1^\sharp)(t)\big] }{h_1^\sharp(t)}\sim \bigg( \frac{ \log \big[p(\log t)^{-1}t\big]}{\log t} \bigg)\sim 1. 
\end{equation*}
It then follows that condition~(\ref{eq:app.sharp-Potter}) is satisfied. 

This shows that we may apply Lemma~\ref{lem:app.Ntolambda1} and Lemma~\ref{lem:app.Ntolambda2} with $h_1^\sharp(t)$ and $\epsilon(t)$ as above. This leads to the following result.

\begin{lemma}\label{lem:app.example-tplogq}
 Suppose that
 \begin{equation*}
 N(\lambda)= \lambda^{-p}|\log \lambda|^q \left[c +\op{O}\left(|\log \lambda|^{-1}\right)\right] \qquad \textup{as}\ \lambda\rightarrow 0^+.
\end{equation*}
\begin{enumerate}
 \item[(i)] If $c>0$, then, as $j\rightarrow \infty$, we have
 \begin{equation*}
 \lambda_j = \left(p^{-q}c\right)^{\frac 1p} j^{-\frac1p} (\log j )^{\frac{q}{p}} \left[ 1- \frac{q^2}{p} \frac{\log \log j}{\log j} + \op{O}\left(\frac{1}{\log j}\right) \right].
\end{equation*}

\item[(ii)] If $c=0$, then
\begin{equation*}
  \lambda_j = \op{O}\left(j^{-\frac1p}(\log j)^{\frac{q-1}{p}}\right) \qquad \textup{as}\ j\rightarrow \infty.
\end{equation*}
\end{enumerate}
\end{lemma}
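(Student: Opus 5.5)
Both parts come from the general Lemmas~\ref{lem:app.Ntolambda1} and~\ref{lem:app.Ntolambda2}, applied with $h(t)=t^p(\log(t+2))^q$, the refined inverse $h_1^\sharp$ of Lemma~\ref{lem:app.h1sharp}, and $\epsilon(t)=(\log t)^{-1}$. The discussion preceding the statement already checks conditions (i)--(iii) and~(\ref{eq:app.sharp-Potter}).

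The first step is to rewrite the hypothesis in the form $N(\lambda)=h(\lambda^{-1})[c+\op{O}(\epsilon(\lambda^{-1}))]$. Since $\log(\lambda^{-1}+2)=|\log\lambda|\,(1+\op{O}(\lambda/|\log\lambda|))$, we have $h(\lambda^{-1})=\lambda^{-p}|\log\lambda|^q(1+\op{O}(\lambda))$. This discrepancy is absorbed into the $\op{O}(|\log\lambda|^{-1})$ remainder.

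For part~(ii), with $c=0$, Lemma~\ref{lem:app.Ntolambda2} applies because $\rho=0\in(-p,0]$. It gives $\lambda_j=\op{O}\big((\epsilon\circ h_1^\sharp)(j)^{1/p}h_1^\sharp(j)^{-1}\big)$. By~(\ref{eq:app.epsilonh1sharp-log-example}), $(\epsilon\circ h_1^\sharp)(j)\sim p/\log j$, and $h_1^\sharp(j)^{-1}\sim p^{-q/p}j^{-1/p}(\log j)^{q/p}$. Multiplying these yields $\op{O}\big(j^{-1/p}(\log j)^{(q-1)/p}\big)$.

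For part~(i), with $c>0$, Lemma~\ref{lem:app.Ntolambda1} gives
\[
\lambda_j=c^{1/p}h_1^\sharp(j)^{-1}\bigl[1+\op{O}((\log j)^{-1})\bigr],
\]
again using $(\epsilon\circ h_1^\sharp)(j)\sim p/\log j$. It remains to expand $h_1^\sharp(j)^{-1}$, which is the only real computation. By definition,
\[
h_1^\sharp(j)^{-1}=p^{-q/p}j^{-1/p}\bigl[\log(j+2)-q\log\log(j+e)\bigr]^{q/p}.
\]
We write $\log(j+2)=\log j+\op{O}(1/j)$ and $\log\log(j+e)=\log\log j+\op{O}(1/(j\log j))$. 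The bracket then equals $\log j\,\bigl(1-q\frac{\log\log j}{\log j}+\op{O}(1/j)\bigr)$. Raising to the power $q/p$ with $(1+x)^{q/p}=1+\frac qp x+\op{O}(x^2)$ gives
\[
(\log j)^{q/p}\Big(1-\frac{q^2}{p}\frac{\log\log j}{\log j}+\op{O}\Big(\frac{(\log\log j)^2}{(\log j)^2}\Big)\Big).
\]
Since $(\log\log j)^2/(\log j)^2=\op{O}(1/\log j)$, the error is of the required size. Collecting the constants, $c^{1/p}p^{-q/p}=(p^{-q}c)^{1/p}$, which is the stated formula.

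The main obstacle is ensuring the second-order term $\frac{q^2}{p}\frac{\log\log j}{\log j}$ is captured exactly rather than lost in the remainder. This is exactly why $h_1^\sharp$, and not $h^\sharp$, must be used: by the remark following Lemma~\ref{lem:app.h1sharp}, $h^\sharp\circ h$ only has a $\frac{\log\log t}{\log t}$ error, which would swamp this term.
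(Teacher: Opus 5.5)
Your proposal is correct and follows the paper's proof. You apply Lemma~\ref{lem:app.Ntolambda2} (for $c=0$) and Lemma~\ref{lem:app.Ntolambda1} (for $c>0$) with $h_1^\sharp$ and $\epsilon(t)=(\log t)^{-1}$, use $(\epsilon\circ h_1^\sharp)(t)\sim p/\log t$, and expand $[\log(t+2)-q\log\log(t+e)]^{\pm q/p}$ to second order. The differences are cosmetic: you expand $h_1^\sharp(j)^{-1}$ directly instead of expanding $h_1^\sharp(j)$ and then inverting, you explicitly justify replacing $|\log\lambda|^q$ by $(\log(\lambda^{-1}+2))^q$, and you keep the factor $c^{1/p}$ that the paper's intermediate display omits.
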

 \begin{proof}
If $c=0$, then by Lemma~\ref{lem:app.Ntolambda2} we get
\begin{equation}\label{eq:case c=0 h_1sharp}
\lambda_j=\op{O}\left( h_1^\sharp(j)^{-1}(\epsilon \circ h_1^\sharp)(j)^{\frac1{p}}\right).
\end{equation}
Combining this with~(\ref{eq:app.epsilonh1sharp-log-example})  and the fact that $h_1^\sharp(t)\sim p^{\frac{q}{p}} t^{\frac{1}{p}}(\log t)^{-\frac{q}{p}}$ gives the result for $c=0$. 

Suppose now that $c>0$. By Lemma~\ref{lem:app.Ntolambda1}  and~(\ref{eq:app.epsilonh1sharp-log-example}) we have
\begin{align}\label{eq:case c>0 h_1sharp}
\lambda_j &=h_1^\sharp(j)^{-1}\left[1+\op{O}\big((\epsilon \circ h_1^\sharp)(j)\big)\right] \\
& = h_1^\sharp(j)^{-1}\left[1+\op{O}\big((\log j)^{-1}\big)\right]. 
\end{align}

As in the proof of Lemma \ref{lem:app.h1sharp}, we have
\begin{align*}
&\log(t+2)=\log t+\log(1+2/t)=\log t+\op{O}(1);\\
&\log(t+e)=(\log t)(1+\op{o}(1));\\
&\log\log(t+e)=\log\log t+\op{o}(1)=\log\log t+\op{O}(1).
\end{align*}
Setting $L(t)=\frac{\log\log t}{\log t}$, we then get
\begin{align*} h_1^\sharp(t)&= p^{\frac{q}{p}}t^{\frac1p}[\log t-q\log\log t+\op{O}(1)]^{-\frac{q}{p}}\\
&=p^{\frac{q}{p}}t^{\frac1p}(\log t)^{-\frac{q}{p}}[1-qL(t) +\op{O}((\log t)^{-1})]^{-\frac{q}{p}}\\
&=p^{\frac{q}{p}}t^{\frac1p}(\log t)^{-\frac{q}{p}}[1+q^2p^{-1}L(t) +\op{O}((\log t)^{-1})].
\end{align*}
Combining this with ~(\ref{eq:case c>0 h_1sharp}) then gives
\begin{align*}
\lambda_j&=(cp^{-q})^{\frac1p}j^{-\frac1p}(\log j)^{\frac{q}{p}}[1-q^2p^{-1}L(j) +\op{O}((\log j)^{-1})][1+\op{O}((\log j)^{-1})]\\
&=(cp^{-q})^{\frac1p}j^{-\frac1p}(\log j)^{\frac{q}{p}}[1-q^2p^{-1}L(j) +\op{O}((\log j)^{-1})].
\end{align*}
This proves the result for $c>0$. The proof is complete.
\end{proof}

\section{Proof of Proposition~\ref{prop:Pietsch-Dixmier}}\label{app:proof-Pietsch-Dixmier}
In this appendix, for the sake of completeness and the reader's convenience, we include an alternative proof of Proposition~\ref{prop:Pietsch-Dixmier}.

We first need to show that if $\omega$ is any extended limit, then $\omega\circ C_g$ is a Banach limit. To this end, we use the following lemma.

\begin{lemma}[{\cite[Proposition~4.5]{LU:JMAA25}}]\label{lem:regular summation}
The following hold.
 \begin{itemize}
 \item[(i)]  If $a_j \rightarrow L$, then $C_g(a)_j \rightarrow L$.

 \item[(ii)] $\ran C_g\circ (S-1)\subseteq \co$.
\end{itemize}
\end{lemma}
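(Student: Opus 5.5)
The plan is to treat $C_g$ as a weighted Riesz mean. Set $w_k:=2^kg(2^k)>0$ and $W_j:=\sum_{k\leq j}w_k$, so that $(C_ga)_j=W_j^{-1}\sum_{k\leq j}w_ka_k$. Everything then reduces to two properties of the weights:
\begin{equation*}
 W_j\longrightarrow\infty \qquad \text{and} \qquad \frac{w_{k+1}}{w_k}\longrightarrow 1 .
\end{equation*}

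For the second property, write $w_{k+1}/w_k=2g(2^{k+1})/g(2^k)$. Since $g$ is $\RV_{-1}$, this tends to $2\cdot 2^{-1}=1$.

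For the first property, apply Lemma~\ref{lem:auxiliary result} with $t=2^k$. It gives $G(2^{k+1})-G(2^k)\sim (\log 2)\,w_k$. Summing over $k$ and using $W_j\to\infty$ implicitly through the divergence of the telescoped sum, we get $W_j\sim (\log 2)^{-1}G(2^{j+1})$. This tends to $\infty$ because $\int_0^\infty g=\infty$. An alternative is simply to note that $w_k$ is the $\RV_0$-function $tg(t)$ evaluated along $t=2^k$, so $w_k\geq c>0$ eventually and the partial sums diverge. Karamata's theorem~(\ref{eq:Karamata}) also gives $w_j/W_j\to 0$, although this will follow from the argument below anyway.

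Part~(i) is the Toeplitz (Silverman--Toeplitz) theorem for a positive regular matrix. Given $\epsilon>0$, choose $k_0$ with $|a_k-L|<\epsilon$ for $k\geq k_0$, and split the sum at $k_0$. The head contributes at most $W_{k_0}\sup_k|a_k-L|/W_j$, which tends to $0$ because $W_j\to\infty$. The tail contributes at most $\epsilon$. Hence $C_g(a)_j\to L$.

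For part~(ii), use the convention $a_{-1}=0$, so that $((S-1)a)_k=a_{k-1}-a_k$. Summation by parts gives
\begin{equation*}
 \sum_{k\leq j}w_k(a_{k-1}-a_k)=\sum_{k\leq j-1}(w_{k+1}-w_k)a_k-w_ja_j .
\end{equation*}
From this we obtain the bound
\begin{equation*}
 |C_g((S-1)a)_j|\leq \|a\|_\infty\, W_j^{-1}\Big(\sum_{k<j}|w_{k+1}-w_k|+w_j\Big).
\end{equation*}
Since $w_{k+1}/w_k\to1$, we have $|w_{k+1}-w_k|=\op{o}(w_k)$. Arguing exactly as in part~(i), with $W_j\to\infty$, this yields $\sum_{k<j}|w_{k+1}-w_k|=\op{o}(W_j)$. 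Similarly, $w_j\leq (1+\op{o}(1))w_{j-1}$ together with $w_{j-1}=\op{o}(W_j)$ gives $w_j=\op{o}(W_j)$; this last point can also be taken directly from Karamata's theorem. Therefore $C_g\circ(S-1)$ maps $\ell_\infty$ into $\co$.

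The only mildly delicate step is establishing the two asymptotic properties of the weights from the $\RV_{-1}$ property of $g$. Once those are in place, both parts are routine summability arguments.
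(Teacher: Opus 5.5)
Your proof is correct and follows essentially the route indicated in the remark after the lemma: divergence of $W_j$ via Lemma~\ref{lem:auxiliary result} gives $C_g(\co)\subseteq\co$ and hence (i), and your Abel summation reduces (ii) to that same property together with $w_{k+1}/w_k\to 1$, which is the $\RV_{-1}$ input the paper alludes to (incidentally, your constant in $G(2^{k+1})-G(2^k)\sim(\log 2)\,w_k$ is the right one; the paper's remark has it inverted, harmlessly). Two non-essential asides should be dropped: the claim $w_k\geq c>0$ eventually is false in general, since condition~(\ref{cond:minimal}) is not assumed here and $g(t)=((t+2)\log(t+2))^{-1}$ gives $w_k\sim(k\log 2)^{-1}\to 0$; and the step $w_{j-1}=\op{o}(W_j)$ is no easier than the claim it is meant to prove, so get $w_j=\op{o}(W_j)$ from Karamata's theorem, as you also note, or from $w_j\leq w_0+\sum_{k<j}|w_{k+1}-w_k|=\op{o}(W_j)$.
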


\begin{remark}
 As $C_g(1)=1$ the proof of the first part only amounts to showing that $C_g(\co)\subseteq \co$. The proof of the second part also uses this property and the fact that $g(t)$ is $\RV_{-1}$ (see~\cite{LU:JMAA25}). In the proof of~\cite[Proposition~4.5]{LU:JMAA25} the property that $C_g(\co)\subseteq \co$ is deduced
  from~\cite[Theorem~3.2.7]{BC:Oxford00}. In fact, it follows from Lemma~\ref{lem:auxiliary result} that  $2^kg(2^k)\sim (\log 2) (G(2^{k+1})-G(2^k))$, and so we have
  \begin{equation}\label{eq:lacunary partial sum}
\sum_{k\leq j} 2^kg(2^k) \sim (\log 2)\sum_{k\leq j} \left(G(2^{k+1})-G(2^k)\right)\sim (\log 2) G(2^{j+1})\sim (\log 2) G(2^j).
\end{equation}
This implies that $\sum_{k=0}^\infty 2^kg(2^k)=\infty$, and so we have
 \begin{equation*}
 x_j = \op{o}\left(2^jg(2^j)\right) \ \Longrightarrow \ \sum_{k\leq j} x_k = \op{o}\bigg(\sum_{k\leq j} 2^kg(2^k)\bigg).
\end{equation*}
It follows from this that $C_g(\co)\subseteq \co$.
 \end{remark}

We are now in a position to prove the following result.

\begin{lemma}\label{lem:Cesaro -banach}
 If $\omega:\ell_\infty \rightarrow \C$ is an extended limit, then $\omega\circ C_g$ is a Banach limit.
\end{lemma}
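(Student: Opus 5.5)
The plan is to verify directly the three defining properties of a Banach limit for $\theta:=\omega\circ C_g$: positivity with $\theta(1)=1$, vanishing on $\co$, and shift invariance. Each one should follow from Lemma~\ref{lem:regular summation} together with the elementary structure of $C_g$.

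First, $C_g:\ell_\infty\rightarrow\ell_\infty$ is a positive linear operator with $C_g(1)=1$. Indeed, each $(C_ga)_j$ is a convex combination of $a_0,\ldots,a_j$, since the weights $2^kg(2^k)/\sum_{i\leq j}2^ig(2^i)$ are positive and sum to $1$. Hence $\|C_ga\|_\infty\leq\|a\|_\infty$, and $a\geq 0$ implies $C_ga\geq 0$. Because $\omega$ is a state, $\theta$ is then a positive linear functional on $\ell_\infty$ with $\theta(1)=\omega(1)=1$. Next, if $a\in\co$, then Lemma~\ref{lem:regular summation}(i) gives $C_ga\in\co$, so $\theta(a)=\omega(C_ga)=0$ because $\omega$ annihilates $\co$. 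Thus $\theta$ is an extended limit.

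It remains to check shift invariance, which is the only step with any content. For $a\in\ell_\infty$, Lemma~\ref{lem:regular summation}(ii) gives $C_g(Sa-a)=C_g\circ(S-1)(a)\in\co$. Therefore
\begin{equation*}
\theta(Sa)-\theta(a)=\omega\big(C_g(S-1)a\big)=0.
\end{equation*}
So $\theta$ is invariant under the forward shift $S$ of~(\ref{def:forward}). If backward-shift invariance is the convention in force, this is equivalent by Remark~\ref{rmk:Pietsch.shift-singular}. Alternatively, one can note directly that $S^*Sa=a$, so $\theta(a)=\theta(S(S^*a))=\theta(S^*a)$.

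The main obstacle would be part (ii) of Lemma~\ref{lem:regular summation}, but it is assumed here as an earlier result. If I had to justify it, I would write
\begin{equation*}
(C_g(Sa-a))_j=\frac{\sum_{k\leq j}\big(2^kg(2^k)-2^{k+1}g(2^{k+1})\big)a_k+\op{O}\big(\|a\|_\infty\, 2^{j}g(2^{j})\big)}{\sum_{k\leq j}2^kg(2^k)}
\end{equation*}
by summation by parts. The $\RV_{-1}$ property gives $2^{k+1}g(2^{k+1})/(2^kg(2^k))\rightarrow 1$, so the coefficients in the numerator are $\op{o}(2^kg(2^k))$. Combining this with the divergence~(\ref{eq:lacunary partial sum}) and $2^jg(2^j)=\op{o}(\sum_{k\leq j}2^kg(2^k))$, which follows from~(\ref{eq:lacunary partial sum}) and Karamata~(\ref{eq:Karamata}), shows that the quotient tends to $0$.
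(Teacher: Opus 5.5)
Your proof is correct and is essentially the paper's argument: positivity of $C_g$, $C_g(1)=1$ and $C_g(\co)\subseteq\co$ (Lemma~\ref{lem:regular summation}(i)) make $\omega\circ C_g$ an extended limit, and $C_g(Sa-a)\in\co$ (Lemma~\ref{lem:regular summation}(ii)) gives shift invariance. One small slip is in your optional aside: the step $\theta(S(S^*a))=\theta(a)$ follows from $S(S^*a)=a-(a_0,0,0,\ldots)$ together with $\theta=0$ on $\co$, not from $S^*Sa=a$; since the paper defines Banach limits via the forward shift $S$, that aside is not needed anyway.
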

\begin{proof}
As $C_g$ preserves positivity and the first part of Lemma~\ref{lem:regular summation} ensures that $C_g(1)=1$ and $C_g(\co)\subseteq \co$, we see that $\omega \circ C_g$ is a positive linear functional on $\ell_\infty$ such that $\omega \circ C_g(1)=\omega(1)=1$ and $\omega\circ C_g=\omega=0$ on $\co$. That is, $\omega \circ C_g$ is an extended limit.

Moreover, if $a\in\ell_\infty$, then the second part of Lemma~\ref{lem:regular summation} asserts that $C_g(Sa-a)\in\co$. Thus,
\begin{equation*}
 \omega\circ C_g (Sa)- \omega\circ C_g (a)=\omega\left( C_g(Sa-a)\right)=0.
\end{equation*}
It follows that $\omega \circ C_g$ is a shift-invariant extended limit. That is, $\omega \circ C_g$ is a Banach limit. The proof is complete.
\end{proof}

As mentioned in Section~\ref{sec:Pietsch}, the Banach limits of the form provided by Lemma~\ref{lem:Cesaro -banach} are called \emph{$C_g$-factorizable}.

In what follows, we denote by $\ell_\infty(\gamma)$ the ideal of $\ell_\infty$ generated by the sequence $\gamma=(g(j))_{j\geq 0}$, and we let 
$\tau:\ell_\infty(\gamma)\rightarrow \ell_\infty$ be the operator defined by
\begin{equation*}
 \tau(x)_j = \frac{1}{G(j)}\sum_{k<j} x_k, \qquad x=(x_j)_{j\geq 0}\in \ell_\infty(\gamma).
\end{equation*}
Thus, given any extended limit $\omega:\ell_\infty \rightarrow \C$, we have
\begin{equation*}
 \Trw(T)=\omega\circ \tau (\lambda(T)) \qquad \forall T\in \sL_g.
\end{equation*}

Our approach is based on the following asymptotic result. This result seems to be new, even in the case of the weak trace-class, i.e., with $g$ such that $g(t)=t^{-1}$ for $t\geq 1$  (see, however,~\cite[p.\ 206]{LSZ:Book} for related considerations).

Recall that $\tilde{\ell}_g$ consists of sequences $(x_j)_{j\geq 0}$ in $\ell_g$ such that $|x_0|\geq |x_1|\geq \cdots$. In particular, for such sequences we have $|x_j|=\mu_j(x)=\op{O}(g(j))$.

\begin{lemma}\label{lem:new}
 For every sequence $x=(x_j)_{j\geq 0}\in \tilde{\ell}_g$, we have
 \begin{equation}\label{eq:C_g vs tau}
 \delta\circ C_g \circ \hat{\alpha}_g(x)_j = (\log 2)  \tau(x)_j + \op{o}(1).
\end{equation}
In particular, for every extended limit $\omega$ on $\ell_\infty$, we have
\begin{equation}\label{eq:factor by C_g}
 \omega\circ\tau (x)= \frac{1}{\log 2} \omega\circ \delta\circ C_g \circ \hat{\alpha}_g(x).
\end{equation}
\end{lemma}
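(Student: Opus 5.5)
The plan is to compute $C_g\circ\hat{\alpha}_g(x)$ explicitly. The weights in $C_g$ exactly cancel the normalization in $\hat{\alpha}_g$, so $C_g\circ\hat{\alpha}_g(x)_m$ becomes a partial sum of $x$ divided by a lacunary weight sum. Writing $S(n):=\sum_{k<n}x_k$ and $W_m:=\sum_{k\leq m}2^kg(2^k)$, the definitions~(\ref{def:hat alpha}) and of $C_g$ give
\begin{equation*}
 C_g\circ\hat{\alpha}_g(x)_m=\frac{1}{W_m}\sum_{k\leq m}\sum_{i\in\D_k}x_i=\frac{S(2^{m+1}-1)}{W_m},
\end{equation*}
because the sets $\D_0,\ldots,\D_m$ partition $\{0,\ldots,2^{m+1}-2\}$. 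By the definition of $\delta$, for $j\in\D_m$ the left-hand side of~(\ref{eq:C_g vs tau}) is exactly this quantity. The task is therefore to compare $S(2^{m+1}-1)/W_m$ with $(\log 2)S(j)/G(j)$, uniformly over $j\in\D_m$.

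The first step is the asymptotics of $W_m$. Lemma~\ref{lem:auxiliary result} gives $G(2t)-G(t)\sim(\log 2)\,tg(t)$, hence $2^kg(2^k)\sim(\log 2)^{-1}\bigl(G(2^{k+1})-G(2^k)\bigr)$. This corrects the direction of the constant in~(\ref{eq:lacunary partial sum}). Since $G(t)\to\infty$, the right-hand terms are positive with divergent sum. A Stolz--Ces\`aro type argument (asymptotically equivalent positive terms with divergent partial sums have asymptotically equivalent partial sums) then yields $W_m\sim(\log 2)^{-1}G(2^{m+1})$.

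The second step uses that $G$ is slowly varying. Karamata's theorem~(\ref{eq:Karamata}) together with $G(2t)-G(t)\sim(\log 2)\,tg(t)$ gives $G(2t)/G(t)\to 1$. As $G$ is increasing, $G(j)\sim G(2^m)\sim G(2^{m+1})$ uniformly for $j\in\D_m$, and so $W_m\sim(\log 2)^{-1}G(j)$ uniformly on $\D_m$. It remains to replace $S(2^{m+1}-1)$ by $S(j)$. Since $|x_i|=\mu_i(x)\leq Cg(i)$ and $g$ is decreasing,
\begin{equation*}
 \bigl|S(2^{m+1}-1)-S(j)\bigr|\leq \sum_{i\in\D_m}|x_i|\leq C\,2^m g(2^m-1)=\op{O}\bigl(2^mg(2^m)\bigr)=\op{o}\bigl(G(2^m)\bigr),
\end{equation*}
again by~(\ref{eq:Karamata}). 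Moreover $|S(j)|\leq C\sum_{k<j}g(k)=\op{O}(G(j))$, so $\tau(x)$ is bounded. Combining these facts,
\begin{equation*}
 \frac{S(2^{m+1}-1)}{W_m}=\frac{(\log 2)\bigl(1+\op{o}(1)\bigr)\bigl(S(j)+\op{o}(G(j))\bigr)}{G(j)}=(\log 2)\,\tau(x)_j+\op{o}(1),
\end{equation*}
with $m\to\infty$ as $j\to\infty$. This proves~(\ref{eq:C_g vs tau}).

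Finally, an extended limit $\omega$ vanishes on $\co$. Applying $\omega$ to~(\ref{eq:C_g vs tau}) and dividing by $\log 2$ gives~(\ref{eq:factor by C_g}).

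The main point needing care is uniformity in $j\in\D_m$, both for $G(j)\sim G(2^m)$ and for the tail estimate. Both reduce to monotonicity of $G$ and $g$ together with the $\RV_{-1}$ and Karamata estimates above, so I do not expect a genuine obstacle. The only subtle point is getting the constant in $W_m$ right: it must come out as $(\log 2)^{-1}G$, which is what produces the factor $\log 2$ multiplying $\tau$.
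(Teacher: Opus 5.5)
Your proposal is correct and follows essentially the same route as the paper: write $\delta\circ C_g\circ\hat{\alpha}_g(x)_j=y_m/\sigma_m$ for $j\in\D_m$, obtain the asymptotics of the lacunary weight sum from Lemma~\ref{lem:auxiliary result}, use $G(j)\sim G(2^{m+1})$ on $\D_m$, and bound the dyadic tail by $\op{O}(2^mg(2^m))=\op{o}(G(j))$ via Karamata. Your correction of the constant is also right: in fact $\sigma_m\sim(\log 2)^{-1}G(2^{m+1})$, not $(\log 2)\,G(2^{m+1})$ as written in (\ref{eq:lacunary partial sum}) and in the paper's proof, and it is this corrected constant that produces the factor $\log 2$ in front of $\tau(x)_j$; your explicit remark that $\tau(x)$ is bounded also supplies a step the paper leaves implicit.
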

\begin{proof}
As~(\ref{eq:factor by C_g}) is a direct consequence of~(\ref{eq:C_g vs tau}), we only need to prove~(\ref{eq:C_g vs tau}). For $j\in \D_m$, we have
\begin{gather*}
 (\delta\circ C_g \circ \hat{\alpha}_g)(x)_j = C_g\circ \hat{\alpha}_g(x)_m = \frac{y_m}{\sigma_m}, \qquad \text{where}\\
 \sigma_m = \sum_{k\leq m} 2^k g(2^k), \qquad y_m= \sum_{k\leq m}\sum_{i\in \D_k} x_i= \sum_{k=0}^{2^{m+1}-2} x_k.
\end{gather*}
We know from~(\ref{eq:lacunary partial sum}) that $\sigma_m\sim (\log 2) G(2^{m+1})$. As $G(2^m-1)\leq G(j) \leq G(2^{m+1})$, and we have $G(2^m-1)\sim G(2^m)\sim G(2^{m+1})$, we see that $G(j)\sim G(2^{m+1})$. Thus,
\begin{equation}\label{eq:partial sum asymptotic}
 \sigma_m \sim (\log 2) G(j).
\end{equation}

As $x \in \tilde{\ell}_g$, we have $|x_j|\leq Cg(j)$ for some constant $C>0$ independent of $j$. Thus,
\begin{equation*}
 \bigg| y_m -\sum_{k< j} x_k\bigg| \leq \sum_{k=j}^{2^{m+1}-2} |x_k| \leq C  \sum_{k=j}^{2^{m+1}-2} g(k)\leq C 2^m g(j)\leq C(j+1)g(j).
\end{equation*}
As $tg(t)=\op{o}(G(t))$, and hence $(j+1)g(j)=\op{o}(G(j))$, we see that
\begin{equation*}
 y_m -\sum_{k< j} x_k =\op{o}(G(j)).
\end{equation*}
Combining this with~(\ref{eq:partial sum asymptotic}) then gives
\begin{equation*}
  (\delta\circ C_g \circ \hat{\alpha}_g)(x)_j =  \frac{y_m}{\sigma_m}=  \frac{\log 2}{G(j)}\sum_{k< j} x_k+\op{o}(1)=(\log 2)  \tau(x)_j + \op{o}(1).
\end{equation*}
This gives~(\ref{eq:C_g vs tau}). The proof is complete.
\end{proof}

We are now in a position to prove Proposition~\ref{prop:Pietsch-Dixmier}.

\begin{proof}[Proof of Proposition~\ref{prop:Pietsch-Dixmier}]
 First, we observe that if $\omega:\ell_\infty\rightarrow \C$ is an extended limit, then $\omega\circ \alpha$ and $\omega\circ \delta$ are extended limits, since the operators $\alpha$ and $\delta$ both preserve positivity and each of the subspaces $\C1$ and $\co$. Using Lemma~\ref{lem:Cesaro -banach} we then see that $\theta:=\omega\circ \delta \circ C_g$ is a $C_g$-factorizable Banach limit. Recall also that by Proposition~\ref{prop:DK} if $T\in \sL_g$, then every eigenvalue sequence $\lambda(T)$ is contained in $\tilde{\ell}_g$. Therefore, we may apply Lemma~\ref{lem:new} to get
  \begin{equation*}
 \Trw(T)=\omega \circ \tau \left(\lambda(T)\right)= \frac{1}{\log 2}(\omega\circ \delta\circ C_g) \circ \hat{\alpha}_g \left(\lambda(T)\right)= \hat{\varphi}_\theta(T).
\end{equation*}

Conversely, let $\theta:\ell_\infty \rightarrow \C$ be a $C_g$-factorizable Banach limit, i.e., $\theta=\omega\circ C_g$ for some extended limit $\omega$. As mentioned above, $\tilde{\omega}:=\omega\circ \alpha$ is an extended limit. As $\alpha\circ \delta=\op{id}$, we have
\begin{equation*}
 \theta=\omega\circ C_g=\omega\circ \alpha \circ \delta \circ C_g=\tilde{\omega}\circ \delta \circ C_g.
\end{equation*}
Therefore, by using Lemma~\ref{lem:new}, we see that, for all $T\in \sL_g$, we have
\begin{equation*}
\hat{\varphi}_\theta (T)= \frac{1}{\log 2} \theta \circ \hat{\alpha}_g\left(\lambda(T)\right)=
 \frac{1}{\log 2} \tilde{\omega}\circ \delta \circ C_g\left(\lambda(T)\right)= \tilde{\omega}\circ \tau \left(\lambda(T)\right)= \Tr_{\tilde{\omega}}(T).
\end{equation*}
This shows that $\hat{\varphi}_\theta$ is a Dixmier trace. This completes the proof of Proposition~\ref{prop:Pietsch-Dixmier}.
\end{proof}

\end{document}